\newcommand{\Pp}{\mathbb{P}}
\newcommand{\R}{\mathbb{R}}
\newcommand{\Z}{\mathbb{Z}}
\newcommand{\Cfrak}{\mathfrak{C}}
\newcommand{\Pfrak}{\mathfrak{P}}
\newcommand{\boldn}{\boldsymbol{n}}
\newcommand{\bx}{\boldsymbol{x}}
\newcommand{\Bcal}{\mathcal{B}}
\newcommand{\Ccal}{\mathcal{C}}
\newcommand{\Dcal}{\mathcal{D}}
\newcommand{\Ecal}{\mathcal{E}}
\newcommand{\Fcal}{\mathcal{F}}
\newcommand{\Hcal}{\mathcal{H}}
\newcommand{\Ical}{\mathcal{I}}
\newcommand{\Pcal}{\mathcal{P}}
\newcommand{\Scal}{\mathcal{S}}
\newcommand{\Tcal}{\mathcal{T}}
\newcommand{\Wcal}{\mathcal{W}}
\DeclareMathOperator{\add}{add}
\DeclareMathOperator{\spann}{span}
\DeclareMathOperator{\Hom}{Hom}
\DeclareMathOperator{\cone}{cone}
\DeclareMathOperator{\rank}{rk}
\DeclareMathOperator{\link}{lk}
\DeclareMathOperator{\codim}{codim}
\DeclareMathOperator{\Hasse}{Hasse}
\DeclareMathOperator{\mods}{mod}
\DeclareMathOperator{\proj}{proj}
\DeclareMathOperator{\Filt}{Filt}
\DeclareMathOperator{\Ext}{Ext}
\DeclareMathOperator{\Fac}{Fac}
\DeclareMathOperator{\Faq}{Faq}
\DeclareMathOperator{\Sub}{Sub}
\DeclareMathOperator{\End}{End}
\DeclareMathOperator{\brick}{brick}
\DeclareMathOperator{\coker}{coker}
\DeclareMathOperator{\tors}{tors}
\DeclareMathOperator{\torf}{torf}
\DeclareMathOperator{\TF}{TF}
\DeclareMathOperator{\starr}{star}
\DeclareMathOperator{\flatt}{flat}
\DeclareMathOperator{\shard}{shard}
\DeclareMathOperator{\Part}{Part}
\DeclareMathOperator{\APart}{APart}
\DeclareMathOperator{\strigid}{s\tau-rigid}
\DeclareMathOperator{\sttilt}{s\tau-tilt}
\DeclareMathOperator{\diag}{diag}
\DeclareMathOperator{\bdim}{\boldsymbol{\dim}}
\newtheorem{thm}{Theorem}[section]
\newtheorem{lem}[thm]{Lemma}
\newtheorem{cor}[thm]{Corollary}
\newtheorem{prop}[thm]{Proposition}
\theoremstyle{definition}
\newtheorem{defn}[thm]{Definition}
\newtheorem{exmp}[thm]{Example}
\theoremstyle{remark}
\newtheorem{rmk}[thm]{Remark}
\numberwithin{equation}{section}
\begin{document}

\title{The category of a partitioned fan}


\author{Maximilian Kaipel}
\address{Abteilung Mathematik, Department Mathematik/Informatik der Universität
zu Köln, Weyertal 86-90, 50931 Cologne, Germany}
\curraddr{}
\email{mkaipel@uni-koeln.de}
\thanks{}

\subjclass[2020]{Primary 18A05; Secondary 16G10, 18E40, 55P20, 52A20} 

\date{}

\dedicatory{}

\begin{abstract}
    In this paper, we introduce the notion of an \textit{admissible partition} of a simplicial polyhedral fan and define the \textit{category of a partitioned fan} as a generalisation of the $\tau$-cluster morphism category of a finite-dimensional algebra. This establishes a complete lattice of categories around the $\tau$-cluster morphism category, which is closely tied to the fan structure. We prove that the classifying spaces of these categories are cube complexes, which reduces the process of determining if they are $K(\pi,1)$ spaces to three sufficient conditions. We characterise when these conditions are satisfied for fans in $\R^2$ and prove that the first one, the existence of a certain faithful functor, is satisfied for hyperplane arrangements whose normal vectors lie in the positive orthant. As a consequence we obtain a new infinite class of algebras for which the $\tau$-cluster morphism category admits a faithful functor and for which the cube complexes are $K(\pi,1)$ spaces. In the final section we also offer a new algebraic proof of the relationship between an algebra and its $g$-vector fan.
\end{abstract}

\maketitle

\section{Introduction}
Polyhedral fans arise naturally in many areas of mathematics. In \textit{toric geometry}, fans serve as fundamental tools for defining toric varieties \cite{Fulton1993}. In commutative algebra, the \textit{Gröbner fan} is an invariant associated to an ideal in a commutative polynomial ring \cite{MoraRobbiano1988}. Polytopes give rise to \textit{normal fans} and \textit{face fans} \cite{ZieglerLectures} which play an important role in the theory of optimisation \cite{Schrijver}. In matroid theory, the \textit{Bergman fan} is a subfan of the normal fan of the matroid polytope \cite{Bergman1971}. Recently, it was shown that abelian categories define \textit{heart fans} \cite{BPPW2023}. However, our motivation comes from the \textit{$g$-fan} of a finite-dimensional algebra \cite{DIJ2019} which was recently used to construct the \textit{$\tau$-cluster morphism category} \cite{STTW2023}. \\

The $g$-fan embeds into the \textit{wall-and-chamber structure} which arises from stability conditions \cite{Asai2019WS, BST2019, King1994}. The wall-and-chamber structure is the support of the stability scattering diagram \cite{Bridgeland2017}. For hereditary algebras, the wall-and-chamber structure is equivalent to the \textit{semi-invariant picture} defined in \cite{IOTW2015}. This picture defines the \textit{picture group}, see \cite{IgusaTodorovWeyman2016}, which has close connections to cluster theory and in particular to maximal green sequences \cite{IgusaTodorovMGS2021}. To compute the cohomology ring of the picture group, the authors of \cite{IgusaTodorovWeyman2016} associate a finite CW-complex, called the \textit{picture space}, to every hereditary algebra. Later, the \textit{cluster morphism category} is introduced in \cite{IgusaTodorov2017} as a categorical analogue of the picture space in the following sense: The \textit{classifying space} of this category is homeomorphic to the picture space and for representation-finite hereditary algebras this is a $K(\pi,1)$ space with $\pi$ the picture group. \\

The definition of the cluster morphism category was extended to $\tau$-tilting finite algebras in \cite{BuanMarsh2018} and then as the \textit{$\tau$-cluster morphism category} to all finite-dimensional algebras in \cite{BuanHanson2017}. The difficulty in the original constructions of the $\tau$-cluster morphism category lies in showing that composition of morphisms is associative \cite{BuanMarsh2018}. Different proofs of associativity are given in \cite{BuanHanson2017} and \cite{Borve2022}. Together with the work of \cite{MST2023}, this motivated the authors of \cite{STTW2023} to take a geometric viewpoint and construct the $\tau$-cluster morphism category from the $g$-fan of a finite-dimensional algebra. In this construction, the cones of the $g$-fan correspond to support $\tau$-rigid pairs, which have a wide subcategory associated to them \cite{BST2019,DIRRT2017,Jasso2015}. To obtain the definition of the category in \cite{STTW2023}, cones are identified whenever their corresponding wide subcategories coincide. \\

In this paper, we generalise the $\tau$-cluster morphism category to the \textit{category of a partitioned fan} by allowing more general identifications of cones. To guarantee composition of morphisms in these categories is well-defined we introduce \textit{admissible partitions} of a fan, which reflect the geometric properties induced by the identification of cones by shared wide subcategories (cf. \cite[Cor. 3.7, Lem. 3.8]{STTW2023}). Hence, the category of a partitioned fan has as its objects the equivalence classes of cones of the fan under the identifications of the admissible partition. In \cref{prop:lattice} we show that this means the $\tau$-cluster morphism category sits in a lattice of categories associated to the fan, which is ordered by refinement of partitions. In a similar way, the lattice of categories of a hyperplane arrangement admits two distinguished partitions of its underlying fan which define the \textit{category of the flat-partition} and the \textit{category of the shard-partition}, see \cref{subsec:HA}. \\

Each category defines a classifying space, which is the geometric realisation of its chains of composable morphisms. We generalise results of \cite{HansonIgusa2021} and \cite{IgusaTodorov2017} and prove that the classifying spaces of the categories of a partitioned fan are cube complexes. Therefore, we can use the sufficient conditions developed in \cite{Gromov1987} to determine classifying spaces which are $K(\pi,1)$ spaces, a subclass of Eilenberg-Maclane spaces whose only non-trivial homotopy group is in degree 1. We follow the approach of \cite{Igusa2022}, where the author categorifies the ideas of \cite{Gromov1987} and defines a \textit{cubical category}, roughly speaking, as a category whose classifying space is a cube complex and translates the criteria of \cite{Gromov1987} into this setting.

\begin{thm} \textnormal{(\cref{thm:cubicalcat})}
    Let $\Sigma$ be a simplicial fan and $\Pfrak$ an admissible partition. Then the category $\Cfrak(\Sigma, \Pfrak)$ of the partitioned fan is a cubical category and its classifying space $\Bcal \Cfrak(\Sigma, \Pfrak)$ thus a cube complex.
\end{thm}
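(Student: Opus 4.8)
The plan is to verify directly the axioms of a cubical category in the sense of \cite{Igusa2022} and then to invoke the result there that every cubical category has a cube complex as its classifying space. Recall that a cubical category is, roughly, a small category equipped with a \emph{rank} function assigning to each morphism $g$ an integer $\lvert g\rvert \in \Z_{\geq 0}$, additive under composition and vanishing exactly on the isomorphisms, subject to the requirement that for every morphism $g$ of rank $n$ the intermediate objects through which $g$ factors form a poset isomorphic to the Boolean lattice $2^{[n]}$; geometrically these $2^n$ objects are the vertices of an $n$-cube whose $n!$ maximal chains are the complete factorizations of $g$ into rank-$1$ morphisms. The first task is therefore to define the rank of a morphism of $\Cfrak(\Sigma,\Pfrak)$. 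Each morphism is represented by a pair of cones of $\Sigma$ related by a face inclusion, and I set its rank to be the difference of their dimensions; admissibility of $\Pfrak$ ensures that all cones in a single class share the same dimension, so this is well defined and vanishes precisely on the isomorphisms.

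Next I would establish additivity of the rank under composition. A composite is represented by a chain of faces $\tau \subseteq \rho \subseteq \sigma$, whereupon additivity reduces to the identity $\dim\sigma - \dim\tau = (\dim\sigma - \dim\rho) + (\dim\rho - \dim\tau)$. The genuine content is not this arithmetic but the assertion that, after passing to $\Pfrak$-classes, the composite is still represented by an honest chain of faces; this is precisely the geometric consistency encoded in the admissibility conditions (cf. \cite[Cor.~3.7, Lem.~3.8]{STTW2023}) that make composition well defined, and I would quote it from the earlier sections.

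The heart of the proof, and the step I expect to be the main obstacle, is the cube axiom. The decisive input is simpliciality. A morphism of rank $n$ is represented by a simplicial cone $\sigma$ together with the face $\tau$ obtained by deleting $n$ of its rays; since $\sigma$ is simplicial its faces are in inclusion-preserving bijection with the subsets of its set of rays, so the faces $\rho$ with $\tau \subseteq \rho \subseteq \sigma$ are indexed exactly by the subsets of the $n$ deleted rays and form a Boolean lattice $2^{[n]}$ before any identification is made. What must be shown is that this structure persists after the quotient by $\Pfrak$: that the intermediate $\Pfrak$-classes remain pairwise distinct and that every partial factorization is again a morphism of $\Cfrak(\Sigma,\Pfrak)$. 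I expect this to follow from the defining properties of an admissible partition, which are tailored so that the operation of taking faces descends to classes without collapsing the factorization poset; the delicate point is to exclude two distinct subsets of deleted rays from producing identified intermediate cones.

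Finally, with the rank function, its additivity, and the cube axiom in hand, I would check the remaining compatibility conditions of a cubical category — essentially that overlapping factorization cubes agree along shared faces, which is immediate from the Boolean description above — and then conclude by \cite{Igusa2022} that the classifying space of a cubical category is a cube complex. Applied to $\Cfrak(\Sigma,\Pfrak)$ this yields both assertions at once: the category is cubical and $\Bcal\Cfrak(\Sigma,\Pfrak)$ is a cube complex.
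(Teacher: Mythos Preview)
Your overall approach matches the paper's: define the rank as the difference of cone dimensions, use simpliciality to get a Boolean lattice of faces between $\tau$ and $\sigma$ in the unpartitioned fan, and then argue that admissibility preserves this structure after passing to $\Pfrak$-classes. The paper does exactly this, proving the Boolean-lattice step first for the trivial partition $\Pfrak_{\mathrm{poset}}$ (\cref{lem:posetcon2}) and then showing via \cref{lem:uniquefaq} that identified morphisms have canonically identified factorisation cubes.

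There is, however, a gap in your treatment of the remaining axioms. The paper's \cref{def:cubical} has five conditions, and your sketch stops after condition 2 (the cube axiom), dismissing the rest as ``overlapping factorisation cubes agree along shared faces''. That is not what conditions 3--5 say. Condition 3 demands that the forgetful functor $\Faq([f_{\sigma\tau}]) \to \Cfrak(\Sigma,\Pfrak)$ sending a factorisation to its middle object be an \emph{embedding}; this is precisely the ``delicate point'' you name --- two distinct intermediate faces might land in the same $\Pfrak$-class --- and the paper devotes two lemmas (\cref{lem:embedsetup}, \cref{lem:embedding}) to excluding it, using \cref{lem:uniquefaq} in an essential way. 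Conditions 4 and 5 require that a rank-$k$ morphism be determined by its $k$ first factors and by its $k$ last factors, respectively; the paper checks these explicitly by reconstructing $\tau$ as $\cone\{\sigma,\kappa_1,\dots,\kappa_{\ell-k}\}$ from the first factors and $\sigma$ as $\bigcap_i \lambda_i$ from the last. These are short arguments, but they are not about compatibility of overlapping cubes and you should not omit them.
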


The following categorical conditions are sufficient for the classifying space of the category $\Cfrak(\Sigma, \Pfrak)$ to be a $K(\pi,1)$ space (see \cite[Prop. 3.4, Prop. 3.7]{Igusa2022} and \cref{prop:igusacriteria}):
\begin{enumerate}
    \item There exists a faithful functor from $\Cfrak(\Sigma, \Pfrak)$ to some group $G$, viewed as a groupoid with one object. 
    \item + (3) The second and third conditions are referred to as the pairwise compatibility property of first and last factors respectively. These are more technical conditions about the compatibility of certain morphisms in the category, see \cref{prop:igusacriteria}. 
\end{enumerate}

The pairwise compatibility property of first factors is satisfied by the $\tau$-cluster morphism category of any finite-dimensional algebra. The other two conditions are satisfied for the $\tau$-cluster morphism categories of special classes of algebras, like representation-finite hereditary algebras \cite{IgusaTodorov2017}, Nakayama(-like) algebras \cite{HansonIgusaPW2SMC, HansonIgusa2021} and certain gentle algebras \cite{HansonIgusaPW2SMC}. Moreover, the $\tau$-cluster morphism category of $\tau$-tilting finite algebras of rank at most 3 satisfies the pairwise compatibility of last factors \cite{BarnardHanson2022}, whereas some examples are known which do not satisfy this property (cf. \cite{BarnardHanson2022}, \cite{HansonIgusaPW2SMC}). We generalise the picture group of a finite-dimensional algebra \cite{IgusaTodorovWeyman2016} to a fan with a well-behaved partial order, called a \textit{weak fan poset}, on its maximal cones (see \cref{defn:picgroup}) and show the following:
 
\begin{thm} (\cref{thm:rank2kpi1}, \cref{cor:rank2KPG1}, simplified)
    Let $\Sigma$ be a simplicial fan in $\R^2$ with a weak fan poset $\Pcal$ and an admissible partition $\Pfrak$. Then there exists a faithful functor from $\Cfrak(\Sigma, \Pfrak)$ to its picture group.
    \begin{enumerate}
        \item If no set of three pairwise-compatible rank 1 morphisms exist, then $\Bcal \Cfrak(\Sigma, \Pfrak)$ is a $K(\pi,1)$ space.
        \item If additionally, all maximal cones are identified, then the picture group is isomorphic to the fundamental group.
    \end{enumerate}
\end{thm}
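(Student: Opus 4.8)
The plan is to treat the three assertions in turn. Throughout I would exploit that $\Sigma\subseteq\R^2$ forces every morphism of $\Cfrak(\Sigma,\Pfrak)$ to have rank at most $2$, so that all cubes in the classifying space $\Bcal\Cfrak(\Sigma,\Pfrak)$, a cube complex by \cref{thm:cubicalcat}, are points, edges, or squares. I would verify the two criteria of \cref{prop:igusacriteria} before computing the fundamental group directly.

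\textbf{The faithful functor.} The picture group $G$ of $(\Sigma,\Pcal)$ has one generator $x_{[\rho]}$ for each equivalence class of a rank 1 cone, with relations read off each maximal cone from the weak fan poset $\Pcal$ (\cref{defn:picgroup}). As every non-identity morphism in dimension $2$ is either a rank 1 morphism crossing a single wall $[\rho]$ or a rank 2 morphism factoring into two such, I would define $F\colon\Cfrak(\Sigma,\Pfrak)\to G$ by sending each rank 1 morphism to $x_{[\rho]}$ and extending multiplicatively. Well-definedness, and hence functoriality, follows because the relations of $G$ are exactly those identifying the images of the distinct factorisations of each rank 2 morphism inside a single maximal cone. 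For faithfulness I would argue that the ordered word of walls crossed is a complete invariant of a morphism: two parallel morphisms with equal image in $G$ cross the same walls in the same order, and in dimension $2$ this determines the morphism uniquely.

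\textbf{Part (1).} Condition (1) of \cref{prop:igusacriteria} is now the functor $F$. For condition (2), the pairwise compatibility of last factors, I would translate it into the flag condition on vertex links of the cube complex. Since all cubes have dimension at most $2$, each link is a graph whose vertices are the rank 1 morphisms out of a fixed object and whose edges record pairwise compatibility; the flag condition fails exactly when three such vertices are pairwise joined yet bound no $2$-simplex, that is, when three pairwise-compatible rank 1 morphisms fail to assemble into a $3$-cube --- which is impossible in $\R^2$. This degenerate configuration is the single identification flagged in the abstract; under the hypothesis that it does not occur, every link is flag, condition (2) holds, and \cref{prop:igusacriteria} yields that $\Bcal\Cfrak(\Sigma,\Pfrak)$ is a $K(\pi,1)$.

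\textbf{Part (2).} When all maximal cones lie in a single class, the category acquires one maximal object that receives a rank 2 morphism from the apex for each maximal cone, and the edge-path presentation of $\pi_1\bigl(\Bcal\Cfrak(\Sigma,\Pfrak)\bigr)$ for this connected cube complex has generators given by the rank 1 morphisms and relations given by the squares coming from maximal cones. I would match this term-by-term with the presentation of $G$: the rank 1 morphisms yield the generators $x_{[\rho]}$ and the maximal-cone squares yield precisely the defining relations of the picture group. The comparison map is induced by $F$, and matching the generators and relations exhibits it as an isomorphism $\pi_1\bigl(\Bcal\Cfrak(\Sigma,\Pfrak)\bigr)\cong G$. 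The main obstacle I anticipate is the faithfulness step of the first part: one must show that the weak fan poset introduces no coincidences among wall-crossing words beyond the identities already holding in $\Cfrak(\Sigma,\Pfrak)$, and this is exactly where the admissibility of $\Pfrak$ and the precise form of the poset relations must be used in full.
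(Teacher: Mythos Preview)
Your overall architecture matches the paper's, but the faithfulness argument has a genuine gap that you yourself flag at the end, and your functor is not quite the one needed.

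\textbf{The functor.} Your description ``send each rank 1 morphism to $x_{[\rho]}$ and extend multiplicatively'' is not the paper's functor and is not obviously well-defined. The paper's functor (equation~\eqref{eq:functor}) sends $[f_{\sigma\kappa}]\mapsto X_{[\sigma^-,\kappa^-]}$, a product of generators labelling a path in $\Hasse(\Pcal)$ from $\kappa^-$ to $\sigma^-$. In particular, some rank 1 morphisms (those from a ray $\sigma$ to the chamber $\sigma^-$) map to the identity, while a rank 1 morphism from the origin to a ray typically maps to a \emph{product} of generators, not a single $x_{[\rho]}$. Your ``wall crossed'' language does not straightforwardly apply to morphisms of the fan category; walls are crossed in the poset of maximal cones, and the translation is exactly what the passage to $\sigma^-,\kappa^-$ encodes.

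\textbf{Faithfulness.} Your step ``two parallel morphisms with equal image in $G$ cross the same walls in the same order'' is precisely the hard point, and it is not true on the nose: the picture group has a relation equating the two maximal chains around the unique polygon, so different words can represent the same element. What must be shown is that this single relation never collapses two \emph{distinct} elements of a fixed $\Hom([\sigma],[\tau])$. The paper does not argue this abstractly. Instead it first invokes \cref{lem:isopicgroups} to replace $\Pcal$ by a specific convenient non-degenerate poset (a base region $\tau_A$, its angle bisector, and the opposite region $\tau_D$), obtaining a picture group with exactly one relation of the form $X_{[\sigma_1]}\cdots X_{[\sigma_k]}=X_{[\kappa_1]}\cdots X_{[\kappa_s]}$. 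It then checks injectivity of $\Psi_{[\sigma][\tau]}$ case-by-case on $(\dim\sigma,\dim\tau)\in\{(0,1),(0,2),(1,2)\}$. The delicate case is dimension $0\to 1$: the two words $X_{[\tau_D,\tau_{C_s}]}$ and $X_{[\tau_D,\tau_{B_k}]}$ might coincide, but only if the bounding rays $\sigma_{k+1}$ and $\kappa_{s+1}$ span the same line, forcing $\tau_D$ to be a half-plane and hence not strongly convex. This geometric obstruction is what actually closes the argument, and nothing in your sketch supplies it.

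\textbf{Parts (1) and (2).} Your treatment of pairwise compatibility via the flag condition is essentially \cref{lem:lastfactorsubarr}; note only that condition (2) concerns \emph{last} factors, so the link is built from morphisms \emph{into} a fixed object, not out of it. Your Part (2) matches \cref{cor:rank2KPG1}: with all maximal cones identified there is a single $0$-cell, the $1$-cells correspond to equivalence classes of rays (hence to the generators $X_{[\sigma]}$), and the unique $2$-cell gives the single relation. The matching of presentations is exactly as the paper does it.
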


For example, whenever the $g$-fan of a finite-dimensional algebra is finite, it admits a canonical fan poset (see \cref{thm:picgroupgeneralisation}) and as a consequence the $\tau$-cluster morphism category is a $K(\pi,1)$ for its picture group in rank 2. We establish the following relationships in the lattice of categories of a fan ordered by refinement of partitions.

\begin{thm} \textnormal{(\cref{thm:latticeproperties}, simplified)}
    Let $(\Sigma, \Pcal)$ be a weak fan poset and $\Pfrak_1, \Pfrak_2$ be two admissible partitions of $\Sigma$ such that $\Pfrak_1$ is finer than $\Pfrak_2$. Then the following hold:
    \begin{enumerate}
        \item There exists a faithful surjective-on-objects functor $F: \Cfrak(\Sigma, \Pfrak_1) \to \Cfrak(\Sigma, \Pfrak_2)$.
        \item The classifying space $\Bcal \Cfrak(\Sigma, \Pfrak_2)$ is a quotient space of $\Bcal \Cfrak(\Sigma, \Pfrak_1)$.
        \item The picture group of $\Cfrak(\Sigma, \Pfrak_2)$ is a quotient of the picture group of $\Cfrak(\Sigma, \Pfrak_1)$.
    \end{enumerate}
\end{thm}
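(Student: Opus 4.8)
The plan is to build the functor $F$ directly from the refinement $\Pfrak_1 \leq \Pfrak_2$ and then deduce (2) and (3) from its formal properties. On objects, since $\Pfrak_1$ is finer than $\Pfrak_2$, every $\Pfrak_1$-class of cones is contained in a unique $\Pfrak_2$-class; I set $F([\sigma]_1) = [\sigma]_2$, which is well-defined and manifestly surjective on objects because every cone, hence every $\Pfrak_2$-class, is hit. On morphisms I send the morphism of $\Cfrak(\Sigma,\Pfrak_1)$ represented by a given cone-datum to the morphism of $\Cfrak(\Sigma,\Pfrak_2)$ represented by the same datum. The point is that composition and identities in $\Cfrak(\Sigma,\Pfrak)$ are governed by the face relation of $\Sigma$, which is intrinsic to the fan and does not see the partition; the partition enters only by declaring which representatives are identified. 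Hence $F$ automatically preserves identities and composition, and well-definedness reduces to checking that any identification imposed in the finer category is still valid in the coarser one, which holds precisely because $\Pfrak_1 \leq \Pfrak_2$.

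The subtle point, and the main obstacle, is faithfulness: I must show that the induced map $\Hom_{\Cfrak(\Sigma,\Pfrak_1)}([\sigma]_1,[\tau]_1) \to \Hom_{\Cfrak(\Sigma,\Pfrak_2)}([\sigma]_2,[\tau]_2)$ is injective for every pair of objects. Concretely, if two morphisms of the finer category share a source and target there and become equal after applying $F$, I need them to have been equal already. My strategy is to recover a morphism from its image together with its finer source and target: two representing cone-data that are $\Pfrak_2$-equivalent while both being compatible with the fixed $\Pfrak_1$-classes $[\sigma]_1$ and $[\tau]_1$ are forced to be $\Pfrak_1$-equivalent. This is exactly the kind of geometric consistency built into the notion of an admissible partition (the analogues of \cite[Cor. 3.7, Lem. 3.8]{STTW2023}), and I expect the proof to invoke those defining properties: admissibility prevents the coarsening from collapsing distinct morphisms that already carry distinct fixed endpoints in $\Pfrak_1$.

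For part (2), I pass to nerves. Because $F$ is surjective on objects and, by the same representative-lifting argument, surjective on morphisms, the induced simplicial map $N F \colon N\Cfrak(\Sigma,\Pfrak_1) \to N\Cfrak(\Sigma,\Pfrak_2)$ is surjective in every simplicial degree, since every composable chain of morphisms downstairs lifts degreewise to one upstairs. A degreewise surjection of simplicial sets exhibits its target as a quotient (the coequalizer of its kernel pair) of its source, and geometric realization preserves colimits, so $\Bcal F = |N F|$ is a quotient map. Hence $\Bcal\Cfrak(\Sigma,\Pfrak_2)$ is a quotient space of $\Bcal\Cfrak(\Sigma,\Pfrak_1)$, as claimed.

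For part (3), I work with the presentation of the picture group from \cref{defn:picgroup}, whose generators are indexed by the rank $1$ morphisms of the category and whose relations are read off from its rank $2$ morphisms (the square $2$-cells of the cube complex $\Bcal\Cfrak(\Sigma,\Pfrak)$). Since $F$ is surjective on objects and morphisms it carries the rank $1$ generators for $\Cfrak(\Sigma,\Pfrak_1)$ onto a generating set for the picture group of $\Cfrak(\Sigma,\Pfrak_2)$, and by functoriality it sends each rank $2$ relation of the former to a relation valid in the latter. Hence the generator assignment extends to a well-defined group homomorphism, which is surjective because the generators surject, exhibiting the picture group of $\Cfrak(\Sigma,\Pfrak_2)$ as a quotient of that of $\Cfrak(\Sigma,\Pfrak_1)$. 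The only checks remaining are routine: that the assignment respects relations, which is immediate from functoriality, and that surjectivity on morphisms guarantees no further generators are required.
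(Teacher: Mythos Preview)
Your argument for (1) reaches the right conclusion but misattributes the key step. Faithfulness does not rely on admissibility at all: by \cref{defn:thecategory}(2), two morphisms $f_{\sigma_1\tau_1}$ and $f_{\sigma_2\tau_2}$ are identified if and only if $\pi_{\sigma_1}(\tau_1)=\pi_{\sigma_2}(\tau_2)$, a purely geometric condition that is \emph{independent of the partition}. Thus if two morphisms with common $\Pfrak_1$-source and $\Pfrak_1$-target are distinct in $\Cfrak(\Sigma,\Pfrak_1)$, their projections differ, and they remain distinct in $\Cfrak(\Sigma,\Pfrak_2)$. This is exactly the paper's one-line argument; the admissibility hypothesis (\cref{defn:admissiblepartition}) plays no role here.

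For (2) your nerve-and-realisation argument is valid and more abstract than the paper's. The paper instead invokes the explicit CW structure of \cref{thm:CWcomplex} together with \cref{lem:uniquefaq}, which yields the sharper statement that $\Bcal\Cfrak(\Sigma,\Pfrak_2)$ is obtained from $\Bcal\Cfrak(\Sigma,\Pfrak_1)$ by identifying the cells $e([\sigma_1]_{\Pfrak_1})$ and $e([\sigma_2]_{\Pfrak_1})$ precisely when $[\sigma_1]_{\Pfrak_2}=[\sigma_2]_{\Pfrak_2}$. Your approach proves the qualitative statement but not this explicit description.

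For (3) there is a genuine gap: you have the wrong presentation of the picture group. In \cref{defn:picgroup} the generators are indexed by $\Pfrak$-classes of \emph{codimension~$1$ cones}, not by rank~$1$ morphisms of the category, and the relations come from maximal chains in intervals of the fan poset $\Pcal$ (type~1) together with morphism identifications (type~2), not from rank~$2$ morphisms. In particular the picture group depends on the auxiliary poset $\Pcal$, so it is not a purely categorical invariant of $\Cfrak(\Sigma,\Pfrak)$, and a bare functoriality argument does not apply. The paper's proof instead observes that the type~1 relations involve only $\Pcal$ and are therefore unchanged under coarsening; passing from $\Pfrak_1$ to $\Pfrak_2$ merely identifies some generators $X_{[\sigma]}$, which visibly gives a quotient. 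The type~2 relations are then handled using the non-degeneracy hypothesis via \cref{lem:nondegimplystars}. Your argument, as written, does not address either set of relations correctly.
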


As a consequence if $\Cfrak(\Sigma, \Pfrak_2)$ admits a faithful group functor $G$ to its picture group, then so does the category of any finer partition $\Cfrak(\Sigma, \Pfrak_1)$, see \cref{cor:faithfulfunct}. We use this fact to show the following.

\begin{thm} (\cref{thm:mainthm}, \cref{cor:HAallfaithful})
    Let $\Hcal \subseteq \R^n$ be a simplicial hyperplane arrangement such that the normal vector to every hyperplane can be taken to lie in the positive orthant $\R_{\geq 0}^n$. Then $\Cfrak(\Hcal, \Pfrak)$ admits a faithful functor to some group for all admissible partitions $\Pfrak$.
\end{thm}

Hence it is sufficient to study the pairwise compatibility properties of first and last factors for showing that $\Bcal \Cfrak(\Sigma, \Pfrak)$ is a $K(\pi,1)$ space. In particular, the above implies the following for the $\tau$-cluster morphism category.

\begin{thm} (\cref{thm:algmainthm})
    Let $A$ be a finite-dimensional algebra such that the $g$-vector fan is a finite hyperplane arrangement. Then the $\tau$-cluster morphism category $\Cfrak(A)$ admits a faithful functor to its picture group viewed as a groupoid with one object.
\end{thm}

This is a new family of algebras for which the existence of such a faithful functor is established. Previously, the existence of a faithful functor was known only for the family of $K$-stone algebras, see \cite[Sec. 5.2]{HansonIgusaPW2SMC} and the finite or tame hereditary algebras, see \cite{IgusaTodorov2017, IgusaTodorov22}. In particular, these three families are all infinite and distinct but not disjoint. One large family of new examples are the generalised preprojective algebras coming from Cartan matrices of finite (Dynkin) type as introduced in \cite{GLS2017}, which by \cite{Murakami2022} define finite hyperplane arrangements. Another large family of algebras are the contraction algebras, as introduced by \cite{DonovanWemyss2016} which define finite hyperplane arrangements as shown in \cite{August2020a}. 
For rank 3, \cite[Thm. 1]{BarnardHanson2022} implies that we obtain a new family of $K(\pi,1)$ spaces.
\begin{cor}
    Let $A$ be such that the $g$-vector fan is a finite hyperplane arrangement in $\R^3$. Then the classifying space $\Bcal \Cfrak(A)$ of the $\tau$-cluster morphism category is a $K(\pi,1)$ space for the picture group. 
\end{cor}

The structure of this paper is as follows: In Section 2 we give a short background on polyhedral fans and simplicial complexes, in Section 3 we define admissible partitions of a fan and the category of a partitioned fan. In Section 4 we study the classifying space of the category in relation to the picture groups of the fan and show the existence of the faithful functor. In Section 5 the lattice structure of admissible partitions and thus of categories is described and in the final Section 6 we study the $\tau$-cluster morphism category and the $g$-fan algebraically.

\section{Background on fans and simplicial complexes} \label{sec:background}

A \textit{convex polyhedral cone $\sigma$} in $\R^n$ is a set of the form $\sigma = \{ \sum_{i=i}^s \lambda_i v_i \in \R^n : \lambda_i \geq 0\}$, where $v_1, \dots, v_s \in \R^n$. We also denote these non-negative linear combinations by $\sigma = \cone\{v_1,\dots, v_s\}$. For two cones $\sigma = \cone\{v_1, \dots, v_s\}$ and $\kappa = \cone\{w_1, \dots, w_t\} \subseteq \R^n$, we sometimes write $\cone\{\sigma, \kappa\} = \cone \{ v_1, \dots, v_s, w_1, \dots, w_t\}$. Note that $\{ 0\}$ is also regarded as a convex polyhedral cone. Unless otherwise specified, in this paper, a \textit{cone} $\sigma$ is a polyhedral cone with the following two properties:
\begin{itemize}
    \item We say $\sigma$ is \textit{strongly convex} if $\sigma \cap (-\sigma) = \{0\}$ holds.
    \item A cone $\sigma$ is \textit{simplicial} if the generating set is linearly independent (up to duplicate generators).
\end{itemize}

The \textit{dimension} $\dim(\sigma)$ of a cone is the dimension of the linear subspace $\spann \{ \sigma\}$ in $\R^n$. Denote by $\langle -,- \rangle$ the standard inner product in $\R^n$. A \textit{face} of a cone $\sigma$ is the intersection of $\sigma$ with a hyperplane $\{v \in \R^n: \langle u,v \rangle = 0\}$ for some $u \in \R^n$ satisfying $\langle u,w \rangle \geq 0$ for all $w \in \sigma$. If $\sigma = \cone\{v_1, \dots, v_s\} \subseteq \R^n$ is a simplicial cone, then a face of $\sigma$ is simply a cone generated by a proper subset of $\{ v_1, \dots, v_s\}$. 

\begin{defn} \label{defn:fan}
A \textit{fan} $\Sigma$ in $\R^n$ is a collection of cones in $\R^n$ satisfying the following:
\begin{enumerate}
    \item Each face of a cone in $\Sigma$ is also a cone contained in $\Sigma$. 
    \item The intersection of two cones in $\Sigma$ is a face of each of the two cones.
\end{enumerate}
\end{defn}

Denote by $\Sigma^i \subseteq \Sigma$ the subset of cones of dimension $i$. We call a fan $\Sigma$ in $\R^n$ \textit{finite} if it consists of a finite number of cones and \textit{complete} if $\bigcup_{\sigma \in \Sigma} \sigma = \R^n$. We may view fans in two different ways. Firstly, fans naturally have the structure of a poset $(\Sigma, \subseteq)$ ordered by inclusion, which we view as a category whose objects are cones and which has a unique morphism $\sigma \to \tau$ whenever $\sigma \subseteq \tau$. The \textit{rank} $\rank(\Sigma)$ of finite and complete fan in $\R^n$ is the maximal dimension of any maximal cone with respect to inclusion, hence $n$. We use the poset category of the fan as a basis for the construction of the category of the partitioned fan in \cref{defn:thecategory}. For a thorough introduction to fans in the context of toric geometry we refer to \cite{Fulton1993}. \\

On the other hand we can view fans as simplicial complexes, which are finite sets $\Delta^0$ together with a collection $\Delta$ of subsets of $\Delta^0$ such that if $X \in \Delta$ and $Y \subseteq X$, then $Y \in \Delta$. Elements $v \in \Delta^0$ such that $\{ v \} \in \Delta$ are called \textit{vertices} and subsets consisting of vertices are called \textit{faces} or more specifically \textit{k-simplices} if they consist of exactly $k+1$ vertices. A simplicial fan $\Sigma$ in $\R^n$ defines a simplicial complex $\Delta(\Sigma)$ whose vertices are the dimension 1 cones $\Sigma^1$ and whose simplices are sets of vertices which span a cone of the fan. When a fan $\Sigma$ in $\R^n$ is simplicial, finite and complete, the \textit{geometric realisation} of $\Delta(\Sigma)$ is a \textit{simplicial sphere}, a triangulation of the $(n-1)$-sphere, obtained by intersecting the unit $(n-1)$-sphere with the cones of $\Sigma$. However, we remark that this geometric realisation is not necessarily a polytope \cite[Ex. 7.5]{ZieglerLectures}. \\

An arbitrary category $\Ccal$ defines a topological space, called the \textit{classifying space} $\Bcal \Ccal$. This space is the geometric realisation of a simplicial set known as the \textit{simplicial nerve} of the category. The 0-simplices correspond to the objects of $\Ccal$ and the $k$-simplices correspond to the chains of composable non-identity morphisms $(X_0 \xrightarrow[]{f_1} X_1 \xrightarrow[]{f_2} \dots \xrightarrow[]{f_k} X_k)$ in $\Ccal$. In \cref{sec:CWconstr} we view the fan as a simplicial complex and build the classifying space of the category of a partitioned fan from it. For this purpose we need the following two constructions for simplicial complexes:
\begin{enumerate}
    \item The \textit{link} $\link_\Delta(\sigma)$  of a simplex $\sigma \in \Delta$ is the simplicial subcomplex of $\Delta$ given by
    \[ \link_\Delta(\sigma) \coloneqq \{ \tau \in \Delta: \sigma \cap \tau = \emptyset \text{ and } \sigma \cup \tau \in \Delta\}.\]
    \item The \textit{join} $\Delta_1 * \Delta_2$ of two simplicial complexes $\Delta_1, \Delta_2$ has vertex set $\Delta_1^0 \cup \Delta_2^0$ and simplices given by
    \[\Delta_1 * \Delta_2 \coloneqq \{\sigma \in \Delta_1^0 \cup \Delta_2^0 : \sigma \cap \Delta_1^0 \in \Delta_1 \text{ and } \sigma \cap \Delta_2^0 \in \Delta_2\}. \]
   The join with a simplicial complex consisting of a single vertex is called the \textit{(topological) cone} over a simplicial complex.
\end{enumerate}

\section{The category of a partitioned fan} \label{sec:definition}

We now generalise the geometric construction of the $\tau$-cluster morphism category \cite{STTW2023} to a simplicial fan. Cones which are identified in the construction of \cite{STTW2023} ``have the same relative fan structure around them'' in the following sense: The collection of cones containing a cone $\sigma$ is denoted by $\starr(\sigma) \coloneqq \{ \tau \in \Sigma: \sigma \subseteq \tau\}$. Let $\pi_{\sigma}: \R^n \to \spann\{ \sigma\}^\perp$ be the projection onto its orthogonal complement. For each cone $\sigma \in \R^n$, this defines another fan $\pi_{\sigma}( \starr (\sigma))$. Then two cones $\sigma_1, \sigma_2 \in \R^n$ which get identified in the construction of \cite{STTW2023} satisfy $\spann\{ \sigma_1\}^\perp = \spann\{ \sigma_2\}^\perp$ and $\pi_{\sigma_1}(\starr(\sigma_1)) = \pi_{\sigma_2}(\starr (\sigma_2))$. Importantly, not all pairs of cones sharing these properties are identified. However, when generalising from the $g$-fan of a finite-dimensional algebra to an arbitrary simplicial fan, the information of which cones to identify is lost. Therefore, let $\Sigma$ be a fan in $\R^n$ and consider for each cone $\sigma_1 \in \Sigma$ the collection of \textit{potential identifications}
\[\Ecal_{\sigma_1} \coloneqq \{ \sigma_2 \in \Sigma : \spann\{ \sigma_1 \}^\perp = \spann\{ \sigma_2 \}^\perp \text{ and } \pi_{\sigma_1}(\starr (\sigma_1)) = \pi_{\sigma_2} (\starr (\sigma_2)) \}. \]

It is clear that this is an equivalence relation and therefore $\Ecal_{\sigma_1} = \Ecal_{\sigma_2}$ for any two cones sharing these properties. Now the set of potential identifications may be partitioned into sets of \textit{actual identifications}. Recall, that a \textit{partition} of a set $X$ is a set $P$ of non-empty  pairwise-disjoint subsets, called \textit{blocks}, of $X$ whose union is $X$. 
In other words, we split each $\Ecal_\sigma$ into blocks $\Ecal_{\sigma}^1, \dots, \Ecal_{\sigma}^{m_\sigma}$ for some $1 \leq m_\sigma \leq |\Ecal_\sigma|$ such that these coincide for all representatives of $\sigma \in \Ecal_{\sigma}$. This induces a partition $\Pfrak$ of the fan $\Sigma$ and we write $\sigma_1 \sim \sigma_2$ whenever $\sigma_1, \sigma_2 \in \Ecal_{\sigma}^k$ for some $1 \leq k \leq m_\sigma$ and $\sigma \in \Sigma$. 

\begin{defn} \label{defn:admissiblepartition}
    A partition $\Pfrak$ of $\Sigma$ as described above is called \textit{admissible} if whenever $\sigma_1 \sim \sigma_2$ are such that $\pi_{\sigma_1}(\tau_1) = \pi_{\sigma_2}(\tau_2)$ for some $\tau_1 \in \starr(\sigma_1)$ and $\tau_2 \in \starr(\sigma_2)$, then $\tau_1 \sim \tau_2$. A \textit{partitioned fan} is a pair $(\Sigma, \Pfrak)$ of a simplicial fan $\Sigma$ and an admissible partition $\Pfrak$ of $\Sigma$.
\end{defn}

This says that if two cones $\sigma_1, \sigma_2$ are in the same equivalence class, then any two cones which are ``in the same relative position'' should be identified. This restriction is necessary to make the composition of morphisms in the category of a partitioned fan well-defined. It is not obvious that non-trivial admissible partitions exist, because the cones $\tau_1, \tau_2$ in the definition may not satisfy $\tau_1, \tau_2 \in \Ecal_\tau$ for some $\tau \in \Sigma$, in other words it may not be possible to identify $\tau_1$ and $\tau_2$ with the rules defined above. Before proving that non-trivial admissible partitions always exist, we state the following elementary result from linear algebra for the sake of completeness in our context.

\begin{lem} \label{lem:linearalglem}
    Let $\sigma \subseteq \tau \in \Sigma^n$, then $\pi_{\tau} \circ \pi_{\sigma}= \pi_{\tau}$.
\end{lem}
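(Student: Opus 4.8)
The plan is to reduce the statement to the standard fact that orthogonal projections onto nested subspaces compose in the obvious way, and the only point requiring care is that orthogonal complementation reverses inclusions.

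First I would record the inclusions of the relevant subspaces. Since $\sigma \subseteq \tau$, every generator of $\sigma$ lies in $\tau$, so $\spann\{\sigma\} \subseteq \spann\{\tau\}$, and taking orthogonal complements in $\R^n$ reverses this to $\spann\{\tau\}^\perp \subseteq \spann\{\sigma\}^\perp$. By definition $\pi_\sigma$ is the orthogonal projection onto $\spann\{\sigma\}^\perp$ (with kernel $\spann\{\sigma\}$) and $\pi_\tau$ the orthogonal projection onto $\spann\{\tau\}^\perp$ (with kernel $\spann\{\tau\}$).

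Next I would prove the identity pointwise. Fix $x \in \R^n$ and use the orthogonal decomposition $x = \pi_\sigma(x) + \big(x - \pi_\sigma(x)\big)$, where $\pi_\sigma(x) \in \spann\{\sigma\}^\perp$ and the remainder $x - \pi_\sigma(x) \in \spann\{\sigma\}$. Applying the linear map $\pi_\tau$ gives
\[
\pi_\tau(x) = \pi_\tau\big(\pi_\sigma(x)\big) + \pi_\tau\big(x - \pi_\sigma(x)\big).
\]
Because $x - \pi_\sigma(x) \in \spann\{\sigma\} \subseteq \spann\{\tau\}$ and $\spann\{\tau\}$ is precisely the kernel of $\pi_\tau$, the second term vanishes, yielding $\pi_\tau(x) = (\pi_\tau \circ \pi_\sigma)(x)$. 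As $x$ was arbitrary, $\pi_\tau \circ \pi_\sigma = \pi_\tau$. (Equivalently, one may write $\pi_\sigma = \id - P_\sigma$ and $\pi_\tau = \id - P_\tau$ for the orthogonal projections $P_\sigma, P_\tau$ onto $\spann\{\sigma\}$ and $\spann\{\tau\}$, and expand using $P_\tau P_\sigma = P_\sigma$, which holds since $\spann\{\sigma\} \subseteq \spann\{\tau\}$.)

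There is no substantial obstacle here: the statement is genuinely elementary, and the entire content is the nesting $\spann\{\tau\}^\perp \subseteq \spann\{\sigma\}^\perp$ together with the fact that $\pi_\tau$ annihilates $\spann\{\tau\}$. The only place where an error could creep in is mistaking the direction of the inclusion after passing to orthogonal complements, so I would state that inclusion explicitly before invoking it.
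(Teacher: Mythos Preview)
Your proof is correct and follows essentially the same argument as the paper: decompose an arbitrary vector as $\pi_\sigma(v)$ plus its component $p_\sigma(v) = v - \pi_\sigma(v)$ in $\spann\{\sigma\}$, apply $\pi_\tau$, and observe that the latter term is killed because $\spann\{\sigma\} \subseteq \spann\{\tau\} = \ker \pi_\tau$. Your exposition is slightly more explicit about the inclusion $\spann\{\tau\}^\perp \subseteq \spann\{\sigma\}^\perp$ and the kernel reasoning, but the substance is identical.
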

\begin{proof}
    Every vector $v \in \R^n$ has a unique orthogonal decomposition $v = \pi_{\sigma}(v) + p_{\sigma}(v)$, where $p_{\sigma}: \R^n \to \spann\{ \sigma\}$ is the orthogonal projection onto the subspace of $\R^n$ spanned by $\sigma$. Then
    \[ \pi_{\tau}(v) = \pi_{\tau}(\tau_\sigma(v) + p_{\sigma}(v)) = \pi_{\tau} \circ \pi_{\sigma}(v) + \pi_{\tau}\circ p_{\sigma}(v).\]
    But since $p_{\sigma}(v) \subseteq \spann\{ \tau\}$, it follows that $\pi_{\tau} \circ p_{\sigma}= 0$. 
\end{proof}

\begin{lem} \label{lem:possident}
    Let $\sigma_1 \sim \sigma_2$ in $\Cfrak(\Sigma, \Pfrak)$. If there exist $\tau_1 \in \starr(\sigma_1)$ and $\tau_2 \in \starr(\sigma_2)$ such that $\pi_{\sigma_1}(\tau_1) = \pi_{\sigma_2}(\tau_2)$, then $\tau_1, \tau_2 \in \Ecal_{\tau}$ for some $\tau \in \Sigma$.
\end{lem}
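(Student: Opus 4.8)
The plan is to verify the two defining conditions of membership in a common $\Ecal_\tau$, namely $\spann\{\tau_1\}^\perp = \spann\{\tau_2\}^\perp$ and $\pi_{\tau_1}(\starr(\tau_1)) = \pi_{\tau_2}(\starr(\tau_2))$; establishing both then places $\tau_1,\tau_2 \in \Ecal_{\tau_1}$, which is the assertion. The starting observation, which simplifies everything, is that an orthogonal projection is determined by its image subspace. Since $\sigma_1 \sim \sigma_2$ forces $\spann\{\sigma_1\}^\perp = \spann\{\sigma_2\}^\perp$, the two maps $\pi_{\sigma_1}$ and $\pi_{\sigma_2}$ coincide; write $\pi$ for this common projection, so that $\ker \pi = \spann\{\sigma_1\} = \spann\{\sigma_2\}$ and the hypothesis reads simply $\pi(\tau_1) = \pi(\tau_2)$.

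First I would establish the span equality. Because $\sigma_i \subseteq \tau_i$, we have $\ker\pi = \spann\{\sigma_i\} \subseteq \spann\{\tau_i\}$, and for any subspace $V$ containing $\ker\pi$ one checks directly that $V = \pi^{-1}(\pi(V))$. Applying this to $V = \spann\{\tau_i\}$ and using $\pi(\spann\{\tau_i\}) = \spann\{\pi(\tau_i)\}$ gives
\[ \spann\{\tau_i\} = \pi^{-1}\big(\spann\{\pi(\tau_i)\}\big). \]
Since $\pi(\tau_1) = \pi(\tau_2)$, the right-hand sides agree for $i=1,2$, so $\spann\{\tau_1\} = \spann\{\tau_2\}$ and hence $\spann\{\tau_1\}^\perp = \spann\{\tau_2\}^\perp$. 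This is the first condition, and it also shows that $\pi_{\tau_1} = \pi_{\tau_2} =: \rho$.

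For the second condition I would pass to the local (star) fan. Projection by $\pi$ restricts to a bijection, indeed a face-poset isomorphism, from $\starr(\sigma_i)$ onto the fan $\pi(\starr(\sigma_i))$ living in $\spann\{\sigma_i\}^\perp$; under it the cones containing $\tau_i$ correspond precisely to the cones of this fan containing $\pi(\tau_i)$. As $\pi(\starr(\sigma_1)) = \pi(\starr(\sigma_2))$ and $\pi(\tau_1) = \pi(\tau_2)$, this yields $\pi(\starr(\tau_1)) = \pi(\starr(\tau_2))$. Finally, the computation in the proof of \cref{lem:linearalglem} applies verbatim to $\sigma_i \subseteq \tau_i$ and gives $\pi_{\tau_i} \circ \pi_{\sigma_i} = \pi_{\tau_i}$, that is $\rho \circ \pi = \rho$; applying $\rho$ cone-wise therefore turns the previous equality into
\[ \rho(\starr(\tau_1)) = \rho\big(\pi(\starr(\tau_1))\big) = \rho\big(\pi(\starr(\tau_2))\big) = \rho(\starr(\tau_2)), \]
which is exactly $\pi_{\tau_1}(\starr(\tau_1)) = \pi_{\tau_2}(\starr(\tau_2))$. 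Together with the span equality this places $\tau_1, \tau_2 \in \Ecal_{\tau_1}$.

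The main obstacle is the star-fan step: one must know that projecting the star of a cone along its span yields a genuine fan in the orthogonal complement and that this assignment is an isomorphism of face posets, so that the relation $\pi(\tau_1)\subseteq \pi(\kappa)$ can be lifted back to $\tau_1 \subseteq \kappa$. This is the standard quotient-fan construction of toric geometry, but since the surrounding framework works with abstract simplicial fans I would either cite it explicitly (e.g. from \cite{Fulton1993}) or prove injectivity and order-reflection by hand, using that $\sigma_i$ is a face of every cone in $\starr(\sigma_i)$ and that the intersection of two cones in a fan is a common face. Everything else is linear algebra that reduces, via the single identification $\pi_{\sigma_1}=\pi_{\sigma_2}$, to elementary manipulations with kernels and preimages.
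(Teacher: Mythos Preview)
Your proof is correct and follows essentially the same route as the paper: both arguments verify the two defining conditions of $\Ecal_\tau$ directly, use that $\pi_{\sigma_1}=\pi_{\sigma_2}$, invoke \cref{lem:linearalglem} for the composition $\pi_{\tau_i}\circ\pi_{\sigma_i}=\pi_{\tau_i}$, and pass through the identity $\pi_{\sigma_i}(\starr_\Sigma(\tau_i))=\starr_{\pi_{\sigma_i}(\starr(\sigma_i))}(\pi_{\sigma_i}(\tau_i))$ (your ``poset-isomorphism'' step). The only cosmetic difference is that the paper obtains $\spann\{\tau_1\}=\spann\{\tau_2\}$ by exhibiting a common basis $B_1\cup B_2$ with $B_1$ a basis of $\pi(\tau_1)=\pi(\tau_2)$ and $B_2$ a basis of $\spann\{\sigma_i\}$, whereas you use the preimage identity $V=\pi^{-1}(\pi(V))$ for subspaces containing $\ker\pi$; and the paper simply asserts the star-fan correspondence without further comment rather than flagging it as an obstacle.
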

\begin{proof}
    By definition, we need to show that in this case $\spann\{ \tau_1\}^\perp = \spann\{ \tau_2\}^\perp$ and $\pi_{\tau_1}(\starr (\tau_1)) = \pi_{\tau_2}(\starr (\tau_2))$, which would imply $\tau_1, \tau_2 \in \Ecal_{\tau_1}= \Ecal_{\tau_2}$. Because $\pi_{\sigma_1}(\tau_1) \cap \sigma_1 = \{ 0 \}$, we may take any basis $B_1$ of $\pi_{\sigma_1}(\tau_1) = \pi_{\sigma_2}(\tau_2)$ and any basis $B_2$ of $\spann(\sigma_1) = \spann(\sigma_2)$, then $B_1 \cup B_2$ is a basis of $\spann\{ \tau_1\}$ and $\spann\{ \tau_2\}$, which immediately implies $\spann\{ \tau_1 \}^\perp = \spann \{ \tau_2\}^\perp$, as required. To show that $\pi_{\tau_1}(\starr (\tau_1)) = \pi_{\tau_2}(\starr (\tau_2))$, we use \cref{lem:linearalglem}. Since $\sigma_i \subseteq \tau_i$ for $i=1,2$, we have
    \[ \pi_{\tau_1}(\starr(\tau_1)) = \pi_{\tau_1}(\pi_{\sigma_1}(\starr(\tau_1))) = \pi_{\tau_2}(\pi_{\sigma_2}(\starr(\tau_2))) = \pi_{\tau_2}(\starr(\tau_2)), \]
    because $\pi_{\tau_1} = \pi_{\tau_2}$ and since $\sigma_1 \sim \sigma_2$ and $\starr_{\pi_{\sigma_i}(\starr(\sigma_i))}(\pi_{\sigma_i}(\tau_i)) = \pi_{\sigma_i}(\starr_\Sigma (\tau_i))$ for $i=1,2$ imply $\pi_{\sigma_1}(\starr(\tau_1)) = \pi_{\sigma_2}(\starr(\tau_2))$. Therefore, $\Ecal_{\tau_1} = \Ecal_{\tau_2}$.
\end{proof}

Therefore, admissible partitions exist and throughout this paper, let $\Pfrak$ denote an admissible partition unless stated otherwise. We are ready to define our central object.

\begin{defn} \label{defn:thecategory}
    Given a partitioned fan ($\Sigma, \Pfrak)$, define the \textit{category of the partitioned fan} $\Cfrak(\Sigma, \Pfrak)$ as follows: 
    \begin{enumerate}
        \item The objects of $\Cfrak(\Sigma, \Pfrak)$ are equivalence classes $[\sigma]$ of the partition $\Pfrak$ of $\Sigma$.
        \item The set of morphisms $\Hom_{\Cfrak(\Sigma, \Pfrak)}([\sigma], [\tau])$ consists of equivalence classes of objects in 
        \[ \bigcup_{\sigma_i \in [\sigma], \tau_j \in [\tau]} \Hom_\Sigma(\sigma_i, \tau_j)\]
        under the equivalence relation where $f_{\sigma_1 \tau_1} \sim f_{\sigma_2 \tau_2}$ if and only if $\pi_{\sigma_1}(\tau_1) = \pi_{\sigma_2}(\tau_2)$.
        \item Given $[f_{\sigma \kappa}] \in \Hom_{\Cfrak(\Sigma, \Pfrak)}([\sigma], [\kappa])$ and $[f_{\kappa \tau}] \in \Hom_{\Cfrak(\Sigma, \Pfrak)}([\kappa], [\tau])$, their composition is defined as $[f_{\kappa \tau}] \circ [f_{\sigma \kappa}]= [f_{\sigma \tau}]$.
    \end{enumerate}
\end{defn}

Because there exists a unique morphism $f_{\sigma \tau}$ in the poset category $\Sigma$ whenever $\sigma \subseteq \tau$, we also need to identify any two compositions of morphisms $[f_{\kappa_1 \tau}] \circ [f_{\sigma \kappa_1}]$ and $[f_{\kappa_2 \tau}] \circ [f_{\sigma \kappa_2}]$ which map to the same representative of an equivalence class. Similar to \cite[Rem. 3.5]{STTW2023} it is not clear that the composition of morphisms in $\Cfrak(\Sigma, \Pfrak)$ is well-defined for two reasons:
    \begin{enumerate}
        \item In order to define the composition of two non-zero morphisms $[f_{\sigma_1 \kappa_1}]$ and $[f_{\kappa_2 \tau_2}]$ where $\kappa_1 \sim \kappa_2$, we need to find a morphism $f_{\kappa_1 \tau_1} \sim f_{\kappa_2 \tau_2}$ so that $[f_{\kappa_2 \tau_2}] \circ [f_{\sigma_1 \kappa_1}] = [f_{\sigma_1 \tau_1}]$.
        \item Given morphisms $f_{\sigma_1 \kappa_1} \sim f_{\sigma_2 \kappa_2}$ and $f_{\kappa_1 \tau_1} \sim f_{\kappa_2 \tau_2}$, we need to show $ [f_{\kappa_1 \tau_1}] \circ [f_{\sigma_1 \kappa_1}] = [f_{\kappa_2 \tau_2}] \circ [f_{\sigma_2 \kappa_2}]$.
    \end{enumerate}

Now we resolve the other two problems in an analogous way to \cite[Lem. 3.9, Lem. 3.10]{STTW2023}.

\begin{lem}\label{lem:morphcomp1}
    For any two morphisms $[f_{\sigma_1 \kappa_1}]$ and $[f_{\kappa_2 \tau_2}]$ in $\Cfrak(\Sigma, \Pfrak)$ with $\kappa_1 \sim \kappa_2$ there exists a morphism $f_{\kappa_1 \tau_1} \sim f_{\kappa_2 \tau_2}$ with $\tau_1 \sim \tau_2$.
\end{lem}
\begin{proof}
    Since $\kappa_1 \sim \kappa_2$ it follows by definition that $\pi_{\kappa_1}(\starr (\kappa_1)) = \pi_{\kappa_2}(\starr (\kappa_2))$. Thus, for each $\tau_2 \in \starr(\kappa_2)$ there exists $\tau_1 \in \starr(\kappa_1)$ such that $\pi_{\kappa_1}(\tau_1)= \pi_{\kappa_2}(\tau_2)$. Since the partition is admissible, it follows that therefore $\tau_1 \sim \tau_2$. Thus there exists a morphism $f_{\kappa_1 \tau_1}$ which satisfies $f_{\kappa_1 \tau_1} \sim f_{\kappa_2 \tau_2}$.
\end{proof}

The following is a special case of \cref{lem:morphcomp1} where $\sigma_1 = \kappa_1$, which will be used throughout this paper.
\begin{cor}\label{cor:morphstartingpoint}
    Let $\Pfrak$ be an admissible partition and let $f_{\sigma_2 \tau_2}$ be a morphism of the poset category $\Sigma$. If $\sigma_1 \sim \sigma_2$ in $\Pfrak$, then there exists a morphism $f_{\sigma_1 \tau_1}$ in the poset category such that $f_{\sigma_1 \tau_1} \sim f_{\sigma_2 \tau_2}$ in $\Cfrak(\Sigma, \Pfrak)$.
\end{cor}

\begin{lem}
    Let $f_{\sigma_1 \kappa_1} \sim f_{\sigma_2 \kappa_2}$ and $f_{\kappa_1 \tau_1} \sim f_{\kappa_2 \tau_2}$ be two pairs of identified morphisms in $\Cfrak(\Sigma, \Pfrak)$. Then $[f_{\sigma_1 \kappa_1}] \circ [f_{\kappa_1 \tau_1}] = [f_{\kappa_2 \tau_2}] \circ [f_{\sigma_2 \kappa_2}]$.
\end{lem}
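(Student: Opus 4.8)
The plan is to reduce the claimed equality of composites to a single identity of cones, namely $\pi_{\sigma_1}(\tau_1) = \pi_{\sigma_2}(\tau_2)$. Indeed, by the composition rule of \cref{defn:thecategory} the two composites reduce to $[f_{\sigma_1 \tau_1}]$ and $[f_{\sigma_2 \tau_2}]$, and the morphism equivalence relation declares these equal exactly when $\pi_{\sigma_1}(\tau_1) = \pi_{\sigma_2}(\tau_2)$. Before proving that identity I would first record that the equation is even well-posed: applying admissibility of $\Pfrak$ to $\kappa_1 \sim \kappa_2$ together with the hypothesis $\pi_{\kappa_1}(\tau_1) = \pi_{\kappa_2}(\tau_2)$ yields $\tau_1 \sim \tau_2$, so both composites genuinely lie in the same $\Hom$-set. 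It then remains to establish the cone identity, which itself will not require admissibility.

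Next I would translate the hypotheses. Since the relevant maps are morphisms we have $\sigma_i \subseteq \kappa_i \subseteq \tau_i$; since the two pairs are identified morphisms between fixed objects we have $\sigma_1 \sim \sigma_2$ and $\kappa_1 \sim \kappa_2$, with $\pi_{\sigma_1}(\kappa_1) = \pi_{\sigma_2}(\kappa_2) =: \bar\kappa$ and $\pi_{\kappa_1}(\tau_1) = \pi_{\kappa_2}(\tau_2)$. From $\sigma_1 \sim \sigma_2$ we obtain a common ambient space $V := \spann\{\sigma_1\}^\perp = \spann\{\sigma_2\}^\perp$ and a common fan $\Sigma_\sigma := \pi_{\sigma_1}(\starr(\sigma_1)) = \pi_{\sigma_2}(\starr(\sigma_2)) \subseteq V$. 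Setting $A_i := \pi_{\sigma_i}(\tau_i)$, both $A_1$ and $A_2$ are cones of $\Sigma_\sigma$ containing $\bar\kappa = \pi_{\sigma_i}(\kappa_i) \subseteq \pi_{\sigma_i}(\tau_i)$, and the goal becomes precisely $A_1 = A_2$.

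The key computation is to show that $A_1$ and $A_2$ have the same image under the projection that quotients out $\bar\kappa$ inside $V$. By \cref{lem:linearalglem} applied to $\sigma_i \subseteq \kappa_i$ (its proof needs only the containment), $\pi_{\kappa_i}(A_i) = \pi_{\kappa_i}(\pi_{\sigma_i}(\tau_i)) = \pi_{\kappa_i}(\tau_i)$, so the hypothesis gives $\pi_{\kappa_1}(A_1) = \pi_{\kappa_2}(A_2)$. Now $\kappa_1 \sim \kappa_2$ forces $\spann\{\kappa_1\}^\perp = \spann\{\kappa_2\}^\perp$, and the orthogonal decomposition $\spann\{\kappa_i\} = \spann\{\sigma_i\} \oplus \spann\{\bar\kappa\}$ shows that on $V$ both $\pi_{\kappa_1}$ and $\pi_{\kappa_2}$ restrict to the same map, namely the orthogonal projection $\pi'$ of $V$ onto $\spann\{\bar\kappa\}^\perp \cap V$. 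Since $A_1, A_2 \subseteq V$, this yields $\pi'(A_1) = \pi'(A_2)$.

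The main obstacle, and the heart of the argument, is to upgrade this equality of projections to equality of the cones, i.e. to show that projecting out $\bar\kappa$ is injective on the cones of $\Sigma_\sigma$ containing $\bar\kappa$. This is false for arbitrary pairs of cones but holds inside a single fan, and I would prove it directly: if $A, B$ are cones of a fan each containing a cone $\rho$ with $\pi_\rho(A) = \pi_\rho(B)$, then given $a \in A$ choose $b \in B$ with $\pi_\rho(a) = \pi_\rho(b)$, so $a - b \in \spann\{\rho\} = \rho - \rho$, say $a - b = r - r'$ with $r, r' \in \rho$; then $a + r' = b + r \in A \cap B$, and since the fan axiom makes $A \cap B$ a face of $A$, the extremality of faces (if $x + y$ lies in a face $F$ of $A$ with $x, y \in A$ then $x, y \in F$) forces $a \in A \cap B \subseteq B$, whence $A \subseteq B$ and, by symmetry, $A = B$. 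Applying this with $\rho = \bar\kappa$, $A = A_1$, $B = A_2$ inside $\Sigma_\sigma$ gives $A_1 = A_2$ and hence $\pi_{\sigma_1}(\tau_1) = \pi_{\sigma_2}(\tau_2)$, which is the desired identity. I expect essentially all the content to sit in this injectivity step; the remaining ingredients are the orthogonal-projection bookkeeping and the already-established \cref{lem:linearalglem}.
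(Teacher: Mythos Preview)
Your argument is correct and reaches the same target identity $\pi_{\sigma_1}(\tau_1)=\pi_{\sigma_2}(\tau_2)$ as the paper, but the route is genuinely different. The paper works pointwise: it picks $w_i\in\kappa_i$ with $\pi_{\sigma_1}(w_1)=\pi_{\sigma_2}(w_2)$ and $v\in\pi_{\kappa_1}(\tau_1)=\pi_{\kappa_2}(\tau_2)$, observes that $w_i+\delta v\in\tau_i$ for small $\delta$, and computes that $\pi_{\sigma_1}(w_1+\delta v)=\pi_{\sigma_2}(w_2+\delta v)$, producing a common (relative interior) point of $\pi_{\sigma_1}(\tau_1)$ and $\pi_{\sigma_2}(\tau_2)$; equality then follows from the fan structure of $\pi_\sigma(\starr\sigma)$. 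You instead first pass everything through \cref{lem:linearalglem} to obtain $\pi'(A_1)=\pi'(A_2)$ for the further projection $\pi'$ killing $\bar\kappa$, and then prove an abstract injectivity lemma: two cones of a fan containing a common subcone $\rho$ and having the same $\pi_\rho$-image must coincide, via the extremality of the face $A\cap B$. Your approach isolates exactly where the fan axiom enters and gives a reusable lemma; the paper's approach is quicker but leaves the ``non-trivial intersection $\Rightarrow$ equality'' step implicit (it is really using that the constructed point lies in both relative interiors).
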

\begin{proof}
    We know by assumption that $\pi_{\sigma_1}(\kappa_1) = \pi_{\sigma_2}(\kappa_2)$ and $\pi_{\kappa_1}(\tau_1) = \pi_{\kappa_2}(\tau_2)$. Now take $w_1 \in \kappa_1$ and $w_2 \in \kappa_2$ such that $\pi_{\sigma_1}(w_1) = \pi_{\sigma_2}(w_2)$ and take $v \in \pi_{\kappa_1}(\tau_1)= \pi_{\kappa_2}(\tau_2)$. Then, similar to the proof of \cref{lem:possident}, a basis of $\spann \{ \tau_1\}$ consists of the union of a basis of $\kappa_1$ and a basis of $\pi_{\kappa_1}(\tau_1)$. Hence there exists a scalar $\epsilon_1 > 0$ such that $w_1 + \epsilon_1 v \in \tau_1$. Similarly there exists $\epsilon_2 > 0$ such that $w_2 + \epsilon_2 v \in  \tau_2$. Set $\delta = \min \{ \epsilon_1, \epsilon_2\}$, then $w_i + \delta v \in \tau_i$ for $i=1,2$. Then
    \begin{align*}
        \pi_{\sigma_1}(w_1 + \delta v) = \pi_{\sigma_1}(w_1) + \delta \pi_{\sigma_1}(v) = \pi_{\sigma_2}(w_2) + \delta \pi_{\sigma_2}(v) = \pi_{\sigma_2}(w_2 + \delta v).
    \end{align*}
    Hence $\pi_{\sigma_1}(\tau_1) \cap \pi_{\sigma_2}(\tau_2) \neq \{0\}$. Because of the equivalence of fans $\pi_{\sigma_1}(\starr \sigma_1) = \pi_{\sigma_2}(\starr \sigma_2)$, the intersection is either zero or the two projections coincide. This implies $\pi_{\sigma_1}(\tau_1) = \pi_{\sigma_2}(\tau_2)$ and thus $f_{\sigma_1 \tau_1} \sim f_{\sigma_2 \tau_2}$.
\end{proof}

As a consequence we have the following.

\begin{prop}
    The category of \cref{defn:thecategory} is well-defined.
\end{prop}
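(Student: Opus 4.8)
The plan is to verify the three requirements of a category in turn, drawing on the lemmas immediately preceding this proposition. For the statement to hold, I need to establish that composition is well-defined, that it is associative, and that identity morphisms exist. The two lemmas that precede this proposition have already done the two most delicate parts of the work, so the proof should essentially be an assembly of those pieces together with the standard checks.

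First I would address well-definedness of composition. Given composable morphisms $[f_{\sigma \kappa_1}]$ and $[f_{\kappa_2 \tau}]$ with $\kappa_1 \sim \kappa_2$, \cref{lem:morphcomp1} guarantees that a representative $f_{\kappa_1 \tau_1} \sim f_{\kappa_2 \tau_2}$ with $\tau_1 \sim \tau_2$ exists, so the composite $[f_{\kappa_2\tau_2}]\circ[f_{\sigma\kappa_1}] = [f_{\sigma\tau_1}]$ can actually be formed; this resolves issue (1) in the excerpt's list. Then the second (unnumbered) lemma resolves issue (2): it shows the resulting equivalence class $[f_{\sigma\tau}]$ does not depend on which representatives $f_{\sigma_1\kappa_1} \sim f_{\sigma_2\kappa_2}$ and $f_{\kappa_1\tau_1} \sim f_{\kappa_2\tau_2}$ were chosen, since in every case $\pi_{\sigma_1}(\tau_1) = \pi_{\sigma_2}(\tau_2)$ and hence $f_{\sigma_1\tau_1}\sim f_{\sigma_2\tau_2}$. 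I would state explicitly that this is exactly the content needed, so composition descends to a well-defined operation on equivalence classes.

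Next I would verify associativity and the existence of identities. For identities, the identity on an object $[\sigma]$ is the class $[f_{\sigma\sigma}]$ of the identity morphism of the poset category $(\Sigma,\subseteq)$, and since $\pi_\sigma(\sigma) = \{0\}$ this class acts neutrally: composing with any $[f_{\sigma\tau}]$ returns $[f_{\sigma\tau}]$ because the underlying poset morphisms compose as identities. For associativity, given three composable morphisms I would note that in the underlying poset category $(\Sigma,\subseteq)$ composition is associative (there is a unique morphism $f_{\sigma\tau}$ whenever $\sigma\subseteq\tau$, so composition of poset morphisms is trivially associative), and the well-definedness established above means that passing to equivalence classes respects this; concretely, both bracketings produce the class determined by a chain $\sigma\subseteq\kappa\subseteq\lambda\subseteq\tau$ of poset morphisms, which collapses to the same $[f_{\sigma\tau}]$.

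The main obstacle is not really in this proposition itself but in the groundwork already laid: the genuinely hard part was showing the existence of suitable representatives and independence of choices, which is precisely what \cref{lem:possident}, \cref{lem:morphcomp1}, and the subsequent lemma accomplish using admissibility of the partition. With those in hand, the remaining verification is routine. I would therefore keep the proof short, phrasing it as a direct appeal to the preceding lemmas for well-definedness of composition, together with brief remarks that identities are given by the classes of poset identities and that associativity is inherited from the associativity of composition in the poset category $(\Sigma,\subseteq)$.
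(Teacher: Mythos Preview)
Your proposal is correct and matches the paper's approach exactly: the paper states this proposition simply as a consequence of the preceding lemmas without writing out a proof, so your assembly of \cref{lem:morphcomp1} and the subsequent lemma for well-definedness of composition, together with the routine checks for identities and associativity inherited from the poset category, is precisely what is intended.
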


\begin{exmp}
    Consider the complete fan in \cref{fig:hirzebruch} giving rise to a common toric variety called the \textit{Hirzebruch surface} $\mathbb{F}_{a}$ where $\sigma_3 = \cone\{ (-1,a)\}$ and $a$ is a positive integer. The possible identifications for this fan are
    \[ \Ecal_{0} = \{0\}, \quad \Ecal_{\sigma_1}= \{ \sigma_1\}, \quad \Ecal_{\sigma_2} = \{ \sigma_2, \sigma_4\}, \quad \Ecal_{\sigma_3}= \{ \sigma_3\}, \quad \Ecal_{\tau_1} = \{ \tau_1, \tau_2, \tau_3, \tau_4\}.\]
    
    The trivial case of not making any identifications gives the standard poset category $\Sigma = \Cfrak(\Sigma, \Pfrak_{\text{poset}})$ whose classifying space is the disk, or more specifically a square. There are only two cones of dimension 1 we may identify, namely $\sigma_2$ and $\sigma_4$ since their linear spans coincide. Moreover, they are such that $\pi_{\sigma_2}(\tau_2) = \pi_{\sigma_4}(\tau_1)$ and $\pi_{\sigma_2}(\tau_3) = \pi_{\sigma_4}(\tau_4)$ and therefore to make the partition admissible, we must identify the cones in the following way:
    \[\Pfrak_{1} = \{\{0\}, \{ \sigma_1\}, \{ \sigma_2, \sigma_4\}, \{ \sigma_3\}, \{ \tau_1, \tau_2\}, \{\tau_3,  \tau_4\}\}.\]
    
    The category is displayed in \cref{fig:ex1identifications}, where the identified morphisms are given the same label and colour. The classifying space is a cylinder and is obtained from the square by identifying the opposite ``sides'' $\tau_4 \xleftarrow{} \sigma_4 \xrightarrow{} \tau_1$ and $\tau_3 \xleftarrow{} \sigma_2 \xrightarrow{} \tau_2$, see \cref{subsec:CW} for more details. It is also possible to additionally identify all rank 2 cones whose classifying space would join the two ends of the cylinder in one point. If the rank 1 cones are not identified, arbitrary identifications may be made among the rank 2 cones, giving rise to the topological spaces coming from a square with any combination of vertices identified. Recall in \cref{fig:ex1identifications} that any two compositions having the same target are identified.

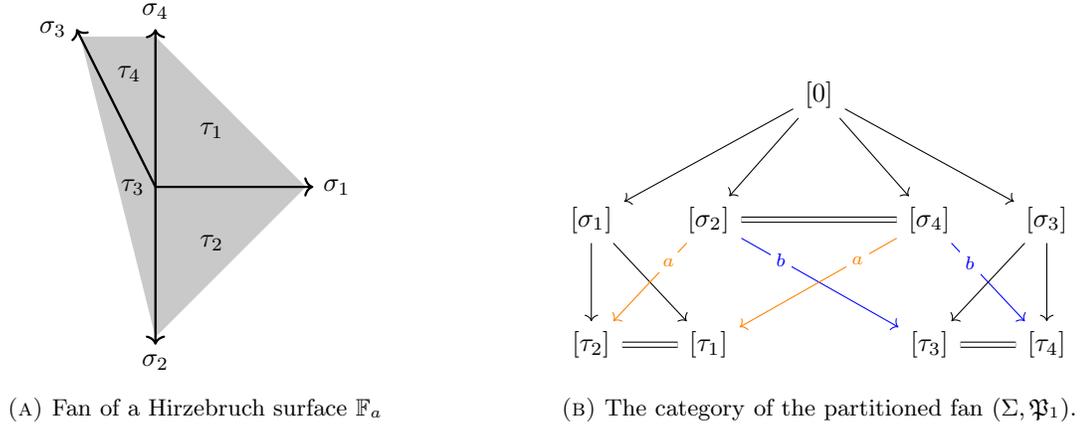
\begin{figure}[ht!]
\centering
\begin{subfigure}{.5\textwidth}
     \centering
        \begin{tikzpicture}
          \draw[fill=gray!42] (0,2) -- (0,0) -- (2,0) (0.75,0.75) node{$\tau_1$};
          \draw[fill=gray!42] (0,-2) -- (0,0) -- (2,0) (0.75,-0.75)  node{$\tau_2$};
          \draw[fill=gray!42] (-1,2) -- (0,0) -- (0,2) (-0.35,1.5) node{$\tau_4$};
          \draw[fill=gray!42] (-1,2) -- (0,0) -- (0,-2) (-0.3, 0) node{$\tau_3$};
          \draw[line width=0.3mm,->] (0, 0) -- (0, 2.1) node[above] {$\sigma_4$};
          \draw[line width=0.3mm,->] (0, 0) -- (0, -2.1) node[below] {$\sigma_2$};
          \draw[line width=0.3mm,->] (0,0 ) -- (2.1,0) node[right] {$\sigma_1$};
          \draw[line width=0.3mm,->] (0,0 ) -- (-1.05,2.1) node[left] {$\sigma_3$};
        \end{tikzpicture}
      \caption{Fan of a Hirzebruch surface $\mathbb{F}_a$}
      \label{fig:hirzebruch}
\end{subfigure}%
\begin{subfigure}{.5\textwidth}
  \centering
  \[ \begin{tikzcd}[ampersand replacement=\&,column sep = 2em, row sep=3em]
\& \& {[}0{]} \arrow[ld] \arrow[lld] \arrow[rd] \arrow[rrd] \\
{[}\sigma_1{]}  \arrow[d] \arrow[rd] \& {[}\sigma_2{]} \arrow[rr, equal] \arrow[ld, orange, "a" {pos=0.25, description}] \arrow[rrd, blue, "b" {pos=0.25, description}] \&\& {[}\sigma_4{]} \arrow[lld, orange, "a" {pos=0.25, description}] \arrow[rd, blue, "b" {pos=0.25, description}]\& {[}\sigma_3{]} \arrow[ld] \arrow[d] \\
{[}\tau_{2}{]} \arrow[r, equal] \& {[}\tau_{1}{]} \& \& {[}\tau_{3}{]} \arrow[r, equal] \& {[}\tau_{4}{]} 
\end{tikzcd}\]
\caption{The category of the partitioned fan $(\Sigma, \Pfrak_{1})$.}
\label{fig:ex1identifications}
\end{subfigure}%
\caption{A fan and an associated category}
\end{figure}

\end{exmp}

\subsection{Cubical categories}

Gromov \cite{Gromov1987} observed that for cube complexes, being \textit{locally} CAT(0) and thus a $K(\pi,1)$ space, is equivalent to a local combinatorial condition. Informally speaking, CAT(0) spaces are topological spaces whose geodesic triangle are ``no fatter than Euclidean triangles'' and locally CAT(0) spaces are those which admit a CAT(0) universal cover. Such spaces are important examples of $K(\pi,1)$ spaces, a class of Eilenberg-MacLane spaces. A connected topological space $X$ is a $K(\pi,1)$ space if it satisfies the following equivalent conditions:
\begin{enumerate}
    \item The homotopy groups of $X$ above degree 1 are all trivial;
    \item The universal cover of $X$ is contractible;
    \item The cohomology of $X$ with arbitrary coefficients is isomorphic to the cohomology of its fundamental group.
\end{enumerate}

Igusa \cite{Igusa2022} categorified this approach and introduced \textit{cubical categories}, whose classifying spaces are cube complexes, where a cube complex is a topological space built from $k$-cubes, similar to a simplicial complex being built from $k$-simplices. In this setting, the conditions of Gromov give a set of sufficient conditions for the classifying space of the category to be a $K(\pi,1)$ (cf. \cite[Prop. 3.4]{Igusa2022}). For representation-finite hereditary algebras, Igusa and Todorov \cite{IgusaTodorov2017} show that the cluster morphism category is a cubical category and make use of the sufficient conditions to show it is a $K(\pi,1)$ for $\pi$ the picture group. Moreover, the $\tau$-cluster morphism category is a cubical category for all algebras whose $g$-fan is finite \cite{HansonIgusa2021}. We remark that their proof actually works for all finite-dimensional algebras. Results about $K(\pi,1)$ spaces have been obtained by Hanson and Igusa for Nakayama algebras \cite{HansonIgusa2021} and in a follow-up paper for Nakayama-like and certain gentle algebras \cite{HansonIgusaPW2SMC}. In this section we show that all categories associated to a partitioned fan are cubical categories, extending results of \cite{HansonIgusa2021}, \cite{Igusa2022} and \cite{IgusaTodorov2017}. For the definition of a cubical category we need the following two categories:
\begin{itemize}
    \item The \textit{standard $k$-cube category} $\Ical^k$ is the name-giving example of a cubical category. It is the poset category on subsets of $\{1, \dots,k\}$ where morphisms are given by inclusion.
    \item For any category $\Ccal$ and any morphism $(A \xrightarrow[]{f} B) \in \Ccal$, the \textit{factorisation category} $\Faq(f)$ is the category whose objects are factorisations $A \xrightarrow[]{g} C \xrightarrow[]{h} B$ such that $h \circ g = f$ and whose morphisms
\[
\begin{tikzcd}[row sep=1.5]
    & C_1 \arrow[rd, "h_1"] \arrow[dd, "\phi", dashed] \\
    A \arrow[ru, "g_1"] \arrow[rd, "g_2", swap] & & B \\
    & C_2 \arrow[ru, "h_2", swap]
\end{tikzcd}\]
are morphisms $\phi: C_1 \to C_2$ such that $\phi \circ g_1 = g_2$ and $h_1 = h_2 \circ \phi$. 
\end{itemize}

Given an object $(A \xrightarrow[]{g} C \xrightarrow[]{h} B)$ in $\Faq(f)$, we call $g$ a \textit{first factor of $f$} if $g$ is irreducible in $\Ccal$ and $h$ a \textit{last factor of $f$} if $h$ is irreducible in $\Ccal$. We are now ready to define cubical categories.

\begin{defn} \label{def:cubical}
    A \textit{cubical category} is a small category $\Ccal$ with the following properties:
    \begin{enumerate}
        \item Every morphism $f: A \to B$ in $\Ccal$ has a \textit{rank} $\rank(f)$ which is a non-negative integer such that $\rank(g \circ f) = \rank(f) + \rank(g)$ for all composable morphisms $f,g \in \Ccal$. 
        \item If $\rank(f)=k$ then there is an isomorphism $\Faq(f) \cong \Ical^k$. 
        \item The forgetful functor $\Faq(f) \to \Ccal$ sending $(A \xrightarrow[]{} C \xrightarrow[]{} B) \mapsto C$ is an embedding. Thus, every morphism of rank $k$ has $k$ distinct first factors and $k$ distinct last factors.
        \item Every morphism of rank $k$ is determined by its $k$ first factors.
        \item Every morphism of rank $k$ is determined by its $k$ last factors.
    \end{enumerate}
\end{defn}

Condition 2 says that in a cubical category, the classifying space of any morphism $f$ is a solid cube, i.e. $\Bcal\Faq(f) = [0,1]^{\rank(f)}$. For the category $\Cfrak(\Sigma, \Pfrak)$ of a partitioned fan, the rank of a morphism $[f_{\sigma \tau}]$ in $\Cfrak(\Sigma, \Pfrak)$ is given by the difference of the dimensions of the cones, in other words,
\[ \rank( [f_{\sigma \tau}]) = \dim \tau - \dim \sigma.\]

This is well-defined since two identified cones $\sigma_1 \sim \sigma_2$ in $\Cfrak(\Sigma, \Pfrak)$ have the same linear span in $\R^n$ by definition and therefore the same dimension. We begin by investigating the finest partition $\Pfrak_{\text{poset}}$ of the fan, in which case the the category $\Cfrak(\Sigma, \Pfrak_{\text{poset}})$ is just the poset category naturally associated with the fan.

\begin{lem} \label{lem:posetcon2}
    Let $\Sigma$ be a simplicial fan. The category $\Cfrak(\Sigma, \Pfrak_{\textnormal{poset}})$ satisfies condition 2. of \cref{def:cubical}.
\end{lem}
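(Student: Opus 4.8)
The plan is to identify the factorisation category $\Faq(f_{\sigma \tau})$ explicitly and recognise it as a Boolean lattice. Since $\Pfrak_{\text{poset}}$ makes every cone its own equivalence class, the category $\Cfrak(\Sigma, \Pfrak_{\text{poset}})$ is precisely the poset category $(\Sigma, \subseteq)$, in which there is a unique morphism between any two comparable cones. First I would observe that because all hom-sets are singletons, an object of $\Faq(f_{\sigma \tau})$ --- a factorisation $\sigma \to \kappa \to \tau$ composing to $f_{\sigma \tau}$ --- is determined by the intermediate cone $\kappa$, and the only constraint is $\sigma \subseteq \kappa \subseteq \tau$. Likewise a morphism $\phi$ between two factorisations exists if and only if the corresponding intermediate cones satisfy $\kappa_1 \subseteq \kappa_2$, and the commutativity conditions $\phi \circ g_1 = g_2$ and $h_1 = h_2 \circ \phi$ hold automatically. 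Hence $\Faq(f_{\sigma \tau})$ is canonically the poset interval $[\sigma, \tau] = \{ \kappa \in \Sigma : \sigma \subseteq \kappa \subseteq \tau \}$ ordered by inclusion.

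Next I would use the fan axioms to describe this interval combinatorially. By \cref{defn:fan}(2), any cone $\kappa \in \Sigma$ with $\kappa \subseteq \tau$ satisfies $\kappa = \kappa \cap \tau$ and is therefore a face of $\tau$; in particular $\sigma$ itself is a face of $\tau$. Conversely, by \cref{defn:fan}(1) every face of $\tau$ lies in $\Sigma$. Thus the interval $[\sigma, \tau]$ is exactly the set of faces of $\tau$ containing $\sigma$. Since $\Sigma$ is simplicial, $\tau = \cone\{ v_1, \dots, v_m\}$ for a linearly independent generating set with $m = \dim \tau$, and its faces are precisely the cones $\cone\{ v_i : i \in S\}$ for subsets $S \subseteq \{1, \dots, m\}$, giving an isomorphism from the face poset of $\tau$ to the Boolean lattice on $\{1, \dots, m\}$. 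As $\sigma$ is a face, it corresponds to a subset $S_0$ with $|S_0| = \dim \sigma$, and the faces containing $\sigma$ correspond exactly to the subsets $S$ with $S_0 \subseteq S$.

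Finally, restricting the Boolean isomorphism to this up-set identifies $[\sigma, \tau]$ with the lattice of subsets of the $(m - |S_0|)$-element set $\{1, \dots, m\} \setminus S_0$. Since $m - |S_0| = \dim \tau - \dim \sigma = \rank(f_{\sigma \tau}) = k$, this is precisely the standard $k$-cube category $\Ical^k$, and the identification is an isomorphism of categories. I expect the only delicate point to be the careful verification that the intermediate cones of the fan coincide with the faces of $\tau$ containing $\sigma$: this is exactly where both fan axioms, together with the fact that containment in a fan forces $\sigma$ to be a genuine face of $\tau$, are used. Everything else reduces to the elementary combinatorics of the face lattice of a simplex.
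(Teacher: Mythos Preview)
Your proposal is correct and follows essentially the same approach as the paper: both identify $\Faq(f_{\sigma\tau})$ with the interval $[\sigma,\tau]$ in the face poset of the simplicial cone $\tau$ and then read off the Boolean lattice $\Ical^k$ from the combinatorics of a simplex. You are slightly more explicit than the paper in invoking the fan axioms to justify that every intermediate cone is a face of $\tau$, but the arguments are otherwise the same.
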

\begin{proof}
    Let $\sigma \subseteq \tau \in \Sigma$ be two cones and consider the morphism $f_{\sigma \tau} \in \Cfrak(\Sigma, \Pfrak_{\text{poset}})$. Assume that $\dim \sigma = k$ and $\dim \tau = \ell$. Then we may express $\tau$ in the following way:
    \[ \tau = \cone \{ v_1, \dots, v_{\ell-k}, \sigma\} \in \Sigma\]
    where $v_1, \dots, v_{\ell-k}\in \Sigma$ are the linearly independent dimension 1 cones of $\Sigma$ contained in $\tau$ which are not contained in $\sigma$. This way of writing $\tau$ is unique. The following bijection between objects of $\Ical^{\ell-k}$ and $\Faq(f_{\sigma \tau})$ induces an isomorphism of categories
    \begin{align*} \Ical^{\ell-k} &\to \Faq (f_{\sigma \tau}) \\
    \{1,\dots, \ell-k\} \supseteq S &\mapsto (\sigma \to \cone\left\{ \{ v_i\}_{i \in S}, \sigma\right\} \to \tau)
    \end{align*}
    where morphisms are induced by the individual poset structures, in other words $S \subseteq P \in \Ical^{\ell-k}$ if and only if $\cone\left\{ \{ v_i\}_{i \in S}, \sigma\right\} \subseteq \cone\left\{ \{ v_i\}_{i \in P}, \sigma\right\}$. It is clear how to define the inverse and that this is an isomorphism.
\end{proof}

The categories of a partitioned fan are constructed from the poset category $\Cfrak(\Sigma, \Pfrak_{\textnormal{poset}})$ via identifications. The following key lemma is essential in understanding how the category changes when cones are identified. In particular, it shows that two morphisms which are identified will have coinciding factorisation cubes. 

\begin{lem} \label{lem:uniquefaq}
    Let $\sigma_1 \neq \sigma_2 \in \Sigma$ and $\tau_1 \neq \tau_2 \in \Sigma$ be distinct cones such that $f_{\sigma_1 \tau_1} \sim f_{\sigma_2 \tau_2}$ in $ \Cfrak(\Sigma, \Pfrak)$. Then for every factorisation $\sigma_1 \xrightarrow[]{f_{\sigma_1 \kappa_1}} \kappa_1 \xrightarrow[]{f_{\kappa_1 \tau_1}} \tau_1$ of $f_{\sigma_1 \tau_1}$ there exists a unique factorisation $\sigma_2 \xrightarrow[]{f_{\sigma_2 \kappa_2}} \kappa_2 \xrightarrow[]{f_{\kappa_2 \tau_2}} \tau_2$
    of $f_{\sigma_2 \tau_2}$ such that $\kappa_1 \sim \kappa_2$. In this case we have $f_{\sigma_1 \kappa_1}\sim f_{\sigma_2, \kappa_2}$ and $f_{\kappa_1 \tau_1} \sim f_{\kappa_2 \tau_2}$ in $\Cfrak(\Sigma, \Pfrak)$.
\end{lem}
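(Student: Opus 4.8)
The plan is to prove this in two halves. First I would establish \emph{existence and uniqueness} of the matching factorisation $\kappa_2$, and then verify the two equivalences $f_{\sigma_1\kappa_1}\sim f_{\sigma_2\kappa_2}$ and $f_{\kappa_1\tau_1}\sim f_{\kappa_2\tau_2}$. Since $f_{\sigma_1\tau_1}\sim f_{\sigma_2\tau_2}$ means $\pi_{\sigma_1}(\tau_1)=\pi_{\sigma_2}(\tau_2)$, and in particular $\sigma_1\sim\sigma_2$, I already have $\spann\{\sigma_1\}=\spann\{\sigma_2\}$ and $\pi_{\sigma_1}(\starr(\sigma_1))=\pi_{\sigma_2}(\starr(\sigma_2))$ as fans. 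The key observation is that a factorisation of $f_{\sigma_1\tau_1}$ through $\kappa_1$ (with $\sigma_1\subseteq\kappa_1\subseteq\tau_1$) is exactly the data of a cone lying between $\sigma_1$ and $\tau_1$ in the poset, and under the projection $\pi_{\sigma_1}$ this corresponds to a face $\pi_{\sigma_1}(\kappa_1)$ of $\pi_{\sigma_1}(\tau_1)$ in the projected fan.

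Concretely, I would argue as follows. Given the factorisation $\sigma_1\subseteq\kappa_1\subseteq\tau_1$, apply $\pi_{\sigma_1}$ to obtain $\{0\}=\pi_{\sigma_1}(\sigma_1)\subseteq\pi_{\sigma_1}(\kappa_1)\subseteq\pi_{\sigma_1}(\tau_1)$, so $\pi_{\sigma_1}(\kappa_1)$ is a cone sitting between the origin and $\pi_{\sigma_1}(\tau_1)$ inside the fan $\pi_{\sigma_1}(\starr(\sigma_1))$. Because this fan equals $\pi_{\sigma_2}(\starr(\sigma_2))$ and $\pi_{\sigma_1}(\tau_1)=\pi_{\sigma_2}(\tau_2)$, there is a corresponding cone $\pi_{\sigma_1}(\kappa_1)=\pi_{\sigma_2}(\kappa_2)$ realised by some $\kappa_2\in\starr(\sigma_2)$ with $\pi_{\sigma_2}(\kappa_2)\subseteq\pi_{\sigma_2}(\tau_2)$. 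I must check that $\kappa_2$ can be chosen with $\sigma_2\subseteq\kappa_2\subseteq\tau_2$, not merely $\kappa_2\in\starr(\sigma_2)$: the containment $\pi_{\sigma_2}(\kappa_2)\subseteq\pi_{\sigma_2}(\tau_2)$ together with $\sigma_2\subseteq\kappa_2$ and $\spann\{\kappa_2\}=\spann\{\sigma_2\}+\pi_{\sigma_2}(\kappa_2)$ forces $\kappa_2\subseteq\tau_2$ (since $\tau_2$ is the unique cone with span $\spann\{\sigma_2\}+\pi_{\sigma_2}(\tau_2)$ in $\starr(\sigma_2)$, by the simplicial structure). Having produced $\kappa_2$, the relation $\pi_{\sigma_1}(\kappa_1)=\pi_{\sigma_2}(\kappa_2)$ is precisely the statement $f_{\sigma_1\kappa_1}\sim f_{\sigma_2\kappa_2}$; and admissibility of $\Pfrak$ (\cref{defn:admissiblepartition}), applied to $\sigma_1\sim\sigma_2$ and $\pi_{\sigma_1}(\kappa_1)=\pi_{\sigma_2}(\kappa_2)$ with $\kappa_i\in\starr(\sigma_i)$, yields $\kappa_1\sim\kappa_2$.

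For the remaining equivalence $f_{\kappa_1\tau_1}\sim f_{\kappa_2\tau_2}$, I need $\pi_{\kappa_1}(\tau_1)=\pi_{\kappa_2}(\tau_2)$. Here I would invoke \cref{lem:linearalglem}: since $\sigma_i\subseteq\kappa_i\subseteq\tau_i$, we have $\pi_{\kappa_i}=\pi_{\kappa_i}\circ\pi_{\sigma_i}$, and since $\kappa_1\sim\kappa_2$ forces $\pi_{\kappa_1}=\pi_{\kappa_2}$ (equal orthogonal complements). Then
\[
\pi_{\kappa_1}(\tau_1)=\pi_{\kappa_1}(\pi_{\sigma_1}(\tau_1))=\pi_{\kappa_2}(\pi_{\sigma_2}(\tau_2))=\pi_{\kappa_2}(\tau_2),
\]
using $\pi_{\sigma_1}(\tau_1)=\pi_{\sigma_2}(\tau_2)$ in the middle, which gives the desired equivalence exactly as in the computation in the proof of \cref{lem:possident}.

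Finally, for \emph{uniqueness} of $\kappa_2$: any two candidates $\kappa_2,\kappa_2'$ with $\sigma_2\subseteq\kappa_i\subseteq\tau_2$ and $\kappa_1\sim\kappa_2$, $\kappa_1\sim\kappa_2'$ would both satisfy $\pi_{\sigma_2}(\kappa_2)=\pi_{\sigma_1}(\kappa_1)=\pi_{\sigma_2}(\kappa_2')$; since $\pi_{\sigma_2}$ is injective on cones of $\starr(\sigma_2)$ containing $\sigma_2$ (its kernel is $\spann\{\sigma_2\}\subseteq\kappa_i$, so $\kappa_i=\pi_{\sigma_2}^{-1}(\pi_{\sigma_2}(\kappa_i))\cap\tau_2$ is determined), this forces $\kappa_2=\kappa_2'$. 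The main obstacle I anticipate is the bookkeeping of the span equality $\spann\{\kappa_2\}=\spann\{\sigma_2\}\oplus\pi_{\sigma_2}(\kappa_2)$ and verifying the containment $\kappa_2\subseteq\tau_2$ rigorously from the projected containment; this is where the simplicial hypothesis is genuinely used, and it is worth stating the orthogonal-decomposition argument carefully rather than treating it as obvious.
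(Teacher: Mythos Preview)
Your proposal is correct and follows essentially the same route as the paper: locate $\kappa_2$ via the equality of projected fans $\pi_{\sigma_1}(\starr(\sigma_1))=\pi_{\sigma_2}(\starr(\sigma_2))$, invoke admissibility to obtain $\kappa_1\sim\kappa_2$, deduce $\kappa_2\subseteq\tau_2$ from $\pi_{\sigma_2}(\kappa_2)\subseteq\pi_{\sigma_2}(\tau_2)$, and then use \cref{lem:linearalglem} together with $\pi_{\kappa_1}=\pi_{\kappa_2}$ for the second equivalence. One small point in your uniqueness paragraph: the implication ``$\kappa_1\sim\kappa_2'$ hence $\pi_{\sigma_2}(\kappa_2')=\pi_{\sigma_1}(\kappa_1)$'' is not immediate from the definition of $\sim$ (which only gives $\spann\{\kappa_1\}=\spann\{\kappa_2'\}$); you should instead note that two faces of the simplicial cone $\tau_2$ with the same linear span must coincide, which gives $\kappa_2=\kappa_2'$ directly.
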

\begin{proof}
    If $\sigma_1$ or $\sigma_2$ is a maximal cone, then such $\tau_1$ and $\tau_2$ do not exist and the result is trivial. Therefore assume that $\sigma_1$ and $\sigma_2$ are not maximal. By assumption $\sigma_1 \sim \sigma_2$ and $\tau_1 \sim \tau_2$ and thus $\pi_{\sigma_1}(\starr (\sigma_1)) = \pi_{\sigma_2}(\starr(\sigma_2))$. So for every $\kappa_1 \in \starr(\sigma_1)$ satisfying $\kappa_1 \subseteq \tau_1$ there exists a unique $\kappa_2 \in \starr(\sigma_2)$ such that $\pi_{\sigma_1}(\kappa_1) = \pi_{\sigma_2}(\kappa_2)$. Because the partition $\Pfrak$ is admissible and $\sigma_1 \sim \sigma_2$, this implies that $\kappa_1 \sim \kappa_2$. This implies $f_{\sigma_1 \kappa_1} \sim f_{\sigma_2 \kappa_2}$ by definition. We must have that $\kappa_2 \subseteq \tau_2$, since
    \[ \pi_{\sigma_2}(\kappa_2) = \pi_{\sigma_1}(\kappa_1) \subseteq \pi_{\sigma_1}(\tau_1) = \pi_{\sigma_2}(\tau_2)\]
    implies that $\kappa_2$ is a face of $\tau_2$. To show that $f_{\kappa_1 \tau_1} \sim f_{\kappa_2 \tau_2}$, we use \cref{lem:linearalglem} since $\sigma_i \subseteq \kappa_i$ implies $\kappa_i^\perp \subseteq \sigma_i^\perp$ for $i=1,2$. Thus,
    \[ \pi_{\kappa_1}(\tau_1) = \pi_{\kappa_1} \circ \pi_{\sigma_1}(\tau_1) = \pi_{\kappa_2} \circ \pi_{\sigma_2}(\tau_2) = \pi_{\kappa_2}(\tau_2),\]
    since $\pi_{\sigma_1}(\tau_1) = \pi_{\sigma_2}(\tau_2)$ follows from $f_{\sigma_1 \tau_1} \sim f_{\sigma_2 \tau_2}$ by definition. 
\end{proof}

Therefore, the category $\Cfrak(\Sigma, \Pfrak)$ of a partitioned fan satisfies condition 2. of \cref{def:cubical} for any admissible partition. The following example illustrates the identification of factorisation cubes described in the previous lemma.

\begin{exmp}
    Consider the standard coordinate fan $\Sigma$ in $\R^3$ whose cones are spanned by the linearly independent (positive) combinations of the vectors $\pm e_i \in \R^3$. Choose a partition $\Pfrak$ such that $e_1 \sim - e_1 \in \Cfrak(\Sigma, \Pfrak)$. Then the factorisation cubes of $[f_{e_1, \cone\{e_1,e_2,e_3\}}]$ and $[f_{-e_1, \cone\{-e_1,e_2,e_3\}}]$ are identified as follows:
    \[ \begin{tikzcd}[ampersand replacement=\&,column sep = 1em, row sep=4em]
    \& {[}\cone\{e_1\}{]} \arrow[r, equal] \arrow[ld, orange, "a" { description}, swap]\arrow[rd, blue, "c" {pos=0.25, description}] \&  {[}\cone\{-e_1\}{]}  \arrow[rd, blue, "c" {description}] \arrow[ld, orange, "a" {pos=0.25, description}, swap] \\
    {[}\cone\{ e_1, e_2\}{]} \arrow[r, equal] \arrow[rd, orange, "b" {description}, swap]\& {[}\cone\{ -e_1, e_2\}{]} \arrow[rd, orange, "b" {pos=0.25, description}, swap]  \& {[}\cone\{ e_1, e_3\}{]} \arrow[r, equal] \arrow[ld, blue, "d" {pos=0.25, description}] \& {[}\cone\{ -e_1, e_3\}{]} \arrow[ld, blue, "d" {description}]\\
    \& {[}\cone\{ e_1, e_2, e_3\}{]} \arrow[r, equal] \&  {[}\cone\{ -e_1, e_2, e_3\}{]}
    \end{tikzcd}\]
\end{exmp}

Consider the following analogue of \cite[Lem. 2.5b]{HansonIgusa2021} a tool to obtain the embedding of \cref{def:cubical}.
\begin{lem} \label{lem:embedsetup}
    Let $[f_{\sigma \tau}] \in \Cfrak(\Sigma, \Pfrak)$. There exists exactly one morphism
    \[( [\sigma] \xrightarrow[]{[g_1]} [\kappa] \xrightarrow[]{[h_1]} [\tau]) \to ([\sigma] \xrightarrow[]{[g_2]} [\lambda] \xrightarrow[]{[h_2]} [\tau])\]
    in $\Faq([f_{\sigma \tau}])$ whenever $\kappa_1 \subseteq \lambda_1$ for some $\kappa_1 \in [\kappa]$ and $\lambda_1 \in [\lambda]$ and none otherwise.
\end{lem}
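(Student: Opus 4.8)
The plan is to fix a representative of $[f_{\sigma\tau}]$, use \cref{lem:uniquefaq} to identify the objects of $\Faq([f_{\sigma\tau}])$ with the intermediate cones of an ordinary poset factorisation, and then read off the morphisms from the cube structure established in \cref{lem:posetcon2}. Concretely, I would fix a representative $\sigma \xrightarrow{f_{\sigma\tau}} \tau$ with $\sigma\subseteq\tau$. By \cref{lem:uniquefaq}, every factorisation $[\sigma]\to[\mu]\to[\tau]$ of $[f_{\sigma\tau}]$ has a unique representative of the shape $\sigma\to\mu_\ast\to\tau$ with $\sigma\subseteq\mu_\ast\subseteq\tau$, so I may write $\kappa_\ast$ and $\lambda_\ast$ for the intermediate cones attached to the two given objects, with $\sigma\subseteq\kappa_\ast\subseteq\tau$, $\sigma\subseteq\lambda_\ast\subseteq\tau$, and $[g_i],[h_i]$ represented by the corresponding inclusions.

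The core of the argument is the equivalence between the existence of a morphism in $\Faq([f_{\sigma\tau}])$ and the nesting $\kappa_\ast\subseteq\lambda_\ast$. For \emph{existence}, if $\kappa_\ast\subseteq\lambda_\ast$ then the inclusion morphism $[f_{\kappa_\ast\lambda_\ast}]\colon[\kappa]\to[\lambda]$ satisfies $[f_{\kappa_\ast\lambda_\ast}]\circ[g_1]=[g_2]$ and $[h_2]\circ[f_{\kappa_\ast\lambda_\ast}]=[h_1]$, directly from the definition of composition in $\Cfrak(\Sigma,\Pfrak)$ and the uniqueness of inclusion morphisms in the underlying poset; hence it is a morphism in $\Faq([f_{\sigma\tau}])$. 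For \emph{uniqueness and necessity}, let $[\phi]\colon[\kappa]\to[\lambda]$ be any morphism with $[\phi]\circ[g_1]=[g_2]$. Using \cref{lem:morphcomp1} to compose along the fixed representative $\kappa_\ast$, the morphism $[\phi]$ acquires a representative $f_{\kappa_\ast\lambda'}$ with $\kappa_\ast\subseteq\lambda'$ and $\lambda'\sim\lambda_\ast$, and then
\[ [f_{\sigma\lambda'}] = [f_{\kappa_\ast\lambda'}]\circ[f_{\sigma\kappa_\ast}] = [\phi]\circ[g_1] = [g_2] = [f_{\sigma\lambda_\ast}], \]
so that $\pi_\sigma(\lambda')=\pi_\sigma(\lambda_\ast)$. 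Since $\lambda',\lambda_\ast\in\starr(\sigma)$ and $\pi_\sigma$ is injective on $\starr(\sigma)$ (the uniqueness already exploited in \cref{lem:uniquefaq}, a cone over $\sigma$ being recovered from $\sigma$ together with its projection), this forces $\lambda'=\lambda_\ast$. Hence $[\phi]=[f_{\kappa_\ast\lambda_\ast}]$, which proves both that there is at most one such morphism and that $\kappa_\ast\subseteq\lambda_\ast$ is necessary for one to exist.

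It remains to match the intrinsic condition $\kappa_\ast\subseteq\lambda_\ast$ with the stated condition that $\kappa_1\subseteq\lambda_1$ for \emph{some} $\kappa_1\in[\kappa]$ and $\lambda_1\in[\lambda]$. One direction is immediate, as $\kappa_\ast$ and $\lambda_\ast$ are themselves representatives. For the converse I would transport an arbitrary nested pair to the fixed representatives: from $\kappa_\ast\sim\kappa_1$ we get $\pi_{\kappa_\ast}(\starr(\kappa_\ast))=\pi_{\kappa_1}(\starr(\kappa_1))$, so there is $\lambda'\in\starr(\kappa_\ast)$ with $\pi_{\kappa_\ast}(\lambda')=\pi_{\kappa_1}(\lambda_1)$, and admissibility (\cref{defn:admissiblepartition}) gives $\lambda'\sim\lambda_1\sim\lambda_\ast$ together with $\kappa_\ast\subseteq\lambda'$; identifying $\lambda'$ with $\lambda_\ast$ as in the previous paragraph then yields $\kappa_\ast\subseteq\lambda_\ast$. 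I expect this transport step to be the main obstacle: it is exactly the place where admissibility and the injectivity of the projection on the star must cooperate to pin down the unique representative of $[\lambda]$ sitting between $\sigma$ and $\tau$, and care is needed because a single factorisation may a priori contain distinct intermediate cones of the same partition class. The remainder is bookkeeping of the cube structure supplied by \cref{lem:posetcon2} and \cref{lem:uniquefaq}.
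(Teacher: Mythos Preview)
Your argument is correct and a little more explicit than the paper's, but it is organised differently. You first normalise via \cref{lem:uniquefaq} to canonical intermediate cones $\kappa_\ast,\lambda_\ast$ with $\sigma\subseteq\kappa_\ast,\lambda_\ast\subseteq\tau$, and then settle existence, uniqueness and necessity in one stroke using the injectivity of $\pi_\sigma$ on $\starr(\sigma)$. The paper instead proves uniqueness by contradiction with a case split on whether $[\sigma]$ has one or several representatives: if there is a unique $\sigma$, two putative morphisms would give distinct $\kappa_1,\kappa_2\in\starr(\sigma)$ with $\pi_\sigma(\kappa_1)\neq\pi_\sigma(\kappa_2)$, contradicting $[g_1]$; otherwise one passes to distinct $\sigma_1,\sigma_2$ and invokes \cref{lem:uniquefaq} directly. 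The paper then declares the (non-)existence ``obvious''. Your normalisation avoids the case analysis and makes the role of the projection more transparent; the paper's version is shorter.

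Regarding the transport step you flag: you are right that this is where the work hides, and the paper does not address it at all. Your attempted identification of $\lambda'$ with $\lambda_\ast$ ``as in the previous paragraph'' is not quite justified as written, since at that point you only know $\lambda'\sim\lambda_\ast$ and $\lambda'\in\starr(\sigma)$, not that $\pi_\sigma(\lambda')=\pi_\sigma(\lambda_\ast)$. In other words, the literal clause ``for \emph{some} $\kappa_1\in[\kappa]$, $\lambda_1\in[\lambda]$'' is slightly loose in the lemma itself; what is actually being used later (in \cref{lem:embedding}) is only the ``at most one'' part, which both your argument and the paper's establish cleanly. If you want to close the transport step, the cleanest fix is to state existence for the canonical representatives $\kappa_\ast\subseteq\lambda_\ast$ rather than for arbitrary ones, which is exactly what your uniqueness/necessity paragraph already yields.
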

\begin{proof}
    The assumption that two such maps $[f_{\kappa_1 \lambda_1}] \neq [f_{\kappa_2 \lambda_2}]$ exist implies the existence of distinct representatives $\kappa_1 \sim \kappa_2$ and $\lambda_1 \sim \lambda_2$ in the equivalence classes. Assume that there exists only one representative $\sigma \in [\sigma]$, then $\kappa_1, \kappa_2 \in \starr(\sigma)$ implies $\pi_{\sigma}(\kappa_1) \neq \pi_{\sigma}(\kappa_2)$. Hence $[f_{\sigma \kappa_1}] \neq [f_{\sigma \kappa_2}]$ which is a contradiction since they both equal $[g_1]$. It follows that there need to be distinct representatives $\sigma_1, \sigma_2 \in [\sigma]$ and thus we have two sets of inclusions $\sigma_i \subseteq \kappa_i \subseteq \lambda_i$ satisfying $[f_{\sigma_i \lambda_i}] = [g_2]$ for $i=1,2$. Then \cref{lem:uniquefaq} implies that $[f_{\kappa_1 \lambda_1}] = [f_{\kappa_2 \lambda_2}]$, a contradiction. The (non-)existence is obvious.
\end{proof}

We remark that there is not necessarily a unique morphism $[\kappa] \to [\lambda]$ in $\Cfrak(\Sigma, \Pfrak)$ in general. However, if one $\lambda_1 \supseteq \kappa_1$ exists, then there exist $\lambda_i \supseteq \kappa_i$ for every $\kappa_i \in [\kappa]$, such that $\lambda_i \in [\lambda]$ and $f_{\kappa_i \lambda_i} \in [f_{\kappa \lambda}]$ by \cref{lem:uniquefaq}. We are now able to prove that the third condition of the definition of a cubical category holds for the category of a partitioned fan.

\begin{lem}\label{lem:embedding}
    Let $[f_{\sigma \tau}]$ be a morphism in $\Cfrak(\Sigma, \Pfrak)$. The forgetful functor $\Faq([f_{\sigma \tau}]) \to \Cfrak(\Sigma, \Pfrak)$ given by $([\sigma] \to [\kappa] \to [\tau]) \mapsto [\kappa]$ is an embedding i.e. faithful and injective-on-objects.
\end{lem}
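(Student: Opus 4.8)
The plan is to establish the two defining properties of an embedding separately: faithfulness, which is essentially free, and injectivity-on-objects, which carries all the content. For faithfulness I would appeal directly to \cref{lem:embedsetup}, which tells us that between any two objects of $\Faq([f_{\sigma\tau}])$ there is at most one morphism. Hence for any pair of objects $X,Y$ the map $\Hom_{\Faq([f_{\sigma\tau}])}(X,Y)\to\Hom_{\Cfrak(\Sigma,\Pfrak)}(FX,FY)$ induced by $F$ is a function out of a set with at most one element, so it is automatically injective and $F$ is faithful with no further work.

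For injectivity-on-objects I would begin with two factorisations $X=([\sigma]\xrightarrow{[g_1]}[\kappa]\xrightarrow{[h_1]}[\tau])$ and $Y=([\sigma]\xrightarrow{[g_2]}[\kappa]\xrightarrow{[h_2]}[\tau])$ of $[f_{\sigma\tau}]$ which share the same middle object $[\kappa]=FX=FY$, the goal being to show $[g_1]=[g_2]$ and $[h_1]=[h_2]$, so that $X=Y$. Since any representative $\kappa_1\in[\kappa]$ trivially satisfies $\kappa_1\subseteq\kappa_1$, \cref{lem:embedsetup} applied with $[\lambda]=[\kappa]$ supplies a unique morphism $\phi\colon X\to Y$ in $\Faq([f_{\sigma\tau}])$. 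Its underlying morphism $F\phi$ is an endomorphism $[\kappa]\to[\kappa]$ in $\Cfrak(\Sigma,\Pfrak)$, and therefore has rank $\dim\kappa-\dim\kappa=0$.

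The crux is to identify this rank-$0$ endomorphism with the identity. I would argue that any representative $f_{\kappa_1\kappa_2}$ of a rank-$0$ morphism comes from an inclusion $\kappa_1\subseteq\kappa_2$ of cones with $\dim\kappa_1=\dim\kappa_2$; since a face of a cone having the same dimension as the cone must be the whole cone, this forces $\kappa_1=\kappa_2$ and hence $F\phi=\id_{[\kappa]}$. Substituting $\phi=\id_{[\kappa]}$ into the two compatibility identities defining a morphism of $\Faq([f_{\sigma\tau}])$, namely $\phi\circ[g_1]=[g_2]$ and $[h_1]=[h_2]\circ\phi$, then yields $[g_1]=[g_2]$ and $[h_1]=[h_2]$, so the factorisations coincide. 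The step I expect to require the most care is precisely this rank-$0$-implies-identity reduction: one must be sure it is applied to the underlying morphism in $\Cfrak(\Sigma,\Pfrak)$ and that the identification of cones by the partition $\Pfrak$ cannot manufacture a genuine non-identity endomorphism. This is guaranteed by the fact that identified cones share the same linear span (so that $\rank$ is well-defined as in \cref{def:cubical}) together with the face structure of the fan, which together leave no room for a nontrivial morphism $[\kappa]\to[\kappa]$.
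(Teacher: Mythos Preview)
Your argument is formally tidy but hides a circularity. The step where you invoke \cref{lem:embedsetup} to produce a morphism $\phi\colon X\to Y$ in $\Faq([f_{\sigma\tau}])$ uses the \emph{existence} half of that lemma in the special case $[\lambda]=[\kappa]$. But examine what that existence says here: a morphism $X\to Y$ in $\Faq$ is by definition a morphism $[\kappa]\to[\kappa]$ in $\Cfrak(\Sigma,\Pfrak)$ making the two triangles commute, and you have already (correctly) shown that the only such morphism is $\id_{[\kappa]}$. So the existence of $\phi$ is exactly the assertion that $\id_{[\kappa]}$ makes the triangles commute, i.e.\ that $[g_1]=[g_2]$ and $[h_1]=[h_2]$, i.e.\ that $X=Y$. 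In other words, the existence claim of \cref{lem:embedsetup} with $[\kappa]=[\lambda]$ is \emph{equivalent} to the injectivity-on-objects you are trying to prove. The paper's proof of \cref{lem:embedsetup} dismisses existence as ``obvious'' and supplies no independent argument, so citing it here does not furnish a proof.

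The paper's own proof of \cref{lem:embedding} avoids this loop by going back to \cref{lem:uniquefaq}: it aligns both factorisations over a single representative $f_{\sigma_0\tau_0}$ (so that the two middle cones $\kappa_1,\kappa_2$ are both faces of one simplicial cone $\tau_0$), and then uses the geometric fact that a simplicial cone has at most one face with a given linear span to force $\kappa_1=\kappa_2$. That simpliciality step is the actual content of the lemma, and it is precisely what your argument outsources to the unproved half of \cref{lem:embedsetup}. To repair your approach you would need to supply this step directly rather than via \cref{lem:embedsetup}.
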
 
\begin{proof}
    Assume two factorisations of a morphism $[f_{\sigma \tau}] \in \Cfrak(\Sigma, \Pfrak)$
    \[ [\sigma_1] \xrightarrow[]{[f_{\sigma_1 \kappa_1}]} [\kappa_1] \xrightarrow[]{[f_{\kappa_1 \tau_1}]} [\tau_1] \quad \text{ and }\quad [\sigma_2] \xrightarrow[]{[f_{\sigma_2 \kappa_2}]} [\kappa_2] \xrightarrow[]{[f_{\kappa_2 \tau_2}]} [\tau_2] \]
    satisfy $[\kappa_1] = [\kappa_2]$ but $\kappa_1 \neq \kappa_2$. We want to use \cref{lem:uniquefaq} to show the pairwise identification of the morphisms and thus an equivalence of factorisations. We need to show that $\sigma_1 \neq \sigma_2$ and $\tau_1 \neq \tau_2$. Since $\kappa_1$ and $\kappa_2$ have the same linear span but are distinct, it follows that there is no generator of one which is linearly independent with respect to the generators of the other and thus they cannot both be contained in the same simplicial cone $\tau$. Hence there must exist distinct $\tau_i \supseteq \kappa_i$ for $i=1,2$. However, this implies the existence of two distinct representatives $\sigma_1$ and $\sigma_2$ in $[\sigma]$ similar to the proof of \cref{lem:embedsetup} as otherwise $\tau_1 = \tau_2$ follows from the identification of the maps $[f_{\sigma \tau_1}]= [f_{\sigma \tau_2}]$. Therefore, \cref{lem:uniquefaq} implies the pairwise-identification of the morphisms and hence the two factorisations are identical and the functor is injective-on-objects. The functor is faithful since there exists at most one morphism between any two objects in $\Faq([f_{\sigma \tau}])$ by \cref{lem:embedsetup}.
\end{proof}

In $\Cfrak(\Sigma, \Pfrak)$, just like in the poset category of the fan $\Sigma$, the irreducible morphisms are exactly the morphisms of rank 1. Hence, given a morphism $[f_{\sigma\tau}] \in \Cfrak(\Sigma, \Pfrak)$ where $\dim(\tau)= \ell$ and $\dim(\sigma) = k$, we may write $\tau = \cone\{ \sigma, v_1, \dots, v_{\ell-k}\}$ like before. Then it is easily seen that the $\ell-k$ first factors are the rank 1 morphisms $[f_{\sigma  \kappa_i}]$, where $\kappa_i \coloneqq \cone \{ \sigma, v_i\} \subseteq \tau$ and the last factors are the rank 1 morphisms $[f_{\lambda_i \tau}]$, where 
\[\lambda_i \coloneqq \cone\{ \sigma, v_1, \dots, v_{i-1}, v_{i+1}, \dots, v_{\ell-k}\}. \]

Putting the above results together we arrive at the following.

\begin{thm} \label{thm:cubicalcat}
    The category $\Cfrak(\Sigma, \Pfrak)$ of a partitioned fan is cubical.
\end{thm}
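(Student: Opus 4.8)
The plan is to verify, one by one, the five conditions of \cref{def:cubical} for the category $\Cfrak(\Sigma, \Pfrak)$, drawing on the lemmas already established in this section. Most of the work has in fact been done; the theorem amounts to assembling these pieces together with two remaining short verifications (conditions 1, 4 and 5). I would structure the proof as five clearly separated claims.

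First I would dispatch condition 1, the existence of a well-behaved rank. The rank of a morphism $[f_{\sigma\tau}]$ is defined as $\dim\tau-\dim\sigma$, which is well-defined because identified cones share a linear span and hence a dimension, as already noted after \cref{def:cubical}. Additivity $\rank([f_{\kappa\tau}]\circ[f_{\sigma\kappa}])=\rank([f_{\sigma\kappa}])+\rank([f_{\kappa\tau}])$ is then immediate from $(\dim\tau-\dim\kappa)+(\dim\kappa-\dim\sigma)=\dim\tau-\dim\sigma$. Conditions 2 and 3 are essentially already proved: \cref{lem:posetcon2} establishes the isomorphism $\Faq(f)\cong\Ical^k$ for the poset partition, and \cref{lem:uniquefaq} shows that passing to an arbitrary admissible partition identifies factorisations compatibly, so the factorisation category of $[f_{\sigma\tau}]$ still has exactly $\binom{\ell-k}{j}$ objects at each level and the bijection with subsets of $\{1,\dots,\ell-k\}$ survives the identifications; \cref{lem:embedding} gives condition 3, that the forgetful functor $\Faq([f_{\sigma\tau}])\to\Cfrak(\Sigma,\Pfrak)$ is an embedding. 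So for these I would simply cite the relevant lemmas.

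The substantive remaining content is conditions 4 and 5, that a morphism of rank $k$ is determined by its $k$ first factors and by its $k$ last factors. Here I would use the explicit description given just before the theorem statement. Writing $\tau=\cone\{\sigma,v_1,\dots,v_{\ell-k}\}$, the first factors are $[f_{\sigma\kappa_i}]$ with $\kappa_i=\cone\{\sigma,v_i\}$ and the last factors are $[f_{\lambda_i\tau}]$ with $\lambda_i=\cone\{\sigma,v_1,\dots,\hat{v_i},\dots,v_{\ell-k}\}$. For condition 4, knowing all $k$ first factors recovers $[\sigma]$ (the common source) and, via the representatives, the rays $v_1,\dots,v_{\ell-k}$ up to the identifications; since $\tau=\cone\{\sigma,v_1,\dots,v_{\ell-k}\}$ is the join of these, the target $[\tau]$ and hence $[f_{\sigma\tau}]$ itself is determined. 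Condition 5 is the dual statement using the $\lambda_i$: the target $[\tau]$ is common to all last factors, and $\sigma=\bigcap_i\lambda_i$ recovers the source, so the morphism is again determined.

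I expect the main obstacle to be conditions 4 and 5 at the level of \emph{equivalence classes} rather than representatives. The explicit generator description is transparent for a single cone, but one must check that two morphisms $[f_{\sigma\tau}]$ and $[f_{\sigma'\tau'}]$ with the same list of first factors (as equivalence classes of morphisms) are actually equal in $\Cfrak(\Sigma,\Pfrak)$, i.e. that $\pi_\sigma(\tau)=\pi_{\sigma'}(\tau')$. The key point is that the first factors $[f_{\sigma\kappa_i}]$ record exactly the classes $\pi_\sigma(\kappa_i)$, and since $\pi_\sigma(\tau)=\cone\{\pi_\sigma(\kappa_1),\dots,\pi_\sigma(\kappa_{\ell-k})\}$ — the projection of the join is the join of the projected rays — the collection of first factors determines $\pi_\sigma(\tau)$ and hence the class of the morphism. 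I would lean on \cref{lem:uniquefaq} and \cref{lem:embedding} to ensure that the representatives can be chosen coherently across the equivalence classes, so that this join computation takes place within a single genuine cone of $\Sigma$; the dual argument for last factors then follows by the symmetric computation with the $\lambda_i$.
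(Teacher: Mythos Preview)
Your proposal is correct and follows essentially the same approach as the paper: cite the rank function already introduced, invoke \cref{lem:posetcon2} together with \cref{lem:uniquefaq} for condition~2, \cref{lem:embedding} for condition~3, and then recover $\tau=\cone\{\sigma,\kappa_1,\dots,\kappa_{\ell-k}\}$ from the first factors and $\sigma=\bigcap_i\lambda_i$ from the last factors. Your additional discussion of why conditions~4 and~5 hold at the level of equivalence classes (via $\pi_\sigma(\tau)=\cone\{\pi_\sigma(\kappa_i)\}$) is in fact more careful than the paper, which simply asserts that one may choose canonical representatives and that the reconstruction is ``easy''.
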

\begin{proof}
    Like stated before, the rank function is given by $\rank([f_{\sigma \tau}]) = \dim \tau - \dim \sigma$. Condition 2. of \cref{def:cubical} follows from combining \cref{lem:posetcon2} and \cref{lem:uniquefaq}, whereas condition 3. was shown explicitly in \cref{lem:embedding}. Let $[f_{\sigma \tau}] \in \Cfrak(\Sigma, \Pfrak)$ be a morphism. Given a choice of representative $f_{\sigma' \tau'} \in [f_{\sigma \tau}]$, the first factors are given by $\{[f_{\sigma' \kappa_i'}]\}_{i=1}^{\ell-k}$, where $\kappa_i' \subseteq \tau'$ are constructed from $\sigma'$ and $\tau'$ as above. It is then clear that they uniquely determine the morphism $[f_{\sigma' \tau'}]$ by writing $\tau' = \cone\{ \sigma', \kappa_1', \dots, \kappa_{\ell-k}'\}$. On the other hand, the last factors $[f_{\lambda_i \tau}]$ are easily seen to determine the morphism $[f_{\sigma \tau}]$, where $\sigma = \bigcap_{i=1}^{\ell-k} \lambda_i$.
\end{proof}

\section{The classifying space and picture group} \label{sec:CWconstr}

We have established that the categories of a partitioned fan are cubical categories, hence it is natural to study their topological properties next. Throughout this section, let $\Sigma$ be a finite and complete fan in $\R^n$ and $\Pfrak$ an admissible partition of $\Sigma$. In this case, we describe the classifying space of the category $\Cfrak(\Sigma, \Pfrak)$ as a finite CW-complex, similar to \cite{HansonIgusa2021}, \cite{Igusa2022} and \cite{IgusaTodorov2017}. Moreover, we associate the \textit{picture group} to a fan, by generalising the definitions of \cite{HansonIgusa2021} and \cite{IgusaTodorovWeyman2016}, see also \cref{thm:picgroupgeneralisation}. We investigate the sufficient conditions for the categories $\Cfrak(\Sigma, \Pfrak)$ to be a $K(\pi,1)$ by studying the picture group. One of the conditions is the existence of a faithful functor from the category $\Cfrak(\Sigma, \Pfrak)$ to a group. We show that a faithful functor to the picture group exists for fans of rank 2 and a large class of hyperplane arrangements.

\subsection{Description of the CW-complex}
\label{subsec:CW}

A \textit{CW-complex} $X$ is a topological space of particular importance in algebraic topology. It is constructed from a discrete set $X^0$, called \textit{$0$-cells}. The \textit{$k$-skeleton} $X^k$ is formed from $X^{k-1}$ by attaching $k$-cells $e_i^k$ via maps $\varphi_i: S^{k-1} \to X^{k-1}$ for some index set $I$. Hence $X^k$ is the quotient space of the disjoint union $X^{k-1} \sqcup \bigsqcup_{i \in I} D_{i}^k$ of $X^{k-1}$ with a collection of  $k$-disks $\{D_{i}^k\}_{i \in I}$ under the identification $x \sim \varphi_{i}(x)$ for $x \in \partial D_{i}^k$. As a set, $X^k$ is the disjoint union of $X^{k-1}$ with open $k$-disks. The name CW-complex comes from two properties of such complexes: closure-finiteness and weak topology. Furthermore, the fundamental group of CW-complexes are completely determined by their 1-cells and 2-cells. For more details we refer the reader to \cite{Hatcher2002}. In the following construction of the CW-complex $\Bcal \Cfrak(\Sigma, \Pfrak)$, each cell is the (topological) cone of the simplicial sphere described in the following definition, hence a disk.

\begin{defn} \label{def:simplicialcomplex}
    Let $(\Sigma, \Pfrak)$ be a partitioned fan and $\sigma \in \Sigma$ be a cone of dimension $k \neq n$. Define $\Scal(\sigma)$ to be the simplicial complex whose vertices are the cones $\tau \in \starr(\sigma)^{k+1}$ and for which $\{ \tau_i\}_{i=1}^{\ell-k}$ spans a simplex if and only if $\cone\{ \pi_{\sigma}(\tau_1), \dots, \pi_{\sigma}(\tau_{\ell-k})\} \in \pi_{\sigma}(\starr(\sigma))$.
\end{defn}

\begin{lem}\label{lem:simplicialcomplex}
    $\Scal(\sigma)$ is homeomorphic to a $(n-k-1)$-sphere.
\end{lem}
\begin{proof}
    By intersecting a finite and complete simplicial fan in $\R^n$ with a $(n-1)$-sphere centered at the origin, we obtain the geometric realisation of $\Sigma$ to be a $(n-1)$-sphere. Similarly for $\Scal(\sigma)$, the projection $\pi_{\sigma}(\starr \sigma)$ is a finite and complete simplicial fan and thus the geometric realisation of it is a $(n - \dim(\sigma) -1)$-sphere. 
\end{proof}

We remark that all representatives $\sigma_i \in [\sigma]$ define isomorphic simplicial complexes because $\sigma_i \sim \sigma_j$ implies that the fans $\pi_{\sigma_1}(\starr \sigma_1) = \pi_{\sigma_2}(\starr \sigma_2)$ coincide. Therefore, given an equivalence class $[\sigma] \in \Pfrak$, we denote by $[\Scal(\sigma)]$ the isomorphism class of simplicial complexes $S(\sigma_i)$ for $\sigma_i \in [\sigma]$.

\begin{exmp} \label{exmp:CWexample}
Consider the fan $\Sigma$ given in \cref{fig:simplicialsub1} which is the complete fan underlying the toric variety $\Pp^1 \times \Pp^1$ and at the same time the $g$-fan of the semisimple rank 2 algebra. In \cref{fig:simplicialsub2} (the geometric realisation of) the simplicial complex $\Scal(0)$ is homeomorphic to a 1-sphere, where we have labelled the vertices it is defined on. Similarly, in \cref{fig:simplicialsub3} the simplicial complex $\Scal(\sigma_1)$ consists only of the vertices and is the 0-sphere.
\end{exmp}
\begin{figure}[ht!]
\centering
\begin{subfigure}{.33\textwidth}
    \begin{center}
    \begin{tikzpicture}
      \draw[fill=gray!42] (0,1.5) -- (0,0) -- (1.5,0) (0.5,0.5) node{$\tau_1$};
      \draw[fill=gray!42] (0,-1.5) -- (0,0) -- (1.5,0) (0.5,-0.5) node{$\tau_2$};
      \draw[fill=gray!42] (0,-1.5) -- (0,0) -- (-1.5,0) (-0.5,-0.5) node{$\tau_3$};
      \draw[fill=gray!42] (0,1.5) -- (0,0) -- (-1.5,0) (-0.5,0.5) node{$\tau_4$};
      \draw[->, line width = 0.3mm] (0, 0) -- (0, 1.6) node[above] {$\sigma_4$};
      \draw[->, line width = 0.3mm] (0, 0) -- (0, -1.6) node[below] {$\sigma_2$};
      \draw[->, line width = 0.3mm] (0,0 ) -- (1.6,0) node[right] {$\sigma_1$};
      \draw[->, line width = 0.3mm] (0,0 ) -- (-1.6,0 ) node[left] {$\sigma_3$};
    \end{tikzpicture}
\end{center}
  \caption{The complete fan $\Sigma$}
  \label{fig:simplicialsub1}
\end{subfigure}%
\begin{subfigure}{.33\textwidth}
  \begin{center}
    \begin{tikzpicture}
      \draw[-] (-1.5,0) -- (0, 1.5) node[above] {$\sigma_4$};
      \draw[-] (1.5,0) -- (0, -1.5) node[below] {$\sigma_2$};
      \draw[-] (0, 1.5) -- (1.5,0) node[right] {$\sigma_1$};
      \draw[-] (0,-1.5) -- (-1.5,0) node[left] {$\sigma_3$};
      \filldraw[black] (0, 1.5) circle (2pt) node[left] {};
      \filldraw[black] (0, -1.5) circle (2pt) node[left] {};
      \filldraw[black] (1.5,0) circle (2pt) node[left] {};
      \filldraw[black] (-1.5,0) circle (2pt) node[left] {};
    \end{tikzpicture}
\end{center}
  \caption{The complex $\Scal(0)$}
  \label{fig:simplicialsub2}
\end{subfigure}%
\begin{subfigure}{.33\textwidth}
  \begin{center}
    \begin{tikzpicture}
      \filldraw[black] (0, 1.5) circle (2pt) node[above] {$\tau_1$};
      \filldraw[black] (0, -1.5) circle (2pt) node[below] {$\tau_2$};
    \end{tikzpicture}
\end{center}
  \caption{The complex $\Scal(\sigma_1)$}
  \label{fig:simplicialsub3}
\end{subfigure}
\caption{An example of the simplicial complexes of a fan.}
\label{fig:simplicialexample}
\end{figure}
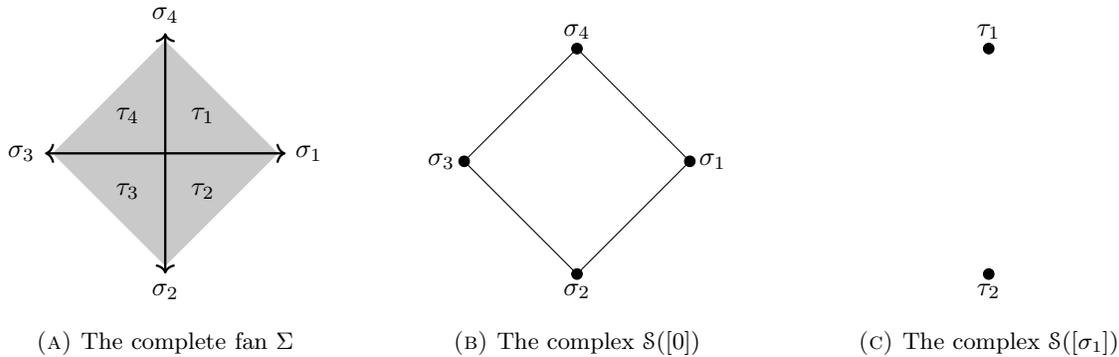

The remainder of this subsection is devoted to proving the following result. We closely follow the proof of \cite[Prop. 4.7]{HansonIgusa2021} and identify each morphism $[f_{\sigma \tau}] \in \Cfrak(\Sigma, \Pfrak)$ with its factorisation cube in $\Bcal \Cfrak(\Sigma, \Pfrak)$. 

\begin{thm} \label{thm:CWcomplex}
    Let $\Sigma$ be a finite and complete fan in $\R^n$ and $\Pfrak$ an admissible partition of $\Sigma$. The classifying space $\Bcal \Cfrak(\Sigma, \Pfrak)$ is a $n$-dimensional CW-complex having one cell $e([\sigma])$ of dimension $k= n- \dim(\sigma)$ for each equivalence class $[\sigma] \in \Pfrak$. The $k$-cell $e([\sigma])$ is the union of the factorisation cubes of the morphisms $[f_{\sigma \tau}]$, where $\tau \in \starr(\sigma)^{n}$.
\end{thm}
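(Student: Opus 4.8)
The plan is to build the CW-complex one skeleton at a time, indexing cells by equivalence classes $[\sigma]$ with the cell dimension $k = n - \dim(\sigma)$, and to verify that the attaching data arising from the category structure matches the CW-axioms. First I would record the two structural facts already available: by \cref{thm:cubicalcat} the category $\Cfrak(\Sigma, \Pfrak)$ is cubical, so each morphism $[f_{\sigma\tau}]$ has a classifying space $\Bcal\Faq([f_{\sigma\tau}]) = [0,1]^{\rank[f_{\sigma\tau}]}$, a solid cube; and by \cref{prop:simplicialcomplex} the simplicial complex $\Scal([\sigma])$ is an $(n-k-1)$-sphere. The guiding principle is that the cell $e([\sigma])$ should be the \emph{topological cone} over $\Scal([\sigma])$ — a disk of dimension $n-k$ — realised concretely as the union of the factorisation cubes $\Bcal\Faq([f_{\sigma\tau}])$ ranging over maximal cones $\tau \in \starr(\sigma)^n$.

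Next I would make the identification ``morphism $\leftrightarrow$ factorisation cube'' precise and glue. For a fixed $[\sigma]$, each maximal $\tau \supseteq \sigma$ gives a top-dimensional cube of dimension $n - \dim\sigma = k$ inside the nerve, whose vertices are the intermediate classes $[\kappa]$ with $\sigma \subseteq \kappa \subseteq \tau$; by \cref{lem:embedding} the forgetful functor is an embedding, so these cubes have no unexpected self-identifications and their faces are indexed exactly by the subsimplices of $\Scal([\sigma])$. I would then check that, as $\tau$ ranges over $\starr(\sigma)^n$, adjacent cubes share precisely the faces dictated by the simplicial structure of $\Scal([\sigma])$ — that is, two cubes $\Bcal\Faq([f_{\sigma\tau}])$ and $\Bcal\Faq([f_{\sigma\tau'}])$ meet along the face corresponding to $\tau \cap \tau'$. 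Summing over all maximal $\tau$, the union of cubes is exactly the cone over $\Scal([\sigma])$: the apex is the class $[\sigma]$ itself and the boundary sphere is $\Scal([\sigma])$, so $e([\sigma])$ is a closed $k$-disk. This handles the interior of each cell; the attaching map $\varphi_{[\sigma]}\colon S^{k-1} \to X^{k-1}$ is the restriction to the boundary sphere $\Scal([\sigma])$, sending each boundary simplex (indexed by a cone $\kappa$ with $\sigma \subsetneq \kappa$, $\dim\kappa = \dim\sigma + 1$) into the lower-dimensional cell $e([\kappa])$.

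I would then verify the CW-axioms directly. The $0$-cells are the classes $[\tau]$ of maximal cones ($k=0$), a finite discrete set since $\Sigma$ is finite. Building the $k$-skeleton $X^k$ from $X^{k-1}$ by attaching the disks $e([\sigma])$ for $\dim\sigma = n-k$ via the maps $\varphi_{[\sigma]}$ reproduces the entire nerve: every chain of composable non-identity morphisms lies in some factorisation cube of a maximal morphism $[f_{\sigma\tau}]$, hence in some $e([\sigma])$, so the cells cover $\Bcal\Cfrak(\Sigma,\Pfrak)$; and the cube-face bookkeeping shows the interiors are disjoint. Finiteness gives closure-finiteness and the weak topology for free. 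The top dimension is $n$, attained by $e([\{0\}])$ when $\{0\}\in\Sigma$, since $\dim\{0\}=0$ gives $k=n$.

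\textbf{Main obstacle.} I expect the delicate step to be showing that the union of factorisation cubes over $\starr(\sigma)^n$ glues to a genuine $k$-disk rather than some other space with the same boundary — i.e. that the combinatorics of how the cubes $\Bcal\Faq([f_{\sigma\tau}])$ meet along shared faces is governed \emph{exactly} by the simplicial structure of $\Scal([\sigma])$, with the cone point realised consistently by the single object $[\sigma]$. The subtlety is that distinct representatives $\sigma_1 \sim \sigma_2$ and distinct intermediate cones may be identified under $\Pfrak$, so one must use \cref{lem:uniquefaq} to see that identified morphisms have coinciding factorisation cubes and \cref{lem:embedding} to rule out spurious face identifications; the content is that \emph{these} are the only identifications, so the quotient of the disjoint cubes is the cone over $\Scal([\sigma])$ and nothing more collapses. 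Once this local gluing is pinned down, assembling the skeleta and confirming the cell count is routine.
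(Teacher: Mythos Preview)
Your approach is essentially the paper's: assemble each cell from the factorisation cubes of the morphisms $[f_{\sigma\tau}]$ for $\tau\in\starr(\sigma)^n$, recognise the result as the cone over $\Scal([\sigma])$, and control identifications via \cref{lem:uniquefaq} and \cref{lem:embedding}. The paper organises the argument through an explicit two-stage quotient that you should adopt: first identify only those faces corresponding to morphisms \emph{starting at} $[\sigma]$ (the paper calls this $\sim_2$), which yields precisely the classifying space of the under category $[\sigma]\backslash\Cfrak(\Sigma,\Pfrak)$ and is thereby seen to be the cone over $\Scal([\sigma])$, hence a $k$-disk; then the remaining identifications $\sim_3$ (faces of morphisms $[f_{\kappa\tau}]$ with $[\kappa]\neq[\sigma]$) are shown, via a link computation at each $0$-simplex, to act only on the boundary sphere and hence constitute the attaching map into the lower skeleton.

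One point to correct: your assertions that ``$e([\sigma])$ is a closed $k$-disk'' and that ``nothing more collapses'' are false as stated. The subspace $e([\sigma])\subseteq\Bcal\Cfrak(\Sigma,\Pfrak)$ is the \emph{image} of the characteristic map, and its boundary can genuinely collapse --- the paper's own example immediately following the theorem has $e([0])$ equal to a torus. What is a disk is the \emph{domain} of the characteristic map, namely the quotient by $\sim_2$ alone. Your ``main obstacle'' paragraph conflates the full quotient with this partial one; once you separate them, the obstacle splits cleanly into (i) $X([\sigma])/{\sim_2}$ is a disk (handled by the under-category identification) and (ii) $\sim_3$ touches only the boundary (handled by the link argument), both of which the paper carries out.
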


Let $e([\sigma])$ be the union of factorisation cubes of all morphisms $[f_{\sigma \tau}]$, where $\tau \in \starr(\sigma)^n$. To obtain the CW-structure of $\Bcal \Cfrak(\Sigma, \Pfrak)$, we must show that each $e([\sigma])$ is a disk of dimension $k= n- \dim(\sigma)$ attached to lower dimensional cells along its boundary. We show this in three steps.

\subsubsection*{Step 1: Start with the disjoint union of factorisation cubes.}

Let $\sigma_1, \sigma_2 \in [\sigma]$ and $\tau_1 \in \starr(\sigma_1)$ and $\tau_2 \in \starr(\sigma_2)$ be maximal cones such that $f_{\sigma_1 \tau_1} \sim f_{\sigma_2 \tau_2}$. From \cref{lem:uniquefaq} we know that factorisation cubes of the two morphisms $[f_{\sigma_1 \tau_1}]$ and $[f_{\sigma_2 \tau_2}]$ are identified, hence it suffices to consider only one representative $\sigma \in [\sigma]$ by \cref{cor:morphstartingpoint}. We begin by defining the disjoint union of factorisation cubes $X([\sigma]) = \bigsqcup_{\tau \in \starr(\sigma)^n} [f_{\sigma \tau}]$. It is clear that a face of the factorisation cube of $[f_{\sigma \tau}]$ corresponds to the factorisation cube of some morphism $[f_{\kappa \lambda}]$ satisfying $\sigma \subseteq \kappa \subseteq \lambda \subseteq \tau$. Consider the equivalence relation $\sim_1$ on $X([\sigma])$ which identifies faces corresponding to the same morphism. By definition we obtain $e([\sigma]) \equiv X([\sigma]) / \sim_1$. We split $\sim_1$ into two types of identifications. First, we only identify factorisation cubes of morphisms of the form $[f_{\sigma \lambda}]$. Denote this equivalence on $X([\sigma])$ by $\sim_2$. After showing that the resulting space is a disk, we let $\sim_3$ identify the factorisation cubes of morphisms $[f_{\kappa \tau}]$ not starting at $\sigma$ and show that the identifications of $\sim_3$ occur only on the boundary. Thus since $X([\sigma])/\sim_1 = (X([\sigma])/\sim_2 )/\sim_3$ we obtain the desired result.

\subsubsection*{Step 2: Show that $X([\sigma])/\sim_2$ is a disk.} We compare this quotient space with the classifying space of the following category.

\begin{defn}
    Given an object $[\sigma] \in \Cfrak(\Sigma, \Pfrak)$, the \textit{under category} (or \textit{coslice category}) $[\sigma] \backslash \Cfrak(\Sigma, \Pfrak)$ is the category whose objects are morphisms $[\sigma] \to [\tau]$ in $\Cfrak(\Sigma, \Pfrak)$ and whose morphisms $([\sigma]\to[\tau_1]) \to ( [\sigma] \to [\tau_2])$ are morphisms $[f_{\tau_1 \tau_2}]$ making the following triangle commute:
        \[\begin{tikzcd} &{[} \sigma{]} \arrow[rd, "{[}f_{\sigma \tau_2}{]}"]  \arrow[ld,"{[}f_{\sigma \tau_1}{]}", swap] \\ 
    {[} \tau_1{]} \arrow[rr,"{[}f_{\tau_1 \tau_2}{]}"]& & {[} \tau_2 {]} \end{tikzcd}\]
\end{defn}

The 0-simplices of the classifying space $\Bcal[\sigma] \backslash \Cfrak(\Sigma, \Pfrak)$ are in bijection with (identity morphisms of) elements of $[\sigma] \backslash \Cfrak(\Sigma, \Pfrak)$. There exists one 0-simplex for every cone in $\starr(\sigma)$. It follows from \cref{lem:embedsetup} that this category is a poset category, and one can easily observe that $\Bcal [\sigma] \backslash \Cfrak(\Sigma, \Pfrak) \equiv X([\sigma])/\sim_2$. \\

Moreover, there is a bijection between 0-simplices corresponding to objects in $\starr(\sigma)^{\dim(\sigma) + 1}$ and vertices of the simplicial complex $\Scal(\sigma)$ of \cref{def:simplicialcomplex}. The other 0-simplices of $[\sigma] \backslash \Cfrak(\Sigma, \Pfrak)$ correspond to cones $\tau \in \starr(\sigma)^{\dim(\sigma)+\ell}$ for $\ell \in \{2, \dots, k\}$ and are in bijection with the $(\ell-1)$-simplices of the simplicial complex $\Scal(\sigma)$, recalling the that $\sigma_1 \sim \sigma_2$ implies $[\Scal(\sigma_1)]=[\Scal(\sigma_2)]$. Therefore $\Bcal [\sigma] \backslash \Cfrak(\Sigma, \Pfrak)$ may be viewed as the cone with cone point $[f_{\sigma \sigma}]$ over $\Scal(\sigma)$. This implies that $\Bcal [\sigma] \backslash \Cfrak(\Sigma, \Pfrak)$ and thus $X([\sigma])/\sim_2$ is a $(n-k-1)$-disk.

\subsubsection*{Step 3: Show that identifications happen on the boundary.}
Define $\sim_3$ to be the equivalence relation on $\Bcal [\sigma] \backslash \Cfrak(\Sigma, \Pfrak)$ identifying all faces corresponding to the same morphism $[f_{\kappa \tau}]$ for $[\kappa] \neq [\sigma]$. By definition $e([\sigma]) \equiv \Bcal [\sigma] \backslash \Cfrak(\Sigma, \Pfrak) / \sim_3$. From the construction of $\Bcal [\sigma] \backslash \Cfrak(\Sigma, \Pfrak)$ as the cone over $\Scal(\sigma)$, it follows from \cref{lem:simplicialcomplex} that the link of $[f_{\sigma \sigma}]$ is a $(n-k-1)$-sphere. Hence $[f_{\sigma \sigma}]$ is in the interior of the disk. Consider now a different 0-simplex $[f_{\sigma \tau}]$ of $\Bcal [\sigma] \backslash \Cfrak(\Sigma, \Pfrak)$. Then its link $\link([f_{\sigma \tau}])$ is given by the simplicial join of:
\begin{itemize}
    \item the link of $[f_{\tau \tau}]$ in $\Bcal [\tau] \backslash \Cfrak(\Sigma, \Pfrak)$, which is the part in the boundary of $e([\sigma])$, and
    \item the link of $[\sigma] \xrightarrow[]{[f_{\sigma \tau}]} [\tau] \xrightarrow[]{{[}f_{\tau \tau}{]}} [\tau]$ in the factorisation cube $[f_{\sigma \tau}]$, which is the part in the interior of $e([\sigma])$.
\end{itemize}
This is the join of a $(n- \dim(\tau)-1)$-sphere with a $(\dim(\tau)-\dim(\sigma)-1)$-disk, which is a $(k-1)$-disk. Thus $[f_{\sigma \tau}]$ is a boundary vertex of $\Bcal [\sigma] \backslash \Cfrak(\Sigma, \Pfrak)$. Any simplex which contains the cone point $[f_{\sigma \sigma}]$ has to be the only representative in its equivalence class of $\sim_3$, by definition. It follows that $\sim_3$ identifies simplices containing exclusively simplices on the boundary of $\Bcal [\sigma] \backslash \Cfrak(\Sigma, \Pfrak)$. Moreover, these identified simplices are factorisation cubes of morphisms of rank strictly less than $k$. Hence $e([\sigma])$ is attached to lower-dimensional cells. This concludes the proof of \cref{thm:CWcomplex}.

\begin{exmp}
    Consider the complete fan of \cref{fig:simplicialsub1} with the partition $\Pfrak$ identifying $\sigma_1 \sim \sigma_3$ and $\sigma_2 \sim \sigma_4$. This partition implies the identification of all maximal cones. In \cref{fig:3step}, the three steps of constructing the 2-cell $e([0])$ are illustrated. First we begin with the disjoint union of factorisation cubes in \cref{fig:3stepa} and then $\sim_2$ identifies corresponding factorisation cubes of morphisms starting at 0, which are labelled with the same number. We showed that the resulting space is a disk with only one 0-simplex in the interior. Then $\sim_3$ identifies all factorisation cubes of morphisms on the boundary which are identified in $\Cfrak(\Sigma, \Pfrak)$. For example, in this partition $[f_{\sigma_4 \tau_4}] = [f_{\sigma_2 \tau_3}]$, which are labelled ``5'' in \cref{fig:3stepb}. A detailed picture of $X([0])/ \sim_2 = \Bcal [0] \backslash \Cfrak(\Sigma, \Pfrak)$ is given in \cref{fig:CWcell}. These identifications give the 2-cell $e([0])$ of the classifying space $\Bcal \Cfrak(\Sigma, \Pfrak)$ to be a torus.

\begin{figure}[ht!]
\centering
\begin{subfigure}{.3\textwidth}
    \begin{center}
    \begin{tikzpicture}
      \draw[-] (0.2, 0.2) -- (0.2, 1.7) node[midway,fill=white] {\small4};
      \draw[-] (0.2, 0.2) -- (1.7, 0.2) node[midway,fill=white] {\small1};
      \draw[-] (0.2, 0.2) -- (1.7, 1.7) node[above] {};
      \draw[-] (1.7, 0.2) -- (1.7, 1.7) node[above] {};
      \draw[-] (0.2, 1.7) -- (1.7, 1.7) node[above] {};
      \filldraw[black] (0.2,0.2) circle (1pt) node[left] {};
      \filldraw[black] (1.7,0.2) circle (1pt) node[left] {};
      \filldraw[black] (0.2,1.7) circle (1pt) node[left] {};
      \filldraw[black] (1.7,1.7) circle (1pt) node[left] {};
      \draw[-] (-0.2, 0.2) -- (-0.2, 1.7) node[midway,fill=white] {\small4};
      \draw[-] (-0.2, 0.2) -- (-1.7, 0.2) node[midway,fill=white] {\small3};
      \draw[-] (-0.2, 0.2) -- (-1.7, 1.7) node[above] {};
      \draw[-] (-1.7, 0.2) -- (-1.7, 1.7) node[above] {};
      \draw[-] (-0.2, 1.7) -- (-1.7, 1.7) node[above] {};
      \filldraw[black] (-0.2,0.2) circle (1pt) node[left] {};
      \filldraw[black] (-1.7,0.2) circle (1pt) node[left] {};
      \filldraw[black] (-0.2,1.7) circle (1pt) node[left] {};
      \filldraw[black] (-1.7,1.7) circle (1pt) node[left] {};
      \draw[-] (0.2, -0.2) -- (0.2, -1.7) node[midway,fill=white] {\small2};
      \draw[-] (0.2, -0.2) -- (1.7, -0.2) node[midway,fill=white] {\small1};
      \draw[-] (0.2, -0.2) -- (1.7, -1.7) node[above] {};
      \draw[-] (1.7, -0.2) -- (1.7, -1.7) node[above] {};
      \draw[-] (0.2, -1.7) -- (1.7, -1.7) node[above] {};
      \filldraw[black] (0.2,-0.2) circle (1pt) node[left] {};
      \filldraw[black] (1.7,-0.2) circle (1pt) node[left] {};
      \filldraw[black] (0.2,-1.7) circle (1pt) node[left] {};
      \filldraw[black] (1.7,-1.7) circle (1pt) node[left] {};
      \draw[-] (-0.2, -0.2) -- (-0.2, -1.7) node[midway,fill=white] {\small2};
      \draw[-] (-0.2, -0.2) -- (-1.7, -0.2) node[midway,fill=white] {\small3};
      \draw[-] (-0.2, -0.2) -- (-1.7, -1.7) node[above] {};
      \draw[-] (-1.7, -0.2) -- (-1.7, -1.7) node[above] {};
      \draw[-] (-0.2, -1.7) -- (-1.7, -1.7) node[above] {};
      \filldraw[black] (-0.2,-0.2) circle (1pt) node[left] {};
      \filldraw[black] (-1.7,-0.2) circle (1pt) node[left] {};
      \filldraw[black] (-0.2,-1.7) circle (1pt) node[left] {};
      \filldraw[black] (-1.7,-1.7) circle (1pt) node[left] {};
    \end{tikzpicture}
\end{center}
  \caption{$X([0])$}
\label{fig:3stepa}
\end{subfigure}%
\begin{subfigure}{.3\textwidth}
  \begin{center}
    \begin{tikzpicture}
    \filldraw[black] (0,0) circle (1pt) node[left] {};
      \draw[-] (0, 0) -- (0, 1.5) node[above] {};
      \draw[-] (0,0 ) -- (1.5,0) node[above] {};
      \draw[-] (0,0) -- (1.5, 1.5) node[above] {};
      \draw[-] (1.5,0) -- (1.5,1.5) node[midway,fill=white] {\small7};
      \draw[-] (0,1.5) -- (1.5,1.5) node[midway,fill=white] {\small 6};
      \filldraw[black] (1.5,1.5) circle (1pt) node[left] {};
      \filldraw[black] (0,1.5) circle (1pt) node[left] {};
      \filldraw[black] (1.5,0) circle (1pt) node[left] {};
      \draw[-] (0,0 ) -- (0,-1.5) node[above] {};
      \draw[-] (0,0) -- (1.5, -1.5) node[above] {};
      \draw[-] (1.5,0) -- (1.5, -1.5) node[midway,fill=white] {\small8};
      \draw[-] (0,-1.5) -- (1.5, -1.5) node[midway,fill=white] {\small6};
      \filldraw[black] (1.5,-1.5) circle (1pt) node[left] {};
      \filldraw[black] (0,-1.5) circle (1pt) node[left] {};
      \draw[-] (0,0) -- (-1.5,-1.5) node[above] {};
      \draw[-] (0,0) -- (-1.5, 0) node[above] {};
      \draw[-] (0,-1.5) -- (-1.5, -1.5) node[midway,fill=white] {\small5};
      \draw[-] (-1.5,0) -- (-1.5, -1.5) node[midway,fill=white] {\small8};
      \filldraw[black] (-1.5,-1.5) circle (1pt) node[left] {};
      \filldraw[black] (-1.5,0) circle (1pt) node[left] {};
      \draw[-] (0,0) -- (-1.5,1.5) node[above] {};
      \draw[-] (-1.5,0) -- (-1.5, 1.5) node[midway,fill=white] {\small7};
      \draw[-] (0,1.5) -- (-1.5, 1.5) node[midway,fill=white] {\small5};
      \filldraw[black] (-1.5,1.5) circle (1pt) node[left] {};
    \end{tikzpicture}
\end{center}
  \caption{$X([0])/\sim_2$}
\label{fig:3stepb}
\end{subfigure}%
\begin{subfigure}{.4\textwidth}
  \begin{center}
    \begin{tikzpicture}
    \useasboundingbox (-3,-1.5) rectangle (3,1.5);
    \draw (0,0) ellipse (3 and 1.5);
      \begin{scope}
        \clip (0,-1.8) ellipse (2.8 and 2.3);
        \draw (0,2.2) ellipse (3 and 2.5);
      \end{scope}
      \begin{scope}
        \clip (0,2.2) ellipse (3 and 2.5);
        \draw (0,-2.2) ellipse (3 and 2.5);
      \end{scope}
      \begin{scope}
        \clip (0,-0.3) rectangle (-1,-1.5);
        \draw (0,-0.9) ellipse (0.2 and 0.6);
      \end{scope}
      \begin{scope}
        \clip (0,-0.3) rectangle (1,-1.5);
        \draw[dashed] (0,-0.9) ellipse (0.2 and 0.6);
      \end{scope}
      \begin{scope}
         \clip (-3,0) rectangle (3,-1);
          \draw[] (0,0) ellipse (3 and 1);
      \end{scope}
      \begin{scope}
         \clip (-3,0) rectangle (3,1);
          \draw[dashed] (0,0) ellipse (3 and 1);
      \end{scope}
      \filldraw[black] (-0.2,-1) circle (1pt) node[left] {};
      \filldraw[] (-0.1,-0.6) circle (0pt) node[left] {\tiny 7};
      \filldraw[] (-0.05,-1.3) circle (0pt) node[left] {\tiny 8};
      \filldraw[] (-0.4,-0.85) circle (0pt) node[left] {\tiny 5};
      \filldraw[] (0.7,-0.85) circle (0pt) node[left] {\tiny 6};
  \end{tikzpicture}
\end{center}
  \caption{$(X([0])/\sim_2)/\sim_3$}
\label{fig:3stepc}
\end{subfigure}
\caption{The construction of $e([0])$ from factorisation cubes.}
\label{fig:3step}
\end{figure}
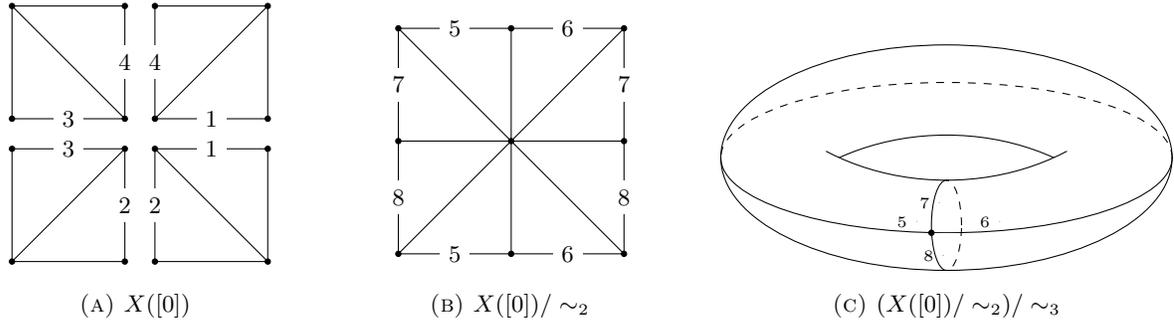

\begin{figure}[ht!]
    \centering
    \begin{tikzpicture}[middlearrow/.style={ decoration={markings, mark= at position 0.5 with {\arrow{#1}} ,}, postaction={decorate}}]
        \draw[middlearrow={>}] (0,0) -- (4,4) node[midway,sloped,above] {$[f_{0 \tau_1}]$};
        \draw[middlearrow={>}] (0,0) -- (4,0) node[midway,sloped,above] {$[f_{0 \sigma_1}]$};
        \draw[middlearrow={>}] (0,0) -- (4,-4) node[midway,sloped,above] {$[f_{0 \tau_2}]$};
        \draw[middlearrow={>}] (0,0) -- (0,-4) node[midway,sloped,above] {$[f_{0 \sigma_2}]$};
        \draw[middlearrow={>}] (0,0) -- (-4,-4) node[midway,sloped,above] {$[f_{0 \tau_3}]$};
        \draw[middlearrow={>}] (0,0) -- (-4,0) node[midway,sloped,above] {$[f_{0 \sigma_3}]$};
        \draw[middlearrow={>}] (0,0) -- (-4,4) node[midway,sloped,above] {$[f_{0 \tau_4}]$};
        \draw[middlearrow={>}] (0,0) -- (0,4) node[midway,sloped,above] {$[f_{0 \sigma_4}]$};
        \draw[middlearrow={>}] (0,4) -- (4,4) node[midway,sloped,above] {$[f_{\sigma_4 \tau_1}]$};
        \draw[middlearrow={>}] (0,4) -- (-4,4) node[midway,sloped,above] {$[f_{\sigma_4 \tau_4}]$};
        \draw[middlearrow={>}] (4,0) -- (4,4) node[midway,sloped,above,rotate=180] {$[f_{\sigma_1 \tau_1}]$};
        \draw[middlearrow={>}] (4,0) -- (4,-4) node[midway,sloped,above] {$[f_{\sigma_1 \tau_2}]$};
        \draw[middlearrow={>}] (0,-4) -- (4,-4) node[midway,sloped,below] {$[f_{\sigma_2 \tau_2}]$};
        \draw[middlearrow={>}] (0,-4) -- (-4,-4) node[midway,sloped,below] {$[f_{\sigma_2 \tau_3}]$};
        \draw[middlearrow={>}] (-4,0) -- (-4,-4) node[midway,sloped,above,rotate=180] {$[f_{\sigma_3 \tau_3}]$};
        \draw[middlearrow={>}] (-4,0) -- (-4,4) node[midway,sloped,above] {$[f_{\sigma_3 \tau_4}]$};
        \filldraw[black] (0, 4) circle (2pt) node[above] {$[f_{\sigma_4 \sigma_4}]$};
        \filldraw[black] (0, 0) circle (2pt) node[above] {};
      \filldraw[black] (0, -4) circle (2pt) node[below] {$[f_{\sigma_2 \sigma_2}]$};
      \filldraw[black] (-4,0) circle (2pt) node[left] {$[f_{\sigma_3 \sigma_3}]$};
      \filldraw[black] (4,0) circle (2pt) node[right] {$[f_{\sigma_1 \sigma_2}]$};
      \filldraw[black] (4,4) circle (2pt) node[right] {$[f_{\tau_1 \tau_1}]$};
      \filldraw[black] (4,-4) circle (2pt) node[right] {$[f_{\tau_2 \tau_2}]$};
      \filldraw[black] (-4,-4) circle (2pt) node[left] {$[f_{\tau_3 \tau_3}]$};
      \filldraw[black] (-4,4) circle (2pt) node[left] {$[f_{\tau_4 \tau_4}]$};
    \end{tikzpicture}
    \caption{The classifying space $\Bcal [0] \setminus \Cfrak(\Sigma, \Pfrak)$.}
    \label{fig:CWcell}
\end{figure}

\end{exmp}

\subsection{The picture group of a fan}
\label{sec:picturegroup}

Igusa, Todorov and Weyman \cite{IgusaTodorovWeyman2016} associate the picture group to a representation-finite hereditary algebra and study its cohomology. Later, Hanson and Igusa \cite{HansonIgusa2021} extended their definition to $\tau$-tilting finite algebras. In these cases it was shown by the respective authors, that the classifying space of the ($\tau$)-cluster morphism category has fundamental group isomorphic to the picture group and thus if the classifying space is a $K(\pi,1)$ the cohomologies of the two coincide. An algebra induces a partial ordering on the maximal cones of its $g$-fan which defines the picture group, see \cref{sec:specialcase}. To generalise the picture group to finite and complete simplicial fans we assume the existence of a \textit{weak fan poset}, generalising the notion of a fan poset introduced in \cite[Sec. 3]{Reading2005}.

\begin{defn} \label{defn:fanposet}
    A \textit{fan poset} is a pair $(\Sigma, \Pcal)$ where $\Sigma$ is a finite complete fan in $\R^n$ and $\Pcal$ is a finite poset whose elements are the maximal cones of $\Sigma$, subject to the following conditions:
    \begin{enumerate}
        \item For every cone $\sigma \in \Sigma$, the set of maximal cones $\starr(\sigma)^n$ containing $\sigma$ is an interval in $\Pcal$, which we denote by $[\sigma^-, \sigma^+]$ and call a \textit{facial interval}. 
        \item For every interval $I$ of $\Pcal$, the union of all maximal cones in $I$ is a convex polyhedral cone, which may not be strongly-convex.
    \end{enumerate}
\end{defn}

For example, the poset of regions of a central simplicial hyperplane arrangement with an arbitrary choice of base region as defined by Edelman \cite{Edelman} defines a fan poset by \cite[Thm. 4.2]{Reading2005}. Moreover, $g$-fans of $\tau$-tilting finite algebras are equipped with a natural fan poset induced by the poset of torsion classes as we show in \cref{prop:fanposet}. To achieve greater generality we weaken the definition above in the following way.

\begin{defn} \label{defn:weakfanposet}
    A \textit{weak fan poset} is a pair $(\Sigma, \Pcal)$ where $\Sigma$ is a finite complete fan in $\R^n$ and $\Pcal$ is a finite poset whose elements are the maximal cones of $\Sigma$, subject to the following conditions:
    \begin{enumerate}
        \item For every cone $\sigma \in \Sigma$, the set of maximal cones $\starr(\sigma)^n$ containing $\sigma$ is an interval in $\Pcal$, which we denote by $[\sigma^-, \sigma^+]$ and call a \textit{facial interval}. 
        \item Every cover relation $\tau_1 \lessdot \tau_2$ in $\Pcal$ can be seen as a facial interval $[\sigma^-, \sigma^+]$ for some $(n-1)$-dimensional cone $\sigma = \tau_1 \cap \tau_2$.
    \end{enumerate}
\end{defn}

We call a triple $(\Sigma, \Pfrak, \Pcal)$ a \textit{partitioned fan poset} if $(\Sigma, \Pfrak)$ is a partitioned fan and $(\Sigma, \Pcal)$ is a weak fan poset. The result \cite[Prop. 3.3]{Reading2005} implies that if $(\Sigma, \Pcal)$ is a fan poset then it is also a weak fan poset.

\begin{exmp}
    Consider the fan of the Hirzebruch surface in \cref{fig:hirzebruch}, then the poset $\Pcal$ whose two maximal chains are given by $\tau_3 \lessdot \tau_2 \lessdot \tau_1$ and $\tau_3 \lessdot \tau_4 \lessdot \tau_1$ with no other cover relations forms a weak fan poset but not a fan poset, since the union $\tau_2 \cup \tau_3$ is not a convex polyhedral cone.
\end{exmp}

For a poset $\Pcal$ we denote by $\Hasse(\Pcal)$ the oriented graph whose vertices are elements of $\Pcal$ and which has an arrow $x \xleftarrow{} y$ whenever $x \lessdot y$ is a cover relation in $\Pcal$. If $(\Sigma, \Pcal)$ is a weak fan poset, we label the arrows of $\Hasse(\Pcal)$ with the cone of codimension 1 giving rise to the cover relation. We are now ready to define the picture group of a partitioned fan with a choice of fan poset.

\begin{defn} \label{defn:picgroup}
    Let $(\Sigma, \Pfrak, \Pcal)$ be a partitioned fan poset. Define the \textit{picture group} $G(\Sigma, \Pfrak, \Pcal)$ to have generators $\{ X_{[\sigma]}: \sigma \in \Sigma^{n-1}\}$ and the following sets of relations:
    \begin{enumerate} 
        \item $ X_{[\sigma_1]} \dots X_{[\sigma_k]} = X_{[\sigma_1']} \dots X_{[\sigma_\ell']}$ whenever $(\sigma_1, \dots, \sigma_k)$ and $(\sigma_1', \dots, \sigma_\ell')$ are two distinct ordered sequences of cones of codimension 1 labelling the arrows of some maximal chain of an interval $[\tau_1, \tau_2]$ in $\Pcal$. We abbreviate this group element by $X_{[\tau_1, \tau_2]}$.
        \item $X_{[\sigma_1^-, \kappa_1^-]} = X_{[\sigma_2^-, \kappa_2^-]}$, whenever $[f_{\sigma_1 \kappa_1}] = [f_{\sigma_2 \kappa_2}]$ in $\Cfrak(\Sigma, \Pfrak)$.
    \end{enumerate}
\end{defn}

Notice that the picture group $G(\Sigma, \Pfrak, \Pcal)$ satisfies $X_{[\sigma^-, \kappa^-]} X_{[\kappa^-, \tau^-]} = X_{[\sigma^-, \tau^-]}$ for appropriate $\sigma, \kappa, \tau \in \Sigma$. We prove in \cref{subsec:tauclustermorph} that the picture group defined for $\tau$-tilting finite algebras in \cite{HansonIgusa2021} satisfies these relations. Given a fan $\Sigma$, different choices of a weak fan poset $(\Sigma, \Pcal)$ may define non-isomorphic picture groups. Moreover, it is possible that some generators become trivial due to the relation $X_{[\sigma]}=X_e$ arising from relations of the second type in \cref{defn:picgroup}. We call a fan poset which does not annihilate any generators in this way \textit{non-degenerate}. For a partitioned fan poset $(\Sigma, \Pfrak, \Pcal)$ we say that $\Pcal$ is \textit{well-defined on identified stars} whenever the induced fan posets $\Pcal|_{\pi_{\sigma_1}}$ and $\Pcal|_{\pi_{\sigma_2}}$ on $\pi_{\sigma_1}(\starr \sigma_1)$ and $\pi_{\sigma_2}(\starr \sigma_2)$ coincide for distinct cones $\sigma_1, \sigma_2 \in \Sigma$ such that $[\sigma_1] = [\sigma_2]$ in $\Cfrak(\Sigma, \Pfrak)$. These two notions are equivalent. 

\begin{lem} \label{lem:nondegimplystars}
    A fan poset $(\Sigma, \Pcal)$ is non-degenerate if and only if $\Pcal$ is well-defined on identified stars. In this case the second set of relations of \cref{defn:picgroup} is trivially satisfied.
\end{lem}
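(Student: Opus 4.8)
The plan is to localise both conditions to the behaviour of codimension $1$ walls under the identifications, since the picture-group generators $X_{[\rho]}$ are indexed by codimension $1$ cones. Write $[\sigma^-,\sigma^+]=\starr(\sigma)^n$ for the facial interval of $\sigma$; for a codimension $1$ cone $\rho$ this is a single covering relation $\rho^-\lessdot\rho^+$ whose unique edge is labelled by $\rho$, so $X_{[\rho^-,\rho^+]}=X_{[\rho]}$. I would first record two structural facts. First, for any cone $\sigma$ the projection $\pi_\sigma$ identifies the facial interval $[\sigma^-,\sigma^+]$ with the induced weak fan poset $\Pcal|_{\pi_\sigma}$ on $\pi_\sigma(\starr\sigma)$ as an order isomorphism, so minima go to minima. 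Second, for the complete simplicial fan $\pi_\sigma(\starr\sigma)$ the condition \cref{defn:fanposet}(2) applied to codimension $2$ cones forces every covering relation of $\Pcal|_{\pi_\sigma}$ to be a wall crossing (two maximal cones sharing a codimension $1$ face): two non-adjacent comparable cones share a lower face $\tau$, and the interval $\starr(\tau)^n$ then contains a cone strictly between them. Hence $\Pcal|_{\pi_\sigma}$ is the transitive closure of its oriented walls, and is therefore \emph{determined by those orientations}. Two induced posets on the common fan $\pi_{\sigma_1}(\starr\sigma_1)=\pi_{\sigma_2}(\starr\sigma_2)$ thus coincide if and only if they orient every wall identically.

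Next I would prove the contrapositive of ``non-degenerate $\Rightarrow$ well-defined''. Suppose $\Pcal$ is not well-defined on identified stars, witnessed by $\sigma_1\sim\sigma_2$ with $\Pcal|_{\pi_{\sigma_1}}\neq\Pcal|_{\pi_{\sigma_2}}$; by the structural fact some wall is oriented oppositely, i.e.\ there are identified codimension $1$ cones $\rho_1\sim\rho_2$ (identified by admissibility from $\pi_{\sigma_1}(\rho_1)=\pi_{\sigma_2}(\rho_2)$, so $X_{[\rho_1]}=X_{[\rho_2]}$) with $\sigma_i\subseteq\rho_i$ and $\pi_{\sigma_1}(\rho_1^+)=\pi_{\sigma_2}(\rho_2^-)$. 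Since $\rho_1\sim\rho_2$ have equal orthogonal complements, $\pi_{\rho_1}=\pi_{\rho_2}$, and \cref{lem:linearalglem} gives $\pi_{\rho_i}=\pi_{\rho_i}\circ\pi_{\sigma_i}$; hence
\[\pi_{\rho_1}(\rho_1^+)=\pi_{\rho_1}\bigl(\pi_{\sigma_1}(\rho_1^+)\bigr)=\pi_{\rho_2}\bigl(\pi_{\sigma_2}(\rho_2^-)\bigr)=\pi_{\rho_2}(\rho_2^-).\]
By \cref{defn:thecategory} this means $[f_{\rho_1\rho_1^+}]=[f_{\rho_2\rho_2^-}]$, so relation (2) of \cref{defn:picgroup} yields $X_{[\rho_1^-,(\rho_1^+)^-]}=X_{[\rho_2^-,(\rho_2^-)^-]}$. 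As $\rho_1^+$ and $\rho_2^-$ are maximal their facial intervals are points, so this reads $X_{[\rho_1]}=X_{[\rho_1^-,\rho_1^+]}=X_{[\rho_2^-,\rho_2^-]}=X_e$, annihilating a generator; thus $\Pcal$ is degenerate.

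For the converse, and simultaneously the final sentence, I would show that if $\Pcal$ is well-defined on identified stars then every instance of relation (2) is a tautology, whence no generator can ever be trivialised and $\Pcal$ is non-degenerate. Given $[f_{\sigma_1\kappa_1}]=[f_{\sigma_2\kappa_2}]$ we have $\sigma_1\sim\sigma_2$, $\kappa_1\sim\kappa_2$ and $\pi_{\sigma_1}(\kappa_1)=\pi_{\sigma_2}(\kappa_2)$. The hypothesis makes the identity map on the common fan an order isomorphism $\Pcal|_{\pi_{\sigma_1}}\cong\Pcal|_{\pi_{\sigma_2}}$; matching global minima gives $\pi_{\sigma_1}(\sigma_1^-)=\pi_{\sigma_2}(\sigma_2^-)$, and since $\pi_{\sigma_i}(\kappa_i^-)$ is the minimum in $\Pcal|_{\pi_{\sigma_i}}$ of the maximal cones containing $\pi_{\sigma_i}(\kappa_i)$, the equality $\pi_{\sigma_1}(\kappa_1)=\pi_{\sigma_2}(\kappa_2)$ forces $\pi_{\sigma_1}(\kappa_1^-)=\pi_{\sigma_2}(\kappa_2^-)$. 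Thus the two subintervals $[\sigma_1^-,\kappa_1^-]$ and $[\sigma_2^-,\kappa_2^-]$ correspond under the order isomorphism. A covering $u\lessdot u'$ inside the first is a wall crossing labelled by $u\cap u'\supseteq\sigma_1$, its image $\tilde u\lessdot\tilde u'$ is labelled by $\tilde u\cap\tilde u'\supseteq\sigma_2$, and these two walls have equal $\pi_{\sigma_i}$-projections, so by admissibility (\cref{defn:admissiblepartition}) they are identified and carry the same generator. Reading relation (1) of \cref{defn:picgroup} along corresponding maximal chains therefore produces the identical word on both sides, so $X_{[\sigma_1^-,\kappa_1^-]}=X_{[\sigma_2^-,\kappa_2^-]}$ already holds; relation (2) imposes nothing new.

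The main obstacle I expect is the structural input of the first paragraph: pinning down that the induced weak fan posets are genuinely determined by their wall orientations, which is exactly where \cref{defn:fanposet}(2) for codimension $2$ cones is needed to rule out non-adjacent covering relations, and that ``disagreement of the two posets'' can always be localised to a single reversed wall rather than some global discrepancy. The remaining care is purely the projection bookkeeping, transferring equalities of $\pi_{\sigma_i}$-projections (where the posets and admissibility live) to the $\pi_{\rho_i}$-projections that relation (2) actually compares, for which the interaction $\pi_{\rho_i}=\pi_{\rho_i}\circ\pi_{\sigma_i}$ from \cref{lem:linearalglem} together with $\pi_{\rho_1}=\pi_{\rho_2}$ is the essential tool.
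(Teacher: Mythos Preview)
Your proof is correct and follows essentially the same approach as the paper's: both reduce the two conditions to agreement of wall orientations in the projected stars, use admissibility to identify the corresponding codimension~$1$ cones, and argue that a reversed wall forces a generator to be trivialised by a type~(2) relation while matching orientations make every type~(2) relation a tautology. Your version is considerably more explicit about the projection bookkeeping---in particular the use of $\pi_{\rho_i}=\pi_{\rho_i}\circ\pi_{\sigma_i}$ from \cref{lem:linearalglem} together with $\pi_{\rho_1}=\pi_{\rho_2}$ to lift equalities from the $\sigma$-projection to the $\rho$-projection---and you organise the forward direction more cleanly by folding ``well-defined $\Rightarrow$ non-degenerate'' into the stronger statement that relation~(2) is already a consequence of relation~(1), whereas the paper treats these as two separate observations.
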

\begin{proof}
    The fan poset is determined by its covering relations of the form $\tau_1 \lessdot \tau_2$ whenever $\codim \tau_1 \cap \tau_2 = 1$. Hence it is clear that if $\Pcal$ is well-defined on identified stars, then for $\sigma_1 \sim \sigma_2$ of codimension 1 we have $\pi_{\sigma_1}(\sigma_1^-) = \pi_{\sigma_1}(\sigma_1)^- = \pi_{\sigma_2}(\sigma_2)^- = \pi_{\sigma_2}(\sigma_2^-)$. Thus the fan poset is non-degenerate. Conversely, if $(\Sigma,\Pcal)$ is non-degenerate, assume for a contradiction that there is some pair of identified stars such that the induced poset on their projections does not coincide. Then at least one covering relation of the poset must be different in the projection and that wall would be crossed in two different directions in the fan poset and thus the poset would no longer be non-degenerate. Thus $\Pcal$ is well-defined on identified stars. In this case, the relations of type 2 in \cref{defn:picgroup} are satisfied because by definition $[f_{\sigma_1 \tau_1}]= [f_{\sigma_2 \tau_2}]$ implies $\pi_{\sigma_1}(\tau_1) = \pi_{\sigma_2}(\tau_2)$ and since the poset is well-defined on identified stars we have $\pi_{\sigma_1}(\sigma_1^-) = \pi_{\sigma_1}(\sigma_1)^- = \pi_{\sigma_2}(\sigma_2)^- = \pi_{\sigma_2}(\sigma_2^-)$ and similarly for $\tau_1^-$ and $\tau_2^-$. Since the paths from $\pi_{\sigma_i}(\tau_i^-)$ to $\pi_{\sigma_i}(\sigma_i^-)$ coincide in the projection for $i=1,2$ and the partition is admissible, the labels of the paths from $\tau_i^-$ to $\sigma_i^-$ must be pairwise identified. 
\end{proof}

The assumption on a (weak) fan poset $\Pcal$ of the following lemma is referred to as $\Pcal$ being \textit{induced by} a linear functional $b \in (\R^n)^{\lor}$ in \cite[Sec. 3]{Reading2005}.

\begin{lem}\label{lem:linfunctposet}
    Let $(\Sigma,\Pcal)$ be a weak fan poset. Let $b$ be a linear functional on $\R^n$. For any $\tau_1 \lessdot \tau_2$ in $\Pcal$, let $\nu$ be the unit normal vector to the hyperplane $\spann\{ \tau_1 \cap \tau_2\}\subseteq \R^n$ separating $\tau_1$ from $\tau_2$, oriented to point from $\tau_1$ to $\tau_2$. If for every $\tau_1 \lessdot \tau_2$ in $\Pcal$ with corresponding $\nu$ we have $b(\nu) > 0$, then the poset is non-degenerate, equivalently, well-defined on identified stars.
\end{lem}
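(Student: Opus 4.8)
The plan is to prove the equivalent statement that $\Pcal$ is \emph{well-defined on identified stars}, since by \cref{lem:nondegimplystars} this is the same as non-degeneracy. Recall from the proof of \cref{lem:nondegimplystars} that a (weak) fan poset on a fixed fan is completely determined by its covering relations $\tau_1 \lessdot \tau_2$, each of which satisfies $\codim(\tau_1 \cap \tau_2) = 1$ and so crosses a single codimension 1 wall. Thus it suffices to fix distinct cones $\sigma_1 \sim \sigma_2$ with $[\sigma_1] = [\sigma_2]$ and to show that the induced posets $\Pcal|_{\pi_{\sigma_1}}$ and $\Pcal|_{\pi_{\sigma_2}}$ on the common fan $\pi_{\sigma_1}(\starr \sigma_1) = \pi_{\sigma_2}(\starr \sigma_2)$ assign the same orientation to every covering relation.

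The geometric heart of the argument is that the projections preserve the relevant normals. First I would record that for $\sigma \subseteq \kappa$ with $\kappa$ of codimension 1, the normal $\nu_\kappa$ to the hyperplane $\spann\{\kappa\}$ satisfies $\nu_\kappa \in \spann\{\kappa\}^\perp \subseteq \spann\{\sigma\}^\perp$, using $\spann\{\sigma\} \subseteq \spann\{\kappa\}$. Hence $\pi_\sigma$ fixes $\nu_\kappa$, and $\nu_\kappa$ is exactly the normal of the projected wall $\pi_\sigma(\kappa)$ inside $\spann\{\sigma\}^\perp$. Now a covering relation of $\Pcal|_{\pi_{\sigma_i}}$ crosses some wall $\bar\kappa$ of the common projected fan; let $\kappa_i$ be the unique wall of $\starr(\sigma_i)$ with $\sigma_i \subseteq \kappa_i$ and $\pi_{\sigma_i}(\kappa_i) = \bar\kappa$. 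Since $\sigma_1 \sim \sigma_2$ gives $\spann\{\sigma_1\}^\perp = \spann\{\sigma_2\}^\perp$, the preceding observation shows that $\kappa_1$ and $\kappa_2$ have a common normal $\bar\nu \in \spann\{\sigma_1\}^\perp = \spann\{\sigma_2\}^\perp$, namely the normal of $\bar\kappa$.

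Next I would use the hypothesis on $b$ to pin down the orientation. Fixing $\bar\nu$, the assumption $b(\nu) > 0$ for the normal $\nu$ pointing from the smaller to the larger cone forces $b(\bar\nu) \neq 0$ and says precisely that the covering relation of $\Pcal$ across a wall with normal $\bar\nu$ points in the $+\bar\nu$ direction when $b(\bar\nu) > 0$ and in the $-\bar\nu$ direction when $b(\bar\nu) < 0$; in either case the orientation is a function of $\sign(b(\bar\nu))$ alone. Applying this to the covering relations of $\Pcal$ across $\kappa_1$ and across $\kappa_2$, which share the normal $\bar\nu$, both are oriented by the same sign $\sign(b(\bar\nu))$ and hence project to the same orientation of the edge across $\bar\kappa$. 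As this holds for every wall $\bar\kappa$ of the common projected fan, the posets $\Pcal|_{\pi_{\sigma_1}}$ and $\Pcal|_{\pi_{\sigma_2}}$ have identical covering relations, so they coincide. Therefore $\Pcal$ is well-defined on identified stars, and by \cref{lem:nondegimplystars} it is non-degenerate.

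I expect the main obstacle to be the bookkeeping in the second paragraph: making fully rigorous that a single normal $\bar\nu$ attached to $\bar\kappa$ simultaneously serves as the normal in $\R^n$ of both preimage walls $\kappa_1$ and $\kappa_2$, and that the induced covering-relation orientation genuinely depends only on $\sign(b(\bar\nu))$ rather than on any datum attached to $\sigma_i$. Once the projection-invariance $\pi_{\sigma_i}(\nu_{\kappa_i}) = \nu_{\kappa_i}$ and the identification $\spann\{\sigma_1\}^\perp = \spann\{\sigma_2\}^\perp$ are set up carefully, the remaining steps are formal.
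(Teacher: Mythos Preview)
Your proposal is correct and follows essentially the same approach as the paper: both arguments observe that identified cones have the same linear span, so the walls in their stars share a common normal $\bar\nu$, and the hypothesis $b(\nu)>0$ forces the orientation across any such wall to depend only on $\sign(b(\bar\nu))$, precluding opposite orientations. The paper phrases this as a contradiction (if the induced orientations disagreed then $\nu_1=-\nu_2$, contradicting $b(\nu_1),b(\nu_2)>0$) and states it only for codimension~1 cones $\sigma_i$, whereas you give the direct version for arbitrary identified $\sigma_1\sim\sigma_2$; but the underlying idea is identical.
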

\begin{proof}
    The poset is determined by its covering relation. Let $[\sigma_1]=[\sigma_2] \in \Cfrak(\Sigma, \Pfrak)$ be an equivalence class of codimension 1 cones. We need to show that the poset induced on $\pi_{\sigma_i}(\starr \sigma_i)$ coincides. Assume for a contradiction that $\tau_1 \lessdot \tau_2$ is the covering relation given by the star of $\sigma_1$ and $\tau_3 \lessdot \tau_4$ the covering relation given by the star of $\sigma_2$ such that $\pi_{\sigma_1}(\tau_1) = \pi_{\sigma_2}(\tau_4)$ and $\pi_{\sigma_1}(\tau_2) = \pi_{\sigma_2}(\tau_3)$. Then the normal vector $\nu_1$ pointing from $\tau_1$ to $\tau_2$ is orthogonal to the hyperplane $\spann \{ \sigma_1\}$ and satisfies $b(\nu_1)> 0$, but at the same time the normal vector $\nu_2$ pointing from $\tau_3$ to $\tau_4$ is orthogonal to the hyperplane $\spann\{ \sigma_2\}$ and satisfies $b(\nu_2) > 0$. Since $[\sigma_1]=[\sigma_2]$ we have $\spann \{ \sigma_1\} = \spann\{\sigma_2\}$, it follows that $\nu_1 = - \nu_2$, but then either $b(\nu_1) < 0$ or $b(\nu_2) < 0$, a contradiction.
\end{proof}

In a similar way to \cite[Prop. 4.4d]{HansonIgusa2021} we may give an alternate description of the picture group as follows.
\begin{lem}\label{defn:altdefn}
    If $\Pcal$ is non-degenerate, the picture group $G(\Sigma, \Pfrak, \Pcal)$ may be presented with the set of generators $\{ X_{[\sigma]}: \sigma \in \Sigma^{n-1}\} \cup \{g_{\tau}: \tau \in \Sigma^n\}$ and a relation
    \[ g_{\tau_2} = X_{[\sigma]} g_{\tau_1}\]
    if there is an arrow $\tau_1 \xleftarrow{\sigma} \tau_2$ in $\Hasse(\Pcal)$ labelled by by $\sigma$ and the relation $g_{0^-} = e$.
\end{lem}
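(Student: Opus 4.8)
The plan is to compare the given presentation with the defining presentation of $G(\Sigma, \Pfrak, \Pcal)$ from \cref{defn:picgroup} by means of Tietze transformations. Since $\Pcal$ is non-degenerate, \cref{lem:nondegimplystars} guarantees that the second set of relations in \cref{defn:picgroup} is automatically satisfied, so $G(\Sigma, \Pfrak, \Pcal)$ is presented by the generators $\{X_{[\sigma]} : \sigma \in \Sigma^{n-1}\}$ subject only to the first type of relation. I would first record that the facial interval of the zero cone is all of $\Pcal$, so that $\Pcal$ is bounded with unique minimum $0^-$; hence for every maximal cone $\tau$ there is a saturated chain from $0^-$ to $\tau$ lying in the interval $[0^-,\tau]$, which is exactly what condition (2) of \cref{defn:fanposet} provides.

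First I would fix, for each $\tau \in \Sigma^n$, one saturated chain from $0^-$ to $\tau$ and let $w_\tau$ be the resulting word in the generators, namely the product of the wall-labels along the chain, i.e.\ a representative of the interval element $X_{[0^-,\tau]}$; by the first relation this element is independent of the chosen chain. Adjoining a new generator $g_\tau$ together with the defining relation $g_\tau = w_\tau^{-1}$ for every $\tau$ is a Tietze transformation and so does not alter the group. It then remains to show that, over the free group on $\{X_{[\sigma]}\} \cup \{g_\tau\}$, the relator set consisting of the first relation together with all $g_\tau = w_\tau^{-1}$ has the same normal closure as the target relator set $\{g_{\tau_1} = X_{[\sigma]} g_{\tau_2} : \tau_1 \xrightarrow{\sigma} \tau_2\} \cup \{g_{0^-} = e\}$.

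For one inclusion I would derive the target relations from the defining ones: $g_{0^-} = e$ holds because $w_{0^-}$ is the empty product, and for a covering arrow $\tau_1 \xrightarrow{\sigma} \tau_2$ the relation $g_{\tau_1} = X_{[\sigma]} g_{\tau_2}$ amounts to $w_{\tau_2} = w_{\tau_1} X_{[\sigma]}$, which follows from the first relation applied to the interval $[0^-,\tau_2]$ upon extending the chosen chain to $\tau_1$ by the single arrow $\tau_1 \xrightarrow{\sigma} \tau_2$. For the reverse inclusion I would run a telescoping argument along chains: fixing a saturated chain $0^- = \rho_0 \lessdot \dots \lessdot \rho_m = \tau$ with walls $\sigma_1,\dots,\sigma_m$ and applying the target relations $g_{\rho_{i-1}} = X_{[\sigma_i]} g_{\rho_i}$ step by step together with $g_{0^-} = e$ recovers $g_\tau = (X_{[\sigma_1]}\cdots X_{[\sigma_m]})^{-1} = w_\tau^{-1}$; and applying the same telescoping to two maximal chains in an arbitrary interval $[\tau_1,\tau_2]$ shows that both associated products equal $g_{\tau_1} g_{\tau_2}^{-1}$ and hence coincide, which is precisely the first relation.

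The main obstacle is not any single computation but keeping the two roles of the first relation straight: in the forward direction it guarantees that the words $w_\tau$ are well defined and compatible with single wall-crossings, whereas in the reverse direction it is reconstructed as the statement that $g_{\tau_1} g_{\tau_2}^{-1}$ is independent of the path in $\Hasse(\Pcal)$. The only point I expect to require genuine care is the bookkeeping of orientations and of the base point: one must verify that the minimum of $\Pcal$ really is $0^-$ and that every maximal cone is reachable by a saturated chain from it, so that each $w_\tau$ is defined and the telescoping terminates correctly. Once these conventions are fixed, the equivalence of the two relator sets, and hence the isomorphism of the two presentations, follows.
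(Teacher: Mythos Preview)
Your proposal is correct and takes essentially the same approach as the paper, only more carefully. The paper's proof is a two-line telescoping: it applies $g_{\tau_1}=X_{[\sigma]}g_{\tau_2}$ repeatedly along two maximal chains in an interval to recover the type~1 relation, and dismisses the type~2 relations via non-degeneracy and \cref{lem:nondegimplystars}. Your Tietze-transformation framing makes explicit the direction the paper leaves implicit (that adjoining the $g_\tau$'s as new generators defined by words in the $X_{[\sigma]}$'s is harmless), and you correctly identify the orientation bookkeeping as the only genuine subtlety. Indeed, there are small sign slips in your write-up (whether $g_\tau$ equals $w_\tau$ or $w_\tau^{-1}$, and the direction of the arrow in the relation $g_{\rho_{i-1}}=X_{[\sigma_i]}g_{\rho_i}$ given that $\rho_{i-1}\lessdot\rho_i$), but these are precisely the conventions you already flag as needing care and they do not affect the argument.
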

\begin{proof}
    Let $H$ be a group with presentation given as above and let $\tau_2 \xrightarrow{\sigma_1} \dots \xrightarrow{\sigma_k} \tau_1$ and $\tau_2 \xrightarrow{\sigma_1'} \dots \xrightarrow{\sigma_\ell'} \tau_1$ be two distinct sequences of codimension 1 cones labelling the arrows of some maximal chain in the interval $[\tau_1, \tau_2] \subseteq \Pcal$. These sequences of codimension 1 cones give rise to the relation
    \[ X_{[\sigma_1]} \dots X_{[\sigma_k]} g_{\tau_1} = g_{\tau_2} = X_{[\sigma_1']} \dots X_{[\sigma_\ell']} g_{\tau_1}\]
    in $H$ which implies that $H$ satisfies the relation $X_{[\sigma_1]} \dots X_{[\sigma_k]} = X_{[\sigma_1']} \dots X_{[\sigma_\ell']}$ as required. Moreover, since $\Pcal$ has a minimal element $0^-$, there exists a sequence $\tau \xrightarrow{\sigma_1''} \dots \xrightarrow{\sigma_s''} 0^-$ labelling the arrows of a maximal chain in the interval $[0^-, \tau]$ in $\Hasse(\Pcal)$. We then obtain
    \[ g_{\tau} = X_{[\sigma_1'']} \dots X_{[\sigma_s'']} \]
    since $g_{0^-} = e$. Hence every generator of $H$ can be written in terms of generators of $G(\Sigma, \Pfrak, \Pcal)$. As a consequence of these two observations we can replace the generators $\{g_{\tau}: \tau \in \Pcal\}$ by expressions using only generators $X_{[\sigma_i]}$ and obtain the presentation of the picture group in \cref{defn:picgroup}.
\end{proof}

The finite CW-structure obtained for $\Bcal \Cfrak(\Sigma, \Pfrak)$ in \cref{thm:CWcomplex}, allows us to describe the fundamental group in the following way: By definition, the 1-skeleton of the CW-complex $\Bcal \Cfrak(\Sigma, \Pfrak)$ is a graph, which contains a loop whenever two adjacent maximal cones get identified. Since this graph is connected, it contains a maximal tree $T$. Now every edge $e$ which is not part of the tree determines a loop $f_e$ in the graph and thus a generator of the fundamental group. For more details we refer to \cite[Sec. 1.A]{Hatcher2002}.

\begin{lem} \label{lem:fundamentalgroup}
    The fundamental group $\pi_1 (\Bcal \Cfrak(\Sigma, \Pfrak))$ is the free group with one generator $[f_e]$ for each edge $e \in \Bcal \Cfrak(\Sigma, \Pfrak)^1 - T$ modulo the relations given by the attaching maps of the 2-cells.
\end{lem}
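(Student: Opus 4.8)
The plan is to recognise this statement as a direct application of the standard theory of fundamental groups of CW-complexes to the explicit cellular structure produced in \cref{thm:CWcomplex}, so that the work is mostly bookkeeping rather than new geometry. First I would record that $\Bcal\Cfrak(\Sigma, \Pfrak)$ is path-connected: the zero cone provides an initial object $[0]$ with a morphism $[f_{0\tau}]$ to every other object, so in the $1$-skeleton every vertex (an equivalence class of maximal cones) is joined by an edge-path to $[0]$, whence the underlying graph is connected. With connectedness established, I would invoke the classical fact that the fundamental group depends only on the $2$-skeleton (the irrelevance of cells of dimension $\geq 3$), as already noted in the text and made precise in \cite[Prop. 1.26]{Hatcher2002}; thus the higher-dimensional cells $e([\sigma])$ with $\dim\sigma \leq n-3$ play no role.

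Next I would apply the two standard computations for the $1$- and $2$-skeleta in turn. The $1$-skeleton is a connected graph whose vertices are the classes of maximal cones and whose edges are the $1$-cells $e([\sigma])$ for $\sigma \in \Sigma^{n-1}$, each joining the classes of the two maximal cones it separates. Choosing a maximal tree $T$, the graph's fundamental group is the free group on the set $\{[f_e] : e \in \Bcal\Cfrak(\Sigma, \Pfrak)^1 - T\}$ of edges outside the tree (\cite[Sec. 1.A]{Hatcher2002}). Attaching the $2$-cells $e([\sigma])$ for $\sigma \in \Sigma^{n-2}$ then, by van Kampen's theorem in the form of \cite[Prop. 1.26]{Hatcher2002}, quotients this free group by the normal subgroup generated by the words that the attaching maps of these $2$-cells trace out in the $1$-skeleton. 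Combining the two steps yields exactly the presentation asserted in the statement.

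I do not anticipate a serious obstacle, since every ingredient is standard once the CW-structure is in hand; the only genuine point requiring care is that the cellular structure of \cref{thm:CWcomplex} really is a CW-structure with the stated cells, so that the attaching maps of the $2$-cells are honest loops in the graph on the generators $[f_e]$. This has already been verified, and the remaining content is purely combinatorial: reading off, from the boundary of each polygonal $2$-cell $e([\sigma])$, the relator as a word in the $[f_e]$, which encodes precisely the identifications of the codimension-$1$ walls surrounding the codimension-$2$ cone $\sigma$.
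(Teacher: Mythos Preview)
Your approach is essentially the same as the paper's, which simply records that the $1$-skeleton is a connected graph and refers to \cite[Sec.~1.A]{Hatcher2002} for the standard CW-complex presentation of the fundamental group; no further proof is given there. Your write-up is a correct elaboration of that citation.

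Two small points of bookkeeping to tighten. First, $[0]$ is generally \emph{not} an initial object of $\Cfrak(\Sigma,\Pfrak)$: since $\pi_0 = \id$, two morphisms $f_{0\tau_1}, f_{0\tau_2}$ are identified only when $\tau_1 = \tau_2$, so $|\Hom([0],[\tau])| = |[\tau]|$, which exceeds $1$ whenever the partition is nontrivial. You only use the \emph{existence} of a morphism $[0]\to[\tau]$, which is fine for path-connectedness, so just drop the word ``initial''. Second, in the CW-structure of \cref{thm:CWcomplex} the $0$-cells are the classes of \emph{maximal} cones and $[0]$ is the unique $n$-cell, so $[0]$ does not lie in the $1$-skeleton of that structure. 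The cleanest fix is either to argue connectedness directly in the dual graph of the complete fan (any two maximal cones are joined by a chain of adjacent ones), or to note that path-connectedness of the space is independent of the CW-structure and follows from your nerve argument.
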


\begin{rmk}
    Taking any fan with its trivial poset partition $\Pfrak_{\text{poset}}$, since the classifying space is a ball, its fundamental group is trivial but the picture group is not. So, in contrast to the setting of finite-dimensional algebras, the picture group $G(\Sigma, \Pfrak, \Pcal)$ is not necessarily isomorphic to the fundamental group of $\Bcal \Cfrak(\Sigma, \Pcal)$.
\end{rmk}

Nonetheless, for a special class of fan posets, we obtain an isomorphism between picture group and fundmantal group. Recall that a \textit{lattice} $L$ is a partially ordered set for which any two elements $x,y$ have a unique common maximal lower bound, $x \land y$, and a unique minimal upper bound, $x  \lor y$. A \textit{polygon} in a finite lattice $L$ is an interval $[x,y]$ such that $(x,y)$ consists of two disjoint non-empty chains. The lattice is \textit{polygonal} if for any two arrows $y_1 \to x$ and $y_2 \to x$ in $\Hasse(L)$ the interval $[x, y_1 \lor y_2]$ is a polygon and for arrows $y \to x_1$ and $y \to x_2$ in $\Hasse(L)$ the interval $[x_1 \land x_2, y]$ is a polygon. We say two maximal chains in an interval $[x,y] \subseteq L$ are related by a \textit{polygon move} if the two chains differ only in that one chain covers one side of a polygon while the other covers the other side. For further background on polygonal lattices we refer to \cite[Sec. 9.6]{Reading2016}.

\begin{prop} \label{prop:polygonalandmaximal}
    Let $(\Sigma, \Pfrak, \Pcal)$ be a non-degenerate partitioned fan poset. If $\Pcal$ is a polygonal lattice, then it suffices to consider facial intervals coming from cones of codimension 2 to obtain all relations of $G(\Sigma, \Pfrak, \Pcal)$. If additionally $\Pfrak$ identifies all maximal cones of $\Sigma$, then $G(\Sigma, \Pfrak, \Pcal)$ is isomorphic to $\pi_1(\Bcal \Cfrak(\Sigma, \Pfrak))$.
\end{prop}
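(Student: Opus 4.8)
The plan is to treat the two assertions in turn. Since $(\Sigma,\Pfrak,\Pcal)$ is assumed non-degenerate, \cref{lem:nondegimplystars} shows the relations of type (2) in \cref{defn:picgroup} are automatically satisfied, so throughout I would only track the type (1) relations, i.e. the equalities of products of generators read along maximal chains of intervals of $\Pcal$. The first assertion then amounts to showing these are generated by the relations attached to facial intervals $[\sigma^-,\sigma^+]$ of codimension 2 cones $\sigma$, and the second to matching the resulting presentation of $G(\Sigma,\Pfrak,\Pcal)$ with the presentation of $\pi_1(\Bcal\Cfrak(\Sigma,\Pfrak))$ coming from the CW-structure of \cref{thm:CWcomplex}.

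For the first assertion I would first observe that a codimension 2 facial interval is a polygon. The projected star $\pi_\sigma(\starr\sigma)$ is a complete simplicial fan in $\R^2$, hence a cycle of at least three sectors, and by \cref{defn:fanposet}(2) this cycle is exactly the interval $[\sigma^-,\sigma^+]$. Its source $\sigma^-$ and sink $\sigma^+$ cannot be adjacent, for an adjacency would be a cover $\sigma^- \lessdot \sigma^+$ contradicted by the complementary arc, which passes through at least one further sector; hence both open arcs are non-empty and $[\sigma^-,\sigma^+]$ is a polygon whose two sides are its two maximal chains. I would then prove the converse: a polygon $[x, y_1 \lor y_2]$ (with $x \lessdot y_1, x \lessdot y_2$) is such a facial interval. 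The two covers are labelled by distinct facets $\sigma_1 \neq \sigma_2$ of the simplicial cone $x$, meeting in a codimension 2 face $\sigma = \sigma_1 \cap \sigma_2$; since $y_1, y_2$ are then precisely the two neighbours of $x$ across $\sigma_1, \sigma_2$ in $\pi_\sigma(\starr\sigma)$, the cone $x$ is the source of that local fan, so $x = \sigma^-$. As intervals of a lattice are sublattices, $y_1 \lor y_2$ is computed inside $[\sigma^-,\sigma^+]$ and equals the top $\sigma^+$ of the local cycle, whence $[x, y_1 \lor y_2] = [\sigma^-,\sigma^+]$; the dual polygons $[x_1 \land x_2, y]$ are handled symmetrically. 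Finally, using the standard fact that in a polygonal lattice any two maximal chains of an interval are connected by a sequence of polygon moves \cite{Reading2005}, every type (1) relation becomes a consequence of the single-polygon relations, each of which is a codimension 2 facial interval relation by the above.

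For the second assertion I would read off both presentations and compare. By \cref{thm:CWcomplex}, $\Bcal\Cfrak(\Sigma,\Pfrak)$ has one cell $e([\rho])$ of dimension $n - \dim\rho$ per class; identifying all maximal cones leaves a single $0$-cell, one $1$-cell $e([\sigma])$ per class of codimension 1 cones, and one $2$-cell $e([\sigma])$ per class of codimension 2 cones. With a single $0$-cell the maximal tree of \cref{lem:fundamentalgroup} is trivial, so $\pi_1$ is free on the loops $e([\sigma])$, $\sigma \in \Sigma^{n-1}$, modulo the boundary words of the $2$-cells; orienting $e([\sigma])$ from $\sigma^-$ to $\sigma^+$ matches these generators with the $X_{[\sigma]}$ of $G(\Sigma,\Pfrak,\Pcal)$. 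To match relations I would use that the $2$-cell $e([\sigma])$ is a disk bounded by the cycle $\Scal([\sigma])$ of \cref{prop:simplicialcomplex}, alternating the maximal cones and codimension 1 cones of the local rank 2 fan. Traversing this loop from $\sigma^-$, up one arc to $\sigma^+$ and back down the other, crosses the walls of one maximal chain of $[\sigma^-,\sigma^+]$ positively and those of the other negatively, so the attaching relation reads $X_{[\sigma_1]}\cdots X_{[\sigma_k]} = X_{[\sigma_1']}\cdots X_{[\sigma_\ell']}$ --- exactly the facial interval relation of \cref{defn:picgroup}(1). Thus $\pi_1(\Bcal\Cfrak(\Sigma,\Pfrak))$ has the same generators as $G(\Sigma,\Pfrak,\Pcal)$ and precisely the codimension 2 facial interval relations, which by the first assertion generate all relations of $G$; comparing presentations gives the isomorphism.

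I expect the main obstacle to be the orientation bookkeeping in this last step --- checking that collapsing the maximal cones to one $0$-cell turns each $2$-cell boundary into the polygon relation with one side inverted, rather than some other word --- together with making the polygon/codimension 2 correspondence of the first assertion watertight, in particular the exclusion (forced by the fan poset axioms) of a source and sink being adjacent.
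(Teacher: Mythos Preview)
Your proposal is correct and follows essentially the same strategy as the paper: dispense with type (2) relations via non-degeneracy, reduce type (1) relations to polygon relations via polygon moves in a polygonal lattice, identify polygons with codimension 2 facial intervals, and for the second assertion match the resulting presentation with the CW presentation of $\pi_1$.

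The one substantive difference is how you establish the polygon/codimension 2 correspondence. The paper argues in one sentence from fan poset axiom (1): the union of the maximal cones in a polygon is a polyhedral cone, hence they all meet in a single codimension 2 face. You instead argue from the simplicial structure, reading the codimension 2 face off as the intersection of the two facets labelling the bottom covers and then identifying $x$ with $\sigma^-$ and $y_1 \lor y_2$ with $\sigma^+$ inside the local rank 2 cycle. Your route is more explicit and does not invoke axiom (1) directly, which is a mild advantage given that the hypothesis is only a \emph{weak} fan poset; on the other hand it requires the auxiliary observation (which you correctly extract) that in the local cycle the source and sink cannot be adjacent. The paper's argument is terser but leans on the strong convexity of interval unions. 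Both reach the same conclusion. Your citation for polygon moves should be \cite[Lem.~9-6.3]{Reading2016} rather than \cite{Reading2005}.
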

\begin{proof}
    The type 2 relations of \cref{defn:picgroup} are satisfied due to non-degeneracy of the poset by \cref{lem:nondegimplystars}. Let $[x,y]$ be an interval in $\Pcal$. Since $\Pcal$ is a finite polygonal lattice, any two maximal chains in $[x,y]$ are related by a sequence of polygon moves by \cite[Lem. 9-6.3]{Reading2016}. Trivially, the labels of two maximal chains which are related by a polygon move differ only in the labels of the two sides of the polygon. Thus it is sufficient to consider the group relations coming from polygons of $\Pcal$ to give a presentation of $G(\Sigma, \Pfrak, \Pcal)$.\\
    
    We now show that every cone of codimension 2 of $\Sigma$ gives rise to a polygon of $\Pcal$ and that in fact every polygon of $\Pcal$ arises this way, as claimed. Thus, let $\sigma \in \Sigma$ be a cone of codimension 2 and consider the induced weak fan poset $\Pcal|_{\pi_{\sigma}}$ on $\pi_{\sigma}(\starr \sigma)$, which has a maximal and minimal element, and two disjoint chains similar to \cref{fig:polygonrel}. Hence the interval $[\sigma^-, \sigma^+]$ is a polygon of $\Pcal$. Now, take a polygon $[\tau_1 \land \tau_2, \tau_3] \subseteq \Pcal$ for some $\tau_1 \lessdot \tau_3$ and $\tau_2 \lessdot \tau_3$, then $\kappa = \tau_1 \cap \tau_3 \cap \tau_2$ is a cone of codimension 2 since $\tau_1 \cap \tau_3$ and $\tau_2 \cap \tau_3$ are both generated by distinct subsets of $(n-1)$ vectors generating $\tau_3$. By the previous, $[\kappa^-, \kappa^+]$ is a polygon which must contain $\tau_1, \tau_2$ and $\tau_3$. More precisely we must have $\kappa^+ = \tau_3$ and by the uniqueness of the meet in a lattice we have $\kappa^- = \tau_1 \land \tau_2$, hence every polygon arises as a facial interval of a cone of codimension 2.\\

    Additionally, if all maximal cones are identified, then there exists a unique 0-cell in $\Bcal \Cfrak(\Sigma, \Pfrak))$ and \cref{lem:fundamentalgroup} implies that the generators of $G(\Sigma, \Pfrak, \Pcal)$ and $\pi_1(\Bcal \Cfrak(\Sigma, \Pfrak))$ coincide. The relations of the fundamental group of a CW-complex are given exactly by the 2-cells which correspond with the cones of codimension 2. 
\end{proof}

The previous result implies that different choices of non-degenerate fan posets define isomorphic picture groups when all maximal cones are identified by $\Pfrak$ and the fan poset is a polygonal lattice. A similar result holds for any fan in $\R^2$. 

\begin{lem} \label{lem:isopicgroups}
    Let $\Sigma$ be a finite and complete fan in $\R^2$ and $(\Sigma, \Pcal_1)$ and $(\Sigma, \Pcal_2)$ be non-degenerate fan posets. Then $G(\Sigma, \Pfrak, \Pcal_1) \cong G(\Sigma, \Pfrak, \Pcal_2)$.
\end{lem}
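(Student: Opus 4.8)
The plan is to reduce the picture group to a one‑relator group and then prove that the single relation is independent of the poset up to an automorphism of the free group.

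First I would pin down the shape of a non‑degenerate fan poset $(\Sigma,\Pcal)$ in $\R^2$. Since $\{0\}$ is the unique codimension $2$ cone, condition (2) of \cref{defn:fanposet} applied to it shows that $\starr(\{0\})^n=\Sigma^n$ is an interval, so $\Pcal$ has a unique minimum $0^-$ and maximum $0^+$. Writing the $m$ maximal cones cyclically as $\tau_1,\dots,\tau_m$ with walls $\sigma_1,\dots,\sigma_m\in\Sigma^1$, where $\sigma_i$ separates $\tau_{i-1}$ and $\tau_i$, condition (2) applied to each ray forces $\starr(\sigma_i)^n=\{\tau_{i-1},\tau_i\}$ to be an interval of size two, hence a covering relation; condition (1) rules out any cover between non‑adjacent cones, since the union of two non‑adjacent sectors is not a polyhedral cone. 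Thus $\Hasse(\Pcal)$ is the cycle $C_m$ with an acyclic orientation having unique source $0^-$ and unique sink $0^+$, i.e. two chains from $0^-$ to $0^+$. In particular $\Pcal$ is a polygonal lattice, so by \cref{prop:polygonalandmaximal} the only relations of $G(\Sigma,\Pfrak,\Pcal)$ arise from the single facial interval $[0^-,0^+]$ of $\{0\}$, while the relations of type 2 hold automatically by \cref{lem:nondegimplystars}. Hence $G(\Sigma,\Pfrak,\Pcal)$ is the free group $F$ on $\{X_{[\sigma]}:\sigma\in\Sigma^1\}$ modulo a single relator $w_\Pcal$.

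Next I would read the two maximal chains of $[0^-,0^+]$ in counter‑clockwise order and record $w_\Pcal$ as the cyclic word $\prod_{i=1}^m X_{[\sigma_i]}^{\epsilon_i}$, where $\epsilon_i=+1$ if crossing $\sigma_i$ counter‑clockwise moves upward in $\Pcal$ and $\epsilon_i=-1$ otherwise; the two‑chain shape guarantees the sign sequence is one block of $+1$'s (the arc $0^-\to 0^+$) followed by one block of $-1$'s. The key observation concerns identified walls: if $\sigma_i\sim\sigma_j$ with $i\neq j$, then $\spann\{\sigma_i\}^\perp=\spann\{\sigma_j\}^\perp$ forces $\sigma_i,\sigma_j$ to span the same line, so $\sigma_j=-\sigma_i$ is the opposite ray and each partition class of walls has at most two elements. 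Because $\Pcal$ is non‑degenerate it is well‑defined on identified stars (\cref{lem:nondegimplystars}), giving $\pi_{\sigma_i}(\sigma_i^-)=\pi_{\sigma_j}(\sigma_j^-)$; since opposite rays are crossed counter‑clockwise in opposite transverse directions, I would check that this identity forces $\epsilon_i=-\epsilon_j$.

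Finally, given two non‑degenerate posets $\Pcal_1,\Pcal_2$ with relators $w_1,w_2$ and sign vectors $\epsilon^{(1)},\epsilon^{(2)}$, set $D=\{i:\epsilon_i^{(1)}\neq\epsilon_i^{(2)}\}$. For any identified pair $\{i,j\}$ one has $\epsilon_i^{(k)}=-\epsilon_j^{(k)}$ for $k=1,2$ by the observation above, so $i\in D\iff j\in D$; together with the singleton classes this shows $D$ is a union of partition classes. Consequently the assignment inverting exactly those generators $X_{[\sigma]}$ whose class lies in $D$ is a well‑defined automorphism $\Phi$ of $F$ (inversion of a subset of free generators), and since $\Phi$ flips the exponent of $X_{[\sigma_i]}$ precisely when $i\in D$, it sends the cyclic word $\prod_i X_{[\sigma_i]}^{\epsilon_i^{(1)}}$ to $\prod_i X_{[\sigma_i]}^{\epsilon_i^{(2)}}=w_2$. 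As $\Phi$ is an automorphism with $\Phi(w_1)=w_2$ as cyclic words, it carries $\langle\langle w_1\rangle\rangle$ onto $\langle\langle w_2\rangle\rangle$ and descends to the desired isomorphism $G(\Sigma,\Pfrak,\Pcal_1)\cong G(\Sigma,\Pfrak,\Pcal_2)$. I expect the main obstacle to be exactly the compatibility of $\Phi$ with the partition $\Pfrak$: inverting $X_{[\sigma]}$ flips the exponents of \emph{both} walls in a two‑element class at once, so the argument succeeds only because non‑degeneracy forces identified walls to carry opposite signs, which is what makes $D$ respect the classes.
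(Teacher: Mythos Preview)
Your proof is correct and follows the same mechanism as the paper's: both picture groups are one-relator quotients of the same free group, and the relators differ only by inverting a subset of generators. The paper's proof is a two-line sketch that simply asserts this (``the type 1 relations only differ by the identification of generators $X_{[\sigma]}' \coloneqq X_{[\sigma]}^{-1}$ for those $\sigma$ which move from being in one sequence to the other''), whereas you have supplied the details it omits---in particular the verification that the inversion respects the partition $\Pfrak$, which you extract from non-degeneracy via the observation that identified rays are antipodal and hence carry opposite signs $\epsilon_i=-\epsilon_j$ in every non-degenerate poset.
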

\begin{proof}
    \cref{lem:nondegimplystars} and non-degeneracy imply that the type 2 relations are satisfied by both $G(\Sigma, \Pfrak, \Pcal_1)$ and $G(\Sigma, \Pfrak, \Pcal_2)$. Denote the generators of $G(\Sigma, \Pfrak, \Pcal_1)$ by $X_{[\sigma_i]}$ and the generators of $G(\Sigma, \Pfrak, \Pcal_1)$ by $Y_{[\sigma_i]}$ for $\sigma_i \in \Sigma^{1}$. Given an arbitrary cover relation $\tau_i \lessdot \tau_j$ in $\Pcal_1$ with $\sigma_k \coloneqq \tau_i \cap \tau_j$, define
    \[ \delta^{\sigma_k} \coloneqq \begin{cases} 1 & \text{ if } \tau_i \lessdot \tau_j \text{ in } \Pcal_2; \\ -1 & \text{ if } \tau_j \lessdot \tau_i \text{ in } \Pcal_2. \end{cases}\]
    Then the desired group isomorphism $\phi: G(\Sigma, \Pfrak, \Pcal_1) \to G(\Sigma, \Pfrak, \Pcal_2)$ is given on the generators by $\varphi(X_{[\sigma_i]}) = Y_{[\sigma_i]}^{\delta_{\sigma_i}}$. We note that this is well-defined on equivalence classes by the assumption that $\Pcal_1$ and $\Pcal_2$ are non-degenerate.
\end{proof}

\subsection{Eilenberg-MacLane spaces of rank 2}

While the picture group is not necessarily isomorphic to the fundamental group in general, it still plays an important role in understanding the classifying spaces of the categories of a partitioned fan. Recall, that $\Cfrak(\Sigma, \Pfrak)$ is a cubical category by \cref{thm:cubicalcat}. 

\begin{prop} \cite[Prop. 3.4, Prop. 3.7]{Igusa2022} \label{prop:igusacriteria} The following additional properties are sufficient for the classifying space $\Bcal \Cfrak(\Sigma, \Pfrak)$ to be locally \textnormal{CAT(0)} and thus a $K(\pi,1)$. 
    \begin{enumerate}
        \item There is a faithful group functor $\Psi: \Cfrak(\Sigma, \Pfrak) \to G$ for some group $G$, viewed as a groupoid with one object.
        \item A set of $k$ rank 1 morphisms $\{ [f_{\sigma \kappa_i}]\}_{i=1}^k$ forms the set of first factors of a rank $k$ morphism if and only if each pair $\{ [f_{\sigma \kappa_i}], [f_{\sigma \kappa_j}]\}$ forms the set of first factors of a rank 2 morphism for $i \neq j$. In other words, first factors are given by pairwise compatibility conditions.
        \item A set of $k$ rank 1 morphisms $\{ [f_{\sigma_i \kappa}]\}_{i=1}^k$ forms the set of last factors of a rank $k$ morphism if and only if each pair $\{ [f_{\sigma_i \kappa}], [f_{\sigma_j \kappa}]\}$ forms the set of last factors of a rank 2 morphism for $i \neq j$. In other words, last factors are given by pairwise compatibility conditions.
    \end{enumerate}
\end{prop}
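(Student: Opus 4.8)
The plan is to work entirely on the geometric side, exploiting that $\Cfrak(\Sigma, \Pfrak)$ is cubical by \cref{thm:cubicalcat}, so that $\Bcal\Cfrak(\Sigma, \Pfrak)$ is a cube complex which we equip with the standard piecewise-Euclidean metric, in which every factorisation cube $\Bcal\Faq([f_{\sigma\tau}])$ is an isometric copy of the unit cube $[0,1]^{\rank([f_{\sigma\tau}])}$. By Gromov's criterion \cite{Gromov1987}, such a complex is locally $\mathrm{CAT}(0)$ --- and hence, by the Cartan--Hadamard theorem for complete geodesic spaces, has a contractible universal cover and is therefore a $K(\pi,1)$ --- as soon as it is a genuine (nonsingular) cube complex whose vertex links are all flag simplicial complexes. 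The two hypotheses are engineered to supply exactly these two facts: condition (2) delivers the flag condition, while condition (1) delivers nonsingularity.

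First I would show that condition (2) is precisely the flag condition on links. Fix a $0$-cell $[\kappa]$ and consider the portion of $\link([\kappa])$ spanned by the rank $1$ morphisms $[f_{\sigma_i \kappa}]$ ending at $[\kappa]$; by the cubical structure these are the vertices of the link, and a set of them spans a simplex exactly when it is the set of last factors of a single higher-rank morphism into $[\kappa]$. A simplicial complex is flag precisely when every set of pairwise-adjacent vertices spans a simplex, and adjacency of $[f_{\sigma_i \kappa}]$ and $[f_{\sigma_j \kappa}]$ means they are the two last factors of a rank $2$ morphism. Thus condition (2) states verbatim that every pairwise-compatible family of last factors spans a simplex, i.e. that $\link([\kappa])$ is flag; I would record the dual statement for first factors, which follows from the symmetry between conditions 4 and 5 of \cref{def:cubical}.

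Next I would use condition (1) to guarantee nonsingularity of $\Bcal\Cfrak(\Sigma, \Pfrak)$, so that these links are honest simplicial complexes and Gromov's criterion applies. The faithful functor $\Psi\colon \Cfrak(\Sigma, \Pfrak) \to G$ separates distinct morphisms as distinct elements of $G$; composed with the embedding of each factorisation cube into $\Cfrak(\Sigma, \Pfrak)$ furnished by \cref{def:cubical}(3), it prevents two faces of a cube, or two distinct cubes, from being identified along their boundaries, since any such gluing would force two distinct composites to have equal image in $G$. Equivalently, $\Psi$ induces a map $\Bcal\Cfrak(\Sigma, \Pfrak) \to \Bcal G$ whose restriction to each cube is locally injective, exhibiting the complex as a nonsingular cube complex in which no cube is self-glued. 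This is exactly what is needed for each vertex link to be a genuine simplicial complex rather than one with repeated or identified faces.

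Finally I would assemble the pieces: with nonsingularity from (1) and the flag condition from (2), Gromov's theorem gives that $\Bcal\Cfrak(\Sigma, \Pfrak)$ is locally $\mathrm{CAT}(0)$; since the complex is locally finite the metric is complete, so Cartan--Hadamard yields a $\mathrm{CAT}(0)$, hence contractible, universal cover, and therefore the classifying space is a $K(\pi,1)$. The main obstacle is the faithful translation between the categorical and geometric descriptions: one must verify carefully that the last factors exhaust the vertices of each vertex link and that higher simplices of the link correspond bijectively to higher-rank morphisms, and --- most delicately --- that condition (1) genuinely rules out every self-identification of cubes, since it is precisely these degenerate gluings that would invalidate the link criterion. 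This is the content of \cite[Prop. 3.4, Prop. 3.7]{Igusa2022}, to which we refer for the complete argument.
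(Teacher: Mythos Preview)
Your sketch of Gromov's link criterion is essentially the content of \cite[Prop.~3.4, 3.7]{Igusa2022}, but it goes well beyond what the paper actually does here. The paper's proof is a single sentence: it cites Igusa's result directly and only observes that Igusa's original statement carries a \emph{third} hypothesis --- the analogue of (2) with ``last'' replaced by ``first'' --- which, the paper asserts, is trivially satisfied by the fan structure. So the paper is not re-proving the CAT(0) machinery at all; its sole contribution at this point is to discharge the first-factors condition.

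You do mention the dual condition on first factors, but you attribute its verification to ``the symmetry between conditions 4 and 5 of \cref{def:cubical}''. That reasoning is not correct. Conditions 4 and 5 say only that a morphism is \emph{determined} by its set of first (respectively last) factors; this formal symmetry in the definition of a cubical category does not imply that pairwise compatibility of first factors forces full compatibility. The two flag conditions on the link --- one for incoming edges, one for outgoing edges --- are genuinely independent hypotheses in Igusa's framework. The paper's claim is instead a geometric one: first factors $[f_{\sigma\kappa_i}]$ out of $[\sigma]$ correspond to rays $\pi_\sigma(\kappa_i)$ in the fan $\pi_\sigma(\starr(\sigma))$, and pairwise compatibility is supposed to force them to span a common cone there. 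Your argument does not supply this step, and the symmetry you invoke does not either.
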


In particular, we consider the picture group to be the most natural group to study condition (1). We emphasise that the above conditions do not imply that $\Bcal \Cfrak(\Sigma, \Pfrak)$ is a $K(G(\Sigma, \Pfrak,\Pcal),1)$ but rather a $K(\pi,1)$ for its possibly non-isomorphic fundamental group $\pi$. While, there is a recipe for constructing a $K(G,1)$ for any finitely presented group $G$, the result may be an infinite-dimensional CW-complex (see \cite[Sec. 1.B.]{Hatcher2002}). Hence we are interested in understanding whether the finite CW-complex of \cref{thm:CWcomplex} is a $K(\pi,1)$ for its fundamental group. For this purpose, we consider the following functor from the category of a partitioned fan to its picture group with respect to some non-degenerate weak fan poset:
\begin{equation}
\begin{aligned}\label{eq:functor}
    \Psi: \Cfrak(\Sigma, \Pfrak) &\to G(\Sigma, \Pfrak, \Pcal)  \\
[f_{\sigma \kappa}] &\mapsto X_{[\sigma^-, \kappa^-]}.
\end{aligned}
\end{equation}

It follows from basic hyperplane arrangement theory (i.e. convex geometry) and the definition of admissible partitions that the functor is well-defined. Indeed, take two representatives $\sigma_1, \sigma_2 \in [\sigma]$ and $\kappa_1, \kappa_2 \in [\kappa]$ such that $[f_{\sigma_1 \kappa_1}]=[f_{\sigma_2 \kappa_2}]$. Now consider the projection of their stars onto the orthgonal complements, $\pi_{\sigma_1} (\starr \sigma_1) = \pi_{\sigma_2} (\starr \sigma_2)$, then $\pi_{\sigma}(\kappa_1^-) = \pi_{\sigma}(\kappa_2^-)$ follows from $\pi_{\sigma} (\kappa_1) = \pi_{\sigma} (\kappa_2)$ which follows from $[f_{\sigma_1 \kappa_1}]=[f_{\sigma_2 \kappa_2}]$ by definition. The terms $X_{[\sigma_i^-, \kappa_i^-]}$ for $i=1,2$ are determined by the coinciding paths $\pi_{\sigma}(\kappa_i^-)$ to $\pi_{\sigma}(\sigma_i^-)$ for $i=1,2$ and hence coincide. It is easily seen to be well-defined on identity morphisms $[f_{\sigma \sigma}]$, which get sent to the trivial element $X_{[\sigma^-, \sigma^-]} = e$. Furthermore, since the weak poset is non-degenerate, the functor is well-defined on composition of morphisms by construction. 

\begin{exmp}
    Consider a partitioned fan $(\Sigma, \Pfrak)$ as in \cref{fig:lastfactorprob1}, then the set of 3 rank 1 morphisms $\{[f_1],[f_2],[f_3]\}$ cannot be the last factors of a rank 3 morphism since no such morphism exists when $\Sigma$ is a fan in $\R^2$. However, the pairs morphisms $\{[f_1], [f_3]\}$, $\{[f_1], [f_2]\}$ and $\{ [f_2], [f_3]\}$ form the last factors of the morphisms $[f_{0 \tau_1}]$, $[f_{0 \tau_3}]$ and $[f_{0 \tau_2}]$ respectively. On the other hand, consider a finite and complete fan with only three cones of dimension 1, for example the fan whose toric variety is the projective plane, see \cite[p. 6-7]{Fulton1993}. This fan $\Sigma$ has three cones, $\cone\{e_1\}, \cone\{e_2\}$ and $\cone\{-e_1-e_2\}$, of dimension 1, which are all pairwise compatible as first factors. But since there is no morphism of rank 3, this shows that the first factors are not given by pairwise compatibility conditions in this case. Fans with exactly three 1-dimensional cones are the only ones in $\R^2$ which do not satisfy the pairwise compatibility property for first factors. 
\end{exmp}

\begin{lem} \label{lem:lastfactorsubarr}
    Let $\Sigma$ be a fan in $\R^2$. Then $\Cfrak(\Sigma, \Pfrak)$ satisfies the pairwise compatibility of first (resp. last) factors if and only there is no set of three pairwise compatible first (resp. last) factors.
\end{lem}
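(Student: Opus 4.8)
The plan is to exploit the fact that in $\R^2$ every cone has dimension at most $2$, so by the rank formula $\rank([f_{\sigma\tau}]) = \dim\tau - \dim\sigma$ there are no morphisms of rank $\geq 3$ in $\Cfrak(\Sigma,\Pfrak)$. The pairwise compatibility of last factors, as phrased in \cref{prop:igusacriteria}(2), is the assertion that for every set $S = \{[f_{\sigma_i\kappa}] : [\sigma_i]\to[\kappa]\}$ of rank $1$ morphisms sharing a common target $[\kappa]$, the set $S$ is the set of last factors of a rank $|S|$ morphism if and only if each pair in $S$ is the set of last factors of a rank $2$ morphism. I would prove the lemma by verifying this biconditional separately according to the cardinality $|S|$, since the rank bound collapses all the interesting behaviour into the cases $|S|\leq 2$ versus $|S|\geq 3$.

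For the ``if'' direction, suppose no set of three pairwise compatible rank $1$ morphisms exists; I would then check the biconditional for every $S$. When $|S|=1$ the pairwise condition is vacuous and a single rank $1$ morphism is trivially its own unique last factor, so both sides hold. When $|S|=2$ the two sides are literally the same statement, since there is exactly one pair, namely $S$ itself, and so the biconditional holds tautologically. When $|S| = k \geq 3$, the left-hand side is false because there is no morphism of rank $k\geq 3$ that could have $S$ as its last factors; the right-hand side must also be false, for if every pair in $S$ were the last factors of a rank $2$ morphism then any three elements of $S$ would constitute a set of three pairwise compatible rank $1$ morphisms, contradicting the hypothesis. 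Hence the biconditional holds for all $S$ and $\Cfrak(\Sigma,\Pfrak)$ satisfies the pairwise compatibility of last factors.

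For the ``only if'' direction I would argue contrapositively: given a set $\{[f_{\sigma_i\kappa}]\}_{i=1,2,3}$ of three pairwise compatible rank $1$ morphisms, take $S$ to be this set, so that the right-hand side of the biconditional holds by assumption while the left-hand side fails, because $|S|=3$ and no morphism of rank $3$ exists in $\R^2$; thus the property does not hold, and the configuration of \cref{fig:lastfactorprob1} realises exactly this failure. The only point requiring a little care---and the nearest thing to an obstacle in an otherwise routine argument---is to confirm that ``pairwise compatible'' already forces the common target $[\kappa]$ to be a maximal cone: a pair can be the last factors of a rank $2$ morphism $[f_{\mu\kappa}]$ only if $\dim\kappa - \dim\mu = 2$ with each source rank $1$, which forces $\dim\kappa = 2$, $\dim\sigma_i = 1$, and $\mu=\{0\}$, so all three morphisms and their rank $2$ witnesses live over a single two-dimensional cone and the cardinality analysis above applies verbatim.
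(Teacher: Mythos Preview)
Your proof is correct and follows essentially the same case analysis as the paper: the biconditional is tautological for $k\leq 2$, and for $k\geq 3$ the absence of rank~$\geq 3$ morphisms in $\R^2$ forces everything to hinge on whether three pairwise compatible rank~$1$ morphisms exist. The one minor difference is that for $k\geq 4$ the paper appeals to a geometric impossibility (four pairwise compatible rank~$1$ morphisms cannot occur in $\R^2$), whereas you more economically note that the hypothesis ``no three pairwise compatible'' already kills the right-hand side for every $k\geq 3$; your route is slightly cleaner and avoids needing that extra geometric claim.
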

\begin{proof}
    Any category trivially satisfies the pairwise compatibility of first (resp. last) factors for $k=2$ in \cref{prop:igusacriteria}. For $k\geq 3$, there is no morphism of rank 3 in $\Cfrak(\Sigma, \Pfrak)$ and hence pairwise compatibility condition is equivalent to there being no set of three pairwise compatible first (resp. last) factors. For $k \geq 4$, it is geometrically impossible to have four compatible first (resp. last) factors in $\R^2$ so the result follows.
\end{proof}

We now show that the functor to the picture group is faithful for all fans in $\R^2$, which is possible since there exists only one relation in the group presentation.

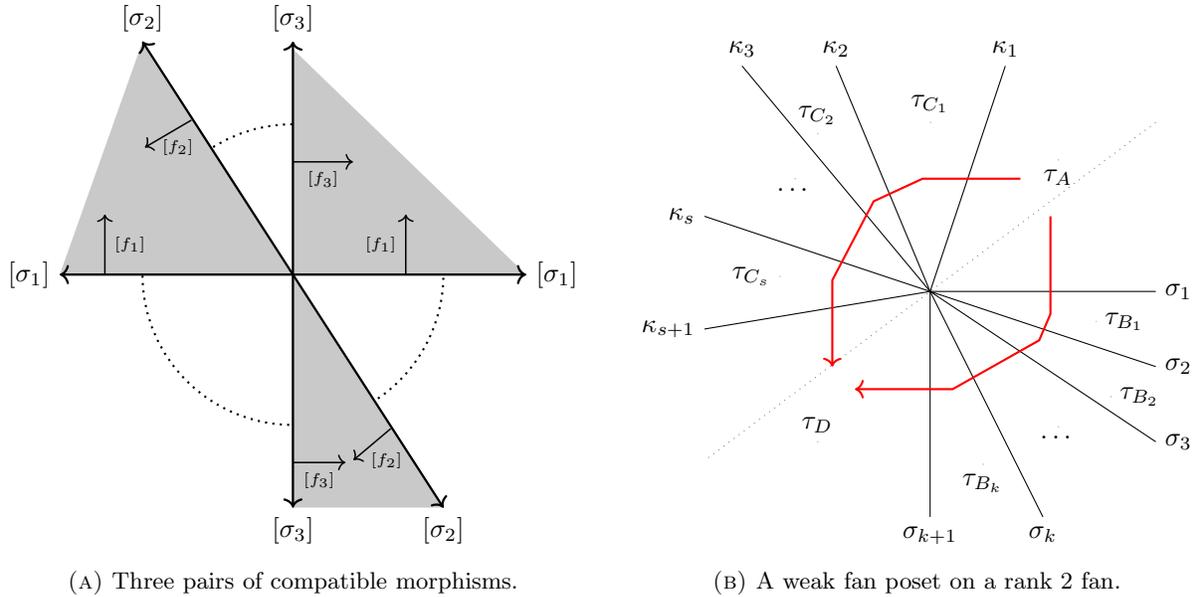
\begin{figure}[ht!]
\centering
\begin{subfigure}{.5\textwidth}
     \centering
        \begin{tikzpicture}
        \draw[dotted, thick] (0,0) circle (2.0);
          \draw[fill=gray!42] (0,3) -- (0,0) -- (3.1,0);
          \draw[fill=gray!42] (-3.1,0) -- (0,0) -- (-2,3.1);
          \draw[fill=gray!42] (0,-3.1) -- (0,0) -- (2,-3.1);
          \draw[line width=0.3mm,->] (0, 0) -- (0, 3.1) node[above] {$[\sigma_3]$};
          \draw[line width=0.3mm,->] (0, 0) -- (0, -3.1) node[below] {$[\sigma_3]$};
          \draw[line width=0.3mm,->] (0,0 ) -- (3.1,0) node[right] {$[\sigma_1]$};
          \draw[line width=0.3mm,->] (0,0 ) -- (-3.1,0) node[left] {$[\sigma_1]$};
          \draw[line width=0.3mm,->] (0,0 ) -- (2,-3.1) node[below] {$[\sigma_2]$};
          \draw[line width=0.3mm,->] (0,0 ) -- (-2,3.1) node[above] {$[\sigma_2]$};
          \draw[line width=0.2mm,->] (1.5,0) -- (1.5,0.8) node[midway,left] {\tiny$[f_1]$};
          \draw[line width=0.2mm,->] (0,1.5) -- (0.8,1.5) node[midway,below] {\tiny$[f_3]$};
          \draw[line width=0.2mm,->] (-2.5,0) -- (-2.5,0.8) node[midway,right] {\tiny$[f_1]$};
          \draw[line width=0.2mm,->] (0,-2.5) -- (0.7,-2.5) node[midway,below] {\tiny$[f_3]$};
          \draw[line width=0.2mm,->] (-1.35,2.055) -- (-1.969185,1.69181) node[right=3pt] {\tiny$[f_2]$};
          \draw[line width=0.2mm,->] (1.3,-2.05) -- (0.804315,-2.46839) node[right=3pt] {\tiny$[f_2]$};
          \filldraw[black] (1.2,1) circle (0pt) node[above]{$\tau_1$};
          \filldraw[black] (-1.3,0.6) circle (0pt) node[above]{$\tau_3$};
          \filldraw[black] (0.5,-2) circle (0pt) node[above]{$\tau_2$};
        \end{tikzpicture}
      \caption{Three pairs of compatible morphisms.}
      \label{fig:lastfactorprob1}
\end{subfigure}%
\begin{subfigure}{.5\textwidth}
  \centering
  \begin{tikzpicture}
      \draw[-] (0, 0) -- (1, 3) node[above] {$\kappa_1$};
      \draw[-] (0, 0) -- (0, -3) node[below] {$\sigma_{k+1}$};
      \draw[-] (0,0 ) -- (3,0 ) node[right] {$\sigma_1$};
      \draw[-] (0,0 ) -- (-3,-0.5) node[left] {$\kappa_{s+1}$};
      \draw[-] (0,0 ) -- (3,-1) node[right] {$\sigma_2$};
      \draw[-] (0,0 ) -- (3,-2) node[right] {$\sigma_3$};
      \draw[-] (0,0 ) -- (1.5,-3) node[below] {$\sigma_k$};
      \filldraw[black] (1.7,-1.8) circle (0pt) node[below]{$\dots$};
      \draw[-] (0,0 ) -- (-1.25,3) node[above] {$\kappa_2$};
      \draw[-] (0,0 ) -- (-2.5,3) node[above] {$\kappa_3$};
      \draw[-] (0,0 ) -- (-3,1.) node[left] {$\kappa_s$};
      \filldraw[black] (-1.8,1.5) circle (0pt) node[below]{$\dots$};
      \filldraw[black] (1.7,1.75) circle (0pt) node[below]{$\tau_A$};
      \filldraw[black] (-1.5,-2) circle (0pt) node[above]{$\tau_{D}$};
      \filldraw[black] (2.2,-0.4) circle (0pt) node[right]{$\tau_{B_1}$};
      \filldraw[black] (2.4,-1.4) circle (0pt) node[right]{$\tau_{B_2}$};
      \filldraw[black] (0.7,-2.3) circle (0pt) node[below]{$\tau_{B_k}$};
      \filldraw[black] (0,2.25) circle (0pt) node[above]{$\tau_{C_1}$};
      \filldraw[black] (-1.5,2.1) circle (0pt) node[above]{$\tau_{C_2}$};
      \filldraw[black] (-2,0.2) circle (0pt) node[left]{$\tau_{C_s}$};
      \draw [red, thick, ->] (1.6,1) -- (1.6,-0.3) -- (1.45,-0.65) -- (0.3,-1.3) -- (-1, -1.3);
      \draw [red, thick, ->] (1.2, 1.5) -- (-0.1,1.5) -- (-0.75, 1.2) -- (-1.3, 0.15) -- (-1.3, -1);
       \draw[-, dotted, gray] (3,2.25) -- (-3,-2.25) node[above] {};
    \end{tikzpicture}
    \caption{A weak fan poset on a rank 2 fan.}
    \label{fig:polygonrel}
\end{subfigure}%
\caption{Last factors and fan poset of fans in $\R^2$.}
\end{figure}

\begin{thm} \label{thm:rank2kpi1}
    Let $\Sigma$ be a fan in $\R^2$ and $\Pcal$ be a non-degenerate weak fan poset, then the functor of \cref{eq:functor} is faithful. Moreover, if $\Cfrak(\Sigma, \Pfrak)$ does not admit a set of three pairwise compatible rank 1 morphisms, then $\Bcal \Cfrak(\Sigma, \Pfrak)$ is a $K(\pi,1)$.
\end{thm}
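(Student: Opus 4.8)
The plan is to prove the two assertions separately: the faithfulness of $\Psi$ is the substantial part, while the $K(\pi,1)$ conclusion is a short assembly of earlier results. First I would pin down the presentation of $G(\Sigma,\Pfrak,\Pcal)$ in dimension $2$. A weak fan poset on a finite complete simplicial fan in $\R^2$ has Hasse diagram equal to the cyclic graph on the maximal cones, oriented as two directed arcs from a unique minimum to a unique maximum: any extra covering relation, or a second source or sink, would produce an interval whose union of maximal cones is disconnected, contradicting \cref{defn:fanposet}(1). In particular $\Pcal$ is a polygonal lattice whose only polygon is $[\min,\max]$. Since the origin is the unique codimension $2$ cone, \cref{prop:polygonalandmaximal} shows that every relation of the first type in \cref{defn:picgroup} follows from the single polygon relation attached to the facial interval $[0^-,0^+]=[\min,\max]$, and \cref{lem:nondegimplystars} makes the relations of the second type vacuous. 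Thus $G$ is the one-relator group $\langle\, X_{[\rho]} : \rho\in\Sigma^1 \mid w_1=w_2 \,\rangle$, where $w_1,w_2$ are the products of the wall labels along the two arcs; $\Psi$ sends a rank $1$ morphism to a single generator and a rank $2$ morphism $[f_{0\tau}]$ to the prefix word $g_\tau=X_{[\min,\tau]}$.

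Next I would reduce faithfulness to injectivity on each Hom-set and split by rank, using that every Hom-set consists of morphisms of one fixed rank $\dim\tau-\dim\sigma\in\{0,1,2\}$. Rank $0$ is immediate. For a rank $1$ Hom-set $\Hom([\rho],[\tau])$, the relative-position invariant $\pi_{\rho}(\tau)$ takes only the two values on the line $\spann\{\rho\}^\perp$, so such a Hom-set has at most two elements, with images $e$ and the single generator $X_{[\rho]}$; these are distinct precisely because $\Pcal$ is non-degenerate. For the Hom-sets $\Hom([0],[\rho])$ and $\Hom([0],[\tau])$ one uses $\pi_0=\id$ to see that distinct representatives give distinct morphisms, and here simpliciality is essential: the two walls of any maximal cone are linearly independent, hence never lie in the same class $[\rho]=\{\rho,-\rho\}$, which rules out the degenerate identification that would otherwise collapse two distinct morphisms onto $e$ (this is the configuration excluded in the abstract). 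Consequently all rank $1$ cases, and the case $\Hom([0],[\rho])$, reduce to the single statement that the assignment $\tau\mapsto g_\tau$ is injective on maximal cones.

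The crux is therefore to show the prefix words $g_\tau$ are pairwise distinct in $G$. Here I would record that the relator $r=w_1w_2^{-1}$ is cyclically reduced (its boundary letters are the two walls of $\min$ and of $\max$, linearly independent by simpliciality, hence distinct generators) and is not a proper power, since read cyclically $r$ has exactly one maximal positive run $w_1$ and one maximal negative run $w_2^{-1}$. When the partition identifies no pair of opposite rays, $w_1$ and $w_2$ involve disjoint generator sets and $w_i$ is a product of distinct generators, so $G$ is the amalgamated free product of the free groups $F(\text{arc }1)$ and $F(\text{arc }2)$ over the infinite cyclic subgroup $\langle w_1\rangle=\langle w_2\rangle$; the normal form theorem gives $F(\text{arc }i)\hookrightarrow G$ and $F(\text{arc }1)\cap F(\text{arc }2)=\langle w_1\rangle$, so distinct reduced prefixes of $w_1$ or $w_2$ can coincide across the two arcs only at the shared endpoints $\min$ and $\max$, forcing $\tau\mapsto g_\tau$ to be injective. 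This is the main obstacle, because when opposite rays are identified the two arcs share generators and the clean amalgamated-product description degenerates (for the coordinate cross it collapses to $G\cong\Z^2$); in that case I would fall back on the solution of the word problem for one-relator groups to conclude that no nonempty block of a single arc, and no pair of proper prefixes of the two arcs, becomes trivial, which yields distinctness of the $g_\tau$ in general.

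Finally, the $K(\pi,1)$ statement is pure assembly. The category $\Cfrak(\Sigma,\Pfrak)$ is cubical by \cref{thm:cubicalcat}; the faithful functor $\Psi$ just constructed verifies condition $(1)$ of \cref{prop:igusacriteria}; and the hypothesis that there is no set of three pairwise compatible rank $1$ morphisms gives the pairwise compatibility of last factors, that is condition $(2)$, by \cref{lem:lastfactorsubarr}. Hence \cref{prop:igusacriteria} applies and $\Bcal\Cfrak(\Sigma,\Pfrak)$ is locally CAT(0), in particular a $K(\pi,1)$.
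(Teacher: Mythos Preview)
Your overall strategy matches the paper's: obtain the one-relator presentation of the picture group, check faithfulness rank by rank, reduce everything to injectivity of $\tau\mapsto g_\tau$, and then assemble the $K(\pi,1)$ conclusion from \cref{thm:cubicalcat}, \cref{lem:lastfactorsubarr} and \cref{prop:igusacriteria}. Two points separate your argument from the paper's route.

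First, a minor correction: the two-arc shape of $\Hasse(\Pcal)$ is not forced by \cref{defn:fanposet}(1) as you claim---in $\R^2$ every union of cones through the origin is star-shaped and hence simply connected, so for a \emph{weak} fan poset condition~(1) is vacuous. It is condition~(2) that does the work: each ray $\rho$ has $\starr(\rho)^2$ a two-element facial interval, hence a cover relation; the facial interval of the origin forces a unique source and sink; and a non-adjacent cover $\tau\lessdot\tau'$ would make the facial interval of one of the intervening rays contain more than two chambers.

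Second, and more substantively, your handling of identified opposite rays is only a gesture. ``Fall back on the solution of the word problem for one-relator groups'' names an algorithm, not a theorem asserting that the particular subwords you need are nontrivial; a genuine argument would require the Freiheitssatz, applied after verifying that each relevant prefix-difference omits a generator occurring in the relator. Your amalgamated-product argument is correct and elegant when the two arcs use disjoint generator sets, but by your own admission it breaks down once a ray and its negative are identified. This is precisely where the paper diverges: rather than wrestling with the given $\Pcal$, it first invokes \cref{lem:isopicgroups} to replace it by the specific poset built from the angle bisector of a chosen top chamber $\tau_A$, with the bottom chamber $\tau_D$ containing the opposite half-line. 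The payoff is geometric: each arc then subtends angle strictly less than $\pi$, so no arc contains both a ray and its negative, each generator occurs at most once in $w_1$ and at most once in $w_2$, and the problematic coincidences reduce to the single case at the two walls of $\tau_D$, which strong convexity excludes. That angle-bisector normalisation is the ingredient missing from your approach.
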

\begin{proof}
    By \cref{lem:isopicgroups} we may choose one particular non-degenerate fan poset defined as follows: Choose a base region $\tau_A \in \Sigma^2$ and consider the angle bisector of the angle spanned by the two dimension 1 cones defining $\tau_A$. Then let $\tau_D$ be the chamber containing the opposite of the angle bisector. If the opposite of the angle bisector is contained in a cone of codimension 1, then choose either of the adjacent maximal cones as $\tau_D$. 
    The set-up is depicted in \cref{fig:polygonrel}, where the angle bisector is the dotted line and the fan poset is indicated in red and given by the following Hasse diagram:
    \[ \begin{tikzcd}[row sep=0.1cm] & \tau_{B_1} \arrow[r] & \dots \arrow[r] & \tau_{B_k} \arrow[rd] \\
    \tau_A \arrow[ru] \arrow[rd] & && & \tau_D \\
    & \tau_{C_1} \arrow[r] & \dots \arrow[r] & \tau_{C_s} \arrow[ru] \end{tikzcd} \]
    
    To show that the functor is faithful we want to show that $\Psi_{[\sigma][\tau]}: \Hom([\sigma], [\tau]) \to G(\Sigma, \Pfrak, \Pcal)$ is injective. It is sufficient to show that two distinct morphisms $[f_{\sigma \tau_1}]$ and $[f_{\sigma \tau_2}]$ cannot map to the same group element under $\Psi$, by \cref{cor:morphstartingpoint} and \cref{lem:uniquefaq}. We use the description of the picture group in \cref{defn:picgroup} and since we only have one relation, the generators are distinct. Let us consider the different possible dimensions of $\sigma$ and $\tau$:
    \begin{itemize}
        \item Dimension 0 to 1: For $0 \in \Sigma^0$ and $\lambda \in \Sigma^1$, any morphism $[f_{0 \lambda}] \in \Cfrak(\Sigma, \Pfrak)$ gets mapped to $X_{[\tau_D,\lambda^-]}$, which are easily seen to be distinct, except potentially $X_{[\tau_D,\tau_{C_s}]}$ and $X_{[\tau_D,\tau_{B_k}]}$. However, if the dimension 1 cones $\sigma_{k+1}$ and $\kappa_{s+1}$ give the same generator $X_{[\sigma_{k+1}]}$ then their linear span must be equal, hence the cone $\tau_D$ is a half-plane and not strongly-convex. This is also the reason why it is no problem that $[f_{0 \sigma_{k+1}}]$ and $[f_{0 \kappa_{s+1}}]$ have the same image under $\Psi$, since these two dimension 1 cones cannot be identified.
        \item Dimension 0 to 2: For $0 \in \Sigma^0$ and $\tau \in \Sigma^2$, each such morphism $[f_{0 \tau_i}] \in \Cfrak(\Sigma, \Pfrak)$ maps to $X_{[\tau_D, \tau_i]}$ and we know that these are all distinct elements of $G(\Sigma, \Pfrak, \Pcal)$ from the Dimension 0 to 1 case.
        \item Dimension 1 to 2: For $\sigma \in \Sigma^1$ and $\tau \in \Sigma^2$, there are exactly two possibilities, either $[f_{\sigma, \tau}]\in \Cfrak(\Sigma, \Pfrak)$ maps to $X_{[\sigma]}$ or it maps to the identity element $e$.
    \end{itemize}
    Thus the functor is faithful and by \cref{lem:lastfactorsubarr} and our assumption $\Cfrak(\Sigma, \Pfrak)$ satisfies the pairwise compatibility of last factors. Hence $\Bcal \Cfrak(\Sigma, \Pfrak)$ is a $K(\pi,1)$ by \cref{prop:igusacriteria}.
\end{proof}

\begin{cor} \label{cor:rank2KPG1}
    In the setting of \cref{thm:rank2kpi1}. If $\Pfrak$ identifies all maximal cones, then $\Bcal \Cfrak(\Sigma, \Pfrak)$ is a $K(G(\Sigma, \Pfrak, \Pcal),1)$. 
\end{cor}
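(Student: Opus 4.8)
The plan is to obtain the result by combining the $K(\pi,1)$ conclusion of \cref{thm:rank2kpi1} with an identification of the fundamental group of $\Bcal\Cfrak(\Sigma, \Pfrak)$ with the picture group. Working in the setting of \cref{thm:rank2kpi1}, where $\Cfrak(\Sigma, \Pfrak)$ carries no set of three pairwise compatible rank 1 morphisms, that theorem already establishes that $\Bcal\Cfrak(\Sigma, \Pfrak)$ is a $K(\pi,1)$ for $\pi = \pi_1(\Bcal\Cfrak(\Sigma, \Pfrak))$. Since a $K(\pi,1)$ whose fundamental group is isomorphic to $G(\Sigma, \Pfrak, \Pcal)$ is by definition a $K(G(\Sigma, \Pfrak, \Pcal),1)$, it suffices to prove the isomorphism $\pi_1(\Bcal\Cfrak(\Sigma, \Pfrak)) \cong G(\Sigma, \Pfrak, \Pcal)$.

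First I would invoke \cref{lem:isopicgroups} to replace $\Pcal$ by a convenient non-degenerate weak fan poset, which is harmless since in $\R^2$ all such posets yield isomorphic picture groups. The natural choice is the poset built in the proof of \cref{thm:rank2kpi1} and displayed in \cref{fig:polygonrel}: it has a unique minimum $\tau_A$, a unique maximum $\tau_D$, and exactly two disjoint non-empty chains $\tau_{B_1} \lessdot \dots \lessdot \tau_{B_k}$ and $\tau_{C_1} \lessdot \dots \lessdot \tau_{C_s}$ joining them. Here $\tau_A$ and $\tau_D$ serve as the meet and join of any two elements lying on different chains, so this poset is a lattice, and the whole interval $[\tau_A, \tau_D]$ is itself a polygon because its open interval is the union of two disjoint non-empty chains. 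Thus $\Pcal$ is a polygonal lattice.

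With $\Pcal$ a polygonal lattice and $\Pfrak$ identifying all maximal cones by hypothesis, \cref{prop:polygonalandmaximal} applies verbatim and gives $G(\Sigma, \Pfrak, \Pcal) \cong \pi_1(\Bcal\Cfrak(\Sigma, \Pfrak))$, completing the argument. The only genuinely new content beyond the cited results is the observation that in the rank 2 case the relevant poset collapses to a single polygon, so that the lattice-theoretic hypothesis of \cref{prop:polygonalandmaximal} is automatic; I expect this to be the only point requiring care. It is worth sanity-checking the conclusion against \cref{lem:fundamentalgroup}: identifying all maximal cones leaves a single $0$-cell, so $\pi_1$ is free on the $1$-cells --- one generator $X_{[\sigma]}$ per equivalence class of codimension 1 cone --- modulo the single relation carried by the unique $2$-cell $e([0])$ of \cref{thm:CWcomplex} attached at the origin, and this relation is exactly the two-chain relation $X_{[\sigma_1]}\cdots X_{[\sigma_k]} = X_{[\kappa_1]}\cdots X_{[\kappa_s]}$ defining $G(\Sigma, \Pfrak, \Pcal)$ via the codimension 2 facial interval.
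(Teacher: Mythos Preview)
Your proposal is correct. The paper's own proof takes the more direct route you sketch in your final sanity-check paragraph: with all maximal cones identified, the $1$-skeleton of $\Bcal\Cfrak(\Sigma,\Pfrak)$ has a single vertex and one loop per equivalence class of dimension~$1$ cones, so by \cref{lem:fundamentalgroup} and \cref{thm:CWcomplex} the fundamental group is free on the generators $X_{[\sigma]}$ modulo the single relation coming from the attaching map of the unique $2$-cell $e([0])$, which is precisely the defining relation of the picture group (the paper just says ``similar to \cref{prop:polygonalandmaximal}'' at this last step).

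Your main argument instead routes through \cref{prop:polygonalandmaximal} after first using \cref{lem:isopicgroups} to swap in the specific poset of \cref{fig:polygonrel} and then observing that this poset is a polygonal lattice (the only branchings occur at $\tau_A$ and $\tau_D$, and both yield the single polygon $[\tau_A,\tau_D]$). This is a legitimate alternative: it reduces the corollary to an already-established general statement at the cost of verifying the lattice-theoretic hypothesis, whereas the paper's argument is a one-off CW computation specific to rank~$2$. Either way the content is the same, and you have in fact written down both proofs.
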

\begin{proof}
    In this case, the graph $\Bcal \Cfrak(\Sigma, \Pfrak)$ contains 1 vertex and a loop for every equivalence class $[\sigma] \in \Pfrak$ of a dimension 1 cone. Hence the generators of $\pi_1(\Bcal \Cfrak(\Sigma, \Pfrak))$ coincide with those of $G(\Cfrak, \Pfrak, \Pcal)$. Moreover, the attaching map of the unique 2-cell induces a homotopy which is equivalent to the unique relation of the picture group, similar to \cref{prop:polygonalandmaximal}.
\end{proof}

\begin{exmp}\label{exmp:fanposets}
Our main examples of fan posets are the following:
\begin{enumerate}
    \item The poset of regions of a central simplicial hyperplane arrangement as introduced by \cite{Edelman} is a fan poset by \cite{Reading2005}, a lattice by \cite{BjornerEdelmanZiegler} and polygonal by \cite{Reading2016}. Furthermore, it is easily seen to be non-degenerate. 
    \item The fan poset on the $g$-fan of a finite-dimensional algebra is a polygonal lattice \cite{DIRRT2017} and non-degenerate, see \cref{prop:fanposet}.
    \item Non-degenerate fan posets in $\R^2$ constructed as in the proof of \cref{thm:rank2kpi1}.
\end{enumerate}
\end{exmp}

\subsection{Hyperplane arrangements}\label{subsec:HA}
We briefly recall the theory of hyperplane arrangements, which are collections of subspaces of codimension 1. A \textit{central simplicial hyperplane arrangement} $\Hcal$ in $\R^n$ defines a simplicial fan $\Sigma_\Hcal$ and dissects the space into regions. These closures of these regions are maximal cones of the fan and two regions are called \textit{adjacent} whenever their closures intersect in a cone of codimension 1. Choosing any such region $B$ as the base region and orienting the adjacency graph away from $B$ defines a poset, called the \textit{poset of regions} $\Pcal(\Hcal, B)$ by \cite{Edelman}. For any choice of base region $B$, this defines a fan poset, see \cite[Sec. 4]{Reading2005}. \\

A \textit{flat} of a hyperplane arrangement $\Hcal \subseteq \R^n$ is an intersection of hyperplanes of $\Hcal$, and thus a linear subspace of the ambient space $\R^n$. In particular, the empty intersection gives the ambient space $\R^n$ as a flat. The \textit{support} of a cone $\sigma \in \Sigma_{\Hcal}$ is the smallest flat $s(\sigma)$ which contains $\sigma$. This leads to the \textit{flat-partition} $\Pfrak_{\flatt}$ of the simplicial fan $\Sigma_{\Hcal}$ given by $[\sigma_1]_{\Pfrak_{\flatt}} = [\sigma_2]_{\Pfrak_{\flatt}}$ if and only if $s(\sigma_1) = s(\sigma_2)$. In other words, we identify cones whose support is the same flat.

\begin{prop} \label{prop:flatpartition}
    The partition $\Pfrak_{\flatt}$ is an admissible partition of $\Sigma_{\Hcal}$ and thus the category of the flat-partition $\Cfrak(\Sigma_\Hcal, \Pfrak_{\flatt})$ is a well-defined category.
\end{prop}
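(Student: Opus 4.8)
The plan is to reduce everything to one structural fact about simplicial arrangements and then feed it through the two requirements hidden in \cref{defn:admissiblepartition}: that $\Pfrak_{\flatt}$ is a partition ``as described above'' (its blocks refine the potential-identification relation $\Ecal$) and that it satisfies the admissibility closure condition. The key observation, which I would isolate as a lemma, is that in a central simplicial arrangement the support of a cone coincides with its linear span, i.e. $s(\sigma) = \spann\{\sigma\}$ for every $\sigma \in \Sigma_{\Hcal}$. Indeed, since $\Hcal$ is simplicial every chamber $C$ is a simplicial cone cut out by its facet hyperplanes $H_1, \dots, H_n \in \Hcal$, and each face of $C$ has the form $\sigma = C \cap \bigcap_{i \in S} H_i$ for some $S$; as the facet normals are linearly independent, $\sigma$ is full-dimensional inside $\bigcap_{i \in S} H_i$, so $\spann\{\sigma\} = \bigcap_{i \in S} H_i$ is itself a flat and hence equals the smallest flat $s(\sigma)$ containing $\sigma$. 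Consequently $s(\sigma_1) = s(\sigma_2)$ if and only if $\spann\{\sigma_1\} = \spann\{\sigma_2\}$, i.e. the flat-partition identifies two cones exactly when their spans (equivalently their orthogonal complements) agree.

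Next I would check that $\Pfrak_{\flatt}$ refines $\Ecal$, so that it genuinely is a partition of the required form. If $s(\sigma_1) = s(\sigma_2) = F$, the span condition $\spann\{\sigma_1\}^\perp = \spann\{\sigma_2\}^\perp$ is immediate from the lemma. For the remaining condition $\pi_{\sigma_1}(\starr(\sigma_1)) = \pi_{\sigma_2}(\starr(\sigma_2))$, note that $\spann\{\sigma_1\} = \spann\{\sigma_2\} = F$ forces $\pi_{\sigma_1} = \pi_{\sigma_2} =: \pi_F$, the projection onto $F^\perp$, so it suffices to show $\pi_F(\starr(\sigma))$ depends only on $F$. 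For this I would use a localisation argument: the hyperplanes containing $\sigma$ are precisely those containing $F$, so for a point $p$ in the relative interior of $\sigma$ a small neighbourhood of $p$ meets only the localised arrangement $\Hcal_F = \{H \in \Hcal : F \subseteq H\}$. Hence $\starr(\sigma) = \{\tau \in \Sigma_{\Hcal} : p \in \tau\}$ is, locally at $p$, the fan of $\Hcal_F$, and projecting along $\pi_F$ (which kills the lineality space $F$ of $\Hcal_F$) identifies $\pi_{\sigma}(\starr(\sigma))$ with the fan of the essentialised localisation $\overline{\Hcal_F}$ in $F^\perp$. Since $\Hcal_F$ depends only on $F$, so does this projected fan, giving $\pi_{\sigma_1}(\starr(\sigma_1)) = \pi_{\sigma_2}(\starr(\sigma_2))$. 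Thus each flat-block lies inside a single $\Ecal$-class and $\Pfrak_{\flatt}$ is a legitimate partition of $\Sigma_{\Hcal}$.

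Finally I would verify the admissibility condition itself. Suppose $\sigma_1 \sim \sigma_2$, so $s(\sigma_1) = s(\sigma_2) = F$ and $\pi_{\sigma_1} = \pi_{\sigma_2} = \pi_F$, and suppose $\pi_F(\tau_1) = \pi_F(\tau_2)$ for some $\tau_i \in \starr(\sigma_i)$. Since $\sigma_i \subseteq \tau_i$ we have $F = \spann\{\sigma_i\} \subseteq \spann\{\tau_i\}$, so the span decomposes orthogonally as $\spann\{\tau_i\} = F \oplus (\spann\{\tau_i\} \cap F^\perp) = F \oplus \spann\{\pi_F(\tau_i)\}$. The hypothesis $\pi_F(\tau_1) = \pi_F(\tau_2)$ gives $\spann\{\pi_F(\tau_1)\} = \spann\{\pi_F(\tau_2)\}$, and therefore $\spann\{\tau_1\} = \spann\{\tau_2\}$. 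By the key lemma this means $s(\tau_1) = s(\tau_2)$, i.e. $\tau_1 \sim \tau_2$ in $\Pfrak_{\flatt}$, which is exactly admissibility. Well-definedness of $\Cfrak(\Sigma_{\Hcal}, \Pfrak_{\flatt})$ then follows from the general results already established for admissible partitions.

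The genuinely substantive steps are the key lemma $s(\sigma) = \spann\{\sigma\}$ and the localisation identification in the second paragraph; once these are available the admissibility check collapses to the one-line orthogonal-decomposition computation. I therefore expect the main obstacle to be pinning down the local-fan description $\pi_{\sigma}(\starr(\sigma)) = \Sigma_{\overline{\Hcal_F}}$ carefully — in particular checking that no hyperplane outside $\Hcal_F$ interferes near the relative interior of $\sigma$ — rather than the admissibility inequality itself.
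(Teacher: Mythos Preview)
Your proposal is correct and follows essentially the same route as the paper: both hinge on the identification $s(\sigma)=\spann\{\sigma\}$ and on recognising $\pi_\sigma(\starr(\sigma))$ as the essentialised localisation $\overline{\Hcal_F}$ of the arrangement at the flat $F$. The only real difference is that the paper outsources the localisation statement (and with it the admissibility conclusion) to \cite[Lem.~1.36]{AguiarMahajan}, whereas you unpack it by hand and then verify admissibility via the explicit orthogonal decomposition $\spann\{\tau_i\}=F\oplus\spann\{\pi_F(\tau_i)\}$; this makes your argument more self-contained but not conceptually different.
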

\begin{proof}
    Let $\sigma_1, \sigma_2 \in \Sigma_{\Hcal}$ be two cones such that the flat $X \coloneqq s(\sigma_1) = s(\sigma_2)$ is the same intersection of hyperplanes. It follows immediately that
    \[ \spann\{ \sigma_1\}= X = \spann\{ \sigma_2\}. \]
    It follows from \cite[Lem. 1.36]{AguiarMahajan} that both $\starr(\sigma_1)$ and $\starr(\sigma_2)$ are ``equivalent'' to the \textit{arrangement over the flat $X$} whose \textit{essentialisation} is precisely the projection onto the orthogonal complement, see \cite{AguiarMahajan} for more details. The partition is admissible by \cref{lem:possident} since it identifies  the whole set of possible identifications. 
\end{proof}

Shards of hyperplane arrangements were introduced in \cite{Reading2003}. Informally speaking a shard of a hyperplane arrangement $\Hcal \subseteq \R^n$ is a ``piece of a hyperplane''. They are obtained as follows: Let $B$ be a choice of base region and call the $n$ hyperplanes defining it \textit{basic}. Each pair of hyperplanes $H_1, H_2 \in \Hcal$ defines a \textit{subarrangement} $\Hcal(H_1, H_2) \coloneqq \{ H \in \Hcal: H_1 \cap H_2 \subseteq H\}$ with an induced base region. We say that $H_1$ \textit{cuts} $H_2$ if $H_1$ is basic in $\Hcal(H_1,H_2)$ and $H_2$ is not basic in $\Hcal(H_1,H_2)$. For each hyperplane $H \in \Hcal$ remove from $H$ all points contained in hyperplanes $H'$ which cut $H$ in the subarrangement $\Hcal(H, H')$. The closures of the remaining connected components are called \textit{shards}.

\begin{rmk}
    Essentially, shards cut hyperplanes in the same way as stability spaces of bricks define parts of hyperplanes in the wall-and-chamber structure of a finite-dimensional algebra, see \cite{BST2019}. In particular, for preprojective algebras whose wall-and-chamber structure is a hyperplane arrangement \cite{Mizuno2013}, the shards coincide exactly with stability spaces of bricks \cite{Thomas2018} and shards were generalised and this result extended to all finite-dimensional algebras in \cite{Mizuno2022}. For more details see \cref{sec:specialcase}.
\end{rmk} 

Similar to \cite{Reading2011}, we consider the set $\Xi$ of arbitrary intersections of shards, which has a natural poset structure by inclusion with maximal element the empty intersection $\R^n$. Using this set, we define a partition $\Pfrak_{\shard}$ of the cones $\Sigma_{\Hcal}$ of a hyperplane arrangement given by $[\sigma_1]_{\Pfrak_{\shard}}= [\sigma_2]_{\Pfrak_{\shard}}$ if and only if the smallest elements $\xi_i \in \Xi$ which contain $\sigma_i$, for $i=1,2$ respectively, coincide.

\begin{prop} \label{lem:shard-partition}
    Let $\Sigma_{\Hcal}$ be the fan of a finite central simplicial hyperplane arrangement in $\R^n$. The partition $\Pfrak_{\shard}$ is an admissible partition of $\Sigma_{\Hcal}$ and thus the category of the shard-partition $\Cfrak(\Sigma_\Hcal, \Pfrak_{\flatt})$ is a well-defined category.
\end{prop}
\begin{proof}
    Since by definition each intersection of shards is contained in the intersection of corresponding hyperplanes, two such cones $\sigma_1, \sigma_2$ have the same support and hence the proof of \cref{prop:flatpartition} yields that they are in the same class of potential identifications. \\

    To see that $\Pfrak_{\shard}$ is admissible we let $\sigma_1 \sim \sigma_2$ in $\Pfrak_{\shard}$ be distinct cones, and let $\kappa_i \in \starr(\sigma_i)$ for $i=1,2$ be such that $\pi_{\sigma_1}(\kappa_1) = \pi_{\sigma_2}(\kappa_2)$. It follows from \cref{lem:possident} that $\spann\{ \kappa_1\} = \spann \{ \kappa_2\}$ and since $\Sigma_{\Hcal}$ is finite and complete there exists a hyperplane $H$ containing $\kappa_1, \kappa_2$ such that $H = \spann\{\tau\}$ for some $\tau \in \Sigma^{n-1}$. Assume for a contradiction that $\kappa_1 \not \sim \kappa_2$ in $\Pfrak_{\shard}$ that means that the hyperplane $H$ is cut by a hyperplane $H'$ separating $\kappa_1$ and $\kappa_2$ in the sense that $\kappa_1$ and $\kappa_2$ lie in opposite half-spaces defined by $H'$. Since $\sigma_i \subseteq \kappa_i$, the hyperplane $H'$ also separates $\sigma_1$ and $\sigma_2$ and in particular the cut on $H$ induced by $H'$ implies that $\sigma_1$ and $\sigma_2$ lie in different shards, a contradiction.\\

    Finally we need to verify that $\Pfrak_{\shard}$ is well-defined in other words when $\sigma_1$ and $\sigma_2$ are two distinct cones identified in $\Pfrak_{\shard}$ then given cones $\kappa_i, \kappa_i' \in \starr(\sigma_i)$ for $i=1,2$ satisfying
    \[ \pi_{\sigma_1}(\kappa_1)= \pi_{\sigma_2}(\kappa_2), \quad \pi_{\sigma_1} (\kappa_1') = \pi_{\sigma_2}(\kappa_2'),\]
    we have $\kappa_1 \sim \kappa_1'$ if and only if $\kappa_2 \sim \kappa_2'$. Let $\sigma_1, \sigma_2$ be two distinct cones identified in $\Pfrak_{\shard}$ and $\kappa_i, \kappa_i' \in \starr(\sigma_i)$ be as above for $i=1,2$. Assume for a contradiction that $\kappa_1 \sim \kappa_1'$ but $\kappa_2 \not \sim \kappa_2'$ in $\Pfrak_{\shard}$. By assumption every hyperplane $H$ containing $\sigma_1$ also contains $\sigma_2$. Since $\kappa_2 \not \sim \kappa_2'$, there exists a shard $S$ that contains $\kappa_2$ but not $\kappa_2'$, thus there exists a hyperplane $H'$ which separates $\kappa_2$ and $\kappa_2'$. In particular $H'$ cuts the hyperplane $H_S$ corresponding to the shard $S$. By definition if $H'$ separates $\kappa_2$ and $\kappa_2'$ then it must pass through their intersection, in other words it contains $\sigma_2$. Now both $H_S$ and $H$ contain $\sigma_2$ and thus also $\sigma_1$. Since the orthogonal projections $\pi_{\sigma_1}(\starr(\sigma_1))$ and $\pi_{\sigma_2}(\starr(\sigma_2))$ coincide, the hyperplane $H'$ must also separate $\kappa_1$ and $\kappa_1'$, both of which lie on $H_S$. Since $H'$ cuts $H_S$ as it is basic in $\Hcal(H_S, H')$, the cones $\kappa_1$ and $\kappa_1'$ lie in different shards and hence are not identified, a contradiction.
\end{proof}

\begin{lem}\label{lem:allowedaltdef}
    The poset of regions of a central simplicial hyperplane arrangement with any choice of base region is well-defined on identified stars and hence we may use the description of the picture group in \cref{defn:altdefn}.
\end{lem}
\begin{proof}
    This follows from the fact that any poset of regions $\Pcal$ of a simplicial hyperplane arrangement is induced by any linear functional $b \in (\R^n)^\lor$ whose minimum on the unit sphere lies inside the base region of $\Pcal$, see \cite[Thm. 4.2]{Reading2005}, and \cref{lem:linfunctposet}.
\end{proof}

For the maximal partition $\Pfrak_{\max} = \Pfrak_{\flatt}$, equivalence classes $[\sigma]$ of cones $\sigma \in \Sigma_\Hcal^{n-1}$ of codimension 1 are given by all such cones contained in the same hyperplane $H_{\sigma} \coloneqq \spann\{ \sigma\}$. Therefore we may represent these equivalence classes simply with the normal vector $\boldsymbol{n}_{H_\sigma}$ to the hyperplane. More precisely, for every hyperplane $H \in \Hcal$ let $\boldsymbol{n}_{H} \in \R_{\geq 0}^n$ be a unit normal vector to $H$. The following is one of the main results of this paper and the rest of this section will concern its proof. In \cref{sec:specialcase} we discuss its algebraic implications.
\begin{thm} \label{thm:mainthm}
    Let $\Hcal \subseteq \R^n$ be a (central, simplicial) hyperplane arrangement, $B$ a region and $\Pcal_B$ the corresponding poset of regions. Assume that the normal vectors $\boldn_H$ to all hyperplanes $H \in \Hcal$ can be taken to lie in the positive orthant $\R_{\geq 0}^n$. Then the functor from \cref{eq:functor} is faithful.
\end{thm}

Denote by $\Z[[\R^n]]$ the formal power series with generators $\{x^{\boldsymbol{v}} : \boldsymbol{v} \in \R^n\}$ over $\Z$ whose multiplication is given by $x^{\boldsymbol{v}} * x^{\boldsymbol{v}'} = x^{\boldsymbol{v}+ \boldsymbol{v}'}$. This is a commutative associative algebra. The group of units $\Z[[\R^n]]^*$ is therefore an abelian group consisting of all formal sums with constant term equal to 1 or -1. 

\begin{lem}
    In the setting of \cref{thm:mainthm} there exists a group homomorphism 
    \begin{align*}
        \phi: G(\Hcal, \Pfrak_{\max}, \Pcal) &\to \Z[[\R^n]]^* \\
        X_{\boldn_H} &\mapsto 1 + x^{\boldn_H}
    \end{align*} 
\end{lem}
\begin{proof}  
    As pointed out in \cref{exmp:fanposets} the poset of regions $\Pcal$ is a polygonal lattice, so by \cref{prop:polygonalandmaximal} it is sufficient to consider the relations coming from facial intervals of cones of codimension 2, which we call \textit{polygon relations}. To show that the polygon relations are preserved is simple because of the commutativity of $\Z[[\R^n]]^*$ and the hyperplane structure. In particular, because we have a hyperplane arrangement and take the maximal partition, the labels of the two disjoint chains in any polygon of $\Pcal$ correspond to the two sequences $(H_1,\dots, H_r)$ and $(H_r, \dots, H_1)$. Thus we simply obtain
    \begin{align*}
        \phi(X_{\boldn_{H_1}} \dots X_{\boldn_{H_r}}) &= (1 + x^{\boldn_{H_1}}) * \dots * (1 + x^{\boldn_{H_r}}) \\
        & = (1 + x^{\boldn_{H_r}}) * \dots * (1 + x^{\boldn_{H_1}}) \\
        & = \phi(X_{\boldn_{H_r}} \dots X_{\boldn_{H_1}})
    \end{align*}
    since $\Z[[\R^n]]^*$ is abelian.
\end{proof}

\begin{rmk}
    In \cite[Sec. 4.3]{HansonIgusa2021} and \cite[Sec. 5.2]{HansonIgusaPW2SMC} the authors take a similar approach and find a group homomorhism into the groups of units of different versions of Hall algebras. However, in our case the hyperplane arrangement gives the polygon relations a nicely symmetric structure, so that the abelian group $\Z[[\R^n]]^*$ is seems a natural candidate. 
\end{rmk}

\begin{cor}
    For $H_1 \neq H_2 \in \Hcal$ we have $\boldn_{H_1} \neq \boldn_{H_2}$ and hence $e \neq X_{\boldsymbol{n}} \neq X_{\boldsymbol{m}} \in G(\Hcal, \Pfrak_{\max}, \Pcal_B)$.
\end{cor}
\begin{proof}
    This immediately follows from the group homomorphism as $\phi(X_{\boldsymbol{n}}) = 1 + x^{\boldsymbol{n}_{H_1}} \neq 1 + x^{\boldsymbol{n}_{H_2}} = \phi(X_{\boldsymbol{m}})$, i.e. they get mapped to distinct elements in $\Z[[\R^n]]^*$.
\end{proof}

Since the poset of regions is non-degenerate by \cref{lem:allowedaltdef}, we use the alternative definition of the picture group given in \cref{defn:altdefn} and consider its additional generators in the following. 
\begin{lem}\label{lem:distinctgens} 
    In the setting of \cref{thm:mainthm}, choose all normal vector $\boldn_H$ to lie in the positive orthant. Let $R_1 \neq R_2 \in \R^n \setminus \Hcal$ be distinct regions, then $g_{\overline{R}_1} \neq g_{\overline{R}_2}$ in $G(\Hcal, \Pfrak_{\max}, \Pcal_B)$.
\end{lem}
\begin{proof}
    Consider the maximal cone $R_1 \land R_2$ in $\Pcal_B$ and let $(\boldn_{H_1}, \dots, \boldn_{H_s})$ label a path $R_1 \to R_1 \land R_2$ and let $(\boldn_{H_1'}, \dots, \boldn_{H_r'})$ label a path $R_2 \to R_1 \land R_2$ in $\Pcal$. Assume for a contradiction that $g_{R_1} = g_{R_2}$. If $\max(r,s) > 0$, let $\boldsymbol{m} \in \{\boldn_{H_1}, \dots, \boldn_{H_s}, \boldn_{H_1'}, \dots, \boldn_{H_r'})$ be a vector such that the sum of its entries is minimal among them, then the assumption $g_{\overline{R}_1} = g_{\overline{R}_2}$ implies
    \[ X_{\boldn_{H_1}} \dots X_{\boldn_{H_s}} g_{\overline{R_1 \land R_2}} = g_{\overline{R}_1} = g_{\overline{R}_2} = X_{\boldn_{H_1'}} \dots X_{\boldn_{H_r'}} g_{\overline{R_1 \land R_2}}.\]
    Multiplying both sides by $g_{\overline{R_1 \land R_2}}^{-1}$ on the right and applying $\phi$ yields
    \[ (1 + x^{\boldn_{H_1}}) * \dots * (1 + x^{\boldn_{H_s}}) = (1 + x^{\boldn_{H_1'}}) * \dots * (1 + x^{\boldn_{H_r'}}).\]
    Therefore $x^{\boldsymbol{m}}$ arises with positive coefficient on one of the sides. Since the sum of the entries of $\boldsymbol{m}$ is minimal (and importantly all normal vectors have non-negative entries) $x^{\boldsymbol{m}}$ cannot be written as the product of other terms (since ${\boldsymbol{m}}$ cannot be written as a sum of $\boldn_{H_i}$). Hence it must appear on both sides of the equality. It follows that $\bx$ separates both $R_1$ and $R_2$ from $R_1 \land R_2$, hence
    \[ R_1 \land R_2 < R_{\bx} < R_1, R_2\]
    where $R_{\bx}$ is the minimal (in $\Pcal$) region whose separating set contains $S(R_1 \land R_2) \cup \{ \bx\}$. This contradicts the fact that $R_1 \land R_2$ is the maximal lower bound, hence $\max\{r,s\} = 0$ and $R_1 = R_2$. 
\end{proof}

\begin{proof}[Proof of \cref{thm:mainthm}]
    Consider the functor
    \begin{equation}
    \begin{aligned}
        \Psi: \Cfrak(\Sigma, \Pfrak) &\to G(\Sigma, \Pfrak, \Pcal)  \\
        [f_{\sigma \kappa}] &\mapsto g_{\kappa^-} \cdot g_{\sigma^-}^{-1}
    \end{aligned}
    \end{equation}
    which is simply an alternative way of writing the one in \cref{eq:functor} using the presentation of the picture group given in \cref{defn:altdefn}, which is possible by \cref{lem:allowedaltdef}. We need to show that the induced function $\Psi_{\sigma \kappa}$ from $\Hom_{\Cfrak(\Sigma_{\Hcal}, \Pfrak_{\flatt})}([\sigma],[\kappa])$ to $ \Hom_{G(\Sigma_{\Hcal}, \Pfrak_{\flatt}, \Pcal)}(\bullet, \bullet)$ is injective. By \cref{cor:morphstartingpoint} and \cref{lem:uniquefaq}, it suffices to prove this for one representative $\sigma \in [\sigma]$. Hence, take distinct morphisms $[f_{\sigma \kappa_1}] \neq [f_{\sigma \kappa_2}] \in \Hom_{\Cfrak(\Sigma_{\Hcal}, \Pfrak_{\flatt})}([\sigma],[\kappa])$ for which we have $\kappa_1 \neq \kappa_2$ but $\kappa_1 \sim \kappa_2$. Then we have
    \[ \Psi([f_{\sigma \kappa_1}]) = g_{\kappa_1^-} \cdot g_{\sigma^-}^{-1} \quad \text{and} \quad \Psi([f_{\sigma \kappa_2}]) = g_{\kappa_2^-} \cdot g_{\sigma^-}^{-1}.\]
    Then \cref{lem:distinctgens} implies that these group elements coincide if and only if $\kappa_1^- = \kappa_2^-$. This means that $\kappa_1^- = \kappa_2^- \in \starr(\kappa_1) \cap \starr(\kappa_2) \neq \emptyset$ which is a contradiction as $\spann\{ \kappa_1\} = \spann\{ \kappa_2\}$ but $\kappa_1 \neq \kappa_2$ implies that their stars may not intersect. Therefore the functor is faithful. 
\end{proof}

\begin{cor}
    If $\Cfrak(\Sigma_{\Hcal}, \Pfrak_{\flatt})$ satisfies the pairwise compatibility of first and last factors, then the classifying is a $K(\pi,1)$ for $\pi$ the picture group.
\end{cor}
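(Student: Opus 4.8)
The plan is to verify that the two sufficient conditions of \cref{prop:igusacriteria} hold for the category $\Cfrak(\Sigma_{\Hcal}, \Pfrak_{\flatt})$ and then to identify its fundamental group with the picture group. By \cref{thm:cubicalcat} the category is already cubical, so \cref{prop:igusacriteria} becomes applicable as soon as both conditions are checked.

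For condition (1) I would simply invoke \cref{thm:BCAHAfaithful}, which establishes that the functor $\Psi$ of \cref{eq:functor} from $\Cfrak(\Sigma_{\Hcal}, \Pfrak_{\flatt})$ to its picture group $G(\Sigma_{\Hcal}, \Pfrak_{\flatt}, \Pcal)$ is faithful; viewing the picture group as a groupoid with one object, this is precisely the required faithful group functor. Condition (2), the pairwise compatibility of last factors, is exactly the hypothesis of the corollary. Applying \cref{prop:igusacriteria}, the classifying space $\Bcal \Cfrak(\Sigma_{\Hcal}, \Pfrak_{\flatt})$ is locally CAT(0) and hence a $K(\pi,1)$ for $\pi$ its fundamental group.

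It then remains to identify this fundamental group with the picture group. The key observation is that the flat-partition $\Pfrak_{\flatt}$ identifies all maximal cones of $\Sigma_{\Hcal}$: every region is full-dimensional, so the smallest flat containing it is the whole ambient space $\R^3$ (the empty intersection of hyperplanes), whence all regions share the same support and lie in a single block. Since the poset of regions of a central simplicial hyperplane arrangement is a non-degenerate polygonal lattice, \cref{prop:polygonalandmaximal} yields an isomorphism $G(\Sigma_{\Hcal}, \Pfrak_{\flatt}, \Pcal) \cong \pi_1(\Bcal \Cfrak(\Sigma_{\Hcal}, \Pfrak_{\flatt}))$, so that $\pi$ may indeed be taken to be the picture group.

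I expect no genuine obstacle here, as all the substantive work was carried out in \cref{thm:BCAHAfaithful} (faithfulness of $\Psi$) and in \cref{prop:polygonalandmaximal} (the coincidence of $\pi_1$ with the picture group when all maximal cones are identified). The only point requiring any care is the elementary remark that $\Pfrak_{\flatt}$ collapses all regions into one equivalence class, which is precisely what licenses the passage from the generic $K(\pi,1)$ conclusion of \cref{prop:igusacriteria} to a statement phrased in terms of the picture group.
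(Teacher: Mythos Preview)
Your proof is correct and is precisely the argument the paper intends: the corollary is stated without proof there, as an immediate consequence of \cref{thm:BCAHAfaithful} (yielding condition (1)), the hypothesis (condition (2)), and \cref{prop:igusacriteria}; your additional justification that $\Pfrak_{\flatt}$ identifies all maximal cones so that \cref{prop:polygonalandmaximal} applies to identify $\pi_1$ with the picture group is exactly the missing detail, and is supported by the paper's remark that the poset of regions is a non-degenerate polygonal lattice.
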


\section{Lattice of admissible partitions} \label{sec:lattice}

It is well-known that the collection of partitions of a set, ordered by refinement, forms a complete lattice, see \cite[Sec. IV.4]{Gratzer}. We show that the restriction to admissible partitions of a fan preserves the lattice structure, thus establishing a lattice of categories (one for each admissible partition) of a fan. In the special case where the underlying fan is the $g$-fan of a finite-dimensional algebra, see \cref{sec:specialcase}, the $\tau$-cluster morphism category is an element of this lattice. Similarly if the underlying fan is a hyperplane arrangement, the category of the flat-partition and the category of the shard-partition lie in this lattice. We begin by introducing some basic notions. It is a well-known that an equivalence relation induces a partition on a set $X$ and vice versa. Let us define the following poset relation on partitions. 
\begin{defn}
    Let $P_1$ and $P_2$ be partitions of $X$. We say that $P_1$ is a \textit{finer} partition than $P_2$ if
    \[ x \sim_{P_1} y \Longrightarrow x \sim_{P_2} y, \quad \text{or equivalently,} \quad \{x,y\} \subseteq a \in P_1 \Longrightarrow \{x,y\} \subseteq b \in P_2\]
    for  $x,y  \in X$ and some $a \in P_1$ and $b \in P_2$. In this case, we write $P_1 \leq P_2$ and say $P_2$ is \textit{coarser} than $P_1$.
\end{defn}

Denote by $\Part(X)$ the set of all partitions of a set $X$. In view of \cref{defn:thecategory}, denote by $\APart(\Sigma)$ the set of all admissible partitions of a fan $\Sigma$. The definitions of join and the meet do not depend on the restriction to admissible partitions and are the operations defined for a general \textit{partition lattice}. Given a subset $\Scal =\{ P_1, \dots, P_k\} \subseteq \Part(X)$, define the meet $\bigwedge \Scal$ to be the partition satisfying 
\[x \sim_{\bigwedge \Scal} y \quad \text{if and only if} \quad x \sim_{P_i} y \text{ for all $i =1, \dots, k$}. \] 

Define the join $\bigvee \Scal$ to be the partitioned satisfying $x \sim_{\bigvee \Scal} y$ if and only if there exists a natural number $n$, indices $i_0, \dots, i_{n-1} \in \{1, \dots, k\}$ and $x_0, \dots, x_{n+1} \in X$ such that $x= x_0$, $y = x_{n+1}$ and $x_j \sim_{P_{i_j}} x_{j+1}$ for $0 \leq j \leq n$. 

\begin{prop} \label{prop:lattice}
    The partially ordered set $\APart(\Sigma)$ forms a complete lattice. 
\end{prop}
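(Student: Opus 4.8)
The plan is to realise $\APart(\Sigma)$ as a sub-poset of the complete lattice of all partitions of $\Sigma$ and to check that it is closed under the meet and join operations. Write $\Ecal$ for the partition of $\Sigma$ whose blocks are the potential-identification classes $\Ecal_{\sigma}$. By construction every admissible partition is a splitting of the blocks of $\Ecal$, so $\APart(\Sigma)$ is contained in the interval $[\Pfrak_{\text{poset}}, \Ecal]$ of $\Part(\Sigma)$; this interval is itself a complete lattice, and the meet and join formulas stated before the proposition restrict to it. It therefore suffices to prove that $\APart(\Sigma)$ is closed under arbitrary meets and arbitrary joins computed there, including the empty cases. The empty join is the bottom $\Pfrak_{\text{poset}}$, which is admissible because $\pi_{\sigma}$ is injective on $\starr(\sigma)$, so the only instance of the condition in \cref{defn:admissiblepartition} is the trivial one $\tau_1=\tau_2$. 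The empty meet is the top $\Ecal$, whose admissibility is exactly the content of \cref{lem:possident}: two cones in a common $\Ecal$-block satisfy $\spann\{\sigma_1\}^{\perp}=\spann\{\sigma_2\}^{\perp}$ and $\pi_{\sigma_1}(\starr(\sigma_1))=\pi_{\sigma_2}(\starr(\sigma_2))$, and the lemma forces $\tau_1\sim_{\Ecal}\tau_2$ from $\pi_{\sigma_1}(\tau_1)=\pi_{\sigma_2}(\tau_2)$.

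Closure under meets I expect to be immediate. Given a family $\Scal=\{\Pfrak_i\}_{i\in I}\subseteq\APart(\Sigma)$, the meet $\Pfrak=\bigwedge\Scal$ again refines $\Ecal$, and if $\sigma_1\sim_{\Pfrak}\sigma_2$ with $\pi_{\sigma_1}(\tau_1)=\pi_{\sigma_2}(\tau_2)$ for some $\tau_1\in\starr(\sigma_1)$ and $\tau_2\in\starr(\sigma_2)$, then $\sigma_1\sim_{\Pfrak_i}\sigma_2$ for every $i$; applying admissibility of each $\Pfrak_i$ gives $\tau_1\sim_{\Pfrak_i}\tau_2$ for every $i$, whence $\tau_1\sim_{\Pfrak}\tau_2$.

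The main work is closure under joins, and I expect this to be the genuine obstacle. Let $\Pfrak=\bigvee\Scal$ and suppose $\sigma_1\sim_{\Pfrak}\sigma_2$ with $\pi_{\sigma_1}(\tau_1)=\pi_{\sigma_2}(\tau_2)=:c$. Unlike the meet, the join relates $\sigma_1$ and $\sigma_2$ only through a finite zigzag $\sigma_1=\rho_0,\rho_1,\dots,\rho_m=\sigma_2$ with $\rho_j\sim_{\Pfrak_{i_j}}\rho_{j+1}$, so I would transport the target cone $c$ coherently along this chain. Since every $\Pfrak_{i_j}$ refines $\Ecal$, consecutive $\rho_j$ lie in one $\Ecal$-class, so all the $\rho_j$ share the same orthogonal complement and the same projected star $\pi_{\rho_j}(\starr(\rho_j))=F$, and $c\in F$. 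As $\pi_{\rho_j}$ restricts to a bijection from $\starr(\rho_j)$ onto the cones of $F$, for each $j$ there is a unique $\upsilon_j\in\starr(\rho_j)$ with $\pi_{\rho_j}(\upsilon_j)=c$, and uniqueness forces $\upsilon_0=\tau_1$ and $\upsilon_m=\tau_2$. Then $\rho_j\sim_{\Pfrak_{i_j}}\rho_{j+1}$ together with $\pi_{\rho_j}(\upsilon_j)=c=\pi_{\rho_{j+1}}(\upsilon_{j+1})$ and admissibility of $\Pfrak_{i_j}$ yields $\upsilon_j\sim_{\Pfrak_{i_j}}\upsilon_{j+1}$; concatenating produces a join-chain from $\tau_1$ to $\tau_2$, so $\tau_1\sim_{\Pfrak}\tau_2$ and $\Pfrak$ is admissible.

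With both closure properties in hand, $\APart(\Sigma)$ is a subset of a complete lattice that is closed under all meets and joins (with the empty cases supplying the top $\Ecal$ and bottom $\Pfrak_{\text{poset}}$), and is therefore a complete lattice with the induced operations. In particular the meet and join coincide with those of $\Part(\Sigma)$, as anticipated in the discussion preceding the statement.
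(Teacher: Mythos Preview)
Your proof is correct and follows essentially the same approach as the paper: verify that $\APart(\Sigma)$ is closed under the meet and join of $\Part(\Sigma)$, the meet being immediate and the join handled by transporting along a finite zigzag using admissibility of each $\Pfrak_{i_j}$. If anything you are more careful than the paper, which writes only for finite families and leaves the construction of the intermediate cones $\upsilon_j$ implicit; your explicit use of the bijection $\pi_{\rho_j}\colon\starr(\rho_j)\to F$ and your treatment of the empty meet and join are welcome additions.
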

\begin{proof}
    Since $\Part(\Sigma)$ is a complete lattice, we need to show that $\APart(\Sigma)$ is closed under the lattice operations. Recall from \cref{sec:definition} that we need to show the join and meet partitions are \textit{possible} and \textit{admissible}. Let us start with the meet. Take a subset $\Scal = \{ \Pfrak_1, \dots, \Pfrak_k\} \subseteq \APart(\Sigma)$ of admissible partitions and take a block $\Ecal_{\sigma}^\ell \in \bigwedge \Scal$. If $|\Ecal_{\sigma}^\ell |=1$, then there is nothing to show, so assume $|\Ecal_{\sigma}^\ell|> 1$ and take $\sigma_1, \sigma_2 \in \Ecal_{\sigma}^\ell$. By definition $\sigma_1 \sim_{\Pfrak_i} \sigma_2$ for all $i =1,\dots, k$, so the partition $\bigwedge \Scal$ consists of possible identification. Similarly, whenever $\pi_{\sigma_1}(\tau_1) = \pi_{\sigma_2}(\tau_2)$, then $\tau_1 \sim_{\Pfrak_i} \tau_2$ $i=1,\dots, k$ and thus $\tau_1 \sim_{\bigwedge \Scal} \tau_2$, hence $\bigwedge \Scal$ is an admissible partition. \\

    For the join, take $\Ecal_{\sigma}^\ell \in \bigvee \Scal$ and $\sigma_1, \sigma_2 \in \Ecal_{\sigma}^\ell$, then if both $\sigma_1$ and $\sigma_2$ are contained in one block $\Ecal_{\lambda}^h \in \Pfrak_i$ for some $i \in \{1, \dots, k\}$ the result is immediate. Therefore assume that $\sigma_1$ and $\sigma_2$ are not contained in the same block in any $\Pfrak_i$. However, by the construction of the join, there exists a sequence 
    \[\sigma_1 \sim_{\Pfrak_{i_1}} \sigma_{i_1} \sim_{\Pfrak_{i_2}} \dots \sim_{\Pfrak_{i_{r-1}}} \sigma_{i_{r-1}} \sim_{\Pfrak_{i_{r}}} \sigma_2\]
    for some $r \geq 1$, such that each term is contained in $\Ecal_{\sigma_1}= \Ecal_{\sigma_2}$, hence $\bigvee \Scal$ consists of possible identifications. This sequence of possible identifications implies that we have 
    \[ \pi_{\sigma_1}(\starr(\sigma_1)) = \pi_{\sigma_{i_j}}(\starr(\sigma_{i_j})) = \pi_{\sigma_2}(\starr(\sigma_2)), \]
    for $j=1, \dots, r-1$. To show that $\bigvee \Scal$ is admissible, assume we have $\pi_{\sigma_1}(\tau_1) = \pi_{\sigma_2}(\tau_2)$ for some $\tau_1 \in \starr(\sigma_1)$ and $\tau_2 \in \starr(\sigma_2)$, then from the above we must have a sequence of $\tau_{i_j} \in \starr(\sigma_{i_j})$ for $j=1, \dots, r-1$ such that
    \[ \pi_{\sigma_1}(\tau_1) = \pi_{\sigma_{i_j}}(\tau_{i_j}) = \pi_{\sigma_2}(\tau_2). \]
    Since each $\Pfrak_{i_j}$ is admissible for $j = 1, \dots, r$ we obtain a sequence $\tau_1 \sim_{\Pfrak_{i_1}} \tau_{i_1} \sim_{\Pfrak_{i_2}} \dots \sim_{\Pfrak_{i_{r-1}}} \tau_{i_{r-1}} \sim_{\Pfrak_{i_r}} \tau_2$ as required. Hence $\bigvee \Scal$ is an admissible partition.
\end{proof}

It is easily seen that this is a bounded lattice, that is, there exists a maximal and a minimal element. The minimal element is the partition $\Pfrak_{\text{poset}}$ mentioned earlier, which is just the finest partition with trivial equivalence classes. The maximal element is the coarsest partition, whose equivalence classes are exactly the $\Ecal_{\sigma}$ of \cref{defn:thecategory}. When the fan is a hyperplane arrangement the coarsest partition is exactly the flat-partition, see \cref{prop:flatpartition}, whereas the shard-partition, see \cref{lem:shard-partition}, sits somewhere in the middle. We conclude this section by describing the relationship between comparable partitions.

\begin{thm} \label{thm:latticeproperties}
    Let $(\Sigma, \Pcal)$ be a fan poset and $\Pfrak_1, \Pfrak_2$ be two admissible partitions of $\Sigma$ such that $\Pfrak_1$ is finer than $\Pfrak_2$. Then the following hold:
    \begin{enumerate}
        \item There exists a faithful surjective-on-objects functor $F: \Cfrak(\Sigma, \Pfrak_1) \to \Cfrak(\Sigma, \Pfrak_2)$.
        \item The classifying spaces satisfy $\Bcal \Cfrak(\Sigma, \Pfrak_2) \equiv \Bcal \Cfrak(\Sigma, \Pfrak_1)/\sim$, where $\sim$ identifies the cells $e([\sigma_1]_{\Pfrak_1})$ and $e([\sigma_2]_{\Pfrak_1})$ whenever $[\sigma_1]_{\Pfrak_2}= [\sigma_2]_{\Pfrak_2}$.
        \item If $(\Sigma, \Pcal)$ is non-degenerate, then the picture groups satisfy $G(\Sigma, \Pfrak_2, \Pcal) \cong G(\Sigma, \Pfrak_1, \Pcal)/I$, where $I$ is generated by the relations $X_{[\sigma_1]} = X_{[\sigma_2]}$ whenever $[\sigma_1]_{\Pfrak_1} \neq [\sigma_2]_{\Pfrak_1}$ but $[\sigma_1]_{\Pfrak_2}= [\sigma_2]_{\Pfrak_2}$.
    \end{enumerate}
\end{thm}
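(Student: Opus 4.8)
The plan is to treat the three parts in order, exploiting throughout that passing from $\Pfrak_1$ to the coarser $\Pfrak_2$ only merges equivalence classes while leaving the fan $\Sigma$, the projections $\pi_\sigma$, and the poset $\Pcal$ untouched.

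For (1) I would define $F$ on objects by $[\sigma]_{\Pfrak_1}\mapsto[\sigma]_{\Pfrak_2}$ and on morphisms by $[f_{\sigma\tau}]_{\Pfrak_1}\mapsto[f_{\sigma\tau}]_{\Pfrak_2}$, choosing a representative inclusion $\sigma\subseteq\tau$ and re-reading its class in the coarser category. This is well defined on objects because $\Pfrak_1\leq\Pfrak_2$ gives $\sigma_1\sim_{\Pfrak_1}\sigma_2\Rightarrow\sigma_1\sim_{\Pfrak_2}\sigma_2$, and on morphisms because the relation $f_{\sigma_1\tau_1}\sim f_{\sigma_2\tau_2}\iff\pi_{\sigma_1}(\tau_1)=\pi_{\sigma_2}(\tau_2)$ defining morphism classes is identical in both categories; the composition rule $[f_{\kappa\tau}]\circ[f_{\sigma\kappa}]=[f_{\sigma\tau}]$ is preserved verbatim, so $F$ is a functor, and it is surjective on objects since every $\Pfrak_2$-class is $[\sigma]_{\Pfrak_2}$ for some cone. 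The crucial point is faithfulness: within a fixed Hom-set $\Hom_{\Cfrak(\Sigma,\Pfrak_1)}([\sigma]_{\Pfrak_1},[\tau]_{\Pfrak_1})$ two morphisms are by definition distinguished exactly by their value $\pi_{\sigma}(\tau)$, which is precisely the datum $F$ records; hence $F$ cannot identify distinct morphisms with common source and target. (I would deliberately \emph{not} claim fullness, as the coarser category may contain morphisms realised only by representatives outside the relevant $\Pfrak_1$-classes.)

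For (2) I would show the induced cellular map $\Bcal F\colon\Bcal\Cfrak(\Sigma,\Pfrak_1)\to\Bcal\Cfrak(\Sigma,\Pfrak_2)$ realises the claimed quotient. It is surjective because every chain of composable morphisms in $\Cfrak(\Sigma,\Pfrak_2)$ can be represented by a genuine chain of cone inclusions $\sigma^{(0)}\subseteq\cdots\subseteq\sigma^{(k)}$, built step by step using \cref{lem:morphcomp1}, and this same chain of inclusions defines a simplex of $\Bcal\Cfrak(\Sigma,\Pfrak_1)$ mapping onto it. As both spaces are finite CW-complexes, hence compact Hausdorff, this continuous surjection is closed and therefore a quotient map, so it remains only to identify its fibres using the CW-structure of \cref{thm:CWcomplex}. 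The key observation is that in either category the open cell $e([\sigma])$ is the open cone over the simplicial complex $\Scal([\sigma])$ of \cref{prop:simplicialcomplex}, and the latter depends only on the geometric datum $\pi_{\sigma}(\starr(\sigma))$ of a chosen representative, which is unchanged by the partition. Hence $\Bcal F$ restricts to a homeomorphism on each open cell, and by \cref{lem:uniquefaq} the open cells $e([\sigma_1]_{\Pfrak_1})$ and $e([\sigma_2]_{\Pfrak_1})$ map onto the \emph{same} $\Pfrak_2$-cell precisely when $[\sigma_1]_{\Pfrak_2}=[\sigma_2]_{\Pfrak_2}$. Any further gluing forced by the coarsening concerns faces $[f_{\kappa\lambda}]$ with $\kappa\supsetneq\sigma$ (the case $\kappa=\sigma$ forces the partner to coincide with $\sigma$, so interiors are untouched); such faces lie in strictly lower-dimensional cells and are already accounted for by the identification of those cells. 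This yields $\Bcal\Cfrak(\Sigma,\Pfrak_2)\equiv\Bcal\Cfrak(\Sigma,\Pfrak_1)/\sim$.

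For (3) I would use that, since $\Pcal$ is common to both and non-degenerate, \cref{lem:nondegimplystars} renders the type 2 relations of \cref{defn:picgroup} automatic, so only the generators and the type 1 (maximal-chain) relations matter. The assignment $X_{[\sigma]_{\Pfrak_1}}\mapsto X_{[\sigma]_{\Pfrak_2}}$ defines a surjective homomorphism because the type 1 relations of both groups arise from the identical intervals and maximal chains of $\Pcal$, labelled by the identical codimension $1$ cones; only their names as equivalence classes change. Consequently the presentation of $G(\Sigma,\Pfrak_2,\Pcal)$ is obtained from that of $G(\Sigma,\Pfrak_1,\Pcal)$ by adjoining exactly the relations $X_{[\sigma_1]}=X_{[\sigma_2]}$ that identify generators merging under the coarsening, so the kernel is the normal closure $I$ of these relations and $G(\Sigma,\Pfrak_2,\Pcal)\cong G(\Sigma,\Pfrak_1,\Pcal)/I$.

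The main obstacle I anticipate lies in (2): rigorously proving that $\Bcal F$ is injective on open cells and that the equivalence relation it induces is exactly the stated one, rather than something strictly finer. This requires cleanly separating the two sources of identification — the merging of whole cells of equal dimension, and the extra gluings among boundary faces — and verifying that the latter are subsumed by applying the former to lower-dimensional cells. This is precisely where the admissibility of both partitions, channelled through \cref{lem:uniquefaq} and \cref{lem:morphcomp1}, does the real work.
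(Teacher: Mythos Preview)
Your argument is correct and follows essentially the same route as the paper: the same canonical functor for (1) with faithfulness coming from the fact that the morphism-identification condition $\pi_{\sigma_1}(\tau_1)=\pi_{\sigma_2}(\tau_2)$ is independent of the partition, the CW description via \cref{thm:CWcomplex} and \cref{lem:uniquefaq} for (2), and a comparison of presentations for (3). Your treatment of (2) is considerably more detailed than the paper's one-line appeal to the CW structure, and your handling of (3) is actually a bit cleaner, since you invoke \cref{lem:nondegimplystars} up front to dismiss the type~2 relations in both groups, whereas the paper expends extra effort verifying that the type~2 relations of the coarser group follow from the identifications.
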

\begin{proof}
    \begin{enumerate}
        \item The functor $F: \Cfrak(\Sigma, \Pfrak_1) \to \Cfrak(\Sigma, \Pfrak_2)$ is given on objects by the map sending $[\sigma]_{\Pfrak_1} \mapsto [\sigma]_{\Pfrak_2}$ for all cones $\sigma \in \Sigma$ and on morphisms by sending $[f_{\sigma \tau}]_{\Pfrak_1} \mapsto [f_{\sigma \tau}]_{\Pfrak_2}$ for all $\sigma, \tau \in \Sigma$. Objects of $\Cfrak(\Sigma,\Pfrak_i)$ are exactly the blocks of $\Pfrak_i$ for both $i=1,2$. By definition, $\Pfrak_2$ being coarser than $\Pfrak_1$ means $[\sigma]_{\Pfrak_1} \subseteq [\sigma]_{\Pfrak_2}$. Hence $\Pfrak_2$ has at most as many blocks as $\Pfrak_1$. Thus there is a surjection on objects of the categories. \\

        To see that the functor is faithful, one observes that the identification of morphisms in the construction of the category depends not on a choice but is induced when certain cones are identified. Take for example two different morphisms $[f_{\sigma_1 \tau_1}]_{\Pfrak_1} \neq [f_{\sigma_2 \tau_2}]_{\Pfrak_1} \in \Hom_{\Cfrak(\Sigma, \Pfrak_1)}([\sigma]_{\Pfrak_1}, [\tau]_{\Pfrak_1})$ satisfying $\sigma_1 \neq \sigma_2$ or $\tau_1 \neq \tau_2$ but $\sigma_1 \sim_{\Pfrak_1} \sigma_2$ and $\tau_1 \sim_{\Pfrak_1} \tau_2$. Assume for a contradiction that $[f_{\sigma_1 \tau_1}]_{\Pfrak_2} = [f_{\sigma_2 \tau_2}]_{\Pfrak_2}$ then the cones must satisfy $\pi_{\sigma_1}(\tau_1) = \pi_{\sigma_2}(\tau_2)$ by definition. However, then they should have also been identified in $\Pfrak_1$, a contradiction. Hence the induced map
        \[ F_{[\sigma],[\tau]}: \Hom_{\Cfrak(\Sigma, \Pfrak_1)}([\sigma]_{\Pfrak_1}, [\tau]_{\Pfrak_1}) \to \Hom_{\Cfrak(\Sigma, \Pfrak_2)}([\sigma]_{\Pfrak_2}, [\tau]_{\Pfrak_2})\]
        is injective and hence the functor faithful. 
        \item This follows from the description of the CW-complex in \cref{thm:CWcomplex} whose cells are the union of factorisation cubes, and the fact that the factorisation cubes of two identified morphisms get identified, by \cref{lem:uniquefaq}.
        \item Let $\kappa_1, \kappa_2, \kappa_3 \in \Sigma$. The quotient group homomorphism $H: G(\Sigma, \Pfrak_1, \Pcal) \to G(\Sigma, \Pfrak_1, \Pcal)/I$ ``identifies'' $X_{[\kappa_1]}, X_{[\kappa_2]} \in G(\Sigma, \Pfrak_1, \Pcal)$ whenever $\kappa_1 \sim_{\Pfrak_2} \kappa_2$ but $\kappa_1 \not \sim_{\Pfrak_1} \kappa_2$. More precisely, we have that $X_{[\kappa_1]} + I = X_{[\kappa_2]} + I \in G(\Sigma, \Pfrak_1, \Pcal)/I$. Furthermore if $\kappa_3 \sim_{\Pfrak_1} \kappa_1$, then also $X_{[\kappa_3]} + I = X_{[\kappa_2]} + I \in G(\Sigma,\Pfrak_1, \Pcal)/I$. It is easy to see that we have $X_{[\kappa_i]} + I = X_{[\kappa_j]} + I \in G(\Sigma, \Pfrak_1, \Pcal)/I$ if and only if $X_{[\kappa_i]} = X_{[\kappa_j]} \in G(\Sigma, \Pfrak_2, \Pcal)$ for any $\kappa_i, \kappa_j \in \Sigma$. The first kind of relations in \cref{defn:picgroup} hold since they only depend on the shared fan poset $(\Sigma, \Pcal)$ and the second kind relations trivially hold in $G(\Sigma, \Pfrak_1, \Pcal)$, thus its quotient and $G(\Sigma, \Pfrak_2, \Pcal)$ by \cref{lem:nondegimplystars} since the fan poset is non-degenerate.
    \end{enumerate}
\end{proof}

\begin{cor}\label{cor:faithfulfunct}
    Let $(\Sigma, \Pcal)$ be a fan poset and $\Pfrak_1 \leq \Pfrak_2$ be two admissible partitions of $\Sigma$. If there exists a faithful group functor $H: \Cfrak(\Sigma, \Pfrak_2) \to G(\Sigma, \Pfrak_2, \Pcal)$, then there exists a faithful group functor $\Cfrak(\Sigma, \Pfrak_1) \to G(\Sigma, \Pfrak_2, \Pcal)$.
\end{cor}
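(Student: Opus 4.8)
The plan is to obtain the desired functor as a composite, since the substantive work has already been carried out in \cref{thm:latticeproperties}(1). That result provides a faithful (and surjective-on-objects) functor $F\colon \Cfrak(\Sigma, \Pfrak_1) \to \Cfrak(\Sigma, \Pfrak_2)$, and we are handed by hypothesis a faithful group functor $G\colon \Cfrak(\Sigma, \Pfrak_2) \to G(\Sigma, \Pfrak_2, \Pcal)$. I would simply take the candidate functor to be the composite $G \circ F \colon \Cfrak(\Sigma, \Pfrak_1) \to G(\Sigma, \Pfrak_2, \Pcal)$ and check the two things that need checking: that it is a group functor in the sense of \cref{prop:igusacriteria}(1), and that it is faithful.

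First I would observe that $G \circ F$ is again a group functor. Since $F$ is an ordinary functor of small categories and $G$ takes values in the picture group $G(\Sigma, \Pfrak_2, \Pcal)$ regarded as a one-object groupoid, the composite lands in that same one-object groupoid; hence its target is unchanged and it is of the required form. Functoriality (identities to the identity element, compatibility with composition) is inherited from the functoriality of $F$ and $G$ separately, so no additional verification is needed here.

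Next I would verify faithfulness, which is the only genuine property to prove and is purely formal. For a pair of objects $[\sigma]_{\Pfrak_1}, [\tau]_{\Pfrak_1}$ of $\Cfrak(\Sigma, \Pfrak_1)$, the map on hom-sets induced by $G \circ F$ factors as
\[ \Hom_{\Cfrak(\Sigma,\Pfrak_1)}\bigl([\sigma]_{\Pfrak_1},[\tau]_{\Pfrak_1}\bigr) \xrightarrow{\;F\;} \Hom_{\Cfrak(\Sigma,\Pfrak_2)}\bigl([\sigma]_{\Pfrak_2},[\tau]_{\Pfrak_2}\bigr) \xrightarrow{\;G\;} G(\Sigma,\Pfrak_2,\Pcal). \]
The first map is injective because $F$ is faithful by \cref{thm:latticeproperties}(1), and the second is injective because $G$ is faithful by assumption; a composite of injective maps is injective, so $G \circ F$ is faithful.

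I do not anticipate any real obstacle: the corollary is a formal consequence of the fact that faithful functors are closed under composition. The only non-automatic ingredient is the faithfulness of $F$, and this is precisely what \cref{thm:latticeproperties}(1) supplies, resting on the observation that the identifications of morphisms in the construction of $\Cfrak(\Sigma,\Pfrak)$ are forced by which cones coincide rather than chosen, so that two morphisms separated in the finer partition $\Pfrak_1$ cannot be merged in the coarser $\Pfrak_2$ without already having been merged in $\Pfrak_1$.
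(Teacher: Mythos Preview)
Your proof is correct and follows essentially the same approach as the paper: define the functor as the composite $G \circ F$ using \cref{thm:latticeproperties}(1), and observe that faithfulness follows because the induced maps on hom-sets are a composition of two injections.
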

\begin{proof}
    The functor is given by the composition $H \circ F$, where $F: \Cfrak(\Sigma, \Pfrak_1) \to \Cfrak(\Sigma, \Pfrak_2)$ is the faithful functor of \cref{thm:latticeproperties} (1). In particular, let $[\sigma]_{\Pfrak_1}, [\tau]_{\Pfrak_1} \in \Cfrak(\Sigma, \Pfrak_1)$, then
    \[ \Hom_{\Cfrak(\Sigma, \Pfrak_1)}([\sigma]_{\Pfrak_1}, [\tau]_{\Pfrak_1}) \xhookrightarrow{F} \Hom_{\Cfrak(\Sigma, \Pfrak_2)}([\sigma]_{\Pfrak_2}, [\tau]_{\Pfrak_2}) \xhookrightarrow{H} \Hom_{G(\Sigma, \Pfrak_2)}(\bullet,\bullet)\]
    is a composition of injective morphisms between $\Hom$-sets and thus injective. Hence $G \circ F$ is faithful. 
\end{proof}

\begin{cor} \label{cor:HAallfaithful}
    In the setting of \cref{thm:mainthm}, the category $\Cfrak(\Sigma_\Hcal, \Pfrak)$ admits a faithful functor to the group $G(\Sigma_\Hcal, \Pfrak_{\flatt}, \Pcal)$ for all admissible partitions $\Pfrak$.
\end{cor}
\begin{proof}
    Since $\Pfrak_{\flatt}$ is the maximal partition any other partition $\Pfrak$ is finer, i.e. $\Pfrak \leq \Pfrak_{\flatt}$. The result then follows from \cref{cor:faithfulfunct} and \cref{thm:mainthm}.
\end{proof}

\section{Applications to the $\tau$-cluster morphism category}
\label{sec:specialcase}

In the representation theory of finite-dimensional algebras, the Auslander-Reiten translation $\tau$ defines an important relationship within the category of (finitely generated) modules of an algebra. Adachi, Iyama and Reiten \cite{AIR2014} introduced $\tau$-tilting theory as a completion of classical tilting theory from the viewpoint of mutations. We refer to \cite{bluebookV1} for a general introduction to the representation theory of finite-dimensional algebras and to \cite{Hipolitosurvey} for a survey of $\tau$-tilting theory. While the first sections were purely geometric, we now reintroduce the algebraic tools of $\tau$-tilting theory to study the $\tau$-cluster morphism category of a finite-dimensional algebra as a special case of the category of a partitioned fan. \\

We begin by giving the necessary background on $\tau$-tilting theory and provide a new algebraic proof of the connection between a finite-dimensional algebra and its $g$-fan. Then we show that there is a natural fan poset on the $g$-fan induced by the torsion classes and that our definition of the picture group generalises that of \cite{HansonIgusa2021} and \cite{IgusaTodorovWeyman2016}. We conclude by giving an application of the previous sections and prove that $\tau$-cluster morphism category $\Cfrak(A)$ admits a faithful functor to some group whenever its $g$-fan $\Sigma(A)$ is a hyperplane arrangement.

\subsection{$\tau$-tilting theory}
Let $\mods A$ be the category of finitely generated right $A$-modules of a finite-dimensional basic algebra over an arbitrary field $K$ and $\proj A$ be the full subcategory of projective modules. Write $\bdim M$ for the dimension vector of a module $M \in \mods A$, defined such that $(\bdim M)_i \coloneqq \dim_{\End_A(S(i))}(Me_i)$, where $\{e_i\}_{i=1}^{n}$ is a set of primitive orthogonal idempotents. Let $|M|$ be the number of non-isomorphic indecomposable direct summands of $M$. Assume that $|A|=n$ so that we may write $A = \bigoplus_{i=1}^n P(i)$, where $P(i) \in \proj A$ are the projective covers of the simple modules. The focal objects of $\tau$-tilting theory are the $\tau$-rigid modules. 

\begin{defn}
    Let $M \in \mods A$ and $P \in \proj A$, then 
    \begin{enumerate}
        \item $M$ is \textit{$\tau$-rigid} if $\Hom_A(M, \tau M) = 0$ and \textit{$\tau$-tilting} if additionally $|M|=n$.
        \item $(M,P)$ is a \textit{(support) $\tau$-rigid pair} if $M$ is $\tau$-rigid and $\Hom_A(P,T)=0$,  and a \textit{(support) $\tau$-tilting pair} if additionally $|M|+|P|=n$.
        \item $M$ is \textit{$\tau^{-1}$-rigid} if $\Hom_A(\tau^{-1} M, M)=0$.
    \end{enumerate}
\end{defn}
Denote by $\strigid A$ (resp. $\sttilt A$) the $\tau$-rigid (resp. $\tau$-tilting) pairs of $A$. For a module $M \in \mods A$, denote by $\add M$ (resp. $\Fac M$, $\Sub M$) the full subcategory of $\mods A$ consisting of all direct summands (resp. factor modules, submodules) of finite direct sums of copies of $M$. Furthermore, define
\[ M^\perp \coloneqq \{ N \in \mods A: \Hom_A(M,N) =0 \}, \quad {}^\perp M \coloneqq \{ N \in \mods A: \Hom_A(N,M) = 0\}. \]
\begin{prop} \label{lem:homtoext} \label{prop:HomToExt2} \cite[Prop. 5.6 and 5.8]{AuslanderSmalo1980}
Let $M,N \in \mods A$, then 
\begin{enumerate}
    \item $\Hom_A(M, \tau N) = 0$ if and only if $\Ext_A^1(N, \Fac M)=0$, and
    \item $\Hom_A(\tau^{-1}M, N) = 0$ if and only if $\Ext_A^1(\Sub N, M)=0$.
\end{enumerate}
\end{prop}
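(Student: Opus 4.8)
The plan is to deduce both equivalences from the Auslander--Reiten duality formula, which over the ground field $K$ reads
\[ \Ext^1_A(X, Y) \cong D\overline{\Hom}_A(Y, \tau X), \]
where $\overline{\Hom}_A(-,-)$ denotes homomorphisms modulo those factoring through an injective module and $D = \Hom_K(-,K)$. I would prove $(1)$ in full and then obtain $(2)$ by applying $(1)$ to the opposite algebra $A^{\op}$, using that the standard duality $D\colon \mods A \to \mods A^{\op}$ interchanges $\tau$ with $\tau^{-1}$, interchanges $\Fac$ with $\Sub$, and swaps the two arguments of both $\Hom$ and $\Ext^1$.

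For the forward implication of $(1)$, suppose $\Hom_A(M, \tau N) = 0$ and take any $L \in \Fac M$, so that there is an epimorphism $p\colon M^k \twoheadrightarrow L$. Any $f\colon L \to \tau N$ satisfies $f \circ p \in \Hom_A(M^k, \tau N) = \Hom_A(M, \tau N)^k = 0$, and since $p$ is epic, $f = 0$. Hence $\Hom_A(L, \tau N) = 0$, a fortiori $\overline{\Hom}_A(L, \tau N) = 0$, and the Auslander--Reiten formula gives $\Ext^1_A(N, L) = 0$. As $L \in \Fac M$ was arbitrary, this yields $\Ext^1_A(N, \Fac M) = 0$.

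The harder direction is the converse, where the subtlety is passing from the vanishing of the \emph{stable} Hom to the vanishing of the genuine Hom. Assume $\Ext^1_A(N, \Fac M) = 0$ but, for a contradiction, that some $g\colon M \to \tau N$ is nonzero. Its image $L \coloneqq \image(g)$ is a nonzero module in $\Fac M$, with inclusion $\iota\colon L \hookrightarrow \tau N$. By hypothesis and the Auslander--Reiten formula, $\overline{\Hom}_A(L, \tau N) = 0$, so $\iota$ factors through an injective module; since any map into an injective extends along the essential inclusion $L \hookrightarrow E(L)$ into the injective envelope, the factorization passes through $E(L)$, giving $b\colon E(L) \to \tau N$ with $b|_L = \iota$. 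As $L$ is essential in $E(L)$ and $b|_L = \iota$ is monic, $\ker b$ meets $L$ trivially and hence vanishes, so $b$ embeds the injective module $E(L)$ into $\tau N$; an injective submodule splits off, so $E(L)$ is a nonzero injective direct summand of $\tau N$. This contradicts the structural fact that $\tau N$ has no nonzero injective direct summand: after deleting the projective summands of $N$ (which do not affect $\tau N$), the module $\tau N$ is a direct sum of the indecomposable \emph{non-injective} modules $\tau N_i$ attached to the indecomposable non-projective summands $N_i$ of $N$. Therefore $g = 0$, proving $\Hom_A(M, \tau N) = 0$.

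Statement $(2)$ then follows formally by applying the established $(1)$ over $A^{\op}$ and dualising: concretely $\Hom_A(\tau^{-1}M, N) \cong \Hom_{A^{\op}}(DN, \tau_{A^{\op}} DM)$, while $\Ext^1_A(\Sub N, M) = 0$ translates to $\Ext^1_{A^{\op}}(DM, \Fac_{A^{\op}} DN) = 0$, so $(1)$ for $A^{\op}$ with the roles of $M$ and $N$ played by $DN$ and $DM$ delivers exactly the required equivalence. The main obstacle, as highlighted, is the reverse direction of $(1)$: the Auslander--Reiten formula only controls $\overline{\Hom}_A(L, \tau N)$, and the crux is the observation that a nonzero map into $\tau N$ would force an injective direct summand of $\tau N$, which is precisely ruled out by the no-injective-summand property of the Auslander--Reiten translate.
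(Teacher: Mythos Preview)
The paper does not prove this proposition; it is stated with a citation to \cite[Prop.~5.6 and 5.8]{AuslanderSmalo1980} and used as a black box. Your proof via the Auslander--Reiten formula is correct and is essentially the standard argument. The key observation you isolate in the converse of (1)---that a monomorphism $L \hookrightarrow \tau N$ which factors through an injective forces, via the injective envelope and essentiality, an injective direct summand of $\tau N$, contradicting the fact that $\tau$ lands in modules with no injective summands---is exactly the point, and your duality reduction for (2) is clean. There is nothing to compare against in the paper itself.
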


To every module, we associate the following integer vector.
\begin{defn}
    Let $M \in \mods A$. Let $P_{-1} \to P_0 \to T \to 0$ be a minimal projective presentation, where $P_0 = \bigoplus_{i=1}^n P(i)^{a_i}$ and $P_{-1}= \bigoplus_{i=1}^n P(i)^{b_i}$. Define the \textit{$g$-vector} of $M$ as
    \[ g^M \coloneqq (a_1 - b_1, a_2 -b_2, \dots, a_n - b_n).\]
    Furthermore, define the $g$-vector of a $\tau$-rigid pair $(M,P)$ as $g^{(M,P)} = g^M - g^P$. In this way we associate a polyhedral cone $\overline{\Ccal}_{(M,P)} \coloneqq \cone\{ g^{M_1}, \dots, g^{M_k}, - g^{P_{k+1}}, \dots, - g^{P_t}\}$ to each $\tau$-rigid pair $(M,P)$, where $\{M_i\}_{i=1}^k$ and $\{P_j\}_{j=k+1}^t$ are the indecomposable direct summands of $M$ and $P$ respectively.
\end{defn}

The $g$-vectors of $\tau$-rigid pairs of an algebra are important examples of \textit{stability conditions} in the sense of King \cite{King1994}, who translated the Geometric Invariant Theory of Mumford \cite{Mumford65} to quiver representations. These stability conditions define a \textit{stability scattering diagram} \cite{Bridgeland2017} whose support is the \textit{wall-and-chamber structure} of the algebra \cite{BST2019}. Let $\langle \cdot, \cdot\rangle$ denote the standard inner product on $\R^n$. 

\begin{defn}\cite{King1994} Let $M \in \mods A$, take $v \in \R^n$. We say $M$ is \textit{$v$-semistable} if $\langle v, \bdim M \rangle = 0$ and if for any nonzero proper subobject $L$ of $M$ we have $\langle v, \bdim L \rangle \leq 0$. We say $M$ is \textit{$v$-stable} if these inequalities are strict.
\end{defn}

\begin{defn}
Fix a nonzero module $M \in \mods A$, then the \textit{stability space} $\Dcal(M)$ of $M$ is 
\[ \Dcal(M) = \{ v \in \R^n: M \text{ is $v$-semistable}\} \subseteq \R^n. \]
We say $\Dcal(M)$ is a \textit{wall} if $\codim \Dcal(M) = 1$. A \textit{chamber} is an open connected component of 
\[\R^n \setminus \overline{\bigcup_{\substack{M \in \mods A \\ 0 \neq M}} \Dcal(M)}.\]
\end{defn}

The combination of all stability spaces $\Dcal(M)$ for nonzero modules $M$ and the corresponding chambers they define is called the wall-and-chamber structure. A module $B \in \mods A$ is called a \textit{brick} if $\End_A(B)$ is a division ring, denote the collection of bricks of an algebra by $\brick A$. By \cite[Prop. 2.7]{Asai2019WS} and \cite[Prop. 12]{KaipelTreffinger} it suffices to consider stability spaces of bricks to obtain the wall-and-chamber structure. We refer to \cite{KaipelTreffinger} for a survey on the connection between King's stability conditions and $\tau$-tilting theory. \\

Closely related to $\tau$-rigid pairs are \textit{torsion pairs} \cite{Dickson66}. A torsion pair is a pair of full subcategories $(\Tcal, \Fcal)$ of $\mods A$ such that $\Hom_A(T,F)=0$ for all $T \in \Tcal$ and $F \in \Fcal$ and such that for each $M \in \mods A$, there exists a unique short exact sequence with submodule in $\Tcal$ and quotient in $\Fcal$. The subcategory $\Tcal$ is called the \textit{torsion class} and the corresponding $\Fcal$ the \textit{torsion-free class}. The study of stability conditions tells us that we can associate two torsion pairs to every element of $\R^n$.

\begin{prop} \cite[Prop. 3.1]{BKT2011}\label{prop:BKTtorsion}
Let $v \in \R^n$, denote the small and the large torsion class associated to $v$ by
\begin{enumerate}
    \item $\Tcal_v = \{ 0 \} \cup \{ Y \in \mods A : \langle v, \bdim Z \rangle > 0 \text{ for quotients } Y \to Z \to 0\}$, and
    \item $\overline{\Tcal}_v = \{ Y \in \mods A : \langle v, \bdim Z \rangle \geq 0 \text{ for quotients } Y \to Z \to 0\}$.
\end{enumerate}
The corresponding torsion-free classes are given, respectively, by
\begin{enumerate}
    \item $\overline{\Fcal}_v = \{ Y \in \mods A : \langle v, \bdim X \rangle \leq 0 \text{ for submodules } 0 \to X \to Y \}$, and
    \item $\Fcal_v = \{0 \} \cup \{ Y \in \mods A : \langle v, \bdim X \rangle < 0 \text{ for submodules } 0 \to X \to Y \}$.
\end{enumerate}
\end{prop}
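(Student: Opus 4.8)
The plan is to read the statement as the assertion that the two pairs $(\Tcal_v, \overline{\Fcal}_v)$ and $(\overline{\Tcal}_v, \Fcal_v)$ are genuine torsion pairs in $\mods A$, and to reduce this to the standard fact that in a length category a full subcategory closed under quotients and extensions is the torsion class of a unique torsion pair, whose torsion-free class is its right $\Hom_A$-perpendicular. Thus I would split the proof into two tasks for each pair: first show that the proposed torsion class is closed under quotients and extensions, and second identify its perpendicular category with the proposed torsion-free class. I will only carry out the argument for $(\Tcal_v, \overline{\Fcal}_v)$, the case of $(\overline{\Tcal}_v, \Fcal_v)$ being entirely dual, with the strict inequality $>0$ on quotients replaced by $\geq 0$ and the non-strict $\leq 0$ on submodules replaced by the strict $<0$.

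Closure of $\Tcal_v$ under quotients is immediate, since a quotient of a quotient of $Y$ is again a quotient of $Y$. For closure under extensions I would take a short exact sequence $0 \to Y_1 \to Y \to Y_2 \to 0$ with $Y_1, Y_2 \in \Tcal_v$ and an arbitrary nonzero quotient $Y \twoheadrightarrow Z$. Writing $Z_1$ for the image of $Y_1$ in $Z$, the module $Z_1$ is a quotient of $Y_1$ and $Z/Z_1$ is a quotient of $Y_2$, so $\langle v, \bdim Z_1\rangle \geq 0$ and $\langle v, \bdim (Z/Z_1)\rangle \geq 0$, with strict inequality for whichever of the two is nonzero; since $Z \neq 0$ at least one of them is, and additivity of $\bdim$ along $0 \to Z_1 \to Z \to Z/Z_1 \to 0$ gives $\langle v, \bdim Z\rangle > 0$. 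Hence $\Tcal_v$ is a torsion class.

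The substantive step is identifying the torsion-free class $\Tcal_v^{\perp}$ with $\overline{\Fcal}_v$. One inclusion is a direct $\Hom_A$-orthogonality check: given $T \in \Tcal_v$ and $F \in \overline{\Fcal}_v$, the image of any map $T \to F$ is simultaneously a quotient of $T$ and a submodule of $F$, so if it were nonzero its pairing with $v$ would be both $>0$ and $\leq 0$; hence every such map vanishes and $\overline{\Fcal}_v \subseteq \Tcal_v^{\perp}$. The reverse inclusion is where the real work lies, and I would isolate it as the key lemma that every $X \in \Tcal_v^{\perp}$ satisfies $\langle v, \bdim X\rangle \leq 0$, proved by induction on the length of $X$: if $X \neq 0$ then $X \notin \Tcal_v$ (otherwise $\id_X$ would be a nonzero endomorphism exhibiting $X$ as both torsion and torsion-free), so there is a nonzero quotient $X \twoheadrightarrow Z$ with $\langle v, \bdim Z\rangle \leq 0$; its kernel $K$ is a proper submodule of $X$, lies in $\Tcal_v^{\perp}$ since torsion-free classes are closed under submodules, and the inductive hypothesis yields $\langle v, \bdim K\rangle \leq 0$, whence $\langle v, \bdim X\rangle = \langle v, \bdim K\rangle + \langle v, \bdim Z\rangle \leq 0$. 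Applying this lemma to each submodule of a given $F \in \Tcal_v^{\perp}$ (all of which again lie in $\Tcal_v^{\perp}$) shows $F \in \overline{\Fcal}_v$, giving $\Tcal_v^{\perp} \subseteq \overline{\Fcal}_v$ and hence equality. I expect this inductive identification of the perpendicular category to be the only delicate point; once it is in place, the dual argument produces $(\overline{\Tcal}_v, \Fcal_v)$ and the proposition follows.
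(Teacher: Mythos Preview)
Your argument is correct. Note, however, that the paper does not supply its own proof of this proposition: it is quoted from \cite[Prop.~3.1]{BKT2011} and stated without proof, so there is no proof in the paper to compare against. Your write-up provides a clean self-contained verification, and the inductive identification of $\Tcal_v^{\perp}$ with $\overline{\Fcal}_v$ (and dually $\overline{\Tcal}_v^{\perp}$ with $\Fcal_v$) via the length of $X$ is exactly the right idea. One minor point: when you invoke the ``standard fact'' that closure under quotients and extensions suffices to be a torsion class, you are implicitly using that $\mods A$ is a length category so that finite direct sums are iterated extensions; this is fine here but worth flagging since in larger module categories one also needs closure under arbitrary coproducts.
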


Asai defines an equivalence relation such that $v_1, v_2 \in \R^n$ are \textit{TF-equivalent} whenever $\overline{\Tcal}_{v_1} = \overline{\Tcal}_{v_2}$ and $\overline{\Fcal}_{v_1} = \overline{\Fcal}_{v_2}$. In this case we write $\overline{\Tcal}_E$ and $\overline{\Fcal}_E$ to mean the large torsion and torsion-free class of any vector in the TF-equivalence class $E$. 

\begin{thm}\cite[Prop. 3.15]{BST2019} \cite[Thm. 3.17]{Asai2019WS}\label{prop:maximalchambers}
    Let $(M,P)$ be a $\tau$-rigid pair then the geometric interior of the polyhedral cone $\overline{\Ccal}_{(T,P)}$, denoted by $\Ccal_{(M,P)}$, is a TF-equivalence class. 
\end{thm}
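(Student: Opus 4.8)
The plan is to translate the numerical semistability conditions defining $\overline{\Tcal}_v$ and $\overline{\Fcal}_v$ into homological conditions on the indecomposable summands of $M$ and $P$, and then to exploit the strict positivity of the coordinates of an interior point of the cone. The essential tool is the pairing formula: for any $N \in \mods A$ (with minimal projective presentation) and any $X \in \mods A$ one has $\langle g^N, \bdim X \rangle = \dim_K \Hom_A(N,X) - \dim_K \Hom_A(X, \tau N)$, which for a projective summand $P_j$ specialises to $\langle g^{P_j}, \bdim X\rangle = \dim_K \Hom_A(P_j, X)$ since $\tau P_j = 0$. First I would establish this formula by applying $\Hom_A(-,X)$ to the presentation $P_{-1} \to P_0 \to N \to 0$, reading off $\langle g^N, \bdim X\rangle$ from the projective terms, and invoking the Auslander--Reiten formula to identify the remaining cokernel term with $\Hom_A(X,\tau N)$.

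Next, I would fix a point $v = \sum_{i=1}^{k} \lambda_i\, g^{M_i} - \sum_{j=k+1}^{t} \mu_j\, g^{P_j}$ in the geometric interior $\Ccal_{(M,P)}$, so that every $\lambda_i > 0$ and every $\mu_j > 0$. Via the pairing formula, $\langle v, \bdim X\rangle$ becomes a strictly-positively-weighted combination of the quantities $\dim_K\Hom_A(M_i,X) - \dim_K\Hom_A(X,\tau M_i)$ and $-\dim_K\Hom_A(P_j,X)$. Because all weights are strictly positive, I expect the defining inequalities of $\overline{\Tcal}_v = \{Y : \langle v, \bdim Z\rangle \ge 0 \text{ for all quotients } Y \twoheadrightarrow Z\}$ to decouple into conditions on the individual summands, yielding $\overline{\Tcal}_v = {}^{\perp}(\tau M) \cap P^{\perp}$; dually, using part (2) of \cref{lem:homtoext} together with the $\tau^{-1}$-version of the pairing formula, $\overline{\Fcal}_v$ is identified with a fixed torsion-free class as well. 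The $\tau$-rigidity of $M$ and the support condition $\Hom_A(P,M)=0$ are what guarantee these conditions are mutually compatible, and \cref{lem:homtoext} rewrites the $\tau$-Hom conditions as the Ext-vanishing conditions characterising membership in a torsion class. Both descriptions are manifestly independent of the chosen interior point $v$, which already shows that $\Ccal_{(M,P)}$ is contained in a single TF-equivalence class.

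It then remains to prove the converse (maximality), namely that the whole class is captured: if $w \in \R^n$ satisfies $\overline{\Tcal}_w = \overline{\Tcal}_v$ and $\overline{\Fcal}_w = \overline{\Fcal}_v$, then $w$ lies in the relative interior of $\overline{\Ccal}_{(M,P)}$. Here I would argue that the prescribed torsion pair forces $w$ to satisfy the closed (non-strict) inequalities cutting out the cone — these come from the semistability of the summands $M_i$ and $P_j$ — and that strictness, hence membership in the relative interior, follows because any $w$ on the relative boundary would lie on a wall $\Dcal(B)$ for some brick $B$ not already controlled by $(\overline{\Tcal}_v,\overline{\Fcal}_v)$, forcing either $B$ or a related module into the torsion or torsion-free class and thereby changing the TF-equivalence class.

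I expect this converse to be the main obstacle: the forward inclusion is essentially bookkeeping with the pairing formula and the sign of strictly positive coefficients, whereas identifying the relative boundary of the cone with exactly the walls separating TF-equivalence classes requires understanding which bricks become semistable there and showing that crossing such a wall genuinely alters $(\overline{\Tcal}, \overline{\Fcal})$. A secondary difficulty lies in the forward step, where $\langle v, \bdim Z\rangle$ mixes terms of both signs, so decoupling them into clean Hom-vanishing statements must be done carefully — likely by first treating the projective part (which contributes only nonpositive terms, giving the condition $\Hom_A(P,Y)=0$) and only then the $\tau$-rigid part.
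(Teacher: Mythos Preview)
The paper does not give its own proof of this statement: it is quoted verbatim as a result of \cite[Prop.~3.15]{BST2019} and \cite[Thm.~3.17]{Asai2019WS}, with no accompanying argument. There is therefore nothing in the paper to compare your proposal against.

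Your outline is a reasonable reconstruction of how those cited results are proved --- the pairing formula $\langle g^N,\bdim X\rangle=\dim_K\Hom_A(N,X)-\dim_K\Hom_A(X,\tau N)$ is indeed the engine, and your identification of the converse (maximality) as the delicate step is accurate. But since the paper treats this as a black box from the literature, the correct ``comparison'' is simply to note that no proof is expected here.
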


It is well-known that each $\tau$-rigid module $M$ defines two torsion classes $\Fac M$ and ${}^\perp \tau M$ \cite{AIR2014}. The work of \cite[Thm. 3.27]{BST2019}, \cite[Prop. 3.3]{Yurikusa2018} and \cite[Thm. 3.11]{Asai2019WS} implies that these coincide with the torsion classes of \cref{prop:BKTtorsion} in the interior of the corresponding cone of $g$-vectors. The following result is essential in obtaining the fan structure of $\Sigma(A)$.

\begin{thm} \label{lem:facminimal} \cite[Thm. 3.11]{Asai2019WS}
Let $(M,P)$ be a $\tau$-rigid pair and $\nu$ be the Nakayama functor. Then
\begin{enumerate}
    \item $(\Tcal_v, \overline{\Fcal}_v) = (\Fac M, M^\perp)$ for all $v \in \Ccal_{(M,P)}$, and
    \item $(\overline{\Tcal}_v, \Fcal_v) = ({}^\perp (\tau M) \cap P^{\perp}, \Sub(\tau M \oplus \nu P))$ for all $v \in \Ccal_{(M,P)}$.
\end{enumerate}
\end{thm}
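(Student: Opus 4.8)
The plan is to reduce the whole statement to a single homological interpretation of the pairing $\langle v, \bdim N\rangle$ and then argue entirely with $\Hom$-spaces. Write $M = \bigoplus_{i=1}^k M_i$ and $P = \bigoplus_{j=k+1}^t P_j$ for the indecomposable summands, so that $v$ lies in the \emph{open} cone $\Ccal_{(M,P)}$ exactly when $v = \sum_{i=1}^k a_i g^{M_i} - \sum_{j=k+1}^t b_j g^{P_j}$ with all $a_i, b_j > 0$. First I would record $\langle g^{P(i)}, \bdim N\rangle = \dim\Hom_A(P(i), N)$, and then, applying $\Hom_A(-,N)$ to a minimal projective presentation and identifying the resulting cokernel via the transpose and Auslander--Reiten duality, establish the standard formula $\langle g^{M_i}, \bdim N\rangle = \dim\Hom_A(M_i, N) - \dim\Hom_A(N, \tau M_i)$, while $\langle g^{P_j}, \bdim N\rangle = \dim\Hom_A(P_j, N)$ since $\tau P_j = 0$. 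Summing yields, for every $N \in \mods A$,
\[ \langle v, \bdim N\rangle = \sum_{i=1}^k a_i\big(\dim\Hom_A(M_i, N) - \dim\Hom_A(N, \tau M_i)\big) - \sum_{j=k+1}^t b_j \dim\Hom_A(P_j, N). \]
Because this holds for every interior $v$ uniformly, no separate appeal to TF-equivalence is needed.

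For part (1) I would prove the two inclusions $\Fac M \subseteq \Tcal_v$ and $M^\perp \subseteq \overline{\Fcal}_v$. For the first, any nonzero quotient $Z$ of an object of $\Fac M$ again lies in $\Fac M$; since ${}^\perp(\tau M)$ is a torsion class containing $M$ we have $\Fac M \subseteq {}^\perp(\tau M)$, and the pair condition $\Hom_A(P,M)=0$ with projectivity of $P$ gives $\Hom_A(P, \Fac M)=0$, so the displayed formula collapses to $\langle v, \bdim Z\rangle = \sum_i a_i \dim\Hom_A(M_i, Z)$, which is strictly positive because a surjection $M^a \twoheadrightarrow Z$ is nonzero and forces some $\Hom_A(M_i, Z)\neq 0$. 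For the second, a submodule $X$ of $N \in M^\perp$ again lies in the torsion-free class $M^\perp$, so $\Hom_A(M_i, X)=0$ and the formula reduces to $\langle v, \bdim X\rangle = -\sum_i a_i \dim\Hom_A(X, \tau M_i) - \sum_j b_j\dim\Hom_A(P_j, X) \leq 0$, placing $N$ in $\overline{\Fcal}_v$. As $(\Fac M, M^\perp)$ is a torsion pair (here $\tau$-rigidity makes $\Fac M$ extension-closed via \cref{lem:homtoext} and \cite{AIR2014}) and $(\Tcal_v, \overline{\Fcal}_v)$ is a torsion pair by \cref{prop:BKTtorsion}, the two inclusions force $\overline{\Fcal}_v = M^\perp$ and hence $\Tcal_v = \Fac M$.

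Part (2) is the mirror computation, with the Nakayama functor replacing $\Fac M$. I would show ${}^\perp(\tau M)\cap P^\perp \subseteq \overline{\Tcal}_v$ and $\Sub(\tau M\oplus\nu P)\subseteq \Fcal_v$. The first is immediate: quotients stay in ${}^\perp(\tau M)\cap P^\perp$, killing the negative terms and leaving $\langle v, \bdim Z\rangle = \sum_i a_i\dim\Hom_A(M_i, Z)\geq 0$. For the second, the adjunction $\Hom_A(X, \nu P_j)\cong D\Hom_A(P_j, X)$ with $D = \Hom_K(-,K)$, together with $\tau$-rigidity and $\Hom_A(P,M)=0$, yields $\Hom_A(M, \tau M\oplus\nu P)=0$, so every submodule $X$ of an object of $\Sub(\tau M\oplus\nu P)$ satisfies $\Hom_A(M_i, X)=0$ and thus $\langle v, \bdim X\rangle \leq 0$; strictness for $X\neq 0$ follows because the nonzero inclusion $X\hookrightarrow(\tau M\oplus\nu P)^a$ produces a nonzero map to some $\tau M_i$ or to some $\nu P_j$, the latter translating through the adjunction into $\Hom_A(P_j, X)\neq 0$. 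Invoking that $({}^\perp(\tau M)\cap P^\perp, \Sub(\tau M\oplus\nu P))$ is a torsion pair \cite{AIR2014} and that $(\overline{\Tcal}_v, \Fcal_v)$ is a torsion pair by \cref{prop:BKTtorsion}, the inclusions upgrade to $\Fcal_v = \Sub(\tau M\oplus\nu P)$ and $\overline{\Tcal}_v = {}^\perp(\tau M)\cap P^\perp$.

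The main obstacle I anticipate is the careful bookkeeping in the pairing formula: correctly matching the cokernel of $\Hom_A(-,N)$ applied to the minimal presentation with $\Hom_A(N, \tau M)$ via transpose and Auslander--Reiten duality, and handling the adjunction $\Hom_A(X, \nu P)\cong D\Hom_A(P, X)$ for the projective summands. Securing \emph{strict} (rather than weak) inequalities at the two crucial points above — which is precisely what distinguishes the small torsion(-free) classes $\Tcal_v, \Fcal_v$ from the large ones $\overline{\Tcal}_v, \overline{\Fcal}_v$ — is the delicate step; once the formula and these strictness arguments are in place, the remaining deductions are formal manipulations of the torsion-pair correspondence.
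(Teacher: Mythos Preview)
The paper does not give its own proof of this statement: it is quoted verbatim as \cite[Thm.~3.11]{Asai2019WS} and used as a black box in the subsequent arguments. There is therefore nothing in the paper to compare your argument against.

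That said, your proposal is correct and is essentially the standard route to this result. The key input is the Auslander--Reiten pairing formula $\langle g^{M_i}, \bdim N\rangle = \dim\Hom_A(M_i,N) - \dim\Hom_A(N,\tau M_i)$, after which the four inclusions you outline are exactly the right ones, and the ``two torsion pairs with mutual inclusions must coincide'' step is the clean way to finish. Your handling of the strict inequalities --- using that a nonzero $Z\in\Fac M$ admits a nonzero map from some $M_i$, and that a nonzero $X\in\Sub(\tau M\oplus\nu P)$ admits a nonzero map to some $\tau M_i$ or $\nu P_j$, the latter dualising via $\Hom_A(X,\nu P_j)\cong D\Hom_A(P_j,X)$ --- is precisely what is needed to distinguish $\Tcal_v$ from $\overline{\Tcal}_v$ and $\Fcal_v$ from $\overline{\Fcal}_v$. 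The only caution is cosmetic: when you write ``the adjunction $\Hom_A(X,\nu P)\cong D\Hom_A(P,X)$'', this is the defining property of the Nakayama functor on projectives rather than an adjunction in the usual sense, but the isomorphism itself is correct and is what you use.
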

We remark that the torsion pairs coincide when $(M,P)$ is a $\tau$-tilting pair \cite[Prop. 2.16]{AIR2014} and conclude this preliminary section by giving another key result for our proof of the fan structure. Approximations were introduced in \cite{AuslanderSmaloApprox1980}.

\begin{lem} \cite[Lem. 2.6]{AIR2014} \label{AIRLemma2.6}
Let $0 \to Y \to M' \xrightarrow{f} X$ be an exact sequence in $\mods A$, where $M$ is $\tau$-rigid and $f: M' \to X$ is a right ($\add M$)-approximation. Then $Y \in {}^\perp (\tau M)$.
\end{lem}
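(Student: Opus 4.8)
The plan is to convert the desired vanishing $\Hom_A(Y,\tau M)=0$ into an $\Ext$-vanishing that the approximation hypothesis can control, using the Auslander--Reiten duality $\Ext^1_A(M,N)\cong D\,\overline{\Hom}_A(N,\tau M)$ together with its unbarred refinement recorded in \cref{lem:homtoext}. Reading \cref{lem:homtoext}(1) with first module $Y$ and second module $M$ gives
\[
Y\in {}^\perp(\tau M)\ \Longleftrightarrow\ \Hom_A(Y,\tau M)=0\ \Longleftrightarrow\ \Ext^1_A(M,\Fac Y)=0,
\]
so it suffices to control $\Ext^1_A(M,-)$ on the factor modules of $Y$. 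A preliminary observation is that $M$, being $\tau$-rigid, is rigid: applying Auslander--Reiten duality to $\Hom_A(M,\tau M)=0$ yields $\Ext^1_A(M,M)=0$, and hence $\Ext^1_A(M,M')=0$ for the module $M'\in\add M$ appearing in the approximation sequence.

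The approximation-theoretic core of the argument is clean and I would carry it out first. Writing $X'=\image f$, the hypothesis gives a short exact sequence $0\to Y\to M'\to X'\to 0$. The defining property of a right $(\add M)$-approximation is that every morphism $M\to X$ factors through $f$; since such a morphism then has image in $X'$, a short diagram chase (using that $X'\hookrightarrow X$ is a monomorphism) shows the induced map $\Hom_A(M,M')\to\Hom_A(M,X')$ is surjective. Applying $\Hom_A(M,-)$ to the short exact sequence and feeding in this surjectivity forces the connecting homomorphism $\Hom_A(M,X')\to\Ext^1_A(M,Y)$ to vanish, whence $\Ext^1_A(M,Y)\hookrightarrow\Ext^1_A(M,M')=0$. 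Thus $\Ext^1_A(M,Y)=0$, i.e.\ $Y$ itself lies in the $\Ext^1_A(M,-)$-vanishing class.

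The main obstacle is the passage from this single vanishing at $Y$ to the vanishing on the whole of $\Fac Y$ that \cref{lem:homtoext}(1) demands — equivalently, the bare long exact sequence combined with Auslander--Reiten duality only delivers $\overline{\Hom}_A(Y,\tau M)=0$, vanishing modulo morphisms factoring through a projective, which is strictly weaker than the honest $Y\in{}^\perp(\tau M)$. The structural reason the gap is real is that the class $\{N:\Ext^1_A(M,N)=0\}$ is not closed under quotients, whereas ${}^\perp(\tau M)$ is a torsion class and hence is; consequently the datum $\Ext^1_A(M,Y)=0$ cannot be propagated to factor modules by a formal diagram chase, and naive devices such as pushing the approximation sequence out along an epimorphism $Y\twoheadrightarrow Z$ leave an uncontrolled term $\Ext^1_A(M,W)$. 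Bridging this is exactly where the full Auslander--Reiten/Auslander--Smal\o\ theory underlying \cref{lem:homtoext} must be invoked rather than merely reformulated — it is this theory that accounts for the projective-factoring maps and upgrades $\overline{\Hom}_A(Y,\tau M)=0$ to $\Hom_A(Y,\tau M)=0$. I therefore regard this translation as the crux; the approximation manipulation of the previous paragraph, being purely homological bookkeeping, is routine by comparison, and the statement is in any case precisely \cite[Lem.~2.6]{AIR2014}.
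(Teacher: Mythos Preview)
The paper does not supply its own proof of this lemma; it is quoted verbatim from \cite[Lem.~2.6]{AIR2014}. So there is no in-paper argument to compare against, and the question reduces to whether your sketch stands on its own.

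It does not, and you have correctly located the gap yourself: from $\Ext^1_A(M,Y)=0$ you only obtain $\overline{\Hom}_A(Y,\tau M)=0$, not the honest vanishing. Where you go wrong is in declaring the pushout term ``uncontrolled'' and retreating to a hand-wave. The term \emph{is} controlled, and by the very tool you already invoked. You used Auslander--Reiten duality to extract only $\Ext^1_A(M,M')=0$ from $\tau$-rigidity, but \cref{lem:homtoext}(1) applied with both modules equal to $M$ gives the much stronger statement
\[
\Hom_A(M,\tau M)=0 \ \Longrightarrow\ \Ext^1_A(M,\Fac M)=0.
\]
Now push out $0\to Y\to M'\to X'\to 0$ along an epimorphism $Y\twoheadrightarrow Z$. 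Since the pushout of an epimorphism is an epimorphism, the resulting middle term $W$ is a quotient of $M'$, hence $W\in\Fac M$, hence $\Ext^1_A(M,W)=0$. The approximation property still lifts maps $M\to X'$ through $W$ (factor through $M'$ first), so the long exact sequence forces $\Ext^1_A(M,Z)=0$. This gives $\Ext^1_A(M,\Fac Y)=0$ and closes your argument.

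There is also a one-line way to rescue your final paragraph directly. If $g\colon Y\to\tau M$ factors through an injective $I$ (this is what $\overline{\Hom}_A(Y,\tau M)=0$ says; note the quotient is by maps factoring through \emph{injectives}, not projectives), then injectivity of $I$ extends $Y\to I$ along the monomorphism $Y\hookrightarrow M'$, producing a map $M'\to I\to\tau M$ that restricts to $g$. But $\Hom_A(M',\tau M)=0$ by $\tau$-rigidity, so $g=0$. Either completion is short; what is not acceptable is leaving the bridge as an appeal to unspecified ``full theory''.
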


The following is a well-known construction of such an approximation.

\begin{lem} \label{lem:addTapprox}
Let $M, N \in \mods A$ and let $\{ f_1, \dots, f_p \}$ be a collection of generators of $\Hom_A(M,N)$. Then $f: M^p \to N$ given by $f \begin{pmatrix} t_1 & \dots & t_p \end{pmatrix}^T = f_1(t_1) + \dots + f_p (t_p)$ is a right $(\add M)$-approximation. 
\end{lem}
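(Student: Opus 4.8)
The plan is to verify directly the universal lifting property that defines a right $(\add M)$-approximation: a morphism $f \colon M' \to N$ with $M' \in \add M$ is such an approximation precisely when every morphism from an object of $\add M$ into $N$ factors through $f$. Because every object of $\add M$ is a direct summand of some power $M^k$, and because factorisations through $f$ are stable under forming direct sums and composing with split monomorphisms, it suffices to establish the factorisation property for a single copy of $M$, i.e. to show that an arbitrary $h \in \Hom_A(M,N)$ factors through $f$.

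The key step exploits the hypothesis that $\{f_1, \dots, f_p\}$ generates $\Hom_A(M,N)$. Reading this as generation over the endomorphism ring, I would write $h = \sum_{i=1}^p f_i \circ g_i$ for suitable $g_i \in \End_A(M)$ (should one instead read generation over the base field $K$, the same argument runs with $g_i = \lambda_i \id_M$). Then the morphism $\phi = (g_1, \dots, g_p)^T \colon M \to M^p$ satisfies $f \circ \phi = \sum_{i=1}^p f_i \circ g_i = h$ by the very definition of $f$, which is exactly the desired factorisation of $h$.

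To conclude for an arbitrary $X \in \add M$, I would fix a split epimorphism $\pi \colon M^k \to X$ with section $\iota \colon X \to M^k$ satisfying $\pi \circ \iota = \id_X$, together with an arbitrary $h \colon X \to N$. Writing $M^k = \bigoplus_{j=1}^k M$ with structural inclusions $u_j$ and projections $q_j$, each composite $h \circ \pi \circ u_j \colon M \to N$ factors through $f$ by the previous paragraph, say as $f \circ \psi_j$; assembling these componentwise via $\psi \coloneqq \sum_{j=1}^k \psi_j \circ q_j \colon M^k \to M^p$ gives $f \circ \psi = h \circ \pi$, using $\sum_j u_j \circ q_j = \id_{M^k}$. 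Precomposing with $\iota$ then yields $h = h \circ \pi \circ \iota = f \circ (\psi \circ \iota)$ with $\psi \circ \iota \colon X \to M^p$ a morphism in $\add M$, as required.

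I do not expect any genuine obstacle, as this is a classical construction; the only points demanding care are the precise interpretation of the word \emph{generators} and the bookkeeping in passing from the single generating object $M$ of $\add M$ to an arbitrary direct summand, both of which are handled uniformly by the additivity of $\Hom_A(-,N)$ and the splitting relation $\pi \circ \iota = \id_X$.
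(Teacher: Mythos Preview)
Your proof is correct and follows essentially the same strategy as the paper: express an arbitrary morphism $M \to N$ in terms of the generators $f_i$ and assemble the coefficients into a lift $M \to M^p$ through $f$. Your version is in fact more careful than the paper's, which writes the coefficients as elements $a_i \in A$ (rather than in $\End_A(M)$ or $K$) and treats the passage from $M$ to a general $M' \in \add M$ only implicitly.
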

\begin{proof} Clearly $M^p \in \add M$. We need to show that $\Hom_A(M',M^p) \to \Hom_A(M',N)$ is a surjection for all $M' \in \add M$. Since $\{f_1, \dots, f_p \}$ is a generating set for $\Hom_A(M, M')$ any map $g: M' \to N$ may be written as $g = f_1 a_1 + \dots + f_p a_p$ for some $a_1, \dots, a_p \in A$. Then we may construct a map $h: M' \to M^p$ given as $ h(t') = \begin{pmatrix} t' a_1 & \dots & t' a_p \end{pmatrix}^T$ which satisfies $g = f \circ h$. Hence $f$ is a right $(\add M)$-approximation. 
\end{proof}

\subsection{The fan structure of $g$-vectors}\label{sec:fanstructure}
Define the \textit{$g$-fan} of an algebra $A$ to be 
\[ \Sigma(A) \coloneqq \{ \overline{\Ccal}_{(M,P)} \subseteq \R^n: (M,P) \in \strigid A\}. \]
Faces of cones are cones of direct summands. We need to prove that two cones intersect at a face. The following technical lemma will be the key to obtain the fan structure.

\begin{lem}\label{lem:mainlem}
    Let $(M,P), (M',P') \in \strigid A$ be basic and such that $\Ccal_{(M,P)} \cap \Ccal_{(M',P')} \neq \emptyset$. Then $M \cong M'$ and $P \cong P'$.
\end{lem}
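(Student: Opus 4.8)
The plan is to reduce the geometric hypothesis to a torsion-theoretic one and then recover the two pairs from their torsion pairs. By \cref{prop:maximalchambers} each of $\Ccal_{(M,P)}$ and $\Ccal_{(M',P')}$ is a single TF-equivalence class, and distinct TF-equivalence classes are disjoint; hence the assumption $\Ccal_{(M,P)}\cap\Ccal_{(M',P')}\neq\emptyset$ supplies a common vector $v$. Applying \cref{lem:facminimal} to this $v$ for both pairs yields the four equalities
\[ \Fac M = \Fac M', \qquad M^\perp = M'^\perp, \qquad {}^\perp(\tau M)\cap P^\perp = {}^\perp(\tau M')\cap P'^\perp, \qquad \Sub(\tau M\oplus \nu P) = \Sub(\tau M'\oplus\nu P'). \]
The remaining task is to extract $M\cong M'$ and $P\cong P'$ from these.

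For the module part I would show $\add M=\add M'$ by a symmetric argument, so it suffices to prove that every indecomposable summand $X$ of $M'$ is a summand of $M$. Since $X\in\Fac M'=\Fac M$, \cref{lem:addTapprox} produces a right $(\add M)$-approximation $f\colon M^{\oplus p}\to X$, which is surjective because $X\in\Fac M$. Let $Y$ be its kernel. Then \cref{AIRLemma2.6} gives $Y\in{}^\perp(\tau M)$, while $Y\subseteq M^{\oplus p}$ together with the support condition $\Hom_A(P,M)=0$ gives $Y\in P^\perp$; hence $Y\in{}^\perp(\tau M)\cap P^\perp={}^\perp(\tau M')\cap P'^\perp\subseteq{}^\perp(\tau X)$, the last inclusion because $X$ is a summand of $M'$. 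By \cref{prop:HomToExt2}(1) applied to the pair $(Y,X)$, the vanishing $\Hom_A(Y,\tau X)=0$ gives $\Ext^1_A(X,\Fac Y)=0$ and in particular $\Ext^1_A(X,Y)=0$, so the sequence $0\to Y\to M^{\oplus p}\to X\to 0$ splits. Thus $X$ is a summand of $M^{\oplus p}$, hence of $M$ by Krull--Schmidt. Exchanging the roles of the two pairs gives $\add M=\add M'$, and since both are basic, $M\cong M'$.

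For the support part I would argue directly from $\Sub(\tau M\oplus\nu P)=\Sub(\tau M'\oplus\nu P')$. Write $P=\bigoplus_{i\in J}P(i)$ and $P'=\bigoplus_{i\in J'}P(i)$, and let $I(i)=\nu P(i)$ denote the indecomposable injectives, so $\nu P=\bigoplus_{i\in J}I(i)$, whereas $\tau M$ has no nonzero injective direct summand. Since an injective submodule always splits off, for an indecomposable injective $I(i)$ one has $I(i)\in\Sub(\tau M\oplus\nu P)$ if and only if $I(i)$ is a summand of $\tau M\oplus\nu P$, if and only if $I(i)\mid\nu P$, if and only if $i\in J$; thus $J=\{i: I(i)\in\Sub(\tau M\oplus \nu P)\}$, and likewise $J'=\{i: I(i)\in\Sub(\tau M'\oplus\nu P')\}$. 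The equality of the two torsion-free classes then forces $J=J'$, so $P\cong P'$.

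The main obstacle is the middle step: the torsion class $\Fac M$ alone does not determine $M$, since distinct $\tau$-rigid modules can share the same $\Fac$, so one genuinely needs the larger torsion class ${}^\perp(\tau M)\cap P^\perp$ to pin the summands down. The approximation sequence is precisely the device that converts this second equality into the vanishing $\Ext^1_A(X,Y)=0$, and hence into a splitting; obtaining the membership $Y\in{}^\perp(\tau M')$, rather than merely $Y\in{}^\perp(\tau M)$, is the crucial point at which the hypothesis $\Ccal_{(M,P)}\cap\Ccal_{(M',P')}\neq\emptyset$ is used. Once $M\cong M'$ and $P\cong P'$ are established, the lemma follows.
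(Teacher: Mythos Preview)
Your argument is correct. The first half, proving $M\cong M'$, is essentially the paper's approach: take a right $(\add M)$-approximation of (a summand of) $M'$, use \cref{AIRLemma2.6} and the equality ${}^\perp(\tau M)\cap P^\perp={}^\perp(\tau M')\cap P'^\perp$ to see that the kernel lies in ${}^\perp(\tau M')$, and conclude via \cref{prop:HomToExt2} that the sequence splits. The only cosmetic difference is that you approximate each indecomposable summand separately and invoke Krull--Schmidt, while the paper approximates $M'$ at once and uses a counting argument on $|M|$ and $|M'|$.

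For $P\cong P'$ you take a genuinely different and more elementary route. The paper dualises the entire approximation argument: it checks that $\tau M\oplus\nu P$ is $\tau^{-1}$-rigid, builds a left $\add(\tau M\oplus\nu P)$-approximation of $\tau M\oplus\nu P'$, applies the dual of \cref{AIRLemma2.6} and \cref{prop:HomToExt2}(2) to split it, and then separates out the injective part. You instead observe directly that an indecomposable injective $I(i)$ belongs to $\Sub(\tau M\oplus\nu P)$ if and only if it is a summand of $\nu P$, since injective submodules split off and $\tau M$ has no nonzero injective summand; the equality of the two $\Sub$-classes then reads off $J=J'$. Your argument is shorter and avoids the $\tau^{-1}$-rigidity and dual approximation machinery; the paper's argument is more symmetric with the module part and mirrors the structure of the proof throughout.
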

\begin{proof}
    First we show that $M \cong M'$. Take $v \in \Ccal_{(M,P)} \cap \Ccal_{(M',P')}$, then \cref{lem:facminimal} implies the identifications
    \[ \Fac M = \Tcal_v = \Fac M', \quad \text{and} \quad {}^\perp(\tau M) \cap P^\perp = \overline{\Tcal}_v = {}^\perp (\tau M') \cap (P')^\perp.\]
    Therefore $M, M' \in \Fac M = \Fac M'$. Construct a right $(\add M)$-approximation $f: M^p \to M'$ like in \cref{lem:addTapprox}, then $\ker f \in {}^\perp (\tau M)$ by \cref{AIRLemma2.6}. Moreover, $\ker f \in P^\perp$, since a nonzero morphism from $P$ may be composed with the natural inclusion to give a nonzero morphism from $P$ to $M$, contradicting the assumption that $(M,P)$ is $\tau$-rigid. It follows that $\ker f \in \overline{\Tcal}_v$ and hence $\Hom_A(\ker f, \tau M') = 0$. \cref{lem:homtoext} implies $\Ext_A^1(M', \ker f) = 0$ and hence the right $(\add M)$-approximation splits. Thus $M^p \cong M' \oplus N$ for some $N \in \mods A$. Since $M'$ is a basic direct summand of $M^p$ it has at most $|M|$ indecomposable direct summands. Thus $|M|\geq |M'|$ and reversing the the argument, it follows that $|M|= |M'|$. Then $M \cong M'$, because a basic direct summand of $M^p$ with exactly $|M|$ indecomposable direct summands has to be isomorphic to $M$. The argument to show $P \cong P'$ is similar. The previous argument and \cref{lem:facminimal} imply 
    \[ M^\perp = \overline{\Fcal}_v = (M')^\perp \quad \text{and} \quad \Sub(\tau M \oplus \nu P) = \Fcal_v = \Sub(\tau M \oplus \nu P').\]
    We observe that the module $\tau^{-1}(\tau M \oplus \nu P) \cong \tau^{–1} \tau M$ as $\tau$ is additive and $\nu P$ injective and moreover $\Sub(\tau M \oplus \nu P) \subseteq M^\perp$, therefore $\tau M \oplus \nu P$ is $\tau^{–1}$-rigid. Construct a left $\add(\tau M \oplus \nu P)$-approximation $g$ dually to \cref{lem:addTapprox}. Then the dual of \cref{AIRLemma2.6} implies that $\coker g \in (\tau^{-1} ( \tau M \oplus \nu P'))^\perp$. Hence by \cref{lem:homtoext} the approximation $g$ splits. It follows that $\tau M \oplus \nu P'$ is isomorphic to a basic direct summand of $(\tau M \oplus \nu P)^r$. Moreover, $\nu P'$ cannot be isomorphic to a a direct summand of $(\tau T)^r$ and therefore $\nu P' \cong (\nu P)^r$ and thus $P \cong P'$.
\end{proof}

We make the following immediate observation about cones of maximal basic direct summands.

\begin{lem} \label{lem:largestbasicsummand}
Let $(M,P)$ be a $\tau$-rigid pair, and $(N,Q)$ be a basic $\tau$-rigid pair such that $\add N = \add M$ and $\add Q = \add P$. Then $\overline{\Ccal}_{(M,P)} = \overline{\Ccal}_{(N,Q)}$ and $\Ccal_{(M,P)} = \Ccal_{(N,Q)}$.
\end{lem}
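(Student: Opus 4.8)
The plan is to reduce the statement to two elementary facts: the $g$-vector of a module depends only on its isomorphism class, and forming a cone is insensitive to repeating (or omitting repeated) generators. Since both $\overline{\Ccal}_{(M,P)}$ and $\overline{\Ccal}_{(N,Q)}$ are by definition the cones generated by the $g$-vectors of the indecomposable summands (taken with sign according to whether the summand lies in the module or the projective part), it suffices to compare the two generating sets.

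First I would recall that the $g$-vector is read off from a minimal projective presentation $P_{-1} \to P_0 \to X \to 0$. As minimal projective presentations are unique up to isomorphism, isomorphic modules have equal $g$-vectors; in particular $g^{X} = g^{X'}$ whenever $X \cong X'$. Next, the hypothesis $\add N = \add M$ means precisely that $N$ and $M$ have the same indecomposable direct summands up to isomorphism, the only difference being the multiplicities, which do not affect the underlying set of summands. Hence the sets $\{g^{M_i}\}$ and $\{g^{N_i}\}$ coincide as subsets of $\R^n$, and likewise $\add Q = \add P$ gives $\{-g^{P_j}\} = \{-g^{Q_j}\}$. Since the cone $\cone\{v_1, \ldots, v_m\}$ depends only on the underlying set of its generators and is unchanged by repeating a generator, we conclude
\[ \overline{\Ccal}_{(M,P)} = \cone\{ g^{M_1}, \ldots, g^{M_k}, -g^{P_{k+1}}, \ldots, -g^{P_t}\} = \cone\{ g^{N_1}, \ldots, -g^{Q_{t'}}\} = \overline{\Ccal}_{(N,Q)}. \]

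Finally, by \cref{prop:maximalchambers} the set $\Ccal_{(M,P)}$ is by definition the geometric interior of the closed cone $\overline{\Ccal}_{(M,P)}$, and the geometric (relative) interior of a cone is a function of its underlying point set alone. Therefore equality of the closed cones immediately yields $\Ccal_{(M,P)} = \Ccal_{(N,Q)}$. There is no genuine obstacle here, in keeping with the text's description of this as an immediate observation; the only points requiring any care are the well-definedness of the $g$-vector on isomorphism classes and the harmless presence of repeated generators in the non-basic pair $(M,P)$.
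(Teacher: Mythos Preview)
Your proof is correct and is precisely the natural unpacking of what the paper intends: the paper gives no proof at all, introducing the lemma merely as an ``immediate observation'' and stating it without argument. Your two points---that the generating set of $\overline{\Ccal}_{(M,P)}$ depends only on the isomorphism classes of indecomposable summands, and that cones are insensitive to repeated generators---are exactly what makes the observation immediate, and the passage to interiors is then automatic.
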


We are now able to give a new proof of \cite[Thm. 1.9]{DIJ2019}, establishing the fan structure of cones of $g$-vectors.

\begin{thm}\label{thm:gfan}
    Let $(M,P)$ and $(M',P')$ be $\tau$-rigid pairs such that $(N,Q)$ is their maximal common direct summand, then $\overline{\Ccal}_{(M,P)} \cap \overline{\Ccal}_{(M',P')} = \overline{\Ccal}_{(N,Q)}$.
\end{thm}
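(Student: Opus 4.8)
The plan is to prove the two inclusions separately, using throughout the decomposition of each $g$-cone into the relative interiors of its faces. The crucial structural fact I would invoke is that every polyhedral cone is the disjoint union of the relative interiors of its faces; combined with the paper's convention that the faces of $\overline{\Ccal}_{(M,P)}$ are exactly the cones $\overline{\Ccal}_{(N_1,Q_1)}$ of direct summands $(N_1,Q_1)$ of $(M,P)$, and with \cref{prop:maximalchambers}, which identifies the relative (geometric) interior of $\overline{\Ccal}_{(N_1,Q_1)}$ with the TF-equivalence class $\Ccal_{(N_1,Q_1)}$, this says precisely that $\overline{\Ccal}_{(M,P)}$ is the disjoint union $\bigsqcup \Ccal_{(N_1,Q_1)}$ over the direct summands of $(M,P)$.

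The inclusion $\overline{\Ccal}_{(N,Q)} \subseteq \overline{\Ccal}_{(M,P)} \cap \overline{\Ccal}_{(M',P')}$ I expect to be immediate: since $(N,Q)$ is a common direct summand, its generating $g$-vectors form a subset of the generators of each larger cone, so $\overline{\Ccal}_{(N,Q)}$ is a common face of $\overline{\Ccal}_{(M,P)}$ and $\overline{\Ccal}_{(M',P')}$, hence contained in both.

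For the reverse inclusion I would take $v \in \overline{\Ccal}_{(M,P)} \cap \overline{\Ccal}_{(M',P')}$ and locate it in the relative interior of a face of each cone. By the decomposition above there is a direct summand $(N_1,Q_1)$ of $(M,P)$ with $v \in \Ccal_{(N_1,Q_1)}$, and a direct summand $(N_2,Q_2)$ of $(M',P')$ with $v \in \Ccal_{(N_2,Q_2)}$. Passing to basic versions changes neither the cones nor their interiors by \cref{lem:largestbasicsummand}, so I may assume both pairs basic, and then $v \in \Ccal_{(N_1,Q_1)} \cap \Ccal_{(N_2,Q_2)} \neq \emptyset$ forces $N_1 \cong N_2$ and $Q_1 \cong Q_2$ by \cref{lem:mainlem}. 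Writing $(N',Q')$ for this common pair, it is a direct summand of both $(M,P)$ and $(M',P')$, hence a common direct summand, hence a direct summand of the maximal one $(N,Q)$. Therefore $\overline{\Ccal}_{(N',Q')}$ is a face of $\overline{\Ccal}_{(N,Q)}$ and $v \in \Ccal_{(N',Q')} \subseteq \overline{\Ccal}_{(N',Q')} \subseteq \overline{\Ccal}_{(N,Q)}$, which closes the argument.

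The heavy lifting has already been done in \cref{lem:mainlem}, so the main obstacle is not a difficult computation but rather assembling the geometric bookkeeping correctly: I must be careful that the relative-interior decomposition genuinely partitions the closed cone (so that $v$ lands in a unique TF-class on each side, including the degenerate class $\{0\}$ coming from the zero summand), and that ``maximal common direct summand'' really means every common summand is a summand of $(N,Q)$, which is where I use Krull--Schmidt to get $\add N = \add M \cap \add M'$ and $\add Q = \add P \cap \add P'$. The one point demanding genuine care is the reduction to basic pairs before applying \cref{lem:mainlem}, since that lemma is stated only for basic $\tau$-rigid pairs; \cref{lem:largestbasicsummand} is exactly what licenses this reduction without disturbing the interiors that witness $v$.
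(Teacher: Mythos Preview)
Your proposal is correct and follows essentially the same approach as the paper: both arguments locate $v$ in the relative interior of a face on each side (the paper does this concretely by looking at which coefficients $\alpha_i,\beta_j$ vanish, you via the general relative-interior decomposition of a polyhedral cone), reduce to basic pairs via \cref{lem:largestbasicsummand}, and then invoke \cref{lem:mainlem} to identify the two face-pairs as a common direct summand of $(N,Q)$. The only cosmetic difference is that the paper reduces to basic $(M,P),(M',P')$ at the outset, whereas you reduce the face-pairs $(N_i,Q_i)$ to basic just before applying \cref{lem:mainlem}; both orderings work.
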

\begin{proof}
    By \cref{lem:largestbasicsummand} it suffices to restrict to basic $\tau$-rigid pairs. Trivially, $\overline{\Ccal}_{(N,Q)} \subseteq \overline{\Ccal}_{(M,P)} \cap \overline{\Ccal}_{(M',P')}$. Conversely, if $(M,P)$ is different from $(M',P')$ then \cref{lem:mainlem} implies that the interiors of the cones do not intersect. Hence consider an intersection at the boundary, which given by
    \[ \left\{ v \in \R^n: v = \sum_{i=1}^k \alpha_i g^{M_i} - \sum_{j=k+1}^t \alpha_j g^{P_j} = \sum_{i=1}^\ell \beta_i g^{M_i'} - \sum_{j=\ell +1}^s \beta_j g^{P_j'} \right\},\]
    where $\alpha_i, \beta_i \geq 0$ but not all strictly positive. Let $\boldsymbol{\alpha}_v \coloneqq \{i : \alpha_i = 0\}$ and $\boldsymbol{\beta}_v \coloneqq \{ i: \beta_i = 0\}$ be the indices for which the coefficients are zero. Consider the direct summands of $(M,P)$ and $(M',P')$ given only by those direct summands in the intersection
    \[ \left( \bigoplus_{i \not \in \boldsymbol{\alpha}_v} M_i, \bigoplus_{j \not \in \boldsymbol{\alpha}_v} P_j \right), \quad \text{and} \quad \left( \bigoplus_{i \not \in \boldsymbol{\beta}_v} M_i', \bigoplus_{j \not \in \boldsymbol{\beta}_v} P_j' \right). \]
    By construction, the interior cones of these two $\tau$-rigid pairs intersects non-trivially at $v$. Then \cref{lem:mainlem} implies that they are isomorphic and a thus a direct summand of $(N,Q)$. It follows that $v \in \overline{\Ccal}_{(M,P)} \cap \overline{\Ccal}_{(M',P')} \subseteq \overline{\Ccal}_{(N,Q)}$. 
\end{proof}

The $g$-vectors spanning a $g$-vector cone $\overline{\Ccal}_{(M,P}$ are linearly independent by \cite[Thm. 5.1]{AIR2014}. Therefore, the faces of $\overline{\Ccal}_{(M,P)}$ are given by $\overline{\Ccal}_{(N,Q)}$ where $N \in \add M$ and $Q \in \add P$. It is clear that if $(M,P)$ is a $\tau$-rigid pair then $(N,Q)$ is $\tau$-rigid pair, hence $\overline{\Ccal}_{(N,Q)}$ is also a cone of $\Sigma(A)$. Together with \cref{thm:gfan} this proves that $\Sigma(A)$ is a polyhedral fan. For a summary of known properties of the $g$-fan we refer to \cite{AHIKM2022}. We want to highlight the following result. 

\begin{thm} \label{defn:taufinite}\cite[Thm. 4.7]{Asai2019WS}\cite[Thm. 5.4]{DIJ2019}
    The algebra $A$ has finitely many $\tau$-tilting modules if and only if the $g$-fan $\Sigma(A)$ is finite and complete. In this case, we call $A$ \textit{$\tau$-tilting finite}.
\end{thm}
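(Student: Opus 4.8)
The plan is to prove the two implications separately, after observing that the finiteness of $\Sigma(A)$ is the routine half and the completeness is where the real content lies. For the reverse implication I would argue that finiteness of the fan alone forces $\tau$-tilting finiteness, with completeness playing no role: the maximal cones of $\Sigma(A)$ are exactly the $n$-dimensional cones $\overline{\Ccal}_{(M,P)}$ with $(M,P)\in\sttilt A$, and by \cref{lem:mainlem} two distinct basic $\tau$-tilting pairs have cones with disjoint interiors, hence give distinct maximal cones. So $(M,P)\mapsto\overline{\Ccal}_{(M,P)}$ is injective on basic $\tau$-tilting pairs, and if $\Sigma(A)$ has only finitely many cones there can be only finitely many $\tau$-tilting pairs, equivalently finitely many $\tau$-tilting modules.

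For the forward implication I would first dispatch finiteness. Assuming $A$ is $\tau$-tilting finite, every $\tau$-rigid module is a direct summand of a $\tau$-tilting module by $\tau$-tilting theory \cite{AIR2014}, and each $\tau$-tilting module has only finitely many indecomposable summands; dually the same holds for the projective parts of support $\tau$-tilting pairs. Hence $\strigid A$ is finite, and since the cones of $\Sigma(A)$ are indexed by $\strigid A$, the fan is finite.

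The crux is completeness, i.e. $\bigcup_{\sigma\in\Sigma(A)}\sigma=\R^n$, and here I would use a density argument. For $v\in\R^n$, \cref{prop:BKTtorsion} attaches torsion classes with $\Tcal_v\subseteq\overline{\Tcal}_v$; call $v$ \emph{regular} when $\Tcal_v=\overline{\Tcal}_v$. The non-regular locus is contained in the union of the stability spaces $\Dcal(B)$ for $B\in\brick A$, each a proper closed subset of $\R^n$. Under $\tau$-tilting finiteness there are only finitely many bricks, so this union is closed and nowhere dense and the regular locus is dense and open. For regular $v$ the common torsion class $\Tcal_v=\overline{\Tcal}_v$ is functorially finite, hence of the form $\Fac M$ for a $\tau$-tilting pair $(M,P)$, whence \cref{lem:facminimal} gives $v\in\Ccal_{(M,P)}\subseteq\overline{\Ccal}_{(M,P)}$. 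Thus every regular point lies in a maximal cone. Since $\Sigma(A)$ is finite, $\bigcup_{\sigma}\sigma$ is a finite union of closed cones and therefore closed; as it contains the dense regular locus, it must equal $\R^n$, giving completeness and with it the ``in this case'' bijection between maximal cones and $\tau$-tilting pairs.

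The main obstacle is precisely the completeness step, which is what makes this a genuine theorem rather than a bookkeeping exercise: it rests on the two imported facts that $\tau$-tilting finiteness is equivalent to having only finitely many bricks (so that the wall set is finite and the regular locus is dense) and to every torsion class being functorially finite (so that the regular fibres are realised by honest $\tau$-tilting pairs via \cref{lem:facminimal}). These are exactly the statements of \cite[Thm. 4.7]{Asai2019WS} and \cite[Thm. 5.4]{DIJ2019}, so an honest account either invokes them directly or reproves the brick-finiteness and functorial-finiteness dichotomy, which is the hard technical heart of the result.
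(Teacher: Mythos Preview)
The paper does not actually prove this theorem: it is stated with citations to \cite[Thm.~4.7]{Asai2019WS} and \cite[Thm.~5.4]{DIJ2019} and no proof is given. So there is no ``paper's own proof'' to compare against; you have supplied a proof sketch where the paper simply imports the result.

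That said, your outline is essentially correct and matches the arguments in the cited sources. One step is slightly underspecified. When you write ``whence \cref{lem:facminimal} gives $v\in\Ccal_{(M,P)}$'', you are using that theorem in the wrong direction: \cref{lem:facminimal} says that for $w\in\Ccal_{(M,P)}$ one has $\Tcal_w=\Fac M$ and $\overline{\Tcal}_w={}^\perp(\tau M)\cap P^\perp$, not conversely. The missing ingredient is \cref{prop:maximalchambers}, which says $\Ccal_{(M,P)}$ is an entire TF-equivalence class. With both results the argument goes through: since $(M,P)$ is $\tau$-tilting, $\Fac M={}^\perp(\tau M)\cap P^\perp$, so any $w\in\Ccal_{(M,P)}$ is TF-equivalent to your regular $v$, and then \cref{prop:maximalchambers} forces $v\in\Ccal_{(M,P)}$. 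You should cite both. Apart from this, your identification of the genuine content---that the completeness step rests on the equivalences $\tau$-tilting finite $\Leftrightarrow$ brick-finite $\Leftrightarrow$ every torsion class functorially finite from \cite{DIJ2019} and \cite{Asai2019WS}---is exactly right, and your final paragraph is an honest acknowledgement that a self-contained proof would have to reprove those dichotomies.
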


In this case, there is a bijection between maximal cones $\overline{\Ccal}_{(T,P)} \in \Sigma(A)$ and torsion classes $\Fac T \in \tors A$ by \cite[Thm. 2.7]{AIR2014}. In this way the inclusion of torsion classes induces a partial order on maximal cones. So, for a maximal cone $\sigma \in \Sigma(A)$, we write $\Tcal_\sigma$ to mean $\Tcal_v$ where $v$ lies in the interior of $\sigma$. This defines a partial order $(\Sigma(A),\leq) = (\Sigma(A),\leq_{\tors A})$ on maximal cones satisfying $\sigma_1 \leq \sigma_2$ if and only if $\Tcal_{\sigma_1} \subseteq \Tcal_{\sigma_2}$. 

\begin{prop} \label{prop:fanposet}
    Let $A$ be a $\tau$-tilting finite algebra, then the poset of torsion classes $\tors A$ induces a fan poset $(\Sigma, \leq_{\tors A})$ on $\Sigma(A)$.
\end{prop}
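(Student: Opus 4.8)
The plan is to verify directly the two defining conditions of \cref{defn:fanposet} for the pair $(\Sigma(A), \leq_{\tors A})$. By \cref{defn:taufinite} the fan $\Sigma(A)$ is finite and complete, and the order $\leq_{\tors A}$ makes its maximal cones into a finite poset which is order-isomorphic to $(\tors A, \subseteq)$; this is a finite lattice by \cite{DIRRT2017}, and I will use the lattice structure freely.

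First I would establish condition (2), the facial-interval condition, because its proof also supplies a dictionary reused in condition (1). Given a cone $\sigma = \Ccal_{(M,P)}$ with $v$ in its relative interior, the maximal cones of $\Sigma(A)$ containing $\sigma$ are exactly the cones of the $\tau$-tilting pairs having $(M,P)$ as a direct summand. By $\tau$-tilting reduction \cite{Jasso2015} these completions are in an order-preserving bijection with the support $\tau$-tilting pairs of a reduced algebra, whose torsion lattice has a minimum and a maximum; under the identification with torsion classes these two extremes are $\Tcal_v = \Fac M$ and $\overline{\Tcal}_v = {}^\perp(\tau M)\cap P^\perp$ by \cref{lem:facminimal}. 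Hence $\starr(\sigma)^n$ is the interval $[\sigma^-,\sigma^+]$ with $\sigma^- \leftrightarrow \Tcal_v$ and $\sigma^+ \leftrightarrow \overline{\Tcal}_v$, which is condition (2). Stated for an arbitrary point, this says that the maximal cones whose closure contains $v$ are precisely those whose torsion class lies in the interval $[\Tcal_v,\overline{\Tcal}_v]$ of $\tors A$.

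For condition (1), I would fix an interval $[\Tcal_1,\Tcal_2]$ of $\Pcal$ and let $U$ be the union of the corresponding maximal cones. Using the dictionary above, together with the fact that in a lattice the intervals $[\Tcal_v,\overline{\Tcal}_v]$ and $[\Tcal_1,\Tcal_2]$ meet if and only if $\Tcal_1 \subseteq \overline{\Tcal}_v$ and $\Tcal_v \subseteq \Tcal_2$, one obtains
\[ U = \{\, v \in \R^n : \Tcal_1 \subseteq \overline{\Tcal}_v \,\} \cap \{\, v \in \R^n : \Tcal_v \subseteq \Tcal_2 \,\}. \]
The key computation is to rewrite each factor as an intersection of half-spaces. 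Since $\Tcal_1$ is closed under quotients, the description of $\overline{\Tcal}_v$ in \cref{prop:BKTtorsion} gives $\{v : \Tcal_1 \subseteq \overline{\Tcal}_v\} = \{v : \langle v, \bdim Z\rangle \geq 0 \text{ for all } Z \in \Tcal_1\}$; dually, writing $\Fcal(\Tcal_2)$ for the torsion-free class of $\Tcal_2$ and using closure under submodules, $\{v : \Tcal_v \subseteq \Tcal_2\} = \{v : \langle v, \bdim X\rangle \leq 0 \text{ for all } X \in \Fcal(\Tcal_2)\}$. Each factor is thus an intersection of closed half-spaces, so $U$ is closed and convex. Finally, since $\Sigma(A)$ is a finite fan whose maximal cones are full-dimensional, $U$ is a finite union of full-dimensional polyhedral cones, so its topological boundary is contained in the finitely many hyperplanes spanned by the walls of $\Sigma(A)$; a full-dimensional closed convex set whose boundary lies in finitely many hyperplanes is a polyhedral cone. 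This gives condition (1).

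The main obstacle is precisely the polyhedrality in condition (1): the half-space presentations of $U$ a priori involve infinitely many inequalities, and these cannot be reduced to finitely many naively, because a $\tau$-tilting finite algebra need not be representation-finite and a torsion class may contain infinitely many indecomposables (for instance over a local algebra). The resolution is to decouple convexity from finiteness — the half-space descriptions yield convexity for free, whereas polyhedrality is imported from the finiteness of the fan $\Sigma(A)$ through the boundary argument. A secondary point requiring care is the interval correspondence underlying condition (2), which must be set up cleanly via $\tau$-tilting reduction and \cref{lem:facminimal} so that the two extremes of $\starr(\sigma)^n$ are correctly identified with $\Tcal_v$ and $\overline{\Tcal}_v$.
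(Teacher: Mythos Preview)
Your proposal is correct and follows essentially the same route as the paper: both identify the union $U$ over an interval $[\Tcal_1,\Tcal_2]$ with the set $\{v:\langle v,\bdim T\rangle\ge 0\ \forall T\in\Tcal_1\}\cap\{v:\langle v,\bdim F\rangle\le 0\ \forall F\in\Tcal_2^\perp\}$, and both obtain the facial-interval condition from the identification of $\starr(\sigma)^n$ with $[\Fac M,\,{}^\perp(\tau M)\cap P^\perp]$ via \cref{lem:facminimal}. Your derivation of the equality for $U$ via the lattice criterion ``$[\Tcal_v,\overline{\Tcal}_v]$ meets $[\Tcal_1,\Tcal_2]$ iff $\Tcal_1\subseteq\overline{\Tcal}_v$ and $\Tcal_v\subseteq\Tcal_2$'' is a slightly cleaner packaging of the paper's two inclusions (which use \cite[Lem.~3.12]{BST2019} and a completeness-based contradiction), and you are more explicit than the paper about why the a priori infinite half-space intersection is polyhedral, supplying the boundary-in-finitely-many-hyperplanes argument where the paper simply calls it a convex cone and lets the equality with a finite union of cones do the work.
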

\begin{proof}
    By definition, every cone $\sigma \in \Sigma(A)$ is the $g$-vector cone of a $\tau$-rigid pair $(M,P)$. Then the maximal cones containing it are exactly those corresponding to the interval $[\Fac M, {}^\perp \tau M \cap P^\perp] \subseteq \tors A$. This is because maximal $g$-vector cones containing $\overline{\Ccal}_{(M,P)}$ are exactly those $\overline{\Ccal}_{(N,Q)} \in \Sigma(A)$ such that $(N,Q)$ is a $\tau$-tilting pair with $M \in \add N$ and $P \in \add Q$. By \cite[Thm. 2.10]{AIR2014}, see also \cite[Thm. 4.4]{DIRRT2017} and the bijection of $\tau$-tilting pairs with torsion classes \cite[Thm. 2.7]{AIR2014} when $A$ is $\tau$-tilting finite we get the desired interval. \\
    
    We are left with showing that each interval is a cone. For a torsion class $\Tcal \in \tors A$ and a torsion-free class $\Fcal \in \torf A$, respectively, define the following subspaces of $\R^n$:
    \[ \Hcal_{\Tcal}^+ \coloneqq \bigcap_{T \in \Tcal}\{ v \in \R^n: \langle v, \bdim T \rangle \geq 0\}, \quad \Hcal_{\Fcal}^- \coloneqq \bigcap_{F \in \Fcal} \{ v \in \R^n: \langle v, \bdim F \rangle \leq 0 \}.\]
    
    Both spaces are intersection of half-spaces and thus a convex cone. Take an interval $I = [\sigma_1, \sigma_2] \in (\Sigma(A), \tors A)$ corresponding with torsion classes $[\Tcal_{\sigma_1}, \Tcal_{\sigma_2}] \subseteq \tors A$, define $ \Tcal_i \coloneqq \Tcal_{\sigma_i}$. To prove that $(\Sigma(A), \leq_{\tors A})$ is a fan poset we need to show 
    \[ \bigcup_{\sigma_i \in I} \sigma_i = \Hcal_{\Tcal_1}^+ \cap \Hcal_{\Tcal_2^\perp}^-.\]

    Take $v \in \bigcup_{\sigma_i \in I} \sigma_i$. From \cite[Lem. 3.12]{BST2019} it follows that $\langle v, \bdim T \rangle \geq 0$ for all $T \in \Tcal_{\sigma_1}$ since $\Tcal_1 \subseteq \Tcal_i$ for all $\sigma_i \in I$. Therefore $v \in \Hcal_{\Tcal_1}^+$. Dually, it follows for the torsion-free class $(\Tcal_2)^\perp$ that $\langle v, \bdim F \rangle \leq 0$ for all $F \in (\Tcal_2)^\perp$. Hence $\bigcup_{\sigma_i \in I} \sigma_i \subseteq \Hcal_{\Tcal_1}^+ \cap \Hcal_{(\Tcal_2)^\perp}^-$. \\

    Conversely, assume for a contradiction that there exists $v \in \Hcal_{\Tcal_1}^+ \cap \Hcal_{(\Tcal_2)^\perp}^-$ such that there is no $\sigma_i \in I$ satisfying $v \in \sigma_i$. By completeness of the $g$-fan, there exists some other maximal cone $\sigma'$ with $\sigma' \not \in I$ containing $v$ in its interior. This interior has to be fully contained in $\Hcal_{\Tcal_1}^+ \cap \Hcal_{(\Tcal_2)^\perp}^-$. By construction of $\Hcal_{\Tcal_1}^+$, $\Tcal_w$ contains $\Tcal_1$. Therefore $\Tcal_1 \subseteq \Tcal_{\sigma'}$. Dually, it follows that $(\Tcal_{\sigma'})^\perp \subseteq (\Tcal_2)^\perp$ and hence $\sigma' \in I$. 
\end{proof}

Therefore any properties of the poset $\tors A$ apply to the fan poset $(\Sigma(A), \leq_{\tors A})$, in particular it is a polygonal completely semi-distributive lattice \cite{DIRRT2017}. We conclude the study of the fan structure by showing that $g$-vectors determine $\tau$-rigid pairs, giving a new proof of \cite[Thm. 6.5]{DIJ2019}.

\begin{thm}
    Let $(M,P)$ and $(M',P')$ be $\tau$-rigid pairs such that $g \coloneqq g^{(M,P)}= g^{(M',P')}$.  Then $M \cong M'$ and $P \cong P'$. 
\end{thm}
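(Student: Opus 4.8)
The plan is to deduce the statement from the fan structure established in \cref{thm:gfan}, by showing that the common $g$-vector $g$ lies in the \emph{interior} of the cones attached to both pairs, so that \cref{lem:mainlem} applies directly. First I would record the basic geometric observation underlying the whole argument: for a $\tau$-rigid pair $(M,P)$ with indecomposable decompositions $M \cong \bigoplus_{i=1}^k M_i^{a_i}$ and $P \cong \bigoplus_{j=k+1}^t P_j^{b_j}$, additivity of $g$-vectors gives $g^{(M,P)} = \sum_{i=1}^k a_i\, g^{M_i} - \sum_{j=k+1}^t b_j\, g^{P_j}$, a combination of the generators of $\overline{\Ccal}_{(M,P)}$ in which \emph{every} coefficient $a_i, b_j$ is strictly positive. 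Since these generators are linearly independent (the cone $\overline{\Ccal}_{(M,P)}$ is simplicial), such an all-positive combination lies in the relative interior $\Ccal_{(M,P)}$. The same reasoning shows $g = g^{(M',P')} \in \Ccal_{(M',P')}$.

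Next I would reduce to basic pairs so that \cref{lem:mainlem} can be invoked. Let $(N,Q)$ and $(N',Q')$ be the basic pairs with $\add N = \add M$, $\add Q = \add P$ and $\add N' = \add M'$, $\add Q' = \add P'$. By \cref{lem:largestbasicsummand} the interiors are unchanged, $\Ccal_{(M,P)} = \Ccal_{(N,Q)}$ and $\Ccal_{(M',P')} = \Ccal_{(N',Q')}$, so from the previous paragraph $g \in \Ccal_{(N,Q)} \cap \Ccal_{(N',Q')}$; in particular this intersection is nonempty. \cref{lem:mainlem} then yields $N \cong N'$ and $Q \cong Q'$, equivalently $\add M = \add M'$ and $\add P = \add P'$, so the two pairs have the same indecomposable summands up to isomorphism.

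It then remains to match multiplicities. Writing both $g$-vectors against the common linearly independent set $\{ g^{M_i}\} \cup \{ -g^{P_j}\}$ of indecomposable $g$-vectors, the equation $\sum_i a_i g^{M_i} - \sum_j b_j g^{P_j} = g = \sum_i a_i' g^{M_i} - \sum_j b_j' g^{P_j}$ forces $a_i = a_i'$ and $b_j = b_j'$ by linear independence, whence $M \cong M'$ and $P \cong P'$. The main obstacle, and the one genuine input beyond \cref{lem:mainlem}, is the linear independence of the $g$-vectors of the indecomposable summands of a $\tau$-rigid pair, i.e.\ the simpliciality of $\overline{\Ccal}_{(M,P)}$: this is exactly what makes both the interior-membership claim and the final coefficient comparison valid, and it is the point I would treat most carefully, citing sign-coherence of $g$-vectors or the simplicial nature of the $g$-fan if it is not already isolated as a lemma.
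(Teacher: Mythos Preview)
Your proof is correct. The paper takes a slightly different route to match multiplicities: instead of invoking linear independence of the $g$-vectors once at the end, it iterates. After the first application of \cref{lem:mainlem} shows that the maximal basic summands agree, $B_1^M \cong B_1^{M'}$ and $Q_1^P \cong Q_1^{P'}$, it subtracts $g^{(B_1^M,Q_1^P)}$ from $g$ and repeats the same argument for the remainder pairs $(M_1,P_1)$ and $(M_1',P_1')$, terminating in finitely many steps because the modules are finitely generated. The upshot is that the paper's argument never appeals to simpliciality of the $g$-fan: the step $g \in \Ccal_{(M,P)}$ in fact holds for any polyhedral cone (an all-positive combination of a generating set always lies in the relative interior), and the iterative peeling replaces your final coefficient comparison. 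Your approach is shorter and more transparent, but it genuinely uses linear independence for the last step; since simpliciality of $\Sigma(A)$ is a standard fact but is not proved within the paper itself, the iterative version is marginally more self-contained. As a minor remark, your justification of interior membership via simpliciality is valid but stronger than necessary, and this is exactly the point the paper's proof leaves implicit.
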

\begin{proof}
    We repeatedly reduce the problem to the largest basic direct summands and show that they coincide. Write $M \cong B_1^M \oplus M_1$, where $B_1^M$ is a basic direct summand of $M$ such that $\add B_1^M = \add M$. Similarly, write $M' \cong B_1^{M'} \oplus M_1'$, such that $B_1^{M'}$ is basic and $\add B_1^{M'} = \add M'$. In the same way, write $P \cong Q_1^P \oplus P_1$ and $P' \cong Q_1^{P'} \oplus P_1'$ with $Q_1^P$ and $Q_1^{P'}$ largest basic direct summands. \cref{lem:largestbasicsummand} implies $\Ccal_{(B_1^M, Q_1^P)}= \Ccal_{(M,P)}$ and $\Ccal_{(B_1^{M'}, Q_1^{P'})} = \Ccal_{(M',P')}$. Since $g$ is contained in the intersection of these cones, \cref{lem:mainlem} implies that $B_1^M \cong B_1^{M'}$ and $Q_1^P \cong Q_1^{P'}$. We repeat the process with $g_1 \coloneqq g - g^{(B_1^M, Q_1^P)}$ and the $\tau$-rigid pairs $(M_1, P_1)$ and $(M_1', P_1')$. Since the modules are finitely generated, eventually $M \cong M'$ and $P \cong P'$. 
\end{proof}

\subsection{$\tau$-cluster morphism category}\label{subsec:tauclustermorph} The definition of the category of a partitioned fan, \cref{defn:thecategory}, is inspired by the geometric construction of the $\tau$-cluster morphism category from the $g$-fan \cite{STTW2023}. In the original constructions (cf. \cite{BuanHanson2017}, \cite{BuanMarsh2018}, \cite{IgusaTodorov2017}) the objects of the category are \textit{wide subcategories} of the form $\Wcal_E \coloneqq \overline{\Tcal}_E \cap \overline{\Fcal}_E$ for some TF-equivalence class $E= \Ccal_{(M,P)}$ coming from a $\tau$-rigid pair $(M,P)$. The relationship between TF-equivalence classes and the $g$-fan is described in \cref{prop:maximalchambers}. Let $\TF_A$ be the poset category of TF-equivalence classes coming from $\tau$-rigid pairs with poset relation $E_1 \leq E_2$ whenever $E_1 \subseteq \overline{E_2}$. 

\begin{defn}\cite[Defn. 3.3]{STTW2023} \label{defn:classical}
    Let $A$ be a finite-dimensional algebra. The \textit{$\tau$-cluster morphism category} $\Cfrak(A)$ is the category
    \begin{itemize}
        \item whose objects are objects of $\TF_A$ under the identification $E_1 \sim E_2$ whenever $\Wcal_{E_1} = \Wcal_{E_2}$;
        \item whose morphisms $\Hom_{\Cfrak(A)}([E], [F])$ are given by the set of morphisms
        \[ \bigcup_{E' \in [E], F' \in [F]} \Hom_{\TF_A}(E',F')\]
        under the identification $f_{EF} \sim f_{E'F'}$ whenever $\pi_E(F) = \pi_{E'}(F')$.
    \end{itemize}
\end{defn}

Translating from TF-equivalence classes to cones $\Ccal_{(M,P)}$, we see that the objects of the $\tau$-cluster morphism category are interior cones of cones in $\Sigma(A)$ with identification given by $\Ccal_{(M_1,P_1)} \sim \Ccal_{(M_2,P_2)}$ whenever $M_1^\perp \cap {}^\perp \tau M_1 \cap P_1^\perp = M_2^\perp \cap {}^\perp \tau M_2 \cap P_2^\perp$. By taking the closure, this induces a partition of the fan $\Sigma(A)$, which we denote by $\Pfrak_{\textnormal{WAC}}$. By \cite[Cor. 3.7, Lem. 3.8]{STTW2023}, $\Pfrak_{\textnormal{WAC}}$ is an admissible partition, so that the $\tau$-cluster morphism category $\Cfrak(A)$ is easily seen to be equivalent to the category of the partition fan $\Cfrak(\Sigma(A), \Pfrak_{\textnormal{WAC}})$. It follows from the definition of $\tau$-rigid pairs and their correspondence with the cones of $\Sigma(A)$ that the first factors of $\Cfrak(\Sigma(A), \Pfrak_{\textnormal{WAC}})$ are given by pairwise compatibility conditions, see \cite{HansonIgusa2021, IgusaTodorov2017, IgusaTodorov22}. \\

Following \cite{AsaiSB2020,BarnardCarrolZhu19,DIRRT2017}, we label the edges of $\Hasse(\tors A)$ in the following way: Let $\Tcal_1 \lessdot \Tcal_2$ be a cover relation in $\tors A$, then $\Tcal_1^\perp \cap \Tcal_2 = \Filt\{B\}$ is the category of modules filtered by a brick $B$ by \cite[Thm. 3.3]{DIRRT2017}. We call the labelling of arrows $\Tcal_1 \xrightarrow{B} \Tcal_2$ in $\Hasse(\tors A)$ in this way the \textit{brick-labelling}. Originally, the picture group of an algebra was defined in \cite{IgusaTodorovWeyman2016} for representation-finite hereditary algebras, and has since been extended to all $\tau$-tilting finite algebras in \cite{HansonIgusa2021} as follows.
\begin{defn} \cite[Prop. 4.4a]{HansonIgusa2021}\label{defn:tradpicgroup}
    Let $A$ be $\tau$-tilting finite. The \textit{picture group} $G(A)$ is generated by $\{ X_S : S \in \brick A\}$ with a relation
    \[ X_{S_1} \dots X_{S_k}= X_{S_1'} \dots X_{S_l'}\]
    if there exist torsion classes $\Tcal, \Tcal' \in \tors A$ such that $(S_1, \dots, S_k)$ and $(S_1', \dots, S_l')$ are sequences of bricks labelling directed paths $\Tcal \to \Tcal'$ in $\Hasse(\tors A)$, denote this element by $X_{[\Tcal, \Tcal']}$.
\end{defn}

\begin{prop} \label{thm:picgroupgeneralisation}
    Let $A$ be $\tau$-tilting finite. Let $\Cfrak(A) = \Cfrak(\Sigma, \Pfrak_{\textnormal{WAC}})$, then $G(\Sigma(A), \Pfrak_{\textnormal{WAC}}, \leq_{\tors A})$ is isomorphic to $G(A)$, where $(\Sigma, \leq_{\tors A})$ is the fan poset induced by $\tors A$.
\end{prop}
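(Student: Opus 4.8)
The plan is to prove the isomorphism by exhibiting a bijection between the two generating sets and checking that it carries the defining relations of one presentation onto those of the other, where the key technical input is that the torsion fan poset is non-degenerate so that the second family of relations in \cref{defn:picgroup} becomes vacuous. Throughout I identify the fan poset $\Pcal = {\leq_{\tors A}}$ with the poset $\tors A$, so that $\Hasse(\Pcal)$ is literally $\Hasse(\tors A)$.

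First I would match the generators. The partition $\Pfrak_{\textnormal{WAC}}$ identifies two cones exactly when their wide subcategories $\Wcal_E$ coincide, and for a codimension $1$ cone the associated class $\Wcal_E$ is a rank $1$ wide subcategory generated by a single brick $S$, namely the brick whose stability space $\Dcal(S)$ is the wall containing the cone (cf. \cite{DIRRT2017}, \cite{BST2019} and the discussion preceding \cref{defn:classical}). Since every codimension $1$ cone carrying the brick $S$ lies in the single wall $\Dcal(S)$, the assignment $[\sigma]_{\Pfrak_{\textnormal{WAC}}} \mapsto S$ is a well-defined bijection between the equivalence classes of codimension $1$ cones indexing the generators $X_{[\sigma]}$ of $G(\Sigma(A), \Pfrak_{\textnormal{WAC}}, {\leq_{\tors A}})$ and the bricks indexing the generators $X_S$ of $G(A)$. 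Under the identification of maximal cones with torsion classes, the codimension $1$ cone labelling a covering arrow of $\Hasse(\tors A)$ is exactly the wall $\Dcal(S)$ of the brick $S$ labelling that same arrow in the sense of \cref{defn:tradpicgroup}. Consequently the first family of relations in \cref{defn:picgroup}, taken over intervals $[\tau_1,\tau_2]$ of $\Hasse(\tors A)$, corresponds term by term to the relations of \cref{defn:tradpicgroup} taken over intervals $[\Tcal,\Tcal']$ of $\Hasse(\tors A)$.

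It then remains to show that the second family of relations in \cref{defn:picgroup} imposes nothing further. For this I would prove that the fan poset $(\Sigma(A), {\leq_{\tors A}})$, which is a fan poset by \cref{prop:fanposet}, is non-degenerate, and then invoke \cref{lem:nondegimplystars} to conclude that the type $2$ relations are trivially satisfied. To verify non-degeneracy I would apply \cref{lem:linfunctposet} with the all-ones functional $b(x) = \sum_{i=1}^n x_i$. For a covering relation $\tau_1 \preceq \tau_2$ labelled by the brick $S$, the separating wall lies in the hyperplane $\{v : \langle v, \bdim S\rangle = 0\}$, and since $S \in \Tcal_{\tau_2}$ while $S \notin \Tcal_{\tau_1}$, the description of $\Tcal_v$ in \cref{prop:BKTtorsion} forces the larger cone $\tau_2$ to lie on the side where $\langle v, \bdim S\rangle > 0$. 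Hence the normal $\nu$ pointing from $\tau_1$ to $\tau_2$ is a positive multiple of $\bdim S$, and $b(\nu) > 0$ because $b(\bdim S) = \dim_K S > 0$ for every brick $S$. By \cref{lem:linfunctposet} the poset is non-degenerate, so by \cref{lem:nondegimplystars} the presentation of $G(\Sigma(A), \Pfrak_{\textnormal{WAC}}, {\leq_{\tors A}})$ reduces to the generators $\{X_{[\sigma]}\}$ together with the first family of relations alone.

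Combining these steps, the generator bijection carries the presentation of $G(\Sigma(A), \Pfrak_{\textnormal{WAC}}, {\leq_{\tors A}})$ onto the presentation of $G(A)$ from \cref{defn:tradpicgroup}, which yields the claimed isomorphism. I expect the main obstacle to be the bookkeeping around the wall–brick correspondence: one must confirm both that $\Pfrak_{\textnormal{WAC}}$ identifies codimension $1$ cones precisely according to their brick, and that the geometric labelling of $\Hasse(\tors A)$ by codimension $1$ cones agrees with the algebraic brick labelling, so that the orientation feeding into the non-degeneracy argument is the correct one.
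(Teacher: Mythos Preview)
Your argument is correct and follows essentially the same outline as the paper for the generator bijection and the type~1 relations. The genuine difference is in how you dispose of the type~2 relations. The paper invokes \cite[Thm.~4.12, Prop.~4.13]{DIRRT2017} to obtain directly a label-preserving isomorphism between the posets on identified stars, which forces the type~2 relations to hold. You instead verify non-degeneracy via \cref{lem:linfunctposet} with the dimension functional $b(x)=\sum x_i$, using that the normal to the wall between $\tau_1\prec\tau_2$ is a positive multiple of $\bdim S$ for the labelling brick $S$, and then apply \cref{lem:nondegimplystars}. Your route is more internal to the paper's own machinery and avoids quoting the specific DIRRT results; the paper's route is shorter once those references are granted. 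Both ultimately establish the same fact (the fan poset is well-defined on identified stars), so the two proofs are interchangeable.
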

\begin{proof}
    First we show that equivalence classes of $\Pfrak_{\textnormal{WAC}}$ are determined by bricks. Let $\overline{\Ccal}_{(M,P)} \in \Sigma(A)$ be of codimension 1, then by \cite[Prop. 3.17]{BST2019} $\overline{\Ccal}_{(M,P)} \subseteq \Dcal(N)$, where $N$ is $v$-stable for all $v \in \Ccal_{(M,P)}$ and a brick by \cite[Thm. 1]{Rudakov1997}. Take now a distinct cone $\overline{\Ccal}_{(M',P')} \sim \overline{\Ccal}_{(M,P)} \in \Cfrak(\Sigma, \Pfrak_{\textnormal{WAC}})$. Then \cite[Prop. 3.13, Thm. 3.14]{BST2019} implies that $\overline{\Ccal}_{(M',P')} \subseteq \Dcal(N)$ and that this brick is unique in $M^\perp \cap {}^\perp \tau M \cap P^\perp = (M')^\perp \cap {}^\perp \tau M' \cap (P')^\perp$. Hence the generators are in bijection. It follows from \cite[Prop. 4.5, Rmk. 4.6]{BST2019} and the bijection between $\tau$-tilting pairs and torsion classes \cite[Thm. 2.7]{AIR2014} in the $\tau$-tilting finite case that the set of relations of $G(A)$ coincide under the bijection of generators with the set of type 1 relations for $G(\Sigma(A), \Pfrak_{\textnormal{WAC}}, \leq_{\tors A})$. That $G(A)$ satisfies the type 2 relations of $G(\Sigma(A), \Pfrak_{\textnormal{WAC}}, \leq_{\tors A})$ follows from \cite[Thm. 4.12, Prop. 4.13]{DIRRT2017} which establishes a label-preserving isomorphism between the dual graphs of the stars of cones whose corresponding wide subcategories coincide and that of their shared projection.
\end{proof}

Therefore \cref{defn:picgroup} is a generalisation of  \cref{defn:tradpicgroup}. We also remark that by \cite[Thm. 4.10]{HansonIgusa2021}, the picture group $G(A)$ is isomorphic to the fundamental group of $\Bcal \Cfrak(A)$. We conclude by applying the theory developed in the previous sections to obtain a large class of algebras admitting a faithful functor.

\begin{thm} \label{thm:algmainthm}
    Let $A$ be such that $\Sigma(A)$ is a finite hyperplane arrangement. Then $\Cfrak(A)$ admits a faithful functor to $G(\Sigma(A), \Pfrak_{\flatt}, \leq_{\tors_A})$, and hence to $G(A)$.
\end{thm}
\begin{proof}
    For any finite-dimensional algebra, cones of $\Sigma(A)$ are contained in stability spaces $\Dcal(B)$ for some brick $B \in \brick A$. In particular, $\Dcal(B)$ is orthogonal to the dimension vector $\bdim B \in \Z_{\geq 0}^n$, hence $\Sigma(A)$ satisfies the assumptions of \cref{thm:mainthm} and since the $\tau$-cluster morphism category $\Cfrak(A)$ corresponds to some admissible partition, \cref{cor:HAallfaithful} gives the desired result. Since $G(A)$ is the fundamental group of $\Bcal \Cfrak(A)$ by \cite[Thm. 4.10]{HansonIgusa2021}, the faithful functor to $G(\Sigma(A), \Pfrak_{\flatt}, \leq_{\tors A})$ factors through a faithful functor to $G(A)$, see the proof of \cite[Prop. 3.7]{Igusa2022}.
\end{proof}

\begin{cor}
    Let $A$ be such that $\Sigma(A)$ is a finite hyperplane arrangement. Then the classifying space $\Bcal \Cfrak(A)$ is a $K(\pi,1)$ space if $\Cfrak(A)$ satisfies the pairwise compatibility condition of last factors. In particular, if $\Sigma(A) \subseteq \R^3$, then $\Bcal \Cfrak(A)$ is a $K(\pi,1)$ space.
\end{cor}
\begin{proof}
    The first part is the remaining sufficient condition of \cref{prop:igusacriteria}, and the second result follows from \cite{BarnardHanson2022}, where it is shown that the pairwise compatibility always holds in rank 3.
\end{proof}

The existence of a faithful group functor had previously been proven for $K$-stone algebras i.e. algebras $A$ where every brick $B \in \brick A$ satisfies $\Ext_A^1(B,B)= 0$ and $\End_A(B)=k$, see \cite{HansonIgusaPW2SMC,HansonIgusa2021} and hereditary algebras of finite or tame type \cite{IgusaTodorov2017, IgusaTodorov22}. The class of $K$-stone algebras contains Nakayama(-like) algebras \cite{HansonIgusaPW2SMC,HansonIgusa2021}, preprojective algebras of Dynkin type ADE \cite{IRRT} and gentle algebras with no loops or 2-cycles \cite{HansonIgusaPW2SMC}. As a consequence of \cref{thm:algmainthm} we obtain faithful functors for generalised preprojective algebras coming from Cartan matrices of finite (Dynkin) type as introduced in \cite{GLS2017}. For these algebras \cite[Thm. 3.19]{Murakami2022} states that their $g$-vector fan defines a finite hyperplane arrangement which comes from the root system of the corresponding Weyl group. Conjecturally, the finite hyperplane arrangements coming from crepant resolutions as in \cite{Wemyss2018} define contraction algebras whose $g$-vector fan would then be the finite hyperplane arrangement by \cite{August2020a}.\\

For Nakayama(-like) algebras, the pairwise compatibility property of last factors is also satisfied in dimension 4 and higher \cite{HansonIgusaPW2SMC,HansonIgusa2021}. However, this is not the case for preprojective algebras of Dynkin type ADE \cite{BarnardHanson2022}, whose $g$-vector fan is a finite hyperplane arrangement \cite{Mizuno2013} and certain gentle algebras \cite{HansonIgusaPW2SMC}. However, it is unknown whether $\Bcal \Cfrak(A)$ is a $K(\pi,1)$ space for preprojective algebras and whether the generalised preprojective algebras satisfy the pairwise compatibility property of last factors. We conclude by giving a new example of an algebra $A$ for which $\Bcal \Cfrak(A)$ is a $K(\pi,1)$ space.

\begin{exmp} 
    Let $A$ be the generalised preprojective algebra of type $C_3$, see \cite[Sec. 13.8]{GLS2017}. That is, the algebra coming from the Cartan matrix $C$ with symmetrizer $D$ given as follows:
    \[ C = \begin{pmatrix} 2 & -1 & 0 \\ -1 & 2 & -2 \\ 0 & -1 & 2\end{pmatrix}, \quad D = \diag(1,1,2). \]
    We may write $A \cong KQ/I$ for the quiver
    \[ Q : \begin{tikzcd}[column sep=4em] 1 \arrow[r, shift left, "\alpha_{21}"] & 2 \arrow[l, shift left, "\alpha_{12}"] \arrow[r, shift left, "\alpha_{32}"] & 3 \arrow[l, shift left, "\alpha_{23}"] \arrow[loop right,"\epsilon_3",swap, looseness=10, in=40, out=-40]\end{tikzcd}\]
    and $I = \langle \alpha_{21} \alpha_{12}, \alpha_{32} \alpha_{23}- \alpha_{12} \alpha_{21}, \epsilon_3 \alpha_{23} \alpha_{32} + \alpha_{23} \alpha_{32} \epsilon_3 \rangle$. Similar to \cite[Thm. 3.9]{Mizuno2013} for preprojective algebras of type ADE, \cite[Thm. 3.19]{Murakami2022} states that up to a base-change the $g$-vector cones of generalised preprojective algebras of arbitrary Dynkin type of \cite{GLS2017} coincide with the (Weyl) chambers in the hyperplane arrangement obtained by taking orthogonal hyperplanes to the roots of the corresponding root system. Moreover, by \cite[Thm. 1.3]{GLS2017} these roots are a positive scalar multiple of the dimension vectors of certain ($\tau$-locally free) modules. For our example of type $C_3$, we obtain from \cite[Sec. 13.8]{GLS2017} that the dimension vectors of these modules are the following 9 integer vectors:
     \[ \begin{pmatrix} 1 \\ 0 \\ 0 \end{pmatrix}, \begin{pmatrix} 0 \\ 1 \\ 0 \end{pmatrix}, \begin{pmatrix} 0 \\ 0 \\ 1 \end{pmatrix}, \begin{pmatrix} 1 \\ 1 \\ 0 \end{pmatrix}, \begin{pmatrix} 0 \\ 1 \\ 1 \end{pmatrix}, \begin{pmatrix} 1 \\ 1 \\ 1 \end{pmatrix}, \begin{pmatrix} 0 \\ 1 \\ 2 \end{pmatrix},
     \begin{pmatrix} 1 \\ 1 \\ 2 \end{pmatrix}, \begin{pmatrix} 1 \\2 \\ 2\end{pmatrix}.\]
    However, one can easily see that there the following polygon arises in $\Hasse(\tors A)$, whose edges are labelled by the composition series of the respective modules:
    \[
    \begin{tikzcd}[ampersand replacement=\&]
    \&\bullet \arrow[ld, "{\tiny\begin{array}{c} 2 \end{array}}",swap] \arrow[rd, "{\tiny\begin{array}{c} 3 \end{array}}"]\\
    \bullet \arrow[d, "{\tiny\begin{array}{c} 3 \\2 \end{array}}",swap] \& \& \bullet \arrow[d, "{\tiny\begin{array}{c} 2 \\3 \\ 3 \end{array}}"] \\
    \bullet \arrow[d,"{\tiny\begin{array}{c} 3 \\3 \\2 \end{array}}",swap] \& \& \bullet \arrow[d,"{\tiny\begin{array}{c} 2 \\ 3 \end{array}}"] \\
    \bullet \arrow[rd, "{\tiny\begin{array}{c} 3\end{array}}",swap] \& \& \bullet \arrow[ld, "{\tiny\begin{array}{c} 2\end{array}}"] \\
    \&\emptyset
    \end{tikzcd}
    \]
    Therefore, from the description given in \cite[Prop. 4.33]{DIRRT2017}, it follows that $A$ is not a $K$-stone algebra, since the polygons which may arise in the Hasse quiver of posets of torsion classes of $K$-stone algebras do not include the above. Since $\Sigma(A)$ is a finite hyperplane arrangement, \cref{thm:algmainthm} implies that there exists a faithful functor to some group, and from \cite{BarnardHanson2022} we know that $\Cfrak(A)$ satisfies the pairwise compatibility of last factors since $A$ has 3 simples. Therefore $\Bcal \Cfrak(A)$ is a $K(G(A),1)$ space by \cref{prop:igusacriteria}.

\end{exmp}

\subsection*{Acknowledgements} The author is thankful to Sibylle Schroll and Hipolito Treffinger for their guidance and many meaningful conversations and suggestions. The author thanks Eric J. Hanson for pointing out a mistake in an earlier version of this paper. The author would also like to thank Erlend D. Børve, Edmund Heng, Lang Mou, Calvin Pfeifer, Aran Tattar and José Vivero for inspiring discussions. The author also thanks Bethany Marsh whose question was the starting point of this paper. Finally, the author extends his gratitude to an anonymous referee whose thorough review greatly helped to improve the paper.

\subsection*{Funding} 
MK is supported by the Deutsche Forschungsgemeinschaft (DFG, German Research Foundation) -- Project-ID 281071066 -- TRR 191.

\providecommand{\bysame}{\leavevmode\hbox to3em{\hrulefill}\thinspace}
\providecommand{\MR}{\relax\ifhmode\unskip\space\fi MR }
\providecommand{\MRhref}[2]{%
  \href{http://www.ams.org/mathscinet-getitem?mr=#1}{#2}
}
\providecommand{\href}[2]{#2}

\end{document}